\titleformat{\chapter}[display]
{\normalfont\huge\bfseries}{\chaptertitlename\ \thechapter}{20pt}{\Huge}
\newtheorem{theorem}{Theorem}[section]
\newtheorem{lemma}[theorem]{Lemma}
\newtheorem{corollary}[theorem]{Corollary}
\newtheorem{proposition}[theorem]{Proposition}
\theoremstyle{definition}
\newtheorem{defn}[theorem]{Definition}
\newtheorem{remark}[theorem]{Remark}
\newtheorem{example}[theorem]{Example}
\newcommand{\id}{\mathrm{id}}
\newcommand{\Zmod}[1]{\Z/#1\Z}
\newcommand*{\sbr}[1]{\scalebox{0.8}{$(#1)$}}
\providecommand{\arr}[1]{\langle#1\rangle}
\def\Z{\mathbb{Z}}
\def\R{\mathbb{R}}
\def\T{\mathbb{T}}
\def\N{\mathbb{N}}
\DeclareMathOperator{\aut}{Aut}
\DeclareMathOperator{\ab}{Z}
\DeclareMathOperator{\bnd}{B}
\DeclareMathOperator{\tran}{\Theta}
\DeclareMathOperator{\ia}{\wp}
\DeclareMathOperator{\poly}{poly}
\DeclareMathOperator{\supp}{Supp}
\DeclareMathOperator{\codim}{codim}
\DeclareMathOperator{\q}{c}
\DeclareMathOperator{\ns}{X}
\DeclareMathOperator{\nss}{Y}
\DeclareMathOperator{\co}{\circ\hspace{-0.02 cm}}
\DeclareMathOperator{\cu}{C}
\DeclareMathOperator{\cs}{s}
\DeclareMathOperator{\trem}{\Psi}
\DeclareMathOperator{\cD}{\mathcal{D}}
\DeclareMathOperator{\cF}{\mathcal{F}}
\DeclareMathOperator{\cT}{\mathcal{T}}
\DeclareMathOperator{\tIm}{Im}
\begin{document}

\begin{frontmatter}[classification=]

\title{\Huge{Notes on nilspaces: algebraic aspects}
} 
\author[pc]{Pablo Candela}
\end{frontmatter}


\tableofcontents

\titlespacing*{\chapter}{0pt}{1.5cm}{40pt}

\chapter{Introduction}\label{chap:intro}\vspace{-.5cm}
These notes constitute the first part of a detailed exposition of the theory of nilspaces developed by Camarena and Szegedy in the paper \cite{CamSzeg}. We treat what can be called the algebraic part of the theory, in which nilspaces are studied without any topological assumption. We elaborate on some of the central concepts and arguments from the first two chapters of \cite{CamSzeg}, present more detailed proofs, and illustrate some of the key notions with concrete examples.

The main objects studied in \cite{CamSzeg} are \emph{nilspaces}. These objects are defined in terms of some axioms, which rely on the basic notions of  discrete cubes and cube morphisms. We shall begin in the next section by recalling these notions and their elementary properties. \\
\indent As stated in \cite{CamSzeg}, important examples of nilspaces are provided by filtered groups and filtered nilmanifolds. In Chapter \ref{chap:Centrexs}, we elaborate on this by treating the theory of cubes on filtered groups, which was pioneered by Host and Kra \cite{HK,HKparas} and developed in several other works including \cite{GTOrb,GTZ,HM}. We treat this theory from the viewpoint of general nilspaces. This provides natural motivation for various important objects and results related to this theory, notably polynomial maps and Leibman's theorem, and it also provides illustrations of several tools and ideas used by Camarena and Szegedy.\\
\indent One of the central results in the algebraic part of \cite{CamSzeg} is the description of a general nilspace as an iterated \emph{abelian bundle}.  In Chapter \ref{chap:chargenils}, after having gathered some basic tools in Section \ref{sec:BasNot}, we present a detailed proof of this result in Section \ref{sec:bundledecomp}, following the arguments of Camarena and Szegedy. The final section of the chapter concerns additional algebraic tools introduced in \cite{CamSzeg}. These tools are of inherent interest but are also important for the topological part of the theory of nilspaces.\\
\indent We treat the topological part of the theory separately, in \cite{Cand:Notes2}. That part studies nilspaces equipped with a compact topology  that is compatible with the cube structure, called \emph{compact nilspaces}. This study is motivated principally by the analysis of \emph{uniformity norms} in arithmetic combinatorics, and of the analogous \emph{uniformity seminorms} in ergodic theory, leading in particular to the theory of \emph{higher order Fourier analysis}; see \cite{GHFA} for an introduction to this theory from the viewpoint of arithmetic combinatorics. An alternative treatment of compact nilspaces is given by Gutman, Manners, and Varj\'u in \cite{GMV1,GMV2,GMV3}.

\vspace{0.6cm}

\noindent \textbf{Acknowledgements.} I am very grateful to Bal\'azs Szegedy for discussions that were crucial for my understanding of \cite{CamSzeg}, and to  anonymous referees for advice that helped to improve this paper. I also thank Yonatan Gutman, Frederick Manners and P\'eter Varj\'u for informing me of their work prior to publication, and for useful comments. The present work was supported by the ERC Consolidator Grant No. 617747.

\section{Discrete cubes}\label{sec:discubes}
Recall that an \emph{affine homomorphism} from an abelian group $\ab_1$ to an abelian group $\ab_2$ is a map of the form $x \mapsto \phi(x)+t$, where $\phi:\ab_1\to\ab_2$ is a homomorphism and $t$ is some fixed element of $ \ab_2$.
\begin{defn}\label{def:D-cubes}
Let $n$ be a non-negative integer. The \emph{discrete $n$-cube} is the set $\{0,1\}^n$. We denote the elements $(0,\ldots,0), (1,\ldots,1)$ of  $\{0,1\}^n$ by $0^n,1^n$ respectively. A  \emph{discrete-cube morphism} is a map $\phi:\{0,1\}^m\to \{0,1\}^n$ that is the restriction of an affine homomorphism $\Z^m\to \Z^n$. An \emph{automorphism} of the discrete $n$-cube is a bijective morphism $\{0,1\}^n\to \{0,1\}^n$. These automorphisms form a group, denoted by $\aut(\{0,1\}^n)$.
\end{defn}
\noindent We recall that $\aut(\{0,1\}^n)$ is generated by the group $S_n$ of permutations of $[n]=\{1,2,\ldots,n\}$ together with the reflections $v\sbr{i}\mapsto 1-v\sbr{i}$, $i\in [n]$, more precisely we have $\aut(\{0,1\}^n)\cong S_n \ltimes (\Zmod{2})^n$.\\
\indent For $v\in\{0,1\}^n$, we shall write $\supp(v)$ for the \emph{support} of $v$, i.e.
\[
\supp(v)=\{i\in [n]: v\sbr{i}=1\},
\]
and we shall denote by $|v|$ the cardinality of $\supp(v)$.\\
\indent
Morphisms of discrete cubes can be described more explicitly in several ways. Below we give a description that will be quite useful later on. Let us first denote the four maps $\{0,1\}\to \{0,1\}$ as follows: we denote by $\mathbf{0}$ and $\mathbf{1}$ the maps with constant value 0 and 1 respectively, we denote the identity map by $\tau_0$, and we denote by $\tau_1$ the reflection map (the bijection sending 0 to 1).
\begin{lemma}\label{lem:cube-morph-char}
A map $\phi:\{0,1\}^m\to \{0,1\}^n$ is a discrete-cube morphism if and only if it is of the form $\phi\big(v\sbr{1},\ldots, v\sbr{m}\big) = \big(w\sbr{1},\ldots , w\sbr{n}\big)$ where, for each $j\in [n]$, either $w\sbr{j}$ is a constant function of $v$ or there is a unique $i= i(j)\in [m]$ and $k=k(j)\in \{0,1\}$ such that $w\sbr{j} = \tau_k (v\sbr{i})$.
\end{lemma}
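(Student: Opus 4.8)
The plan is to prove the two implications separately, with essentially all of the content lying in the necessity direction.

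For sufficiency, suppose $\phi$ has the stated coordinate-wise form. Each output coordinate $w\sbr{j}$ is then one of three things: a constant $c_j\in\{0,1\}$; the function $v\mapsto v\sbr{i(j)}$; or the function $v\mapsto 1-v\sbr{i(j)}$. Each of these is the restriction to $\{0,1\}^m$ of an affine function $\Z^m\to\Z$ with integer coefficients (namely $v\mapsto c_j$, $v\mapsto v\sbr{i(j)}$, and $v\mapsto 1-v\sbr{i(j)}$ respectively), and each sends $\{0,1\}^m$ into $\{0,1\}$. Assembling these $n$ coordinate functions yields a single affine homomorphism $\Z^m\to\Z^n$ whose restriction to $\{0,1\}^m$ is exactly $\phi$ and takes values in $\{0,1\}^n$; hence $\phi$ is a discrete-cube morphism.

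For necessity, write $\phi$ as the restriction of an affine homomorphism $v\mapsto Mv+t$ with $M\in\Z^{n\times m}$ and $t\in\Z^n$, so that the $j$-th output coordinate is $w\sbr{j}(v)=\sum_{i=1}^m M_{ji}\,v\sbr{i}+t_j$, and fix $j\in[n]$. I would extract the constraints on the row $(M_{j1},\ldots,M_{jm})$ and the entry $t_j$ by evaluating at a few well-chosen cube points. Evaluating at $0^m$ gives $t_j=w\sbr{j}(0^m)\in\{0,1\}$. Evaluating at each standard basis vector $e_i$ (the element of $\{0,1\}^m$ supported on $\{i\}$) gives $M_{ji}+t_j\in\{0,1\}$, which pins each coefficient to $M_{ji}\in\{0,1\}$ when $t_j=0$ and to $M_{ji}\in\{-1,0\}$ when $t_j=1$; in particular every $M_{ji}\in\{-1,0,1\}$, with its sign tied to $t_j$.

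The key step, and the only one requiring a real argument, is to show that at most one entry $M_{ji}$ is nonzero. Suppose two distinct indices $i_1,i_2$ had $M_{ji_1},M_{ji_2}\neq0$. Evaluating $w\sbr{j}$ at $e_{i_1}+e_{i_2}\in\{0,1\}^m$ yields $M_{ji_1}+M_{ji_2}+t_j\in\{0,1\}$; but in the case $t_j=0$ both coefficients equal $1$, forcing the value $2$, while in the case $t_j=1$ both equal $-1$, forcing the value $-1$, each a contradiction. Hence the $j$-th row of $M$ has at most one nonzero entry. If it has none, then $w\sbr{j}=t_j$ is constant. If it has exactly one, say at $i=i(j)$, then the constraints above give $w\sbr{j}=v\sbr{i}=\tau_0(v\sbr{i})$ when $t_j=0$ and $w\sbr{j}=1-v\sbr{i}=\tau_1(v\sbr{i})$ when $t_j=1$, with $k(j)=t_j$; uniqueness of $i$ is immediate, since it is the unique index with $M_{ji}\neq0$. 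This is precisely the claimed trichotomy, completing the proof.
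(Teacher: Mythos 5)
Your proof is correct and follows essentially the same route as the paper: write $\phi(v)=Mv+t$ and argue that each row of $M$ has at most one nonzero entry, equal to $\pm 1$, with the sign determining the corresponding entry of $t$. You simply supply the details (evaluation at $0^m$, $e_i$, and $e_{i_1}+e_{i_2}$) that the paper leaves implicit, and you also spell out the easy converse.
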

\begin{example}\label{ex:morphs}
To illustrate the lemma, we can consider a morphism $\phi:\{0,1\}^2\to \{0,1\}^3$. We could have for instance $\phi(v)=(v\sbr{1}, 1-v\sbr{1}, v\sbr{2}\big)$, where no coordinate of $\phi(v)$ is a constant function of $v$. In this case, embedding $\{0,1\}^3$ in $\R^3$ as the set of vertices of the unit cube, the image of $\phi$  consists of the vertices of a square going diagonally through the cube, namely $\{(0,1,0),(1,0,0),(0,1,1),(1,0,1)\}$. We could also have that some coordinate of $\phi(v)$ is constant, for instance $\phi(v)=(v\sbr{1},0, 1-v\sbr{2}\big)$. In this case, in the embedding just mentioned, the image of $\phi$ consists of the vertices of a face of the unit cube, namely $\{(0,0,1),(1,0,1),(0,0,0),(1,0,0)\}$.
\end{example}
\begin{proof}[Proof of Lemma \ref{lem:cube-morph-char}]
Note that, firstly, from the definition of a morphism  we can deduce that $\phi(v)=Mv+\phi(0^m)$, for some matrix $M\in \Z^{n\times m}$. Then, since $\phi$ takes values in $\{0,1\}^n$, for each $j\in [n]$ the $j$-th row of $M$ has at most one non-zero entry, with index $i=i(j)$. This entry must then equal 1 or $-1$, which then determines the $i$-th entry of $\phi(0^m)$ to be 0 or 1 respectively, and the result follows.
\end{proof}
\noindent We shall now define an important family of subsets of a discrete cube, namely its \emph{faces}. For this we use the following notation. Given a morphism $\phi:\{0,1\}^m\to \{0,1\}^n$ and $i\in [m]$, we denote by $J_\phi(i)$ the set of indices of the coordinates of $\phi(v)$ that are not constant functions of $v\sbr{i}$ (in other words, the coordinates of $\phi(v)$ which vary as $v\sbr{i}$ varies), that is
\begin{equation}\label{eq:Ji-set}
J_\phi(i):=\{ j\in [n]: \phi(v)\sbr{j}= \tau_0(v\sbr{i})\textrm{ or }\tau_1(v\sbr{i})\}.
\end{equation}
\noindent In the example $\phi:\{0,1\}^2\to \{0,1\}^3$, $v\mapsto (v\sbr{1}, 1-v\sbr{1}, v\sbr{2}\big)$ seen above, we would have $J_\phi(1)=\{1,2\}$ and $J_\phi(2)=\{3\}$.

Note that a morphism $\phi:\{0,1\}^m\to \{0,1\}^n$ is injective if and only if $J_\phi(i)$ is non-empty for every $i\in [m]$. We define
\begin{equation}\label{eq:J-set}
J_\phi:=\bigsqcup_{i\in [m]} J_\phi(i).
\end{equation}
This is the set of coordinates of $\phi(v)$ that are not constant functions of $v$. We denote the set of subsets of size $k$ of a set $S$ by $\binom{S}{k}$.
\begin{defn}[Faces and face maps]\label{def:faces}
Let $m\leq n$ be non-negative integers. An $m$\emph{-face} of $\{0,1\}^n$ (short for $m$-dimensional face) is a set $F\subset \{0,1\}^n$ specified by fixing $n-m$ coordinates of $v\in \{0,1\}^n$, that is 
$F=\{v\in \{0,1\}^n: v\sbr{i}=x\sbr{i}\;\;\forall\, i\in I\}$, for some fixed set $I\in \binom{[n]}{n-m}$ and $x\in \{0,1\}^I$. We call a morphism $\phi:\{0,1\}^m\to \{0,1\}^n$ an $m$-\emph{face map} if $\phi$ is injective and the image $\tIm(\phi)$ is an $m$-face. Equivalently, $\phi$ is an $m$-face map if $|J_\phi(i)|= 1$ for every $i\in [m]$.
\end{defn}
\noindent For instance, the map $\phi:\{0,1\}^2\to \{0,1\}^3$, $v\mapsto (v\sbr{1}, 0, 1-v\sbr{2}\big)$ from Example \ref{ex:morphs} is a 2-face map and $\tIm(\phi)$ is the 2-face $\{w\in \{0,1\}^3: w\sbr{2}=0\}$. The map $\phi:\{0,1\}^2\to \{0,1\}^3$, $v\mapsto (v\sbr{1}, 1-v\sbr{1}, v\sbr{2}\big)$ is an injective morphism, but is not a face map.

We shall often specify an $m$-face $F\subset \{0,1\}^n$ by the data $(I,x)$. We shall then refer to the cardinality of $I$ as the \emph{codimension} of $F$, denoted by $\codim(F)$. For later use, let us also  reserve the special notation $\phi_F$ for the $m$-face map that has image equal to $F$ in the ``simplest way". More precisely, $\phi_F$ will always denote the $m$-face map such that for all $j < j'$ in $J_{\phi_F}$ we have $i(j) < i(j')$ in $[m]$, and $\phi_F(v)\sbr{j}= v\sbr{i(j)}$ for all $j\in J_{\phi_F}$. For instance if $F=\{w\in \{0,1\}^3: w\sbr{2}=0\}$ then $\phi_F:\{0,1\}^2\to \{0,1\}^3$, $v\mapsto (v\sbr{1}, 0, v\sbr{2}\big)$. Note that if $m=n$ then $\phi_F$ is just the identity in $\aut(\{0,1\}^n)$.
\begin{defn}[Face restriction]\label{def:face-res}
Let $F\subset \{0,1\}^n$ be an $m$-face and let $g$ be a map defined on $\{0,1\}^n$. The \emph{face restriction} of $g$ to $F$ is the map $g \co \phi_F$ on $\{0,1\}^m$.
\end{defn}

\section{Definition of a nilspace}

\begin{defn}\label{def:ns}
A \emph{nilspace} is a set $\ns$ together with a collection of sets $\cu^n(\ns)\subset \ns^{\{0,1\}^n}$, for each non-negative integer $n$, satisfying the following axioms:
\begin{enumerate}[leftmargin=0.8cm]
\item (Composition)\quad For every morphism $\phi:\{0,1\}^m\to \{0,1\}^n$ and every $\q \in \cu^n(\ns)$, we have\\ $\q \co \phi\in \cu^m(\ns)$.\\\vspace{-0.7cm}

\item (Ergodicity)\quad $\cu^1(\ns)=\ns^{\{0,1\}}$.\\
\vspace{-0.7cm}

\item (Corner completion)\quad Let $\q' :\{0,1\}^n\setminus \{1^n\}\to \ns$ be such that every restriction of $\q'$ to an $(n-1)$-face containing $0^n$ is in $\cu^{n-1}(\ns)$. Then there exists $ \q \in \cu^n(\ns)$ such that $ \q (v)= \q'(v)$ for all $v\neq 1^n$.
\end{enumerate}
\end{defn}
\noindent The elements of $\cu^n(\ns)$ are referred to as the $n$\emph{-cubes} (or $n$-dimensional cubes) on $\ns$. A map $\q'$ satisfying the premise of the third axiom is called an $n$\emph{-corner}. We shall call an $n$-cube $\q$ satisfying the third axiom for $\q'$ a \emph{completion} of $\q'$.

\begin{defn}
A $k$\emph{-step nilspace} (or $k$-\emph{nilspace}) is a nilspace such that every $(k+1)$-corner has a unique completion.
\end{defn}
\noindent As a first manipulation of the axioms, one may check that the set underlying a 0-step nilspace, if non-empty, must be a singleton.

Occasionally we will have to consider spaces $\ns$ for which one of the last two axioms above may fail. We shall have to deal with such spaces explicitly only from Chapter \ref{chap:chargenils} onwards, so we postpone their discussion until then, recording only the following definition for now.
\begin{defn}\label{def:cubespace}
A \emph{cubespace} is a set $\ns$ together with a collection of sets $\cu^n(\ns)\subset \ns^{\{0,1\}^n}$, for each $n\geq 0$,  satisfying the composition axiom from Definition \ref{def:ns} and such that $\cu^0(\ns)=\ns$. A cubespace $\ns$ is said to be \emph{$k$-fold ergodic} if $\cu^k(\ns)=\ns^{\{0,1\}^k}$.
\end{defn}
\noindent The nilspace axioms can be seen as variants of the axioms defining the notion of \emph{parallelepiped structures}, notion introduced by Host and Kra in \cite{HKparas}. For instance, the corner completion axiom appears in an equivalent form in \cite[Definition 4]{HKparas}. There is in fact a sense in which the concept of parallelepiped structures is equivalent to that of a nilspace. We shall detail this in Subsection \ref{subsec:altnsdef}.\\
\indent The term `nilspace' may seem deceptive at first, in that it is not  clear from the axioms that a nilspace should be related to anything involving nilpotency. It is a  nontrivial fact that there is indeed such a relation. A first explicit description of this phenomenon appeared in the setting of ergodic theory, in the work of Host and Kra \cite{HK}. The notion of cubes on a filtered group, treated in the next chapter, originates in that work. In the later paper \cite{HKparas}, Host and Kra initiated the program of understanding the above-mentioned relation in a more conceptual and general way, starting from the axioms that define parallelepiped structures. This greater generality consisted especially in that there was no measure-preserving system underlying  the definition of these structures, unlike in \cite{HK}. The results included the determination of a nilpotent group naturally associated with such a structure (see \cite[Section 3.8]{HKparas}). The work of Camarena and Szegedy in \cite{CamSzeg} can be seen as a continuation of this program (see the end of Subsection \ref{subsec:altnsdef}).

In the next chapter we begin to study some natural examples of nilspaces, starting with the cube structures on filtered groups introduced by Host and Kra. In fact, the results in later chapters will indicate that these filtered groups, together with their quotients (including nilmanifolds), constitute the main examples of nilspaces.

\chapter{Main examples of nilspaces}\label{chap:Centrexs}

In this chapter we begin our study of nilspaces by examining some natural examples.\\
\indent We start in Section \ref{sec:abcubes} with a basic case, namely that of cubes on abelian groups. We shall provide several characterizations of these cubes and show that an abelian group together with these cubes satisfies the nilspace axioms.\\
\indent In Section \ref{sec:filgps} we shall then generalize this cube structure to any filtered group (not necessarily commutative) and show that this also yields a nilspace structure on such groups.\\
\indent The nilspaces covered in Section \ref{sec:filgps} have the particular feature that the cubes of a fixed dimension form a group under pointwise multiplication. Taking quotients of these nilspaces in certain ways produces new nilspaces that do not have this group property; this is illustrated in Section \ref{sec:filnilm}.\\ \vspace{0.2cm}

\section{Standard cubes on abelian groups}\label{sec:abcubes}

Let $(G,+)$ be an abelian group. An $n$-cube on $G$ is a map $\q:\{0,1\}^n\to G$ of the form\footnote{Strictly speaking, the term `$n$-dimensional parallelepiped' may be more accurate here, but we shall use the shorter `$n$-cube' for convenience.}
\[
\q(v)=x+v\sbr{1}\,h_1+\cdots+v\sbr{n}\,h_n,
\]
for some fixed elements $x,h_1,\ldots,h_n\in G$. This definition of cubes on $G$ is quite familiar in arithmetic combinatorics (it is involved in the definition of the Gowers uniformity norms, for instance). In Proposition \ref{prop:1-degree-cubes} below we shall record other equivalent ways to view these cubes. To this end we use the following function.

\begin{defn}\label{def:abelian-sign}
Let $G$ be an abelian group. We define the function $\sigma_2:G^{\{0,1\}^2}\to G$ as follows: for $g:\{0,1\}^2 \to G$, \; $\sigma_2(g):= g\sbr{00}-g\sbr{10}+g\sbr{11}-g\sbr{01}$.
\end{defn}
\noindent Note that $\q$ is a $2$-cube on $G$ if and only if $\sigma_2 (\q)=0$. Note also that if $\q$ is a $2$-cube on $G$ then from the expression $\q(v)=x+v\sbr{1}h_1+v\sbr{2}h_2$ it clearly follows that $\q$ can be extended to an affine map $\Z^2\to G$, namely the map $(n_1,n_2)\mapsto x+n_1h_1+n_2h_2$. The converse holds also. 

These remarks generalize in a straightforward way to $n$-cubes for $n>2$, as follows.\\

\begin{proposition}\label{prop:1-degree-cubes}
Let $\q:\{0,1\}^n\to G$ be a map. The following properties are equivalent.
\begin{enumerate}
\item There exist $x,h_1,h_2,\ldots,h_n\in G$ such that for every $v\in\{0,1\}^n$ we have
\begin{equation}\label{eq:standard-cube}
\q(v)=x+v\cdot h:=x+v\sbr{1}h_1+\cdots +v\sbr{n}h_n.
\end{equation}
\item The map $\q$ extends to an affine homomorphism $\Z^n\to G$.
\item Every $2$-face map $\phi:\{0,1\}^2\to \{0,1\}^n$  satisfies
\begin{equation}\label{eq:st-cube-2-face}
\sigma_2(\q\co\phi)=0.
\end{equation}
\end{enumerate}
We call the map $\q$ a \textup{(}standard\textup{)} $n$\emph{-cube} on $G$ if it satisfies any of these properties.
\end{proposition}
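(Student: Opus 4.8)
The plan is to prove the equivalences in the cyclic order $(i)\Rightarrow(ii)\Rightarrow(iii)\Rightarrow(i)$, since each implication is then a clean one-directional argument.

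For $(i)\Rightarrow(ii)$, I would simply observe that the formula \eqref{eq:standard-cube} is already the restriction to $\{0,1\}^n$ of the affine map $\Z^n\to G$ given by $(n_1,\ldots,n_n)\mapsto x+n_1h_1+\cdots+n_nh_n$; one checks directly that this is an affine homomorphism (linear part plus the constant $x$), so $\q$ extends as required. This step is essentially immediate.

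For $(ii)\Rightarrow(iii)$, suppose $\q$ extends to an affine homomorphism $\Phi:\Z^n\to G$, $\Phi(w)=\psi(w)+x$ with $\psi$ a homomorphism. Given any $2$-face map $\phi:\{0,1\}^2\to\{0,1\}^n$, the composite $\q\co\phi$ is the restriction to $\{0,1\}^2$ of $\Phi\circ\widetilde{\phi}$, where $\widetilde{\phi}:\Z^2\to\Z^n$ is the affine homomorphism whose restriction is $\phi$ (this exists by Definition \ref{def:D-cubes}). Since a composite of affine homomorphisms is again affine, $\q\co\phi$ is the restriction of some affine homomorphism $\Z^2\to G$, hence has the form $w\mapsto x'+w\sbr{1}h_1'+w\sbr{2}h_2'$. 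Applying $\sigma_2$ to such a map gives $0$ by the direct computation noted after Definition \ref{def:abelian-sign}, establishing \eqref{eq:st-cube-2-face}.

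The implication $(iii)\Rightarrow(i)$ is the substantive one, and is where I expect the main work to lie. The idea is to recover the increments $h_i$ and show that $\q$ coincides everywhere with the standard-cube formula. I would set $x:=\q(0^n)$ and, for each $i\in[n]$, define $h_i:=\q(e_i)-\q(0^n)$, where $e_i$ is the indicator of $\{i\}$. The goal is then to prove $\q(v)=x+\sum_{i\in\supp(v)}h_i$ for all $v$, which I would do by induction on $|v|$, the base cases $|v|\le 1$ holding by the definitions of $x$ and the $h_i$. For the inductive step with $|v|\ge 2$, I would pick two distinct indices $i,j\in\supp(v)$ and apply the hypothesis \eqref{eq:st-cube-2-face} to the $2$-face map $\phi$ whose image is the $2$-face obtained by letting coordinates $i,j$ vary over $\{0,1\}$ while fixing the remaining coordinates to agree with $v$. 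The relation $\sigma_2(\q\co\phi)=0$ expresses $\q(v)$ as an alternating sum of $\q$ at the other three vertices of that $2$-face, each of which has strictly smaller support (with respect to the relevant coordinates) and is thus controlled by the inductive hypothesis; combining these via the telescoping structure of the increments yields the desired formula for $\q(v)$. The care needed here is in choosing the $2$-faces consistently so that the alternating sums telescope correctly and produce exactly the sum $\sum_{i\in\supp(v)}h_i$; this bookkeeping, rather than any deep idea, is the principal obstacle.
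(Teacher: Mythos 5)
Your proposal is correct and follows essentially the same route as the paper: the (i)$\Leftrightarrow$(ii) and (ii)$\Rightarrow$(iii) steps are handled identically, and your (iii)$\Rightarrow$(i) induction on $|v|$ using a $2$-face map through $v$ with the other three vertices of smaller support is exactly the paper's argument (the "bookkeeping" you worry about is immediate: $\sigma_2(\q\co\phi)=0$ gives $\q(v)=\q(v-e_i)+\q(v-e_j)-\q(v-e_i-e_j)$, which telescopes directly to $x+v\cdot h$).
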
 
\begin{proof}
The equivalence of ($i$) and ($ii$) is clear. It is also clear that ($ii$) implies ($iii$), since every 2-face map $\phi$ is by definition the restriction to $\{0,1\}^2$ of an affine homomorphism $\Z^2\to \Z^n$, and so $\q\co\phi$ is the restriction of an affine homomorphism $\Z^2\to G$, i.e. a 2-cube. 
If $\q$ satisfies ($iii$), then let $x:= \q( 0^n)$ and for each $i\in [n]$ let $h_i:= \q(e_i)-x$, where $e_i$ denotes the $i$-th vector in the standard basis of $\R^n$. We claim that $\q$ satisfies \eqref{eq:standard-cube} with this choice of elements $x, h_1,\ldots, h_n$. This can be shown by induction on $|v|$. For $|v|=0,1$ the property holds by definition of $x$ and $h_i$, so let $|v|>1$ and suppose that $\q$ satisfies ($ii$) for all $v'$ with $|v'| < |v|$. Now  there clearly is a 2-face-map $\phi:\{0,1\}^2\to\{0,1\}^n$ such that $v=\phi(1^2)$  and $|\phi(w)|< |v| $ for all $w\neq 1^2$, so we have by induction $\q(\phi (w))= x+\phi (w) \cdot h$ for each $w\in \{0,1\}^2\setminus \{1^2\}$. By \eqref{eq:st-cube-2-face}, we then have $\q(\phi (11))= \q(\phi (10))+\q( \phi (01))-\q( \phi (00)) = x + (\phi (10)+\phi (01)-\phi (00) )\cdot  h = x+  v\cdot h$ as required.
\end{proof}
\noindent The cubes characterized by this proposition yield a basic example of a nilspace.
\begin{proposition}\label{prop:1-step-ns}
An abelian group together with the collection of all standard $n$-cubes on $G$ \textup{(}for each non-negative integer $n$\textup{)} is a 1-step nilspace.
\end{proposition}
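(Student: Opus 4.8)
The plan is to verify the three axioms of Definition \ref{def:ns} for the cube structure $\cu^n(G)$ consisting of all standard $n$-cubes, and then to establish the $1$-step property by showing that every $2$-corner has a unique completion. Two of the axioms are quick. For the composition axiom I would invoke characterization ($ii$) of Proposition \ref{prop:1-degree-cubes}: a map is a standard cube exactly when it extends to an affine homomorphism. Thus, given a morphism $\phi:\{0,1\}^m\to\{0,1\}^n$ and $\q\in\cu^n(G)$, the cube $\q$ extends to an affine homomorphism $\Z^n\to G$ while $\phi$ is by definition the restriction of an affine homomorphism $\Z^m\to\Z^n$; as the composite of affine homomorphisms is again affine, $\q\co\phi$ extends to an affine homomorphism $\Z^m\to G$ and hence lies in $\cu^m(G)$. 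For ergodicity, given any $g:\{0,1\}\to G$ I would set $x=g(0)$ and $h_1=g(1)-g(0)$, so that $g$ is the standard $1$-cube $v\mapsto x+v\sbr{1}h_1$; hence $\cu^1(G)=G^{\{0,1\}}$.

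The substantive axiom is corner completion. Given an $n$-corner $\q'$, I would define the candidate completion by $x:=\q'(0^n)$, $h_i:=\q'(e_i)-x$ for $i\in[n]$, and $\q(v):=x+v\cdot h$, which is a standard $n$-cube by part ($i$) of Proposition \ref{prop:1-degree-cubes}. It then remains to check that $\q$ and $\q'$ agree at every $v\neq1^n$. Here I would use that the $(n-1)$-faces containing $0^n$ are precisely the faces $F_i=\{v:v\sbr{i}=0\}$, $i\in[n]$, and that their union is exactly $\{0,1\}^n\setminus\{1^n\}$. Fixing $v\neq1^n$ and any $i$ with $v\sbr{i}=0$, the corner hypothesis says $\q'$ restricts on $F_i$ to a standard $(n-1)$-cube; its base point is the value at the zero vertex $0^n$ of this face, namely $x$, and its increment in each direction $j\neq i$ is $\q'(e_j)-x=h_j$. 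Consequently $\q'(v)=x+\sum_{j\neq i}v\sbr{j}h_j=x+v\cdot h=\q(v)$, using $v\sbr{i}=0$, which is the required agreement.

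Finally, for the $1$-step property I would check uniqueness of the completion of a $2$-corner. A standard $2$-cube $\q$ satisfies $\sigma_2(\q)=0$ (Definition \ref{def:abelian-sign}), which forces $\q\sbr{11}=\q\sbr{10}+\q\sbr{01}-\q\sbr{00}$; since the right-hand side is determined by the corner data, the completing value at $1^2$ is unique. I expect the main obstacle to be the agreement step in corner completion: the crux is to recognize, for each $v\neq1^n$, a face through $0^n$ containing $v$ and to observe that the standard-cube structure on that face is pinned down by exactly the base point $x$ and increments $h_j$ used to define the candidate $\q$. Once this compatibility is secured, the collection $\{\cu^n(G)\}_{n\geq 0}$ satisfies all the nilspace axioms together with the $1$-step condition.
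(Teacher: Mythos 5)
Your proposal is correct and follows essentially the same route as the paper: composition via characterization (ii) of Proposition \ref{prop:1-degree-cubes}, ergodicity from the explicit formula, and corner completion via the candidate $\q(v)=x+v\cdot h$ with $x=\q'(0^n)$, $h_i=\q'(e_i)-x$, using the fact that a standard cube is determined by its values on $|v|\leq 1$. You additionally spell out the agreement of $\q$ with $\q'$ on the faces $\{v\sbr{i}=0\}$, a detail the paper leaves implicit, and you verify uniqueness only for $2$-corners, which is exactly what the $1$-step condition requires.
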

\begin{proof}
The ergodicity axiom holds (from \eqref{eq:standard-cube} say).  Composition is also clear (by property $(ii)$ from Proposition \ref{prop:1-degree-cubes}, for instance). The unique corner completion for $n\geq 2$ follows from the fact that, by \eqref{eq:standard-cube}, a standard $n$-cube is determined by its values at elements $v\in \{0,1\}^n$ with $|v|\leq 1$. Thus, given  an $n$-corner $\q'$ on $G$, letting $x=\q'(0^n)$ and $h_i=\q'(e_i)-x$ for each $i\in [n]$, the unique $n$-cube $\q$ completing $\q'$ is given by the formula $\q(v)=x+v\sbr{1}h_1+\cdots+v\sbr{n}h_n$. 
\end{proof}

\begin{remark}\label{rem:abfacegp}
For an element $g\in G$ and a set $F\subset \{0,1\}^n$, let $g^F$ denote the element of $G^{\{0,1\}^n}$ defined by $g^F(v)=g$ if $v\in F$ and $0_G$ otherwise. Then it follows from \eqref{eq:standard-cube} that the standard $n$-cubes form the subgroup of $G^{\{0,1\}^n}$ generated by the `constant elements' $x^{\{0,1\}^n}$ and the elements $h_i^{F_i}$, $i\in [n]$, where $F_i$ is the face $\{v\in \{0,1\}^n: v\sbr{i}=1\}$ of codimension 1. (In fact each $n$-cube is generated like this in a unique way.) In the next subsection we shall generalize the standard cubes using this group-theoretic viewpoint.
\end{remark}

\begin{remark}\label{rem:alternprop3}
In Proposition \ref{prop:1-degree-cubes} one can replace `Every 2-face map' with `Every morphism' without affecting the equivalence. We omit the details here, as a more general remark of this kind will be made later (see Remark \ref{rem:alternprop3gen}).
\end{remark}

\section{Filtered groups of degree $k$ as $k$-step nilspaces}\label{sec:filgps}

\noindent In this section the main objectives are the following. First we shall extend the notion of standard cubes to any group and show that these cubes form a nilspace structure; see Proposition \ref{prop:FilGp=nilspace}. We shall then seek alternative characterizations of these cubes to obtain a generalization of Proposition \ref{prop:1-degree-cubes}. This will be obtained eventually as Proposition \ref{prop:G-cubes}, and the task will involve in particular a treatment of polynomial maps between filtered groups, in Subsection \ref{subsec:polys}. 

\subsection{Cubes on a general filtered group}

Let us begin by recalling the definition of a filtered group. Given a group $G$, the commutator $[g,h]$ of two elements $g,h\in G$ is defined by $[g,h] = g^{-1}h^{-1} g h$. Thus $gh=hg [g,h]$. Given two subgroups $H_1,H_2$ of $G$, we denote by $[H_1,H_2]$ the subgroup of $G$ generated by all the commutators $[h_1,h_2]$, $h_i \in H_i$.
\begin{defn}
A \emph{filtration} on a group $G$ is a sequence $G_\bullet= (G_i)_{i=0}^\infty$ of subgroups of $G$ satisfying $G \geq G_0 \geq G_1 \geq \cdots$ and such that $[G_i,G_j] \subset G_{i+j}$ for all $i, j \geq 0$.  We refer to $(G,G_\bullet)$ as a \emph{filtered group}. If some term in the sequence $G_\bullet$ is the trivial subgroup $\{\id_G\}$, then the \emph{degree} of the filtration, denoted by $\deg(G_\bullet)$, is the smallest integer $k$ such that $G_{k+1}=\{\id_G\}$. The filtered group $(G,G_\bullet)$ is then said to be \emph{of degree} $k$.
\end{defn}
\noindent We shall usually assume that $G_0=G_1=G$, except in some clearly indicated places. Note that $G_i$ is a normal subgroup of $G$ for each $i\geq 0$. We shall denote by $\pi_i$ the quotient homomorphism $G\to G/G_i$.\\
\indent One can always take as a filtration the lower central series on $G$, and for many purposes this filtration is the natural one to use. However, other filtrations do arise naturally (see for instance Subsection \ref{sect:deg-k-str}), and in some settings, typically quantitative ones, it is convenient to work with a general filtration \cite{GTOrb}. We shall do so here, thus for the remainder of this section we suppose that an arbitrary filtration $G_\bullet$ has been fixed for the given group $G$.\\

\noindent The general cube structure on a filtered group that we shall study  originated in the work of Host and Kra \cite{HK}. The definition relies on the following notion, which builds up on Remark \ref{rem:abfacegp}.

\begin{defn}[Face groups]\label{def:face-gps}
Given a face $F$ in $\{0,1\}^n$ and an element $g \in G$, we write $g^F$ for the element of $G^{\{0,1\}^n}$ defined by $g^F(v)=g$ if $v\in F$ and $\id_G$ otherwise. We define
\[
G_{(F)}=\{g^F: g\in G_{\codim(F)}\} \leq G^{\{0,1\}^n}.
\]
We refer to these subgroups $G_{(F)}$ as the \emph{face groups} in $G^{\{0,1\}^n}$.
\end{defn}

\begin{defn}[Cubes on a filtered group]\label{def:G-cubes}
Let $(G,G_\bullet)$ be a filtered group. The group of $n$-cubes on $(G,G_\bullet)$, denoted by $\cu^n(G_\bullet)$, is the subgroup of $G^{\{0,1\}^n}$ generated by the face groups.
\end{defn}

\noindent Let us make a remark on the notation $\cu^n(G_\bullet)$. On one hand this involves the same `$\cu^n$' as in Definition \ref{def:ns}; this will be  justified by the main result of this subsection, which shows that $G$ together with $(\cu^n(G_\bullet))_{n\geq 0}$ is a $k$-step nilspace with $k=\deg(G_\bullet)$. On the other hand, as we shall see, these cubes do depend significantly on the filtration, which is why the notation specifies the filtration rather than just the group. For instance, if $G$ is abelian and $G_\bullet$ is the lower central series $G_0=G_1=G$, $G_2=\{\id_G\}$, then $\cu^n(G_\bullet)$ consists of the standard $n$-cubes seen in the previous section, while if $G_\bullet$ is the degree-$k$ filtration $G_0=G_1=\cdots=G_k$, $G_{k+1}=\{\id_G\}$ then we obtain a rather different but still very natural nilspace structure (see Subsection \ref{sect:deg-k-str}).

Note that it is immediate from the definition that $\cu^n(G_\bullet)$ is globally invariant under composition with an automorphism of $\{0,1\}^n$.

We now give a more explicit description of these cubes, in terms of certain `coefficients', that generalizes the expression $\q(v)=x+v\sbr{1}h_1+\cdots +v\sbr{n} h_n$ for standard abelian cubes. To that end we use the following notion.

\begin{defn}[Upper faces]\label{def:up-faces}
An \emph{upper face} of $\{0,1\}^n$ is a set of the form
\[
F(v)=\{u\in \{0,1\}^n: \supp(u)\supseteq \supp(v)\}
\]
for some $v\in \{0,1\}^n$. Note that $\codim(F)=|\supp(v)|$.
We order the upper faces of $\{0,1\}^n$ by writing $F(u)<F(v)$ if $u<v$ in the colex order, thus
\[
F_0=\{0,1\}^n=F(0^n)<F((1,0,\ldots,0))<\cdots<F(1^n)=\{1^n\}=F_{2^n-1}.
\]
\end{defn}
\noindent Note that given two upper faces $F_i<F_j$, their intersection is an upper face $F_k\geq F_j$, and $\codim(F_k)\leq \codim(F_i)+\codim(F_j)$.

As observed in Remark \ref{rem:abfacegp}, any standard abelian cube is a unique sum of upper-face group elements for faces of codimension at most 1.  This can be generalized as follows.
\begin{lemma}[Unique factorization of cubes]\label{lem:cubefactn}
An element $\q\in G^{\{0,1\}^n}$ lies in $\cu^n(G_\bullet)$ if and only if it has a unique factorization of the form
\begin{equation}\label{eq:cubefactn}
\q= g_0^{F_0}g_1^{F_1}\cdots g_{2^n-1}^{F_{2^n-1}},
\end{equation}
with $g_i\in G_{\codim(F_i)}$ $\forall\, i$.
\end{lemma}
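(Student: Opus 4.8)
The plan is to prove the two directions of the equivalence. One direction is immediate: if $\q$ admits a factorization of the form \eqref{eq:cubefactn}, then since each $g_i^{F_i}$ lies in the face group $G_{(F_i)}\leq \cu^n(G_\bullet)$, the product $\q$ lies in $\cu^n(G_\bullet)$ by closure under multiplication. So the substance is the forward direction together with the uniqueness claim, and here the natural strategy is to show that the factorization \emph{map} is a bijection between $\prod_{i=0}^{2^n-1} G_{\codim(F_i)}$ and $\cu^n(G_\bullet)$.

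First I would address \textbf{existence}. Every element of $\cu^n(G_\bullet)$ is by definition a word in the generators $g^F$ ranging over \emph{all} faces $F$ (not just upper faces). The key reduction is that an arbitrary face group $G_{(F)}$ is contained in the subgroup generated by the \emph{upper} face groups: indeed the upper faces of a given codimension, together with the commutator relations coming from $[G_i,G_j]\subseteq G_{i+j}$ and the observation recorded after Definition \ref{def:up-faces} that the intersection $F_i\cap F_j$ is an upper face $F_k$ with $\codim(F_k)\leq\codim(F_i)+\codim(F_j)$, let one rewrite any generator and collect terms. Concretely I would prove that the set of products of the form \eqref{eq:cubefactn} is already a subgroup of $G^{\{0,1\}^n}$: using $g^F h^F = (gh)^F$ and the commutation identity $h^{F'}g^{F}= g^{F}h^{F'}[h,g]^{F\cap F'}$ (valid since $g^F$ and $h^{F'}$ agree with $\id_G$ off $F$ and $F'$ respectively, and their commutator is supported on $F\cap F'$), any product of upper-face elements can be sorted into the prescribed colex order $F_0<F_1<\cdots<F_{2^n-1}$, each reordering generating a commutator supported on an upper face of \emph{strictly higher} index and appropriately higher codimension, so the straightening process terminates. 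This shows the products \eqref{eq:cubefactn} form a subgroup containing every generator, hence equal to $\cu^n(G_\bullet)$.

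Next comes \textbf{uniqueness}, which I expect to be the crux. The clean way is to exhibit the coefficients $g_i$ as explicit functions of $\q$, recovered in increasing order of codimension. Evaluating a product \eqref{eq:cubefactn} at a vertex $v$, only the factors $g_i^{F_i}$ with $v\in F_i$ contribute, i.e.\ those upper faces $F_i=F(u)$ with $\supp(u)\subseteq\supp(v)$. Ordering vertices by $|v|$, one reads off $g$ at the empty support from $\q(0^n)$, then each $g_i$ indexed by a minimal-support face from the values $\q(v)$ with $\supp(v)$ equal to that support, the earlier (lower-index, lower-codimension) coefficients having already been determined and peeled off. Making this precise amounts to checking that the triangular system relating the vertex-values to the coefficients is invertible over the (possibly nonabelian) group, which follows from the colex ordering being compatible with the containment partial order of supports. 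I would do this by induction on the colex index $i$: assuming $g_0,\dots,g_{i-1}$ are uniquely forced, the value of $\q$ at the vertex $v$ with $\supp(v)=\supp(u)$ where $F_i=F(u)$ isolates $g_i$ uniquely.

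The \textbf{main obstacle} is the noncommutativity of $G$, which means I cannot simply treat \eqref{eq:cubefactn} as an additive expansion and invert a matrix; the ordering of the factors matters and the straightening in the existence step genuinely relies on the filtration condition $[G_i,G_j]\subseteq G_{i+j}$ to ensure that each commutator produced during sorting has coefficient in the correct subgroup $G_{\codim(F_k)}$ and is attached to a face appearing later in the fixed colex order. I would therefore be careful to phrase both the existence (straightening) and uniqueness (triangular recovery) arguments so that the colex order on upper faces refines the partial order by support-containment, guaranteeing that all commutator corrections flow strictly forward and that the inductive recovery of coefficients never circles back.
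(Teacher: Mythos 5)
Your proposal follows essentially the same route as the paper: reduce to upper-face generators, straighten products of upper-face elements into colex order using the identity ${g'}^{F'}g^{F}=g^{F}\,{g'}^{F'}\,[g',g]^{F\cap F'}$ together with the filtration condition, and prove uniqueness by recovering the coefficients triangularly in colex order. One small correction: the containment of an arbitrary face group $G_{(F)}$ in the subgroup generated by the upper-face groups rests on an inclusion--exclusion identity for indicator functions (namely that $1_F$ is an integer combination of indicators of upper faces of codimension at most $\codim(F)$), not on the commutator relations as your phrasing suggests.
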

\noindent Recall that for each face $F\subset \{0,1\}^n$ there is unique data $I\subset [n],\, x\in \{0,1\}^I$ such that 
\[
F=\{ v\in \{0,1\}^n : v\sbr{i}=x\sbr{i}\;\;\forall\,i\in I\}.
\]
Upper faces are precisely those such that the corresponding element $x$ has all entries equal to 1.
\begin{proof}
First we claim that every element $\q\in \cu^n(G_\bullet)$ can be written as a product of face-group elements $g^F$ where $F$ is an \emph{upper} face, in other words that $\cu^n(G_\bullet)$ is generated just by the upper-face groups $G_{(F_i)}$. This follows from the fact that every face-group element $g^F$ is a product of upper-face group elements. Indeed, the indicator function $1_F: \{0,1\}^n\to\{0,1\}$ can always be written as an integer combination of indicator functions of upper faces of codimension at most $\codim(F)$ (this can be proved by induction on $\codim(F)$).

Our task is thus reduced to showing that any product of finitely many upper-face group elements has the claimed unique factorization. This has been done in several places, for instance in \cite[Appendix E]{GTlin}, but we recall the argument here for completeness.

Observe that if $F<F'$ are two upper faces, then $G_{(F')}\cdot G_{(F)} \subset G_{(F)}\cdot G_{(F')} \cdot G_{(F\cap F')}$. Indeed, for every $g\in G_{\codim(F)}, g'\in G_{\codim(F')}$, we have
${g'}^{F'}\; g^F = g^F \; {g'}^{F'} \; [{g'}^{F'},g^F]$, where $[{g'}^{F'},g^F]= [g',g]^{F' \cap F}\in G_{(F'\cap F)}$. Using this fact we can obtain the desired rearrangement by first moving all elements of $G_{(F_0)}$ to the left of the product, then using the closure of this group to combine them as a single element $g_0^{F_0}$; next we move to the left all $G_{(F_1)}$ elements in the remaining product, to put them next to $g_0^{F_0}$, and combine these to obtain $g_1^{F_1}$. Continuing this way yields the claimed factorization.

Uniqueness follows from \eqref{eq:cubefactn}. Indeed, note first that $\supp(v_i)\supset\supp(v)$ implies $v_i\geq v$. This implies that for any upper face $F(v_i)$, the only other upper faces $F(v)$ containing $v_i$ are those such that $\supp(v)\subset\supp(v_i)$ and $v< v_i$. From  \eqref{eq:cubefactn} we can therefore determine the coefficients $g_i$ by induction as follows:
\begin{equation}\label{eq:cubecoeffs}
 g_0=\q(v_0),\text{ and for each }i>0,\; g_i=  \Big( \prod_{\substack{ j < i : \;\\ \supp(v_j)\subset\supp(v_i)}} g_j^{F_j}\Big)^{-1}\cdot \q \,(v_i),
\end{equation}
where the order from left to right in the product is increasing in the colex order of $v_j$.
\end{proof}

For $j\in [n]$, let $\{0,1\}^n_{\leq\,j}:=\{v\in \{0,1\}^n: |v|\leq j\}$.

\begin{corollary}\label{cor:cube-det}
If $G_\bullet$ has degree $d$, then for every $\q\in \cu^n(G_\bullet)$, every value $\q(v)$ with $|v|>d$ is a word in the values $\q(v)$ with $|v|\leq d$. In particular, the map $\q$ is entirely determined by its restriction to $\{0,1\}^n_{\leq\, d}$.
\end{corollary}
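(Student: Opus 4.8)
The plan is to leverage the unique factorization from Lemma \ref{lem:cubefactn} to pin down exactly which coefficients $g_i$ can be nontrivial. Writing $\q = g_0^{F_0}g_1^{F_1}\cdots g_{2^n-1}^{F_{2^n-1}}$ with $g_i\in G_{\codim(F_i)}$, the key observation is that $\codim(F(v))=|\supp(v)|=|v|$. Since $G_\bullet$ has degree $d$, we have $G_{\codim(F_i)}=G_{|v_i|}=\{\id_G\}$ whenever $|v_i|>d$. Hence in the factorization only those upper faces $F_i=F(v_i)$ with $|v_i|\leq d$ can contribute a nontrivial factor; all coefficients indexed by $v_i$ with $|v_i|>d$ are forced to be the identity.

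First I would record this reduction: $\q$ equals the product $\prod_{i:\,|v_i|\leq d} g_i^{F_i}$, so $\q$ is completely specified by the finitely many coefficients $g_i$ attached to upper faces of codimension at most $d$. Next I would explain how to recover any value $\q(v)$ from the values on $\{0,1\}^n_{\leq\, d}$. The cleanest route is to use formula \eqref{eq:cubecoeffs}: the coefficients $g_i$ with $|v_i|\leq d$ are determined inductively by the values $\q(v_i)$ with $|v_i|\leq d$, since the product appearing there ranges over $j<i$ with $\supp(v_j)\subsetneq\supp(v_i)$, and all such $v_j$ also satisfy $|v_j|\leq |v_i|\leq d$. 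Thus the entire list of nontrivial coefficients is a word in the values $\{\q(v):|v|\leq d\}$.

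Finally, for any $v$ with $|v|>d$, I would evaluate the factorization at $v$: we have $\q(v)=\prod_{i:\,v\in F_i} g_i$ (in colex order), where the product is over those retained upper faces $F_i=F(v_i)$ that contain $v$, equivalently those with $\supp(v_i)\subseteq\supp(v)$ and $|v_i|\leq d$. Since each such $g_i$ is already a word in the values $\q(v')$ with $|v'|\leq d$, so is $\q(v)$. This gives the first claim; the ``in particular'' statement follows immediately, as it simply reasserts that $\q$ is determined by its restriction to $\{0,1\}^n_{\leq\, d}$.

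I do not anticipate a serious obstacle here, since the heavy lifting has already been done in Lemma \ref{lem:cubefactn}. The only point requiring a little care is the bookkeeping in the inductive recovery of the coefficients via \eqref{eq:cubecoeffs}: one must verify that the recursion for $g_i$ with $|v_i|\leq d$ never appeals to a value $\q(v')$ with $|v'|>d$, which holds precisely because $\supp(v_j)\subsetneq\supp(v_i)$ forces $|v_j|<|v_i|\leq d$. With that noted, the argument is essentially a direct reading-off of consequences of the unique factorization.
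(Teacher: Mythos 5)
Your proof is correct and follows essentially the same route as the paper: both arguments rest on the unique factorization of Lemma \ref{lem:cubefactn}, observe that coefficients $g_i$ with $\codim(F_i)>d$ vanish since $G_{|v_i|}=\{\id_G\}$, and use the recursion \eqref{eq:cubecoeffs} to see that the surviving coefficients (and hence every value $\q(v)$) are words in the values $\q(v')$ with $|v'|\leq d$. Your write-up simply spells out the bookkeeping that the paper leaves implicit.
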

\begin{proof}
Let $F_i$ be an upper face with $\codim(F_i)=|v_i|\leq d$. By \eqref{eq:cubecoeffs}, $g_i$ is a word in the values $\q(v_j)$ with $v_j\leq v_i$ with $\supp(v_j)\subset\supp(v_i)$. On the other hand any $g_i$ with $\codim(F_i)>d$ is trivial by assumption. Hence we are done, by uniqueness of \eqref{eq:cubefactn}.
\end{proof}

\noindent This corollary will be used to show that $\cu^n(G_\bullet)$ satisfies the corner-completion axiom. The following result shows that these cubes also satisfy the composition axiom.

\begin{lemma}\label{lem:G-facemap-inv}
Let $\q\in \cu^n(G_\bullet)$ and let $\phi:\{0,1\}^m \to \{0,1\}^n$ be a morphism. Then $\q\co \phi\in \cu^m(G_\bullet)$.
\end{lemma}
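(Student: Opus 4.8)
The plan is to exploit the group structure of $\cu^n(G_\bullet)$ together with the explicit description of morphisms from Lemma~\ref{lem:cube-morph-char}. The first observation is that precomposition $g \mapsto g\co\phi$ is a group homomorphism from $G^{\{0,1\}^n}$ to $G^{\{0,1\}^m}$ under pointwise multiplication, since $(gh)\co\phi = (g\co\phi)(h\co\phi)$ holds pointwise. Because $\cu^n(G_\bullet)$ is by definition generated by the face groups $G_{(F)}$, and a homomorphism carries the subgroup generated by a set into the subgroup generated by the images, it suffices to show that $g^F \co \phi \in \cu^m(G_\bullet)$ for each face $F\subset\{0,1\}^n$ and each $g\in G_{\codim(F)}$.

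Next I would compute this precomposition explicitly. For $v\in\{0,1\}^m$ we have $(g^F\co\phi)(v) = g$ if $\phi(v)\in F$ and $\id_G$ otherwise; in other words $g^F\co\phi = g^{\phi^{-1}(F)}$, where $\phi^{-1}(F)=\{v\in\{0,1\}^m:\phi(v)\in F\}$. The task thus reduces to understanding the preimage of a face under a morphism.

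The key step, and the main point of the argument, is the claim that $\phi^{-1}(F)$ is either empty or a face of $\{0,1\}^m$ with $\codim(\phi^{-1}(F))\leq\codim(F)$. To prove it, write $F$ via its defining data $(I,x)$, so that $w\in F$ iff $w\sbr{j}=x\sbr{j}$ for every $j\in I$. By Lemma~\ref{lem:cube-morph-char}, each coordinate $\phi(v)\sbr{j}$ is either constant or of the form $\tau_{k(j)}(v\sbr{i(j)})$. For $j\in I$ the condition $\phi(v)\sbr{j}=x\sbr{j}$ is then either vacuous, or impossible (in which case $\phi^{-1}(F)=\emptyset$), or, in the non-constant case, equivalent to fixing the single coordinate $v\sbr{i(j)}$ to the value $\tau_{k(j)}(x\sbr{j})$ (using that $\tau_0,\tau_1$ are involutions). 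Hence $\phi^{-1}(F)$, if non-empty, is cut out by fixing the coordinates indexed by $I':=\{i(j):j\in I,\ \phi(v)\sbr{j}\text{ non-constant}\}$ (these constraints must be mutually consistent, else the preimage is again empty), so it is a face of codimension $|I'|\leq|I|=\codim(F)$.

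Finally I would assemble the conclusion. Since the filtration is decreasing, $\codim(\phi^{-1}(F))\leq\codim(F)$ gives $G_{\codim(F)}\subseteq G_{\codim(\phi^{-1}(F))}$, whence $g\in G_{\codim(\phi^{-1}(F))}$ and therefore $g^{\phi^{-1}(F)}\in G_{(\phi^{-1}(F))}\subset\cu^m(G_\bullet)$; and if $\phi^{-1}(F)$ is empty then $g^{\phi^{-1}(F)}$ is the identity of $G^{\{0,1\}^m}$, which also lies in $\cu^m(G_\bullet)$. Either way $g^F\co\phi\in\cu^m(G_\bullet)$, completing the reduction. The only genuine obstacle is the preimage claim above; everything else is formal, and that claim is handled cleanly by the coordinatewise description of morphisms.
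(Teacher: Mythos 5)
Your proof is correct. The core fact it relies on --- that the preimage of a face under a discrete-cube morphism is either empty or a face of codimension at most that of the original --- is also the engine of the paper's proof, but your packaging differs in two ways that are worth noting. First, you make the reduction to generators explicit and clean: precomposition $g\mapsto g\co\phi$ is a homomorphism $G^{\{0,1\}^n}\to G^{\{0,1\}^m}$, so it suffices to treat a single face-group element $g^F$, and $g^F\co\phi = g^{\phi^{-1}(F)}$. The paper does the same reduction implicitly by distributing $\phi$ over the factorization $\q=\prod_i g_i^{F_i}$. Second, and more substantively, you prove the preimage claim for an \emph{arbitrary} morphism in one stroke, directly from the coordinatewise description in Lemma \ref{lem:cube-morph-char}: each constraint $\phi(v)\sbr{j}=x\sbr{j}$ for $j\in I$ is vacuous, contradictory, or fixes one coordinate $v\sbr{i(j)}$, and you correctly flag the possible inconsistency when distinct $j,j'\in I$ impose conflicting constraints on the same $i(j)=i(j')$ (yielding an empty preimage, handled by the identity element). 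The paper instead splits into two cases: for injective $\phi$ it analyses $\phi^{-1}(F_i\cap\tIm(\phi))$ via the sets $J_\phi(i)$, and for non-injective $\phi$ it runs a separate induction on the number of coordinates of $v$ on which $\phi(v)$ does not depend. Your uniform treatment avoids that case split and the extra induction entirely; what the paper's route buys is an explicit description of the coefficients of $\q\co\phi$ in terms of those of $\q$, which is occasionally useful elsewhere but is not needed for the statement at hand.
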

\noindent Given a map $g$ defined on $\{0,1\}^n$, for each $i\in\{0,1\}$ we define $g(\cdot,i)$ on $\{0,1\}^{n-1}$ by
$g(v,i)= g( (v,i) )$, where $(v,i):=\big(v\sbr{1},\ldots, v\sbr{n-1}, i\big)\in \{0,1\}^n$.

\begin{proof}
Let us first assume that $\phi$ is injective, thus $\phi$ is a bijection $\{0,1\}^m\to \tIm(\phi)$. For every $v\in \{0,1\}^m$, we have
\[
\q(\phi(v))= \prod_{i=0}^{2^n-1} g_i^{F_i}(\phi(v)) = \prod_{i:F_i\cap \tIm(\phi) \neq\emptyset} g_i^{F_i}(\phi(v)).
\]
For each face $F_i$ such that $ F_i \cap \tIm(\phi) \neq\emptyset$, with data $(I_i,x_i)$, fix some $y_i=\phi(v_i)\in \tIm(\phi)$ such that ${y_i}|_{I_i}=x_i$. Let $\beta$ denote the function $J_\phi\to [m]$, $j\mapsto i$ for all $j\in J_\phi(i)$ (recall \eqref{eq:Ji-set}). We can then see that $F'_i:=\phi^{-1} (F_i\cap \tIm(\phi))$ is a face of $\{0,1\}^m$ of codimension at most $\codim(F_i)$. Indeed, $F_i'$ is the face with data $I'=\beta(J_\phi\cap I_i)$ and $x'=\phi^{-1}(y_i)|_{ I'}={v_i}|_{ I'}$. (To see this, note that $v\in F_i'$ if and only if $\phi(v)\in \tIm(\phi) \cap F_i$, if and only if $\phi(v)\sbr{j}=x_i\sbr{j}$ for each $j\in I_i$, and this in turn holds if and only if $\phi(v)\sbr{j}
=y_i\sbr{j}$ for each $j\in J_\phi \cap I_i$. Thus we have $v\in F_i'$ if and only if $\phi(v)|_{J_\phi \cap I_i}=  \phi(v_i)|_{J_\phi \cap I_i}$, and this holds if and only if $v|_{I'}=v_i|_{I'}$.)\\
\indent Therefore we have
\[
\q(\phi(v))= \prod_{i: F_i\cap \tIm(\phi) \neq\emptyset} g_i^{F_i\cap \tIm(\phi)}(\phi(v)) =\prod_{i: F_i\cap \tIm(\phi) \neq\emptyset} g_i^{\phi^{-1}(F_i\cap \tIm(\phi))}(v) =\prod_{i: F_i\cap \tIm(\phi) \neq\emptyset} g_i^{ F_i'}(v),
\]
hence $\q\co \phi$ is a product of face-group elements and is therefore in $\cu^m(G_\bullet)$. 

Now suppose that there is a set $S\subset [m]$ of indices of coordinates of $v$ on which $\phi(v)$ does not depend. We show by induction on $|S|$ that $\q\co \phi\in \cu^m(G_\bullet)$. The case $|S|=0$ corresponds to $\phi$ being injective. If $|S|>0$, fix any $s\in S$, and note that it suffices to show that $\q\co\phi\co \theta \in \cu^m(G_\bullet)$ for the automorphism $\theta$ permuting coordinate-indices $s$ and $m$, so we may assume that $s=m$. Thus $g':=\q\co\phi$ is a map $\{0,1\}^m\to G$ such that $g'(\cdot,0)=g'(\cdot,1)$ is an element of $\cu^{m-1}(G_\bullet)$ (by induction), with coefficients $g_i$. We claim that $g' \in \cu^m(G_\bullet)$. In fact, $g'$ can be checked to be the $m$-cube with the following coefficients: if $v_i'\in \{0,1\}^m$ has $v_i'\sbr{m}=0$, then $g_i'$ equals the coefficient $g_j$ corresponding to  $v_j=(v_i'\sbr{1},v_i'\sbr{2},\ldots,v_i'\sbr{m-1})$, and otherwise $g_i'=\id_G$.
\end{proof}
We can now give the main result of this subsection.
\begin{proposition}\label{prop:FilGp=nilspace}
Let $(G,G_\bullet)$ be a filtered group. Then $G$ together with $(\cu^n(G_\bullet))_{n\geq 0}$ is a nilspace. It is a $k$-step nilspace if and only if $\deg(G_\bullet)\leq k$.
\end{proposition}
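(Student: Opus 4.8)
The plan is to verify the three nilspace axioms for the cubespace $(G, (\cu^n(G_\bullet))_{n\geq 0})$ using the results already assembled, and then to pin down the step count. Two of the three axioms are essentially in hand. The composition axiom is exactly the content of Lemma \ref{lem:G-facemap-inv}, so I would simply invoke it. The ergodicity axiom requires showing $\cu^1(G_\bullet)=G^{\{0,1\}}$: here an arbitrary map $\q:\{0,1\}\to G$ with values $g_0=\q(0),g_1=\q(1)$ factors as $g_0^{F_0}\,(g_0^{-1}g_1)^{F_1}$, where $F_0=\{0,1\}$ (codimension $0$) and $F_1=\{1\}$ (codimension $1$); since we assume $G_0=G_1=G$, both coefficients are admissible, so every such $\q$ is a $1$-cube. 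Thus ergodicity holds.

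The substantive axiom is corner completion. Given an $n$-corner $\q':\{0,1\}^n\setminus\{1^n\}\to G$, I would build a completion directly from the unique-factorization description of cubes in Lemma \ref{lem:cubefactn}. The idea is to read off the coefficients $g_i$ from the values of $\q'$ using the inductive formula \eqref{eq:cubecoeffs}. Every upper face $F_i$ with $i<2^n-1$ satisfies $F_i\neq\{1^n\}$, so each such face contains some vertex $v_i\neq 1^n$, and more importantly the recursion \eqref{eq:cubecoeffs} determines $g_i$ purely from values $\q'(v_j)$ with $v_j\leq v_i$ and $v_j\neq 1^n$; hence all coefficients $g_0,\dots,g_{2^n-2}$ are well defined from the corner data alone. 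Setting $g_{2^n-1}:=\id_G$ (the unique admissible choice forced if the cube is to be determined by the others, which I will discuss below), I define $\q:=g_0^{F_0}\cdots g_{2^n-1}^{F_{2^n-1}}$. By construction $\q\in\cu^n(G_\bullet)$, and I must check $\q(v)=\q'(v)$ for all $v\neq 1^n$. This holds because for such $v$ the factorization restricted to $v$ involves only faces $F_j$ containing $v$, and by the same inductive argument underlying \eqref{eq:cubecoeffs} one recovers $\q(v)=\q'(v)$; I would verify this by induction on $|v|$, matching the recursion. I should also confirm that the corner hypothesis—that every restriction of $\q'$ to an $(n-1)$-face through $0^n$ is a cube—is compatible with (indeed unnecessary beyond ensuring consistency of) this coefficient extraction; in fact the factorization approach shows completion works for the full premise.

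For the step-count claim, I would argue both directions. If $\deg(G_\bullet)\leq k$, then by Corollary \ref{cor:cube-det} (with $d=k$) every $\q\in\cu^{k+1}(G_\bullet)$ is determined by its restriction to $\{0,1\}^{k+1}_{\leq k}$, i.e.\ by its values off $1^{k+1}$; hence any $(k+1)$-corner has at most one completion, and combined with existence this gives uniqueness, so the space is $k$-step. Conversely, if $\deg(G_\bullet)>k$, I would exhibit a $(k+1)$-corner with two completions: pick $g\in G_{k+1}$ with $g\neq\id_G$, and note that the cube $g^{\{1^{k+1}\}}$ (the face-group element for the upper face $\{1^{k+1}\}$ of codimension $k+1$) lies in $\cu^{k+1}(G_\bullet)$, is trivial off $1^{k+1}$, yet is nontrivial at $1^{k+1}$; multiplying a fixed completion by this element yields a second completion of the same corner, contradicting uniqueness. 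Hence $k$-step forces $\deg(G_\bullet)\leq k$. The main obstacle I anticipate is the bookkeeping in the corner-completion step—carefully justifying that \eqref{eq:cubecoeffs} reconstructs $\q'$ on all of $\{0,1\}^n\setminus\{1^n\}$, and that the premise about $(n-1)$-faces through $0^n$ is exactly what guarantees the extracted coefficients satisfy $g_i\in G_{\codim(F_i)}$.
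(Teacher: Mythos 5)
Your treatment of ergodicity, composition, and the step-count equivalence is fine; in fact the step-count argument (uniqueness from Corollary \ref{cor:cube-det} when $\deg(G_\bullet)\leq k$, and the two completions differing by $g^{\{1^{k+1}\}}$ with $g\in G_{k+1}\setminus\{\id_G\}$ when $\deg(G_\bullet)>k$) is spelled out more explicitly than in the paper, which leaves that part implicit. For corner completion you take a genuinely different route: the paper argues by induction on $d=\deg(G_\bullet)$, completing the projected corner in $G/G_d$, lifting, and then correcting the lift level by level using face-group elements $g_v^{F(v)}$ with $g_v\in G_d\subset G_{|v|}$; you instead extract the coefficients directly from the corner via \eqref{eq:cubecoeffs} and write down the completion in one step. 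Your route is viable and arguably cleaner, but as written it has a gap exactly at its load-bearing point.

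The gap is the claim that the extracted coefficients satisfy $g_i\in G_{\codim(F_i)}$. Without this, the product $g_0^{F_0}\cdots g_{2^n-1}^{F_{2^n-1}}$ is just some element of $G^{\{0,1\}^n}$ (every map factors this way if the coefficients are unconstrained), so nothing has been proved. You flag this at the end as ``bookkeeping,'' but your parenthetical that the corner hypothesis is ``unnecessary beyond ensuring consistency'' of the extraction is wrong and suggests the point has been missed: the hypothesis is precisely what forces the coefficients into the correct subgroups, and for an arbitrary map $\q'$ on $\{0,1\}^n\setminus\{1^n\}$ the extracted $g_i$ will generally fail to lie in $G_{|v_i|}$. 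To close the gap, observe that for $|v_i|\leq n-1$ the recursion \eqref{eq:cubecoeffs} computes $g_i$ using only the values of $\q'$ on the lower face $F=\{v: v\sbr{\ell}=0\ \forall \ell\notin\supp(v_i)\}$, which is contained in an $(n-1)$-face through $0^n$; by the corner hypothesis and the composition axiom, $\q'\co\phi_F$ is a cube in $\cu^{|v_i|}(G_\bullet)$, and (using the face-restriction computation from the proof of Lemma \ref{lem:G-facemap-inv} together with the uniqueness in Lemma \ref{lem:cubefactn}) the globally extracted $g_i$ coincides with the top coefficient of the factorization of $\q'\co\phi_F$, hence lies in $G_{|v_i|}$. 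With that argument supplied, and $g_{2^n-1}:=\id_G\in G_n$ (any element of $G_n$ would do; it is not forced unless $G_n$ is trivial), the rest of your construction goes through.
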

\begin{proof}
Ergodicity holds if and only if the first two terms of $G_\bullet$ are $G_0=G_1=G$. The composition axiom follows from Lemma \ref{lem:G-facemap-inv}. To prove the completion axiom, we argue by induction on $d=\deg(G_\bullet)$. For $d=1$ the group $G$ is abelian and completion was proved for Proposition \ref{prop:1-step-ns}. Let $d>1$ and let $G' = G/G_d$, with filtration $G'_\bullet=(G_i/G_d)$ of degree at most $d-1$. By induction $\cu^n(G'_\bullet)$ satisfies the completion axiom. Suppose that $\q'$ is an $n$-corner on $G$. Then $\pi_d\co \q'$ is clearly an $n$-corner on $G/G_d$, so it has a completion, which by surjectivity of $\q\mapsto \pi_d\co \q$ equals $\pi_d\co \tilde \q$ for some $\tilde \q\in \cu^n(G_\bullet)$. We shall now produce a sequence $\q_0,\q_1,\ldots, \q_d$ of elements of $\cu^n(G_\bullet)$ such that $\q_j$ agrees with $\q'$ on all of $\{0,1\}^n_{\leq\, j}$.

First, we obtain $\q_0$ such that $\q_0(0^n) = \q'(0^n)$ and $\pi_d\co \q_0 = \pi_d \co \q'$ (just left-multiplying every entry of $\tilde \q$ by $\q(0^n)\, \tilde \q(0^n)^{-1}\in G_d$). Now for $1\leq j\leq d$ suppose that a cube $\q_{j-1}$ has been constructed such that $\pi_d\co \q_{j-1} = \pi_d \co \q'$ and $\q_{j-1}(v)=\q'(v)$ for all $v\in \{0,1\}^n_{<\, j}$. We shall obtain $\q_j$ by correcting the differences between $\q_{j-1}$ and $\q'$ at elements $v$ with $|v| = j$. Note that $\pi_d\co \q_{j-1} =\pi_d\co \q'$ implies that these discrepancies involve only elements of $G_d$, that is for any such $v$ there is $g_v \in G_d\subset G_j$ such that $\q_{j-1}(v) g_v = \q(v)$. Letting $F(v)$ be the corresponding upper face (of codimension $j$), we therefore have $g_v^{F(v)}\in G_{(F(v))}$. We define $\q_j= \q_{j-1} \cdot \prod_{v:|v|=j} g_v^{F(v)}$. This lies in $\cu^n(G_\bullet)$, and agrees with $\q$ on  $\{0,1\}^n_{\leq\, j}$ as required (note that each $F(v)$ intersects $\{0,1\}^n_{\leq\, j}$ only at $v$).

We have thus obtained a cube $\q_d \in \cu^n(G_\bullet)$ that agrees with $\q'$ on all of $\{0,1\}^n_{\leq\, d}$. This agreement is now easily extended to every $(n-1)$-face containing $0^n$ (and thus to all of $\{0,1\}^n\setminus\{1^n\}$) applying composition and Corollary \ref{cor:cube-det}. 
\end{proof}
\noindent The general nilspace structure described in Proposition \ref{prop:FilGp=nilspace} is the one that we shall consider by default on a filtered group. Thus, from now on, given such a group $(G,G_\bullet)$, by ``a cube on $G$" we shall always mean an element of $\cu^n(G_\bullet)$ for some $n\geq 0$. There is, however, a slightly more general version of this cube structure, that can also be useful.
\begin{defn}\label{def:genindcube}
Let $(G,G_\bullet)$ be a filtered group, let $i_1,\dots,i_n$ be non-negative integers, and let $F=F(v)$ be an upper face in $\{0,1\}^n$.  We denote by $G_{(F)}^{(i_1,\ldots,i_n)}$ the subgroup of $G^{\{0,1\}^n}$ consisting of elements $g^F$ where $g\in G_{\sum_{j\in \supp v} i_j}$. We denote by $\cu^n_{(i_1,\ldots,i_n)}(G_\bullet)$ the subgroup of $G^{\{0,1\}^n}$ generated by the groups $G_{(F)}^{(i_1,\ldots,i_n)}$.
\end{defn}
\noindent This notion appeared already in \cite[Definition B.2]{GTZ}. (Note that $\cu^n(G_\bullet)=\cu^n_{(1,\dots,1)}(G_\bullet)$.) Arguing as in the proof of Lemma \ref{lem:cubefactn}, we obtain the following result.
\begin{lemma}
An element $\q\in G^{\{0,1\}^n}$ lies in $\cu^n_{(i_1,\ldots,i_n)}(G_\bullet)$ if and only if it has a unique factorization of the form
\begin{equation}\label{eq:cubefactn2}
\q= g_0^{F_0}g_1^{F_1}\cdots g_{2^n-1}^{F_{2^n-1}},
\end{equation}\
with $g_i\in  G_{\sum_{j\in \supp v_i} i_j}$ $\forall\, i$.
\end{lemma}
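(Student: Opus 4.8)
The plan is to mirror the proof of Lemma~\ref{lem:cubefactn}, the only substantive change being that the quantity $\codim(F(v))=|\supp(v)|$ which governed the filtration degree there is now replaced throughout by the weighted sum $\sum_{j\in\supp v}i_j$. Writing $\mathbf{i}$ for the tuple $(i_1,\ldots,i_n)$, note first that, unlike in Lemma~\ref{lem:cubefactn}, the groups $\cu^n_{(i_1,\ldots,i_n)}(G_\bullet)$ are generated by the \emph{upper}-face groups $G_{(F)}^{(i_1,\ldots,i_n)}$ from the outset (this is how Definition~\ref{def:genindcube} is phrased). Hence the preliminary reduction from arbitrary faces to upper faces is not needed here, and one may pass directly to showing that an arbitrary product of finitely many upper-face-group elements admits a factorization of the form \eqref{eq:cubefactn2}, and that this factorization is unique.

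For existence I would establish the rearrangement inclusion: if $F=F(u)<F'=F(u')$ are upper faces, then $G_{(F')}^{(\mathbf{i})}\cdot G_{(F)}^{(\mathbf{i})}\subset G_{(F)}^{(\mathbf{i})}\cdot G_{(F')}^{(\mathbf{i})}\cdot G_{(F\cap F')}^{(\mathbf{i})}$. Exactly as in Lemma~\ref{lem:cubefactn}, this rests on the identity ${g'}^{F'}\,g^F=g^F\,{g'}^{F'}\,[g',g]^{F\cap F'}$. The crucial point is to verify that $[g',g]^{F\cap F'}$ lies in $G_{(F\cap F')}^{(\mathbf{i})}$. Here $F\cap F'=F(u'')$ with $\supp(u'')=\supp(u)\cup\supp(u')$; taking $g\in G_{\sum_{j\in\supp u}i_j}$ and $g'\in G_{\sum_{j\in\supp u'}i_j}$, the filtration property gives $[g',g]\in G_{\sum_{j\in\supp u'}i_j+\sum_{j\in\supp u}i_j}$. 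Since $\sum_{j\in\supp u''}i_j=\sum_{j\in\supp u\cup\supp u'}i_j\le \sum_{j\in\supp u}i_j+\sum_{j\in\supp u'}i_j$ and the filtration is decreasing, we conclude $[g',g]\in G_{\sum_{j\in\supp u''}i_j}$, as required. With this inclusion in hand, the same left-to-right collection argument applies verbatim: move all $F_0$-factors to the left and combine them into a single $g_0^{F_0}$, then collect the $F_1$-factors into $g_1^{F_1}$, and so on, producing the factorization \eqref{eq:cubefactn2}.

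For uniqueness I would observe that the argument of Lemma~\ref{lem:cubefactn} is purely combinatorial in the upper faces: the coefficients are recovered from $\q$ by the inductive formula \eqref{eq:cubecoeffs}, which depends only on the incidence relations $\supp(v_j)\subset\supp(v_i)$ between upper faces, and not at all on the particular filtration degrees attached to them. The same formula therefore determines the $g_i$ uniquely in the present weighted setting.

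The main obstacle—indeed essentially the only point requiring genuine checking—is the degree bookkeeping in the commutator inclusion, namely confirming that the weighted sum $\sum_{j\in\supp v}i_j$ is subadditive under the union of supports, so that the commutator $[g',g]^{F\cap F'}$ lands in the correct face group $G_{(F\cap F')}^{(\mathbf{i})}$. Once this subadditivity is in place, every remaining step transfers word for word from the unweighted case of Lemma~\ref{lem:cubefactn}.
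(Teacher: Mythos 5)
Your proposal is correct and matches the paper's intent exactly: the paper gives no separate proof, stating only that one argues as in Lemma~\ref{lem:cubefactn}, and you have correctly identified and verified the single point that genuinely needs checking, namely that $\sum_{j\in\supp u\cup\supp u'}i_j\le\sum_{j\in\supp u}i_j+\sum_{j\in\supp u'}i_j$ so that the commutator lands in the right weighted face group. The remaining steps (collection from the left, and recovery of the coefficients by the incidence-only formula \eqref{eq:cubecoeffs}) transfer verbatim, as you say.
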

\noindent We now wish to complete the picture concerning the cube structures $\cu^n(G_\bullet)$ by showing that they satisfy an analogue of Proposition \ref{prop:1-degree-cubes}.  We have already done this for part $(i)$ of that proposition (in Lemma \ref{lem:cubefactn}). For part $(ii)$, we need an appropriate analogue of an affine homomorphism $\Z^n\to G$. A suitable notion turns out to be that of a \emph{polynomial map} from $\Z^n$ to $G$. This motivates the discussion of general polynomial maps between filtered groups, given in the next subsection. We shall see in particular that these maps are precisely the morphisms that preserve the nilspace structures formed by these cubes.

\medskip
\subsection{Nilspace morphisms between filtered groups: polynomial maps}\label{subsec:polys}
\medskip

\begin{defn}[Nilspace morphism]
Let $\ns, \nss$ be nilspaces. A map $g: \ns\to \nss$ is a \emph{nilspace morphism} if for every $n\geq 0$, for every cube $\q\in \cu^n(\ns)$ we have $g\co\q\in \cu^n(\nss)$. We denote the set of these morphisms by $\hom(\ns,\nss)$.
\end{defn}
\noindent In the case of two filtered groups $(G,G_\bullet)$, $(H,H_\bullet)$ we shall denote the set of morphisms between the corresponding nilspaces by $\hom( H_\bullet, G_\bullet)$.
\begin{defn}[Polynomial maps]
Let $(G,G_\bullet)$ and $(H,H_\bullet)$ be filtered groups. Given a map $g:H\to G$, and $h\in H$, we define the map $\partial_h g:H\to G$ by $\partial_h g(x) = g(x)^{-1} g(xh)$. 
We say that $g$ is a \emph{polynomial map} (adapted to $H_\bullet,G_\bullet$) if for every sequence of non-negative integers $i_1,i_2,\ldots,i_n$ and elements $h_j\in H_{i_j}$, $j\in[n]$, we have $\partial_{h_1} \partial_{h_2}\cdots \partial_{h_n} g (x) \in G_{i_1+\cdots + i_n}$ for all $x\in H$. The set of such maps is denoted by $\poly(H_\bullet,G_\bullet)$.
\end{defn}

\begin{example}\label{ex:polymaps}
For $H=\Z$ with the lower central series, the corresponding set of polynomial maps is denoted by $\poly(\Z,G_\bullet)$ and its elements are called \emph{polynomial sequences} on $G$ (one can add `adapted to $G_\bullet$' when the filtration needs to be specified) \cite{GTOrb,Leib}. If $H$ is an  abelian group with lower central series $H_\bullet$, and $G$ is abelian with $G_\bullet$ being the degree $d$ filtration $G_0=\cdots=G_d=G$, $G_{d+1}=\{\id_G\}$, then  $\poly(H_\bullet,G_\bullet)$ is the set of maps $g$ such that for every $x\in H, h=(h_1,\ldots,h_{d+1})\in H^{d+1}$ we have $\sum_{v\in \{0,1\}^{d+1}} (-1)^{|v|} g(x+v\cdot h)=0$. For $H=\R/\Z$ these are the `globally polynomial phase functions', familiar in arithmetic combinatorics; see for instance \cite[\S 3]{GTinv}.
\end{example}

The main result of this section is the following important fact.

\begin{theorem}\label{thm:homs=polys}
Let $(G,G_\bullet)$, $(H,H_\bullet)$ be filtered groups. Then
\begin{equation}\label{eq:homs=polys}
\hom(H_\bullet, G_\bullet)=\poly(H_\bullet,G_\bullet).
\end{equation}
\end{theorem}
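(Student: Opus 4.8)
The plan is to prove the two inclusions separately, proving the harder one, $\poly(H_\bullet,G_\bullet)\subseteq\hom(H_\bullet,G_\bullet)$, by induction on the dimension $n$.

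For the inclusion $\hom(H_\bullet,G_\bullet)\subseteq\poly(H_\bullet,G_\bullet)$ I would argue directly, via a ``blow-up'' cube. Given $h_j\in H_{i_j}$ for $j\in[n]$ and $x\in H$, set $N=i_1+\cdots+i_n$ and partition $[N]$ into consecutive blocks $B_1,\ldots,B_n$ with $|B_j|=i_j$. Define $\tilde\q:\{0,1\}^N\to H$ by letting $\tilde\q(w)=x\,\prod_{j:\,B_j\subseteq\supp(w)}h_j$. Since the upper face $\{w:w\sbr{k}=1\ \forall\,k\in B_j\}$ has codimension $i_j$ and $h_j\in H_{i_j}$, Lemma \ref{lem:cubefactn} shows $\tilde\q\in\cu^N(H_\bullet)$. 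The morphism hypothesis then gives $g\co\tilde\q\in\cu^N(G_\bullet)$, and a direct (if notationally heavy) computation identifies the coefficient of $g\co\tilde\q$ at the top face $\{1^N\}$ with the iterated derivative $\partial_{h_1}\cdots\partial_{h_n}g(x)$. By the uniqueness part of Lemma \ref{lem:cubefactn}, that coefficient lies in $G_{\codim(\{1^N\})}=G_N=G_{i_1+\cdots+i_n}$, which is exactly the required polynomiality condition.

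For the reverse inclusion I would first record a recursive description of cubes: a map $c:\{0,1\}^n\to G$ lies in $\cu^n(G_\bullet)$ if and only if its bottom face $c(\cdot,0)$ lies in $\cu^{n-1}(G_\bullet)$ and its top difference $v\mapsto c(v,0)^{-1}c(v,1)$ lies in $\cu^{n-1}(G_{\bullet+1})$, where $G_{\bullet+1}=(G_{i+1})_{i\ge0}$ (and similarly $H_{\bullet+1}$). This is read off from the factorization \eqref{eq:cubefactn} by splitting the upper faces according to their last coordinate, the differencing in direction $n$ being responsible for the upward shift of the filtration. Combined with the fact that cubes on a product filtered group are precisely pairs of cubes, this reduces the inductive step to the following. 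Given $g\in\poly(H_\bullet,G_\bullet)$ and $\q\in\cu^n(H_\bullet)$, the bottom face of $g\co\q$ is $g\co(\q(\cdot,0))$, which is handled by the inductive hypothesis in dimension $n-1$; the top difference is $v\mapsto\Phi\big(\q(v,0),\,\q(v,0)^{-1}\q(v,1)\big)$, where $\Phi(a,b):=g(a)^{-1}g(ab)$. Applying the recursive description to $\q$ itself, the pair $\big(\q(\cdot,0),\,\q(\cdot,0)^{-1}\q(\cdot,1)\big)$ is a cube on $(H\times H,\,H_\bullet\times H_{\bullet+1})$, so provided $\Phi\in\poly(H_\bullet\times H_{\bullet+1},G_{\bullet+1})$, the inductive hypothesis in dimension $n-1$ forces the top difference into $\cu^{n-1}(G_{\bullet+1})$, and the recursive description then yields $g\co\q\in\cu^n(G_\bullet)$.

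The main obstacle is therefore the auxiliary claim that $\Phi(a,b)=g(a)^{-1}g(ab)$ is polynomial from $H_\bullet\times H_{\bullet+1}$ to $G_{\bullet+1}$ whenever $g\in\poly(H_\bullet,G_\bullet)$. This is where the non-commutativity genuinely enters: one must expand an iterated multiplicative derivative $\partial_{(h_1,k_1)}\cdots\partial_{(h_m,k_m)}\Phi$ and check that every resulting word in the values of $g$ is an iterated derivative of $g$ of total degree at least $(i_1+\cdots+i_m)+1$, the extra $+1$ coming from the single increment already built into $g(ab)$ and matched by the upward shift in both the second factor and the target. I would isolate this as a separate lemma, proved using the identities for $\partial_h(g\,g')$ and $\partial_h(g\co\psi)$ together with the commutator bound $[G_i,G_j]\subseteq G_{i+j}$; the base cases $n\le 1$ of the main induction are immediate from the ergodicity axiom.
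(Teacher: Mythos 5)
Your overall architecture is the right one, and it matches the paper's in outline: the inclusion $\hom(H_\bullet,G_\bullet)\subseteq\poly(H_\bullet,G_\bullet)$ is the easier direction, and the reverse inclusion goes by induction on the cube dimension via the decomposition of an $n$-cube into its bottom face and its top difference (your ``recursive description'' is exactly Lemma \ref{lem:filtarrow} specialised to $k=1$, and your reduction of the inductive step to the map $\Phi$ is formally correct). Your blow-up-cube argument for the first inclusion is a legitimate variant: the paper instead applies $g$ to the arrow $\arr{\q,\q\cdot h}_i$ and reads off $\partial_h g\in\hom(H_\bullet,G_\bullet^{+i})$ from Lemma \ref{lem:filtarrow}, which avoids your ``notationally heavy'' identification of the top coefficient of $g\co\tilde\q$ with $\partial_{h_1}\cdots\partial_{h_n}g(x)$; that identification is nevertheless a correct formal identity (the multiplicative inversion of $g\bigl(x\prod_{j\in S}h_j\bigr)=\prod_{T\subseteq S}g_T$ with $g_T$ the iterated derivative along $\{h_j\}_{j\in T}$), so this half goes through.

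The gap is in the second inclusion, and it is not a small one. You route the entire inductive step through the claim that $\Phi(a,b)=g(a)^{-1}g(ab)$ lies in $\poly(H_\bullet\times H_\bullet^{+1},G_\bullet^{+1})$. First, a technical point: as stated the claim fails already at the zeroth-order condition unless the second coordinate is restricted to $H_1$ (the zeroth term of $H_\bullet^{+1}$), since $\Phi(a,b)=\partial_b g(a)$ need only lie in $G_1$ when $b\in H_1$; this is harmless in the application (top differences of cubes take values in $H_1$) but the domain must be $H\times H_1$. Much more seriously, this auxiliary claim carries the full weight of the theorem and your sketch of it is not a proof. A single derivative gives $\partial_{(h,k)}\Phi(a,b)=\partial_h\partial_b g(a)\cdot\partial_k g(ahb)$, which is fine, but iterating requires the non-abelian Leibniz rule $\partial_u(FG)=\partial_u F\cdot[\partial_u F,G]\cdot\partial_u G$ together with control of conjugates and commutators of iterated derivatives of $g$ evaluated at twisted arguments; carrying this bookkeeping out is essentially Leibman's proof of the product rule for polynomial maps, i.e.\ the Lazard--Leibman theorem --- which the paper deliberately obtains as a \emph{corollary} of Theorem \ref{thm:homs=polys} rather than using as an input, so there is a real risk of circularity if you appeal to closure of $\poly$ under products. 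The paper's proof avoids the two-variable lemma altogether: it factors $\q_0^{-1}\q_1$ into upper-face group elements $h^F$, telescopes, and reduces the inductive step to showing that $(g\co\q)^{-1}\cdot g\co(\q h^F)=\arr{\id_G,(\partial_h g)\co\q}_{\,\ell}$ is a cube for a shifted filtration; this only needs the induction hypothesis applied to the one-variable map $\partial_h g$, whose polynomiality (with shifted target filtration) is immediate from the definition of $\poly$. The generalized cube sets $\cu^n_{(i_1,\ldots,i_n)}$ are introduced precisely so that this induction closes up. To repair your proof, either supply a complete proof of the auxiliary lemma along Leibman's lines, or switch to the telescoping reduction.
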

\noindent This result yields a functor from the category of filtered groups with polynomial maps to the category of nilspaces with nilspace morphisms, namely the functor that sends a filtered group $(G,G_\bullet)$ to the nilspace $\big(G, (\cu^n(G_\bullet))_{n\geq 0}\big)$ and sends a map in $\poly(H_\bullet,G_\bullet)$ to the same map viewed as a morphism in $\hom(H_\bullet, G_\bullet)$. (This functor is forgetful; see Remark \ref{rem:forget}.)

Theorem \ref{thm:homs=polys} has the following remarkable consequence.
\begin{corollary}\label{cor:LazLeib}
For any filtered groups $(G,G_\bullet)$, $(H,H_\bullet)$, the set $\poly(H_\bullet,G_\bullet)$ with pointwise multiplication is a group.
\end{corollary}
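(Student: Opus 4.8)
The plan is to deduce this immediately from Theorem \ref{thm:homs=polys}, which lets us replace the set $\poly(H_\bullet,G_\bullet)$, whose group structure is not at all transparent from the defining derivative conditions, by the set $\hom(H_\bullet,G_\bullet)$ of nilspace morphisms, for which pointwise multiplication is manifestly a group operation. The only genuine content is already packaged in that theorem; given it, the corollary requires merely the observation that each $\cu^n(G_\bullet)$ is itself a group under pointwise multiplication, which is exactly how it was defined (Definition \ref{def:G-cubes}).

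In detail, first I would fix $g_1,g_2\in \hom(H_\bullet,G_\bullet)$ and form their pointwise product $g_1 g_2:H\to G$, $(g_1 g_2)(x)=g_1(x)\,g_2(x)$. The key elementary identity is that for any $\q\in \cu^n(H_\bullet)$ one has $(g_1 g_2)\co \q = (g_1\co \q)\,(g_2\co \q)$ as elements of $G^{\{0,1\}^n}$, the product on the right being taken in the group $G^{\{0,1\}^n}$; this is immediate by evaluating both sides at each $v\in\{0,1\}^n$. Since $g_1\co \q$ and $g_2\co \q$ both lie in the subgroup $\cu^n(G_\bullet)$, so does their product, and hence $(g_1 g_2)\co \q\in \cu^n(G_\bullet)$. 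As this holds for every $n$ and every $\q$, we conclude $g_1 g_2\in \hom(H_\bullet,G_\bullet)$, i.e. closure under multiplication.

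Next I would treat inverses and the identity in the same spirit. For $g\in \hom(H_\bullet,G_\bullet)$, the pointwise inverse $g^{-1}$ satisfies $g^{-1}\co \q = (g\co \q)^{-1}$, which lies in $\cu^n(G_\bullet)$ because the latter is closed under inversion; thus $g^{-1}\in \hom(H_\bullet,G_\bullet)$. The identity for pointwise multiplication is the constant map $e:x\mapsto \id_G$, and $e\co \q$ is the identity element of $G^{\{0,1\}^n}$, which lies in $\cu^n(G_\bullet)$; hence $e\in \hom(H_\bullet,G_\bullet)$. Associativity is inherited from that of multiplication in $G$. Therefore $\hom(H_\bullet,G_\bullet)$ is a group under pointwise multiplication, and by Theorem \ref{thm:homs=polys} so is $\poly(H_\bullet,G_\bullet)$.

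As for the main obstacle: there is essentially none left once Theorem \ref{thm:homs=polys} is in hand, and this is precisely what makes the statement striking. A direct verification that the pointwise product of two polynomial maps is again polynomial would require an awkward non-commutative Leibniz rule for the iterated derivatives $\partial_{h_1}\cdots\partial_{h_n}(g_1 g_2)$, with commutator corrections landing in the appropriate filtration terms; the translation to nilspace morphisms replaces that computation entirely by the triviality that the cubes $\cu^n(G_\bullet)$ form a group.
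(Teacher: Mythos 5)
Your proposal is correct and follows exactly the paper's route: deduce the corollary from Theorem \ref{thm:homs=polys} together with the fact that $\hom(H_\bullet,G_\bullet)$ is a group under pointwise multiplication, the latter being an immediate consequence of $\cu^n(G_\bullet)$ being a subgroup of $G^{\{0,1\}^n}$. You have merely written out the closure, inverse, and identity verifications that the paper leaves implicit.
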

\noindent This corollary follows from \eqref{eq:homs=polys} together with the fact that $\hom(H_\bullet, G_\bullet)$ is a group under pointwise multiplication, which is a special case of the following  lemma.
\begin{lemma}
Let $\ns$ be a nilspace and let $(G,G_\bullet)$ be a filtered group. Then $\hom(\ns,G_\bullet)$ is a group under pointwise multiplication.
\end{lemma}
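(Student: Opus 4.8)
The plan is to realize $\hom(\ns, G_\bullet)$ as a subgroup of the group $G^{\ns}$ of all maps $\ns\to G$ under pointwise multiplication, by exhibiting it as an intersection of preimages of subgroups under suitable homomorphisms. The single structural input I would use is that, for each $n\geq 0$, the set $\cu^n(G_\bullet)$ is itself a group under pointwise multiplication: this is built into Definition \ref{def:G-cubes}, where $\cu^n(G_\bullet)$ is defined as the subgroup of $G^{\{0,1\}^n}$ generated by the face groups.

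The one observation that requires any checking is that, for a fixed cube $\q\in \cu^n(\ns)$, precomposition $g\mapsto g\co\q$ is a group homomorphism $G^{\ns}\to G^{\{0,1\}^n}$ for the pointwise multiplications on both sides. This is immediate on evaluating at a vertex $v\in\{0,1\}^n$: for $g_1,g_2\in G^{\ns}$ we have $\big((g_1 g_2)\co\q\big)(v) = g_1(\q(v))\,g_2(\q(v)) = (g_1\co\q)(v)\,(g_2\co\q)(v)$, so that $(g_1 g_2)\co\q = (g_1\co\q)(g_2\co\q)$ as elements of $G^{\{0,1\}^n}$.

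With this in hand I would note that, by the definition of a nilspace morphism, a map $g\in G^{\ns}$ lies in $\hom(\ns,G_\bullet)$ precisely when $g\co\q\in \cu^n(G_\bullet)$ for every $n\geq 0$ and every $\q\in \cu^n(\ns)$. In other words,
\[
\hom(\ns,G_\bullet)=\bigcap_{n\geq 0}\ \bigcap_{\q\in \cu^n(\ns)} \big\{\, g\in G^{\ns} : g\co\q\in \cu^n(G_\bullet)\,\big\}.
\]
Each set in this intersection is the preimage of the subgroup $\cu^n(G_\bullet)\leq G^{\{0,1\}^n}$ under the homomorphism $g\mapsto g\co\q$, and is therefore a subgroup of $G^{\ns}$; since an arbitrary intersection of subgroups is again a subgroup, $\hom(\ns,G_\bullet)$ is a subgroup of $G^{\ns}$ and hence a group under pointwise multiplication.

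I do not expect a genuine obstacle: the statement is a formal consequence of the group structure of $\cu^n(G_\bullet)$. If one prefers to avoid the intersection-of-preimages formulation, the same content can be presented as a direct verification of the subgroup criterion — closure and inverses follow from the homomorphism property together with the fact that $\cu^n(G_\bullet)$ is closed under products and inverses, and the identity (the constant map $x\mapsto\id_G$) lies in $\hom(\ns,G_\bullet)$ because $g\mapsto g\co\q$ sends it to the identity of $G^{\{0,1\}^n}$, which belongs to every $\cu^n(G_\bullet)$. The only point worth stating explicitly in either presentation is that $\cu^n(G_\bullet)$ is a group, which is precisely how it was defined.
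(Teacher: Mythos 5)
Your proof is correct and follows the same route as the paper, which simply observes that the lemma "follows clearly from the fact that the cubes $\cu^n(G_\bullet)$ form a group"; you have merely spelled out the formal details (precomposition with a cube is a homomorphism $G^{\ns}\to G^{\{0,1\}^n}$, and $\hom(\ns,G_\bullet)$ is an intersection of preimages of subgroups). Nothing is missing.
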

\noindent This lemma follows clearly from the fact that the cubes $\cu^n(G_\bullet)$ form a group.\\ 
\indent Corollary \ref{cor:LazLeib} was first established for polynomial sequences. This special case is known as the Lazard-Leibman theorem, it was proved by Leibman in \cite{Leib}, and earlier, for Lie groups, by Lazard \cite{Laz}. The more  general case of polynomial maps between groups was then proved by Leibman in \cite{Leib2}. These original proofs did not go via the charaterization of polynomial maps as nilspace morphisms, as we do here. A special case of this characterization appeared in \cite[Proposition 6.5]{GTOrb}, and the general version appeared (with slightly different language) in \cite[Theorem B.3]{GTZ}.\\
\indent Another nice consequence of Theorem \ref{thm:homs=polys} is the following result, which gives the desired generalization of part (ii) of Proposition \ref{prop:1-degree-cubes}.
\begin{corollary}\label{cor:cubes=respoly}
A map $\q:\{0,1\}^n\to G$ is in $\cu^n(G_\bullet)$ if and only if it extends to a polynomial map $g\in \poly(\Z^n,G_\bullet)$.
\end{corollary}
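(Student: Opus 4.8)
The plan is to deduce both directions from Theorem \ref{thm:homs=polys}, which identifies $\poly(\Z^n,G_\bullet)$ with the nilspace morphisms $\hom((\Z^n)_\bullet,G_\bullet)$, where $\Z^n$ carries its (degree-$1$) lower central series filtration $(\Z^n)_0=(\Z^n)_1=\Z^n$, $(\Z^n)_2=\{0\}$. With this filtration $\cu^m((\Z^n)_\bullet)$ consists exactly of the standard abelian $m$-cubes of Proposition \ref{prop:1-degree-cubes}. The one cube that drives everything is the inclusion $\iota:\{0,1\}^n\hookrightarrow\Z^n$, $v\mapsto v$: by \eqref{eq:standard-cube} with $x=0^n$ and $h_i=e_i$ it lies in $\cu^n((\Z^n)_\bullet)$. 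Throughout I write $\rho$ for the restriction map $g\mapsto g|_{\{0,1\}^n}$ from $G^{\Z^n}$ to $G^{\{0,1\}^n}$, which is a homomorphism for pointwise multiplication.

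First I would handle the ``if'' direction. If $\q=g|_{\{0,1\}^n}$ for some $g\in\poly(\Z^n,G_\bullet)$, then by Theorem \ref{thm:homs=polys} the map $g$ is a nilspace morphism, so applying it to the cube $\iota$ gives $g\co\iota\in\cu^n(G_\bullet)$; but $g\co\iota=g|_{\{0,1\}^n}=\q$. Thus $\rho$ maps $\poly(\Z^n,G_\bullet)$ into $\cu^n(G_\bullet)$, and it remains to prove surjectivity. Since $\poly(\Z^n,G_\bullet)$ is a group under pointwise multiplication by Corollary \ref{cor:LazLeib} and $\rho$ is a homomorphism for this operation, the image $\rho(\poly(\Z^n,G_\bullet))$ is a subgroup of $\cu^n(G_\bullet)$. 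By the first part of the proof of Lemma \ref{lem:cubefactn}, $\cu^n(G_\bullet)$ is generated by the upper-face elements, so it suffices to exhibit, for every upper face $F=F(w)$ with support $S:=\supp(w)$ of size $k=\codim(F)$ and every $h\in G_k$, a polynomial map extending the generator $h^F$. The natural candidate is
\[
g_h(u)=h^{\,P(u)},\qquad P(u):=\prod_{j\in S}u\sbr{j}\in\Z,\quad u\in\Z^n,
\]
and since $P|_{\{0,1\}^n}=1_F$ we indeed get $g_h|_{\{0,1\}^n}=h^F$.

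The hard part, and really the only nontrivial point, is verifying that $g_h\in\poly(\Z^n,G_\bullet)$. The key observation is that $g_h$ takes values in the \emph{abelian} subgroup $\langle h\rangle$, so each discrete derivative passes through the exponent: $\partial_{h'}g_h(u)=h^{\,P(u+h')-P(u)}=h^{\,\partial_{h'}P(u)}$, whence $\partial_{h_1}\cdots\partial_{h_m}g_h=h^{\,\partial_{h_1}\cdots\partial_{h_m}P}$ for all $h_1,\ldots,h_m\in\Z^n$. Now $P$ is a polynomial of degree $k$, so its $(k+1)$-fold finite difference vanishes identically; hence for $m>k$ the derivative is the constant $\id_G$, and for $m\le k$ it equals $h^c$ for some integer $c$, which lies in $G_k\subseteq G_m$ because the filtration is decreasing. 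To complete the check against the definition of a polynomial map one notes that for a general exponent sequence $i_1,\ldots,i_m$ with some $i_j\ge2$ the corresponding $h_j$ must be $0\in(\Z^n)_{i_j}=\{0\}$, making $\partial_0 g_h=\id_G$ and the whole iterated derivative trivial; while if all $i_j\in\{0,1\}$ then $\sum_j i_j\le m\le k$ and the containment $h^c\in G_k\subseteq G_{\sum_j i_j}$ again holds. This verifies the polynomial condition, so each generator extends, and the pointwise product of such extensions (polynomial by Corollary \ref{cor:LazLeib}) extends an arbitrary element of $\cu^n(G_\bullet)$ in view of Lemma \ref{lem:cubefactn}, finishing the proof.
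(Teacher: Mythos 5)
Your proof is correct and follows essentially the same route as the paper: the ``if'' direction via Theorem \ref{thm:homs=polys} applied to the embedding cube, and the ``only if'' direction by extending each upper-face generator $g_j^{F_j}$ to the polynomial map $\mathbf{t}\mapsto g_j^{\binom{\mathbf{t}}{v_j}}$ (your $h^{\prod_{j\in S}u\sbr{j}}$ is exactly this, since $\binom{t}{1}=t$ and $\binom{t}{0}=1$) and then invoking the group property of $\poly(\Z^n,G_\bullet)$. The only difference is that you spell out the derivative computation that the paper dismisses as ``easily seen from the definition''; that verification is correct.
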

\noindent Note that throughout this section the implicit filtration on $\Z^n$ is just the lower central series $G_0=G_1=\Z^n$, $G_2=\{0^n\}$. Let us establish this corollary before turning to the proof of the theorem.
\begin{proof}[Proof of Corollary \ref{cor:cubes=respoly}]
The `if' direction is the easiest. Note first that the map $\theta$ that embeds $\{0,1\}^n$ in $\Z^n$ in the natural way is trivially in $\cu^n(\Z^n)$. 
Hence if $\q=g\co\theta$ for $g\in \poly(\Z^n,G_\bullet)$, then, since by the theorem $g$ is a morphism of cubes, we have $\q \in \cu^n(G_\bullet)$.

For the `only if' direction, given an $n$-cube $\q$, we write the unique factorization
\[
\q(v)=g_0^{F_0}(v)\; g_1^{F_1}(v)\; \cdots \;g_{2^n-1}^{F_{2^n-1}}(v),
\]
and we then define the map $g:\Z^n\to G$ by
\[
g(\mathbf{t})=g(t_1,\ldots,t_n) = \prod_{j=0}^{2^n-1} g_j^{\binom{\mathbf{t}}{v_j}},
\]
where $\binom{\mathbf{t}}{v_j}:=\binom{t_1}{v_j\sbr{1}}\binom{t_2}{v_j\sbr{2}}\cdots \binom{t_n}{v_j\sbr{n}}$, and where $v_j$ is the element determining the upper face $F_j=F(v_j)$ (recall Definition \ref{def:up-faces}). Note that the restriction of $g$ to $\{0,1\}^n$ equals $\q$. Observe also that each map $\mathbf{t}\mapsto g_j^{\binom{\mathbf{t}}{v_j}}$ is in $\poly(\Z^n,G_\bullet)$ (this is easily seen from the definition) and so, by the group property for $\poly(\Z^n,G_\bullet)$ (which follows from Theorem \ref{thm:homs=polys}) we have $g\in \poly(\Z^n,G_\bullet)$ as well.
\end{proof}
\noindent To prove Theorem \ref{thm:homs=polys}, we shall use the following construction, which will also play a crucial role in the next chapter.

\begin{defn}[Arrows]\label{def:arrow}
Let $\ns$ be a set, let $n,k\in \N$, and let us denote elements of $\{0,1\}^{n+k}$ by pairs $(v,w)$, $v\in \{0,1\}^n, w\in \{0,1\}^k$. Given two maps $\q_0,\q_1:\{0,1\}^n\to \ns$, we define the \emph{$n$-dimensional $k$-arrow}, denoted by $\arr{\q_0,\q_1}_k$, to be the following map:
\begin{eqnarray*}
\arr{\q_0,\q_1}_k\quad: \quad \{0,1\}^{n+k} & \to & \quad \ns\\
(v,w) \hspace{0.5cm}& \mapsto & \left\{\begin{array}{lr} \q_0(v),& w\neq 1^k \\
  \q_1(v), & w=1^k\end{array}\right..
\end{eqnarray*}
\end{defn}
\noindent Usually, the dimension of the cubes $\q_0,\q_1$ (and hence of the arrow $\arr{\q_0,\q_1}_k$) will be clear from the context, and we shall then call $\arr{\q_0,\q_1}_k$ just `the $k$-arrow of $\q_0,\q_1$'.

We shall see in Chapter \ref{chap:chargenils} that arrows yield a useful  general construction of new nilspaces from old ones. For now we shall use arrows just for cubes on filtered groups.\\
\indent The following result provides a convenient way to check whether an arrow on a filtered group is itself a cube. Given a filtration $G_\bullet$ and $\ell\in \N$, we denote by $G_\bullet^{+\ell}$ the shifted filtration, with $i$-th term $G_{i+\ell}$.
\begin{lemma}\label{lem:filtarrow}
Let $(G,G_\bullet)$ be a filtered group, let $\q_0,\q_1:\{0,1\}^n\to G$, and let $k\in \N$. Then $\arr{\q_0,\q_1}_k$ lies in $\cu^{n+k}_{(i_1,\ldots,i_{n+k})}(G_\bullet)$ if and only if $\q_0\in \cu^n_{(i_1,\ldots,i_n)}(G_\bullet)$ and $\q_0^{-1} \q_1\in\cu^n_{(i_1,\ldots,i_n)}(G_\bullet^{+\ell})$, where $\ell=\sum_{j=n+1}^{n+k} i_j$.
\end{lemma}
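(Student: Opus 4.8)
The plan is to reduce the entire statement to the unique upper-face factorization of Lemma~\ref{lem:cubefactn} together with its generalization stated after Definition~\ref{def:genindcube}. The first thing I would record is that this factorization is \emph{universal}: for an arbitrary map $\q:\{0,1\}^m\to G$, the recursion \eqref{eq:cubecoeffs} produces coefficients $g_i\in G$, subject to no degree restriction, for which $\q=\prod_i g_i^{F_i}$, and these are uniquely determined by $\q$ (one checks $\prod_i g_i^{F_i}=\q$ by evaluating at each vertex and telescoping; this is implicit in the proof of Lemma~\ref{lem:cubefactn}). With this in hand, membership of $\q$ in $\cu^m_{(i_1,\ldots,i_m)}(G_\bullet)$ is exactly the assertion that $g_i\in G_{\sum_{j\in\supp v_i}i_j}$ for every $i$. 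So it suffices to compute the upper-face coefficients of the arrow and to read off what the degree conditions say about them, since the two sides of the claimed equivalence will then match coefficient by coefficient.

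The key computation is to write $A:=\arr{\q_0,\q_1}_k$ as a pointwise product $A=B\,C$, where $B(v,w)=\q_0(v)$ for all $w$, and $C(v,w)=(\q_0^{-1}\q_1)(v)$ when $w=1^k$ and $C(v,w)=\id_G$ otherwise; a direct check gives $B\,C=A$. Letting $a_v$ denote the universal upper-face coefficients of $\q_0$ and $c_v$ those of $\q_0^{-1}\q_1$ on $\{0,1\}^n$, I would verify the identities
\[
B=\prod_{v} a_v^{F(v,0^k)},\qquad C=\prod_{v} c_v^{F(v,1^k)},
\]
with products in colex order of $v$, by evaluating both sides at an arbitrary vertex $(v',w')$ and comparing with $\q_0(v')=\prod_{\supp v\subseteq\supp v'}a_v$ and its analogue for $\q_0^{-1}\q_1$. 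Here $F(v,0^k)=\{(v',w'):\supp v'\supseteq\supp v\}$ is the full preimage of $F(v)$ under the projection forgetting $w$, while $F(v,1^k)$ is contained in the top slice $\{w=1^k\}=F(0^n,1^k)$. Throughout I take $k\geq 1$, so that $0^k\neq 1^k$ and the faces $F(v,0^k)$, $F(v,1^k)$ are genuinely distinct.

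Next I would assemble $B$ and $C$ into a single factorization of $A$ over all upper faces of $\{0,1\}^{n+k}$. In the colex order on $\{0,1\}^{n+k}$ the vertices with $w=0^k$ form the initial segment and those with $w=1^k$ the final segment, so inserting trivial coefficients at every remaining face $F(v,w)$ with $w\notin\{0^k,1^k\}$ yields the colex-ordered product $\prod_{(v,w)}g_{(v,w)}^{F(v,w)}=B\,C=A$ with $g_{(v,0^k)}=a_v$, $g_{(v,1^k)}=c_v$, and all other $g_{(v,w)}=\id_G$. By uniqueness these are precisely the coefficients of $A$. Finally, using $\sum_{j\in\supp(v,0^k)}i_j=\sum_{j\in\supp v}i_j$ and $\sum_{j\in\supp(v,1^k)}i_j=\ell+\sum_{j\in\supp v}i_j$, the condition $A\in\cu^{n+k}_{(i_1,\ldots,i_{n+k})}(G_\bullet)$ becomes $a_v\in G_{\sum_{j\in\supp v}i_j}$ and $c_v\in G_{\ell+\sum_{j\in\supp v}i_j}$ for all $v$, i.e.\ $\q_0\in\cu^n_{(i_1,\ldots,i_n)}(G_\bullet)$ and $\q_0^{-1}\q_1\in\cu^n_{(i_1,\ldots,i_n)}(G_\bullet^{+\ell})$, as claimed.

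I expect the main obstacle to be essentially bookkeeping rather than conceptual: making rigorous that the two partial products $B$ and $C$ really combine into one colex-ordered factorization (that the slices $w=0^k$ and $w=1^k$ occupy the two extremes of the colex order, and that the intervening identity factors can be inserted without changing the value), and extracting cleanly from the proof of Lemma~\ref{lem:cubefactn} the constraint-free uniqueness of the upper-face factorization for an arbitrary $G$-valued function. This universal uniqueness is exactly what lets me pass freely between $A$, $\q_0$ and $\q_0^{-1}\q_1$ without assuming in advance that any of them is a cube, so it is the load-bearing preliminary for the whole argument.
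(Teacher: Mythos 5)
Your proposal is correct and follows essentially the same route as the paper: both rest on the unique upper-face factorization, observing that the coefficients of the arrow are supported on the slices $w=0^k$ and $w=1^k$, with the former reproducing the coefficients of $\q_0$ and the latter the coefficients of $\q_0^{-1}\q_1$ shifted by $\ell$. Your write-up merely makes explicit two points the paper leaves implicit — that the factorization is a bijection on \emph{arbitrary} $G$-valued maps (so membership in the cube set is exactly a degree condition on the coefficients) and that the two slices occupy the extremes of the colex order — which lets you handle both directions of the equivalence at once.
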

\begin{proof}
If $f=\arr{\q_0,\q_1}_k$ is in $\cu^{n+k}_{(i_1,\ldots,i_{n+k})}(G_\bullet)$ then by \eqref{eq:cubefactn2} we have that $\q_0\in \cu^n_{(i_1,\ldots,i_n)}(G_\bullet)$. Moreover, from \eqref{eq:cubefactn2} and the definition of $f$ we see that every coefficient $g_{(v,w)}$ of $f$ corresponding to an upper face $F((v,w))$ of $\{0,1\}^{n+k}$ with $w\neq 0^k,1^k$ is trivial. It follows that for $v\in \{0,1\}^n$ we have
\[
\q_0^{-1}(v)\q_1(v)= f(v,0^k)^{-1}\, f(v,1^k)= \prod_{u\in \{0,1\}^n} g_{(u,1^k)}^{F((u,1^k))}(v,1^k),
\]
where $g_{(u,1^k)}\in G_{\sum_{j\in \supp u} i_j+\sum_{j=n+1}^{n+k} i_j}$, so $\q_0^{-1}\,\q_1$ is indeed in $\cu^n_{(i_1,\ldots,i_n)}(G_\bullet^{+\ell})$ as claimed.

For the converse, note that the factorizations for $\q_0$ and $\q_0^{-1}\,\q_1$ given by \eqref{eq:cubefactn2} combine to give a factorization of the form \eqref{eq:cubefactn2} for $\arr{\q_0,\q_1}_k$.
\end{proof}
\noindent We now prove Theorem \ref{thm:homs=polys} in two steps. 
\begin{lemma}\label{lem:polys-are-homs}
We have $\hom(H_\bullet, G_\bullet) \subset \poly(H_\bullet,G_\bullet)$.
\end{lemma}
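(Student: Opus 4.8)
The plan is to show directly that a nilspace morphism $g\in\hom(H_\bullet,G_\bullet)$ satisfies the defining condition of a polynomial map: for all $i_1,\dots,i_n\geq 0$, all $h_j\in H_{i_j}$ and all $x\in H$, the iterated derivative $\partial_{h_1}\cdots\partial_{h_n}g(x)$ lies in $G_{i_1+\cdots+i_n}$. The naive cube $v\mapsto x\,h_1^{v\sbr{1}}\cdots h_n^{v\sbr{n}}$ built from these edges is only a \emph{generalized} cube (it lies in $\cu^n_{(i_1,\dots,i_n)}(H_\bullet)$), so the hypothesis that $g$ preserves \emph{ordinary} cubes does not apply to it directly. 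First I would therefore spread each edge $h_j$ across a block of $i_j$ coordinates: set $N=i_1+\cdots+i_n$, partition $[N]$ into consecutive blocks $B_1,\dots,B_n$ with $|B_j|=i_j$, and define $P:\{0,1\}^N\to H$ by $P(w)=x\,\prod_{j=1}^n h_j^{\,\mathbf 1[\,w|_{B_j}=1^{i_j}\,]}$. Since $\{w:w|_{B_j}=1^{i_j}\}$ is an upper face of codimension $i_j$ and $h_j\in H_{i_j}$, the map $P$ is literally a product of face-group generators, whence $P\in\cu^N(H_\bullet)$.

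Next I would apply the morphism hypothesis to obtain $g\co P\in\cu^N(G_\bullet)$, and then peel off the blocks one at a time using Lemma \ref{lem:filtarrow}. Writing $w=(u,z)$ with $z$ ranging over the last block $B_n$, the block structure of $P$ gives the arrow decomposition $P=\arr{P_0,\,P_0\cdot h_n}_{i_n}$, where $P_0:\{0,1\}^{N-i_n}\to H$ is the analogous block cube for $h_1,\dots,h_{n-1}$ and $P_0\cdot h_n$ denotes pointwise right multiplication; consequently $g\co P=\arr{g\co P_0,\;g\co(P_0\cdot h_n)}_{i_n}$. Applying Lemma \ref{lem:filtarrow} to $g\co P\in\cu^N_{(1,\dots,1)}(G_\bullet)$ (so that $\ell=i_n$) yields
\[
(g\co P_0)^{-1}\,\big(g\co(P_0\cdot h_n)\big)=(\partial_{h_n}g)\co P_0\in\cu^{N-i_n}_{(1,\dots,1)}\big(G_\bullet^{+i_n}\big).
\]

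I would then iterate this peeling with the obvious induction hypothesis: at the $r$-th step one has $(\partial_{h_{n-r+1}}\cdots\partial_{h_n}g)\co P^{(r)}$ lying in $\cu^{\,i_1+\cdots+i_{n-r}}_{(1,\dots,1)}\big(G_\bullet^{+(i_{n-r+1}+\cdots+i_n)}\big)$, where $P^{(r)}$ is the block cube for $h_1,\dots,h_{n-r}$. Decomposing $P^{(r)}$ as an arrow in its last block $B_{n-r}$ and applying Lemma \ref{lem:filtarrow} again, now over the shifted filtration and with $\ell=i_{n-r}$, introduces the next derivative $\partial_{h_{n-r}}$ and shifts the filtration by a further $i_{n-r}$. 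After all $n$ blocks have been peeled, $P^{(n)}$ is the $0$-dimensional cube consisting of the single point $x$, and the conclusion reads $\partial_{h_1}\cdots\partial_{h_n}g(x)\in\cu^0\big(G_\bullet^{+(i_1+\cdots+i_n)}\big)$. Since the only upper face of $\{0,1\}^0$ is the whole set, of codimension $0$, we have $\cu^0(G_\bullet^{+L})=(G_\bullet^{+L})_0=G_L$; taking $L=i_1+\cdots+i_n$ gives exactly the required membership.

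The main obstacle is bookkeeping rather than any deep new idea: one must track the accumulating filtration shifts and check at each stage that the arrow decomposition of the pushed-forward block cube meets the hypotheses of Lemma \ref{lem:filtarrow} over the correct shifted filtration $G_\bullet^{+(\cdots)}$. A minor point to dispose of separately is the degenerate case in which some $i_j=0$, where the corresponding block is empty and the arrow has width $k=0$; this is handled by a direct reduction, so one may assume $i_j\geq 1$ throughout the peeling. Setting up the induction hypothesis uniformly, parametrized by the number of remaining blocks together with the current filtration shift, is precisely what makes the repeated application of Lemma \ref{lem:filtarrow} go through.
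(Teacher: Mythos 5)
Your proposal is correct, and it runs on the same engine as the paper's proof --- Lemma \ref{lem:filtarrow} together with the identity $(g\co \q)^{-1}\cdot g\co(\q\cdot h)=(\partial_h g)\co \q$ --- but it organizes the induction differently, so the comparison is worth recording. The paper fixes a single derivative: for $h\in H_i$ it applies Lemma \ref{lem:filtarrow} to the arrow $\arr{\q,\q\cdot h}_i$ for an \emph{arbitrary} cube $\q\in\cu^m(H_\bullet)$ and every $m$, thereby proving the stronger intermediate fact that $\partial_h g\in\hom(H_\bullet,G_\bullet^{+i})$; the lemma then follows by induction on the number of derivatives, with the induction ranging over all shifted target filtrations. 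You instead build one explicit $N$-dimensional block cube $P\in\cu^N(H_\bullet)$, push it forward once, and peel off the blocks, never claiming that the intermediate derivatives are morphisms --- you only track their values along the nested family $P^{(r)}$, which is exactly enough to land the single membership $\partial_{h_1}\cdots\partial_{h_n}g(x)\in G_{i_1+\cdots+i_n}$ at the end (using $\cu^0(G_\bullet^{+N})=G_N$). Both routes are valid; the paper's buys a cleaner, reusable invariant at each step, while yours is more concrete and avoids quantifying over all cubes. The one loose end is the degenerate case $i_j=0$ that you defer: the cleanest disposal is via the standing assumption $H_0=H_1$, under which $h_j\in H_0$ may be treated as an element of $H_1$ and the case $i_j'=1$ gives membership in $G_{i_1+\cdots+i_n+1}\subseteq G_{i_1+\cdots+i_n}$; alternatively, run your peeling once with $h_{n-r}$ replaced by $\id_H$ to see that $(\partial_{h_{n-r+1}}\cdots\partial_{h_n}g)\co P^{(r+1)}$ lies in the same cube group as $(\partial_{h_{n-r+1}}\cdots\partial_{h_n}g)\co P^{(r)}$, and then conclude by the group property of the cube sets. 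Either way the gap is cosmetic, not structural.
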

\begin{proof}
We have to show that for every $g\in \hom(H_\bullet, G_\bullet)$, for every integer $n\geq 0$, integers $i_1,\dots,i_n\geq 0$ and elements $h_j\in H_{i_j}$, $j\in [n]$, we have $\partial_{h_1}\cdots\partial_{h_n}g(x)\in G_{i_1+\cdots+i_n}$ for all $x\in H$. This is clear for $n=0$. For $n>0$, by induction it suffices to show that for every $i\geq 0$ and $h\in H_i$ we have $\partial_h g\in \hom(H_\bullet, G_\bullet^{+i})$.  Given any $\q\in \cu^m(H_\bullet)$, we have by Lemma \ref{lem:filtarrow} that $\arr{\q,\q\cdot h}_i\in \cu^{m+i}(H_\bullet)$. We then have $g\co \arr{\q,\q\cdot h}_i\in \cu^{m+i}(G_\bullet)$. But $g\co \arr{\q,\q\cdot h}_i=\arr{g\co \q,g\co (\q\cdot h)}_i$, and then this being in $\cu^{m+i}(G_\bullet)$ and $g\co \q$ being in $\cu^m(G_\bullet)$ implies by Lemma \ref{lem:filtarrow} that $(g\co \q)^{-1}\,\cdot\,g\co (\q\cdot h) \in \cu^m(G_\bullet^{+i})$. Since $(g\co \q)^{-1}\,\cdot\,g\co (\q\cdot h)=(\partial_h g)\co \q$, the proof is complete.
\end{proof}
\noindent We now prove a result which implies the opposite inclusion $\hom(H_\bullet,G_\bullet)\supset \poly(H_\bullet,G_\bullet)$. 
\begin{lemma}\label{lem:homs-are-polys}
For every integer $n\geq 0$ the following holds. Given any filtered groups $(G,G_\bullet)$, $(H,H_\bullet)$ and any map $g\in \poly(H_\bullet,G_\bullet)$, for any $i_1,\ldots,i_n\geq 0$ and any $\q\in \cu^n_{(i_1,\ldots,i_n)}(H_\bullet)$ we have $g\co\q \in \cu^n_{(i_1,\ldots,i_n)}(G_\bullet)$.
\end{lemma}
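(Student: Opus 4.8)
The plan is to argue by induction on $n$, reducing an $n$-cube to $(n-1)$-dimensional data by means of the arrow construction and Lemma~\ref{lem:filtarrow}, and feeding this back through the inductive hypothesis. The one extra ingredient I would isolate first is a closure property of polynomial maps under discrete derivatives: if $g\in\poly(H_\bullet,G_\bullet)$ and $h\in H_i$, then $\partial_h g\in\poly(H_\bullet,G_\bullet^{+i})$. This is immediate from the definition, since for $h_j\in H_{i_j}$ the iterated derivative $\partial_{h_1}\cdots\partial_{h_m}(\partial_h g)$ is just the $(m+1)$-fold derivative $\partial_{h_1}\cdots\partial_{h_m}\partial_h\,g$ of $g$, which by hypothesis takes values in $G_{i_1+\cdots+i_m+i}=(G_\bullet^{+i})_{i_1+\cdots+i_m}$.

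For the induction, the case $n=0$ is trivial (a $0$-cube is a point of $H$ and $g$ maps it into $G=\cu^0(G_\bullet)$). For the inductive step, given $\q\in\cu^n_{(i_1,\ldots,i_n)}(H_\bullet)$ I would split off the last coordinate, writing $\q=\arr{\q_0,\q_1}_1$ with $\q_0,\q_1:\{0,1\}^{n-1}\to H$. Lemma~\ref{lem:filtarrow} then gives $\q_0\in\cu^{n-1}_{(i_1,\ldots,i_{n-1})}(H_\bullet)$ and $u:=\q_0^{-1}\q_1\in\cu^{n-1}_{(i_1,\ldots,i_{n-1})}(H_\bullet^{+i_n})$. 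Since $g\co\q=\arr{g\co\q_0,g\co\q_1}_1$, the same lemma (now over $G_\bullet$) reduces the goal $g\co\q\in\cu^n_{(i_1,\ldots,i_n)}(G_\bullet)$ to two claims: that $g\co\q_0\in\cu^{n-1}_{(i_1,\ldots,i_{n-1})}(G_\bullet)$, and that the \emph{cross term} $(g\co\q_0)^{-1}(g\co\q_1)$ lies in $\cu^{n-1}_{(i_1,\ldots,i_{n-1})}(G_\bullet^{+i_n})$. The first claim is exactly the inductive hypothesis applied to $\q_0$.

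The cross term is the crux. Since $g$ is not a homomorphism it does not distribute over the factorization of $u$, so I would telescope: writing the unique factorization $u=c_1^{F_1}\cdots c_r^{F_r}$ from \eqref{eq:cubefactn2} (with $c_t\in H_{i_n+\sum_{j\in\supp v_t}i_j}$) and setting $\q_0^{(t)}=\q_0\,c_1^{F_1}\cdots c_t^{F_t}$, one has $(g\co\q_0)^{-1}(g\co\q_1)=\prod_{t=1}^r (g\co\q_0^{(t-1)})^{-1}(g\co\q_0^{(t)})$. Each $\q_0^{(t-1)}$ is again in $\cu^{n-1}_{(i_1,\ldots,i_{n-1})}(H_\bullet)$ (because $c_t\in H_{i_n+\cdots}\subset H_{\sum_{j\in\supp v_t}i_j}$), and each factor is supported on the upper face $F_t$, where it equals $v\mapsto\partial_{c_t}g(\q_0^{(t-1)}(v))$ and is the identity off $F_t$. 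Restricting $\q_0^{(t-1)}$ to $F_t$ gives a lower-dimensional cube, indexed by the sub-tuple of coordinates not fixed by $F_t$; applying the inductive hypothesis to this restriction together with $\partial_{c_t}g\in\poly(H_\bullet,G_\bullet^{+(i_n+\sum_{j\in\supp v_t}i_j)})$ shows the restricted factor is a cube over the shifted filtration. Extending it by the identity back to $\{0,1\}^{n-1}$ keeps it a cube of type $(i_1,\ldots,i_{n-1})$ over $G_\bullet^{+i_n}$: here the shift of the polynomial map by $\sum_{j\in\supp v_t}i_j$ precisely compensates the codimension of $F_t$, so the factorization coefficients land in the groups prescribed by \eqref{eq:cubefactn2}. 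As $\cu^{n-1}_{(i_1,\ldots,i_{n-1})}(G_\bullet^{+i_n})$ is a group, the product over $t$ stays inside it, completing the step.

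The main obstacle is the cross term, and within it the degree bookkeeping in the final ``extend by the identity'' step: one must check that a cube on a codimension-$s$ upper face $F_t$ over the filtration $G_\bullet^{+(i_n+\sum_{j\in\supp v_t}i_j)}$, extended by the identity, has all its factorization coefficients in exactly the subgroups required for membership in $\cu^{n-1}_{(i_1,\ldots,i_{n-1})}(G_\bullet^{+i_n})$ — which works out because the filtration shift and the codimension cancel. A minor point to verify along the way is that restriction to an upper face sends a generalized cube to a generalized cube with the corresponding sub-tuple of indices, the natural analogue of Lemma~\ref{lem:G-facemap-inv} for the structures $\cu^n_{(i_1,\ldots,i_n)}$, proved identically via \eqref{eq:cubefactn2}.
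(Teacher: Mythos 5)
Your proposal is correct and follows essentially the same route as the paper's proof: induction on $n$, splitting off the last coordinate via Lemma \ref{lem:filtarrow}, telescoping the cross term along the upper-face factorization of $\q_0^{-1}\q_1$, and handling each factor as $\partial_{c_t}g$ composed with a restricted cube, with the filtration shift of the derivative compensating the codimension of the face. The only cosmetic difference is that the paper packages your final ``extend by the identity'' bookkeeping as another application of Lemma \ref{lem:filtarrow} to the arrow $\arr{\id_G,(\partial_h g)\co\q}_k$ after permuting coordinates, rather than checking the factorization coefficients directly.
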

\noindent Thus polynomial maps conserve in fact the more general cube structures from Definition \ref{def:genindcube}. Note that the last two  lemmas combined tell us that nilspace morphisms also conserve these structures. 
\begin{proof}
(We follow an argument from \cite[Appendix B]{GTZ}.) We argue again by induction on $n$. Given $n>0$ let $\q\in \cu^n_{(i_1,\ldots,i_n)}(H_\bullet)$ and note that, letting $\q_0:=\q(\cdot,0), \q_1:=\q(\cdot,1)$, by Lemma \ref{lem:filtarrow} we have $\q_0\in \cu^{n-1}_{(i_1,\ldots,i_{n-1})}(H_\bullet)$ and $\q_0^{-1}\q_1\in \cu^{n-1}_{(i_1,\ldots,i_{n-1})}(H_\bullet^{+i_n})$. We want to show that $g\co \q$ lies in $\cu^n_{(i_1,\ldots,i_n)}(G_\bullet)$. Since $(g\co \q) (\cdot,t)= g\co \q_t$ for $t=0,1$, and by induction $g\co\q_0\in \cu^{n-1}_{(i_1,\ldots,i_{n-1})}(G_\bullet)$, by Lemma \ref{lem:filtarrow} it suffices to show that $(g\co \q_0)^{-1} g\co \q_1 \in \cu^{n-1}_{(i_1,\ldots,i_{n-1})}(G_\bullet^{+i_n})$.\\
\indent Now $\q_0^{-1}\q_1=h_1^{F_1}h_2^{F_2}\cdots h_t^{F_t}$ for upper-face group elements $h_j^{F_j}\in \cu^{n-1}_{(i_1,\ldots,i_{n-1})}(H_\bullet^{+i_n})$. Moreover, note that $(g\co \q_0)^{-1} g\co \q_1 $ has the telescoping expansion
\begin{eqnarray*}
&& (g\co \q_0)^{-1}\;\; g\co \big(\q_0 h_1^{F_1}\big)\; g\co \big(\q_0 h_1^{F_1}\big)^{-1}\;\; g\co \Big(\big(\q_0 h_1^{F_1}\big) h_2^{F_1}\Big)\; g\co \Big(\big(\q_0 h_1^{F_1}\big) h_2^{F_1}\Big)^{-1} \cdots \\
&&\hspace{4cm}\cdots g\co \Big(\q_0 h_1^{F_1}h_2^{F_2}\cdots h_{t-1}^{F_{t-1}}\Big)\;g\co \Big(\q_0 h_1^{F_1}h_2^{F_2}\cdots h_{t-1}^{F_{t-1}}\Big)^{-1} g\co \q_1.
\end{eqnarray*}
Therefore, it suffices to show that for every cube $\q \in  \cu^{n-1}_{(i_1,\ldots,i_{n-1})}(H_\bullet)$ and every upper-face-group element $h^F\in  \cu^{n-1}_{(i_1,\ldots,i_{n-1})}(H_\bullet^{+i_n})$, we have $(g\co \q)^{-1} g\co (\q h^F)\in \cu^{n-1}_{(i_1,\ldots,i_{n-1})}(G_\bullet^{+i_n})$ .\\
\indent Let $f(v):=(g\co \q)^{-1} g\co (\q h^F)(v)$, and note that this equals $\partial_h g(\q(v))$ for $v\in F$ and $\id_G$ otherwise. Let $w\in \{0,1\}^{n-1}$ be the element with support $I$ of size $k$ such that $F=F(w)$. By composing with an automorphism (permuting the indices $i_j$ accordingly) we can assume that $I=[n-k,n-1]$ and then $f$ is the $(n-1)$-dimensional arrow $\arr{\id_G,(\partial_h g) \co \q}_k$.
By Lemma \ref{lem:filtarrow}, $f$ is therefore in $\cu^{n-1}_{(i_1,\ldots,i_{n-1})}(G_\bullet^{+i_n})$ if and only if $(\partial_h g) \co \q$ restricted to $\{0,1\}^{n-1-k}$ lies in $\cu^{n-1-k}_{(i_1,\ldots,i_{n-1-k})}(G_\bullet^{+i_n+\sum_{j\in I}i_j})$. Since $h\in G_{\sum_{j\in I}i_j}$, we have that $\partial_h g$ is a $G_{\sum_{j\in I}i_j}$-valued polynomial map on $H$, so by the induction hypothesis $\partial_h g$ maps $\cu^{n-1}_{(i_1,\ldots,i_{n-1})}(H_\bullet^{+i_n})$ into $\cu^{n-1}_{(i_1,\ldots,i_{n-1})}(G_\bullet^{+i_n+\sum_{j\in I}i_j})$ under composition. In particular, since $\q$ restricted to $\{0,1\}^{n-1-k}$ lies in $\cu^{n-k-1}_{(i_1,\ldots,i_{n-1})|_{[n-1]\setminus I}}(H_\bullet^{+i_n})$, we have $(\partial_h g) \co \q|_{\{0,1\}^{n-1-k}}\in\cu^{n-1-k}_{(i_1,\ldots,i_{n-1-k})}(G_\bullet^{+i_n+\sum_{j\in I}i_j})$ as required.
\end{proof}
\noindent In the next subsection we complete the generalization of Proposition \ref{prop:1-degree-cubes} by giving the analogue of part $(iii)$. This analogue will consist in a characterization of cubes on filtered groups in terms of certain equations that generalize \eqref{eq:st-cube-2-face}. Recall that Corollary \ref{cor:cube-det} tells us that if $G$ has degree $d$ then the value of every $(d+1)$-cube at every point $v\in \{0,1\}^{d+1}$ is a word in the values at the other points. The equations just mentioned will tell us explicitly what this word is. This third characterization of cubes on filtered groups will be very convenient to describe an important type of nilspace structure on an abelian group, namely the so-called \emph{degree-$k$ structure} (see Subsection \ref{sect:deg-k-str}).

\medskip
\subsection{Defining cubes on a filtered group in terms of equations}\label{subsec:cubeqs}
\medskip

We shall use the following generalization of the function $\sigma_2$ from Definition \ref{def:abelian-sign}.
\begin{defn}\label{defn:gen-sign}
Let $G$ be a group. We define the function $\sigma_n: G^{\{0,1\}^n}\to G$ recursively, for each integer $n\geq 0$, as follows. Let $\sigma_0(g):=g$ for any $g\in G=G^{\{0,1\}^0}$, and for $n>0$, given $g:\{0,1\}^n\to G$, let
\begin{equation}\label{eq:recurGray}
\sigma_n(g):= \sigma_{n-1}\big(g(\cdot ,1)\big)^{-1}\; \sigma_{n-1}\big(g(\cdot ,0)\big).
\end{equation}
\end{defn}
\noindent We have $\sigma_1(g)= g_1^{-1}\,g_0$, $\sigma_2(g) = g_{01}^{-1}\; g_{11} \;g_{10}^{-1}\; g_{00}$, and so on. Expanding the product $\sigma_n(g)$ and reading right to left (say), we see that it is an alternating product taken along a Gray code (or reflective binary code) on the discrete $n$-cube.
\begin{defn}
The \emph{Gray order} $\gamma_n:\{0,1\}^n\to [0,2^n-1]$ is defined by induction on $n$ as follows. For $n=1$ we set $\gamma_1(0)=0, \gamma_1(1)=1$. For $n>1$ we set
$\gamma_n(v\sbr{1},\ldots,v\sbr{n}) = \gamma_{n-1}(v\sbr{1},\ldots,v\sbr{n-1})$ if $v\sbr{n}=0$, and if $v\sbr{n}=1$ we set $\gamma_n(v\sbr{1},\ldots,v\sbr{n})=2^n-\gamma_{n-1}(v\sbr{1},\ldots,v\sbr{n-1})$ .
\end{defn}
\noindent We note the expression $\sigma_n(g) = \prod_{j=0}^{2^n-1} g(\gamma_n^{-1}(j) )^{(-1)^j}$. We will have more use below for expression \eqref{eq:recurGray}, which is helpful in some inductive arguments. In fact, that expression leads to  a third viewpoint on $\sigma_n$ described in the following lemma, which will be crucial below.
\begin{lemma}
Let $G$ be a group and let $g:\Z^n\to G$. Then
\begin{equation}\label{eq:sigma=deriv}
\sigma_n(g)=\partial_{e_n}\partial_{e_{n-1}}\cdots \partial_{e_1} g(0^n).
\end{equation}
\end{lemma}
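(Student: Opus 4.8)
The plan is to prove \eqref{eq:sigma=deriv} by induction on $n$, peeling off the last coordinate so that the recursive definition \eqref{eq:recurGray} of $\sigma_n$ is matched directly against the outermost derivative $\partial_{e_n}$. The base case $n=0$ is immediate, since both sides reduce to $g\in G=G^{\{0,1\}^0}$; the case $n=1$ is a short direct check from the definitions of $\sigma_1$ and of $\partial_h$.

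For the inductive step, I would write $\partial_{e_n}\partial_{e_{n-1}}\cdots\partial_{e_1}g=\partial_{e_n}u$, where $u:=\partial_{e_{n-1}}\cdots\partial_{e_1}g:\Z^n\to G$. The first key observation is that each operator $\partial_{e_j}$ with $j\leq n-1$ only shifts the $j$-th coordinate and leaves the last coordinate untouched; hence, restricting the last coordinate to a fixed value $t\in\{0,1\}$ commutes with applying these operators. Concretely, $u(\cdot,t)=\partial_{e_{n-1}}\cdots\partial_{e_1}\big(g(\cdot,t)\big)$ as maps on $\Z^{n-1}$. Evaluating at $0^{n-1}$ and applying the induction hypothesis to the map $g(\cdot,t):\Z^{n-1}\to G$ then yields $u(0^{n-1},t)=\sigma_{n-1}\big(g(\cdot,t)\big)$ for $t=0,1$.

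It then remains to unfold the outer derivative: by definition, $\partial_{e_n}u(0^n)$ is formed from the two values $u(0^{n-1},0)$ and $u(0^{n-1},1)$, and substituting the identifications above reproduces exactly the right-hand side of \eqref{eq:recurGray}, giving $\sigma_{n-1}(g(\cdot,1))^{-1}\,\sigma_{n-1}(g(\cdot,0))=\sigma_n(g)$ and closing the induction.

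The step requiring the most care is this last one. Since $G$ need not be abelian, one must track the order of the two factors and the placement of the inverse, verifying that applying $\partial_{e_n}$ to $u$ at $0^n$ produces the two factors in precisely the order dictated by the recursion \eqref{eq:recurGray}. This is a pure matter of aligning the definition of $\partial_h$ with \eqref{eq:recurGray}; I expect this bookkeeping of the non-commutative ordering to be the only genuine obstacle, the remainder being a direct unwinding of the definitions together with the observation that the operators $\partial_{e_1},\dots,\partial_{e_{n-1}}$ act only on the first $n-1$ coordinates.
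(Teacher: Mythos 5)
Your induction is structurally the same as the paper's: restrict to the two faces where the last coordinate is $0$ or $1$, apply the inductive hypothesis to $g(\cdot,0)$ and $g(\cdot,1)$, and unfold the outermost derivative against the recursion \eqref{eq:recurGray}. Your explicit remark that $\partial_{e_1},\dots,\partial_{e_{n-1}}$ commute with fixing the last coordinate, so that $u(0^{n-1},t)=\partial_{e_{n-1}}\cdots\partial_{e_1}\bigl(g(\cdot,t)\bigr)(0^{n-1})$, is exactly the (silent) content of the displayed identity in the paper's proof, so up to that point the two arguments coincide.

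The step you postpone as ``bookkeeping'' is, however, precisely where the computation does not close, and you should not have deferred it. With the stated convention $\partial_hg(x)=g(x)^{-1}g(xh)$ one gets $\partial_{e_n}u(0^n)=u(0^{n-1},0)^{-1}\,u(0^{n-1},1)=\sigma_{n-1}(g(\cdot,0))^{-1}\,\sigma_{n-1}(g(\cdot,1))$, whereas \eqref{eq:recurGray} defines $\sigma_n(g)=\sigma_{n-1}(g(\cdot,1))^{-1}\,\sigma_{n-1}(g(\cdot,0))$: the inverse sits on the other factor and the factors appear in the other order. Already at $n=1$ the two sides of \eqref{eq:sigma=deriv} are $g(0)^{-1}g(1)$ and $g(1)^{-1}g(0)$, which are mutual inverses, not equal elements of a general group. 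So under a literal reading of the conventions the identity you are asked to prove fails for non-abelian $G$, and the alignment you expected to be routine is exactly the point of failure. (To be fair, the paper's own proof asserts the final equality without comment and shares this defect; the two sides agree when $G$ is abelian and, in general, agree modulo the commutator subgroup, which is why the downstream uses --- testing membership of $\sigma_m$ in a normal term $G_m$ of the filtration --- survive.) To complete your argument you must actually perform the computation and then either adjust a convention (e.g.\ read $\partial_hg(x)$ as $g(xh)^{-1}g(x)$, or reverse the order of the two factors in \eqref{eq:recurGray}) or prove the correspondingly weakened statement; asserting that the non-commutative ordering ``aligns'' is not enough, because it does not.
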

\noindent Here $\{e_i: i\in [n]\}$ is the standard basis of $\R^n$. 
\begin{proof}
For $n=1$ we clearly have $\sigma_1(g) = \partial_{e_1} g(0)$. Suppose that $n>1$, suppose the claim holds for $m<n$ and let $g:\Z^n\to G$. Then by induction $g(\cdot ,0)$ and $g(\cdot ,1)$ satisfy
\[
\sigma_{n-1}(g(\cdot,i)) = \partial_{e_{n-1}} \cdots \partial_{e_1} g (0^{n-1},i),\quad i=0,1.
\]
By \eqref{eq:recurGray} we then have
\begin{eqnarray*}
\sigma_n(g) & = &  \sigma_{n-1}(g(\cdot ,1))^{-1} \;\sigma_{n-1}(g(\cdot ,0))\\
& = & \big(\partial_{e_{n-1}} \cdots \partial_{e_1} g (0^{n-1},1)\;\big)^{-1} \;\partial_{e_{n-1}} \cdots \partial_{e_1} g (0^{n-1},0)\\
& = & \partial_{e_n}\partial_{e_{n-1}}\cdots \partial_{e_1} g(0^n).
\end{eqnarray*}
\end{proof}
\noindent Note that the increasing sequence of elements of $\{0,1\}^n$ taken in the Gray order gives a Hamiltonian path in the graph of 1-faces on this cube, starting from $0^n$.

The main result of this section is the following.
 
\begin{proposition}\label{prop:G-cubes=Gray-cubes}
Let $(G,G_\bullet)$ be a filtered group. We have $\q\in \cu^n(G_\bullet)$ if and only if for every $m$-face map $\phi$ into $\{0,1\}^n$ we have $\sigma_m(\q\co\phi)\in G_m$.
\end{proposition}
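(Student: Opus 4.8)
The plan is to prove the two implications separately, establishing the forward (``only if'') direction first, since it will be reused in the converse.

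For the forward direction, suppose $\q\in\cu^n(G_\bullet)$. By Corollary \ref{cor:cubes=respoly} there is a polynomial map $g\in\poly(\Z^n,G_\bullet)$ extending $\q$. Given an $m$-face map $\phi$, I would extend it to the affine homomorphism $\Phi:\Z^m\to\Z^n$, $\Phi(x)=Mx+c$, of which it is the restriction. Since $\sigma_m$ depends only on the values on $\{0,1\}^m$, formula \eqref{eq:sigma=deriv} gives $\sigma_m(\q\co\phi)=\partial_{e_m}\cdots\partial_{e_1}(g\co\Phi)(0^m)$. The key computation is the finite-difference chain rule $\partial_{e_a}(g\co\Phi)=(\partial_{Me_a}g)\co\Phi$, which follows at once from $\Phi(x+e_a)=\Phi(x)+Me_a$; iterating it yields $\sigma_m(\q\co\phi)=(\partial_{Me_m}\cdots\partial_{Me_1}g)(c)$. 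As each column $Me_a$ lies in $\Z^n=(\Z^n)_1$ (for a face map $Me_a=\pm e_j$, and $-e_j\in\Z^n$ causes no difficulty), the polynomial-map property gives membership in $G_{1+\cdots+1}=G_m$, as required.

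For the converse, I would use the unique factorization $\q=g_0^{F_0}\cdots g_{2^n-1}^{F_{2^n-1}}$ into upper-face generators afforded by \eqref{eq:cubecoeffs} (valid for any map, with coefficients a priori only in $G$), and show by induction along the colex order that $g_i\in G_{|v_i|}$; by Lemma \ref{lem:cubefactn} this is exactly what is needed. Fix $i$ with $|v_i|=m$ and support $I=\supp(v_i)$, and assume $g_j\in G_{|v_j|}$ for all $j<i$. I would then consider the standard face map $\phi_{F'}$ onto the $m$-face $F'=\{u:\supp(u)\subseteq I\}$, chosen so that $\phi_{F'}(0^m)=0^n$ and $\phi_{F'}(1^m)=v_i$. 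A direct check shows that composing the factorization of $\q$ with $\phi_{F'}$ deletes exactly the generators with $\supp(v_j)\not\subseteq I$ and reindexes the rest, so that $\q\co\phi_{F'}$ has upper-face factorization on $\{0,1\}^m$ whose top coefficient (at $\{1^m\}$) is $g_i$ and whose remaining coefficients are the $g_j$ with $\supp(v_j)\subsetneq I$; these have $|v_j|<m$ and lie in $G_{|v_j|}$ by the inductive hypothesis. The hypothesis applied to the $m$-face map $\phi_{F'}$ gives $\sigma_m(\q\co\phi_{F'})\in G_m$.

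Everything is thus reduced to the following sub-lemma, which I expect to be the main obstacle because of non-commutativity: if $P:\{0,1\}^m\to G$ has upper-face factorization $P=\prod_w g_w^{F(w)}$ with $g_w\in G_{|w|}$ for every $w\neq 1^m$, then $\sigma_m(P)\in G_m$ forces $g_{1^m}\in G_m$. The clean way to handle this is to pass to the quotient $\bar G=G/G_m$ with its induced filtration. There the partial product $\bar P_0=\prod_{w\neq 1^m}\bar g_w^{F(w)}$ is a genuine cube in $\cu^m(\bar G_\bullet)$ by Lemma \ref{lem:cubefactn}, so the already-proven forward direction (applied to the identity $m$-face map over $\bar G$) yields $\sigma_m(\bar P_0)=1$, since $\bar G_m$ is trivial. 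Now $\bar P$ and $\bar P_0$ differ only in their value at $1^m$, where $\bar P(1^m)=\bar P_0(1^m)\,\bar g_{1^m}$; writing $\sigma_m$ as the alternating Gray product behind \eqref{eq:recurGray} and isolating the single factor at $1^m$, one finds that $\sigma_m(\bar P)$ is a conjugate of $\bar g_{1^m}^{\pm1}$ (the sign being the Gray-code parity of $1^m$), using $\sigma_m(\bar P_0)=1$ to absorb the remaining factors. Hence $\sigma_m(P)\in G_m$ if and only if $\bar g_{1^m}=1$, i.e. $g_{1^m}\in G_m$. Feeding this back with $P=\q\co\phi_{F'}$ and $g_{1^m}=g_i$ completes the induction and shows $\q\in\cu^n(G_\bullet)$.
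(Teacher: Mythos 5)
Your proof is correct. The forward direction is essentially the paper's (Lemma \ref{lem:G-cubes=>Gray-cubes}): both rest on Corollary \ref{cor:cubes=respoly} and the identity \eqref{eq:sigma=deriv}, and your finite-difference chain rule $\partial_{e_a}(g\co\Phi)=(\partial_{Me_a}g)\co\Phi$ is a clean substitute for first invoking Lemma \ref{lem:G-facemap-inv}. The converse, however, takes a genuinely different route. The paper (Lemma \ref{lem:Gray-cubes=>G-cubes}) inducts on $d=\deg(G_\bullet)$: it quotients by $G_d$, lifts a cube of $G/G_d$ lying over $\pi_d\co\q$, corrects it level by level on $\{0,1\}^n_{\leq j}$ by multiplying by upper-face group elements of $G_d$, and finally propagates the agreement to $|v|>d$ using the hypothesis at $(d+1)$-faces. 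You instead take the universal upper-face factorization of an arbitrary map furnished by \eqref{eq:cubecoeffs} and prove, by induction along the colex order, that each coefficient lands in the correct term of the filtration: the face map onto the lower face spanned by $\supp(v_i)$ isolates exactly the generators supported there (and, since the identification of supports is order-preserving, yields the canonical factorization of the restricted map), and your reduction mod $G_m$ combined with the already-proved forward direction and the conjugacy computation for $\sigma_m$ correctly extracts the top coefficient --- the claim that $\sigma_m(\bar P)$ is a conjugate of $\bar g_{1^m}^{\pm 1}$ checks out for both signs of the Gray parity. Your route is more self-contained (no lift-and-correct step, and it pinpoints precisely which face map controls which coefficient), and it does not require $G_\bullet$ to have finite degree, which the paper's induction on $\deg(G_\bullet)$ implicitly does; what the paper's route buys is a correction template that is reused essentially verbatim in Propositions \ref{prop:FilGp=nilspace} and \ref{prop:quotex}.
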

\noindent Thus, the $n$-cubes on $(G,G_\bullet)$ are the maps $\q:\{0,1\}^n\to G$ that satisfy the equation
\[
\pi_m\co \sigma_m(\q\co\phi)=\id
\]
for every face map $\phi:\{0,1\}^m\to \{0,1\}^n$.

We split the proof of Proposition \ref{prop:G-cubes=Gray-cubes} into two parts. Firstly, we establish the following stronger statement in one direction.

\begin{lemma}\label{lem:G-cubes=>Gray-cubes}
If  $\q\in \cu^n(G_\bullet)$ then for every morphism $\phi: \{0,1\}^m\to \{0,1\}^n$ we have $\sigma_m(\q\co\phi)\in G_m$.
\end{lemma}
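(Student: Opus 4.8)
The plan is to reduce the statement to cubes via the composition axiom, and then to translate $\sigma_m$ into a single iterated discrete derivative so that the defining property of polynomial maps applies directly. First I would invoke Lemma \ref{lem:G-facemap-inv}: since $\q\in\cu^n(G_\bullet)$ and $\phi:\{0,1\}^m\to\{0,1\}^n$ is a morphism, we have $\q\co\phi\in\cu^m(G_\bullet)$. Hence it suffices to prove the following self-contained claim, which is really the heart of the matter: for every $m\geq 0$ and every cube $\q\in\cu^m(G_\bullet)$ one has $\sigma_m(\q)\in G_m$.

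To prove this claim I would use Corollary \ref{cor:cubes=respoly} to extend $\q$ to a polynomial map $g\in\poly(\Z^m,G_\bullet)$, where $\Z^m$ carries its lower central series filtration, so that $(\Z^m)_1=\Z^m$. Since $\sigma_m$ depends only on the values on $\{0,1\}^m$, we have $\sigma_m(\q)=\sigma_m(g)$, and the identity \eqref{eq:sigma=deriv} gives
\[
\sigma_m(\q)=\partial_{e_m}\partial_{e_{m-1}}\cdots\partial_{e_1}\,g(0^m).
\]
Each basis vector $e_i$ lies in $(\Z^m)_1$, so reading this product as $\partial_{h_1}\cdots\partial_{h_m}g$ with $h_j=e_{m+1-j}\in(\Z^m)_1$, the definition of a polynomial map (with all the indices $i_j$ equal to $1$) yields $\partial_{e_m}\cdots\partial_{e_1}g(0^m)\in G_{1+\cdots+1}=G_m$, which is exactly what is required.

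The only delicate points are essentially bookkeeping: the operators $\partial_{e_i}$ need not commute, but this causes no difficulty because the polynomial condition is imposed for \emph{every} ordering of the increments, so the specific order dictated by \eqref{eq:sigma=deriv} is admissible; and one must keep track of the filtration on $\Z^m$ being the lower central series, which is what assigns each $e_i$ index $1$ and hence total index $m$. The genuinely substantive ingredient is identity \eqref{eq:sigma=deriv}, which recasts the Gray-code alternating product $\sigma_m$ as a single iterated difference operator evaluated at $0^m$; once this is available the polynomial-map hypothesis does all the work, and no direct manipulation of the non-abelian factorization \eqref{eq:cubefactn} is needed.
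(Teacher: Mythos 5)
Your proof is correct and follows exactly the same route as the paper's: reduce to $\q\co\phi\in\cu^m(G_\bullet)$ via Lemma \ref{lem:G-facemap-inv}, extend to a polynomial map by Corollary \ref{cor:cubes=respoly}, and apply \eqref{eq:sigma=deriv} together with the definition of $\poly(\Z^m,G_\bullet)$. The extra remarks about the ordering of the difference operators and the filtration on $\Z^m$ are accurate but only make explicit what the paper leaves implicit.
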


\begin{proof}
Lemma \ref{lem:G-facemap-inv} implies $\q \co \phi\in\cu^m(G_\bullet)$. Corollary \ref{cor:cubes=respoly} gives $g\in \poly(\Z^m,G)$ such that $\q\co\phi (v)=g(v)$ for all $v\in \{0,1\}^m$. The result then follows from \eqref{eq:sigma=deriv} and the definition of $\poly(\Z^m,G_\bullet)$.
\end{proof}
\noindent Secondly, we have the following result in the converse direction.
\begin{lemma}\label{lem:Gray-cubes=>G-cubes}
Suppose that $\q:\{0,1\}^n\to G$ satisfies $\sigma_m(\q\co\phi)\in G_m$ for every $m$-face map $\phi$ into $\{0,1\}^n$.  Then $\q\in \cu^n(G_\bullet)$.
\end{lemma}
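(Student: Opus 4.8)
The plan is to argue by induction on $n$, the key idea being to use the corner-completion axiom (already available through Proposition \ref{prop:FilGp=nilspace}) in order to reduce the whole statement to controlling the single value of $\q$ at $1^n$. The base cases $n\leq 1$ are immediate, since then every map is a cube. For the inductive step I assume the claim in all dimensions below $n$ and take $\q:\{0,1\}^n\to G$ satisfying the hypothesis. Since any face map $\psi$ into a face $F$ of $\{0,1\}^n$ composes with $\phi_F$ to give a face map $\phi_F\co\psi$ into $\{0,1\}^n$, and $(\q\co\phi_F)\co\psi=\q\co(\phi_F\co\psi)$, each restriction $\q\co\phi_F$ again satisfies the hypothesis in lower dimension. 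In particular, for each of the $n$ faces $F=\{v:v\sbr{i}=0\}$ of dimension $n-1$ through $0^n$, the induction hypothesis yields $\q\co\phi_F\in\cu^{n-1}(G_\bullet)$. Hence the restriction of $\q$ to $\{0,1\}^n\setminus\{1^n\}$ is an $n$-corner, and by Proposition \ref{prop:FilGp=nilspace} it admits a completion $\tilde\q\in\cu^n(G_\bullet)$ agreeing with $\q$ at every $v\neq 1^n$.

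It then remains to show that $g:=\tilde\q(1^n)^{-1}\,\q(1^n)$ lies in $G_n$; granting this, $g^{\{1^n\}}$ lies in the face group $G_{(\{1^n\})}\leq\cu^n(G_\bullet)$ (as $\codim(\{1^n\})=n$), and since $\q=\tilde\q\cdot g^{\{1^n\}}$ and the cubes form a group, we conclude $\q\in\cu^n(G_\bullet)$. To prove $g\in G_n$ I would compare $\sigma_n(\q)$ with $\sigma_n(\tilde\q)$. On the one hand $\sigma_n(\tilde\q)\in G_n$ by the forward direction (Lemma \ref{lem:G-cubes=>Gray-cubes} applied with $\phi=\id$), and on the other hand $\sigma_n(\q)\in G_n$ by the hypothesis applied to the $n$-face map $\id$. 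The crucial point is that in the Gray-code expansion $\sigma_n(g)=\prod_{j=0}^{2^n-1} g(\gamma_n^{-1}(j))^{(-1)^j}$ the value at $1^n$ occurs exactly once, so I may write $\sigma_n(\,\cdot\,)=u\,(\,\cdot\,)(1^n)^{\epsilon}\,w$ with $\epsilon=\pm1$ and words $u,w$ depending only on the values off $1^n$, which coincide for $\q$ and $\tilde\q$. Substituting $\q(1^n)=\tilde\q(1^n)g$ shows that $\sigma_n(\tilde\q)^{-1}\sigma_n(\q)$ (when $\epsilon=1$), respectively $\sigma_n(\q)\,\sigma_n(\tilde\q)^{-1}$ (when $\epsilon=-1$), equals a conjugate of $g^{\pm1}$; since this element lies in the normal subgroup $G_n$, so does $g$.

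The main obstacle is precisely this last comparison in the non-abelian setting: $\sigma_n$ is not a homomorphism, so one cannot simply factor $\sigma_n(\q)$ as $\sigma_n(\tilde\q)$ times a contribution of the ``difference'', and indeed such a naive splitting produces spurious commutator corrections. What rescues the argument is that I only alter $\q$ at the \emph{single} vertex $1^n$, where $\sigma_n$ depends on the value exactly once; this collapses the potentially complicated discrepancy between $\sigma_n(\q)$ and $\sigma_n(\tilde\q)$ into one conjugate of $g^{\pm1}$, and conjugation preserves the normal subgroup $G_n$. I emphasise that this route never invokes shifted filtrations or the arrow lemma: the induction stays within the fixed filtered group $(G,G_\bullet)$, drawing only on corner completion and the already-established forward implication.
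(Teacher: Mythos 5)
Your proof is correct, and it takes a genuinely different route from the paper's. The paper inducts on the degree $d$ of the filtration: it passes to $G/G_d$, obtains a cube there by the inductive hypothesis, lifts it, corrects the discrepancies (which all lie in $G_d\subset G_j$) level by level over $|v|=0,1,\dots,d$ using upper-face group elements, and then propagates agreement to $|v|>d$ by applying the hypothesis to $(d+1)$-face maps based at each such $v$. You instead induct on the dimension $n$, reduce everything to a single discrepancy at $1^n$ via corner completion (Proposition \ref{prop:FilGp=nilspace}), and control that discrepancy by comparing $\sigma_n(\q)$ with $\sigma_n(\tilde\q)$. The delicate point --- that $\sigma_n$ is not a homomorphism --- is handled correctly: since $1^n$ occurs exactly once in the Gray-code word for $\sigma_n$, the two words differ only in that single slot, so $\sigma_n(\tilde\q)^{-1}\sigma_n(\q)$ (or its mirror image, depending on the sign of the exponent) is a conjugate of $g^{\pm1}$, and normality of $G_n$ in $G$ then gives $g\in G_n$; adjoining $g^{\{1^n\}}\in G_{(\{1^n\})}$ finishes the proof. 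What your approach buys is brevity and the fact that the argument never leaves the fixed filtered group (no quotients, no shifted filtrations); note however that the level-by-level correction argument has not really disappeared --- it is hidden inside your appeal to the completion axiom of Proposition \ref{prop:FilGp=nilspace}, whose proof is an induction on $\deg(G_\bullet)$ of exactly the kind the paper redoes here, so finiteness of the degree is still implicitly used. Two small points to make explicit in a polished write-up: a composition of face maps is again a face map, so each restriction $\q\co\phi_F$ to an $(n-1)$-face through $0^n$ inherits the hypothesis in dimension $n-1$; and the identity of $\{0,1\}^n$ is an $n$-face map, so the hypothesis does yield $\sigma_n(\q)\in G_n$, while Lemma \ref{lem:G-cubes=>Gray-cubes} yields $\sigma_n(\tilde\q)\in G_n$.
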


\begin{proof}
We argue by induction on the degree $d$ of $G_\bullet$. For $d=1$, $G$ is abelian and the claim follows from (the proof of) Proposition \ref{prop:1-degree-cubes}, so let $d>1$ and suppose that the lemma holds for every filtered group of degree less than $d$. Let $G' = G/G_d$, and let $G'_\bullet=(G_i/G_d)_{i\geq 0}$, a filtration of degree at most $d-1$. Suppose that $\q: \{0,1\}^n\to G$ satisfies the premise of the lemma. Then $\pi_d\co \q$ satisfies the premise in $(G',G'_\bullet)$, so $\pi_d \co \q \in \cu^n(G'_\bullet)$.

Note that $\q\mapsto \pi_d\co \q$ is a surjection $\cu^n(G_\bullet)\to \cu^n(G'_\bullet)$ (this follows from the surjectivity of $\pi_d:G\to G'$ combined with \eqref{eq:cubefactn}, or just with Definition \ref{def:G-cubes}). Hence there exists $\q'\in \cu^n(G_\bullet)$ such that $\pi_d\co \q=\pi_d\co \q'$.  We shall now produce a sequence $\q_0,\q_1,\ldots, \q_d$ of elements of $\cu^n(G_\bullet)$ such that $\q_j$ agrees with $\q$ on all of $\{0,1\}^n_{\leq\, j}$.

First, we obtain $\q_0$ such that $\q_0(0^n) = \q(0^n)$ and $\pi_d\co \q_0 = \pi_d \co \q$, by left-multiplying every entry of $\q'$ by $\q(0^n)\, \q'(0^n)^{-1}\in G_d$. Now for $1\leq j\leq d$ suppose that $\q_{j-1}$ has been constructed such that $\pi_d\co \q_{j-1} = \pi_d \co \q$ and $\q_{j-1}(v)=\q(v)$ for all $v\in \{0,1\}^n_{\leq j-1}$. We shall obtain $\q_j$ by correcting the discrepancies between $\q_{j-1}$ and $\q$ at elements $v$ with $|v| = j$. Note that $\pi_d\co \q_{j-1} =\pi_d\co \q$ implies that these discrepancies involve only elements of $G_d$, that is for any such $v$ there is $g_v \in G_d\subset G_j$ such that $\q_{j-1}(v) g_v = \q(v)$. Letting $F(v)$ be the corresponding upper face (of codimension $j$), we therefore have $g_v^{F(v)}\in G_{(F(v))}$. We define $\q_j= \q_{j-1} \cdot \prod_{v:|v|=j} g_v^{F(v)}$. This lies in $\cu^n(G_\bullet)$, and agrees with $\q$ on  $\{0,1\}^n_{\leq\, j}$ as required (note that each $F(v)$ intersects $\{0,1\}^n_{\leq\, j}$ only at $v$).

We have thus obtained $\q_d$. We now claim that the agreement between $\q_d$ and $\q$ extends to all of $\{0,1\}^n$, and so $\q=\q_d\in \cu^n(G_\bullet)$. Note that the above process cannot be continued in order to prove this claim, because now the new elements $g_v^{F(v)}$ that we would multiply by are not guaranteed to lie in $\cu^n(G_\bullet)$ anymore, since now $\codim(F(v))>d$. However, we have that $\q$ and $\q_d$ both satisfy  the premise of the lemma (for $\q_d$ this follows from Lemma \ref{lem:G-cubes=>Gray-cubes}) and agree on $\{0,1\}^n_{\leq d}$. This can be used to complete the proof, as follows. First let us show that for any $v$ with $|v|=d+1$ we have $\q(v)=\q_d(v)$. Since $|v|=d+1$, we can find a $(d+1)$-face map $\phi$ into $\{0,1\}^n$ with $\phi(0^{d+1})=v$ and $|\phi(w)|\leq d$ otherwise. Using the premise we then have $\q(v) = \q\co \phi(0^{d+1})  =  \q\co \phi(0^{d+1})\;\sigma_{d+1}(\q\co \phi)^{-1} =  \q_d \co \phi(0^{d+1})\;\sigma_{d+1}(\q_d \co \phi)^{-1} = \q_d(v)$. We can now use a similar argument to see that  $\q(v)=\q_d(v)$ for $|v|=d+2$, then for $|v|=d+3$, and so on, concluding finally that $\q=\q_d\in \cu^n(G_\bullet)$.
\end{proof}

Let us record the generalization of Proposition \ref{prop:1-degree-cubes} that we have finally obtained.

\begin{proposition}\label{prop:G-cubes}
Let $(G,G_\bullet)$ be a filtered group, and let $\q:\{0,1\}^n\to G$. We have $\q\in \cu^n(G_\bullet)$ if and only if one of the following equivalent statements holds:\vspace{-0.2cm}
\begin{enumerate}
\item There is a unique expression $\q= g_0^{F_0}g_1^{F_1}\cdots g_{2^n-1}^{F_{2^n-1}}$ with $g_i\in G_{\codim(F_i)}$ for each $i$.\vspace{-0.2cm}

\item The map $\q$ extends to a polynomial map in $\poly(\Z^n,G_\bullet)$.\vspace{-0.2cm}

\item For every $m$-face map $\phi:\{0,1\}^m \to \{0,1\}^n$ we have $\sigma_m(\q\co\phi)\in G_m$.
\end{enumerate}
\end{proposition}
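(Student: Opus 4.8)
The final statement, Proposition~\ref{prop:G-cubes}, collects three characterizations of membership in $\cu^n(G_\bullet)$, so the plan is simply to assemble the equivalences from the three results already established in this section, rather than to prove anything substantially new. The cleanest strategy is to show that each of the three listed properties is equivalent to the assertion $\q\in\cu^n(G_\bullet)$, since this immediately yields their mutual equivalence.

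First I would dispatch property~$(i)$: this is exactly the content of Lemma~\ref{lem:cubefactn} (Unique factorization of cubes), which states that $\q\in\cu^n(G_\bullet)$ if and only if it admits a unique factorization $\q=g_0^{F_0}\cdots g_{2^n-1}^{F_{2^n-1}}$ with $g_i\in G_{\codim(F_i)}$. So this equivalence requires no further work. Next, for property~$(ii)$, I would invoke Corollary~\ref{cor:cubes=respoly}, which asserts precisely that $\q\in\cu^n(G_\bullet)$ if and only if $\q$ extends to a polynomial map in $\poly(\Z^n,G_\bullet)$. Finally, property~$(iii)$ is established by Proposition~\ref{prop:G-cubes=Gray-cubes}, whose two directions are proved in Lemmas~\ref{lem:G-cubes=>Gray-cubes} and~\ref{lem:Gray-cubes=>G-cubes}: membership in $\cu^n(G_\bullet)$ is equivalent to the vanishing condition $\sigma_m(\q\co\phi)\in G_m$ for every $m$-face map $\phi$ into $\{0,1\}^n$.

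Since there is no genuine obstacle here, I would simply write a short proof that cites these three results in turn. The only point requiring a modicum of care is bookkeeping: each cited result is phrased as an ``if and only if'' statement with $\q\in\cu^n(G_\bullet)$, so one should state explicitly that, having shown each of $(i),(ii),(iii)$ to be equivalent to $\q\in\cu^n(G_\bullet)$, the three are pairwise equivalent and each is equivalent to the membership condition. A natural presentation is a single paragraph of the form: ``The equivalence of $\q\in\cu^n(G_\bullet)$ with $(i)$ is Lemma~\ref{lem:cubefactn}; with $(ii)$ is Corollary~\ref{cor:cubes=respoly}; and with $(iii)$ is Proposition~\ref{prop:G-cubes=Gray-cubes}. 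The result follows.'' I expect the entire proof to occupy only a few lines, as all the substance has already been carried out in the preceding lemmas, corollary, and proposition.
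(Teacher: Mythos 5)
Your proposal is correct and matches the paper exactly: the paper introduces this proposition with the phrase ``Let us record the generalization \ldots that we have finally obtained'' and gives no separate proof, since parts (i), (ii), (iii) are precisely Lemma~\ref{lem:cubefactn}, Corollary~\ref{cor:cubes=respoly}, and Proposition~\ref{prop:G-cubes=Gray-cubes} respectively. Your one additional bookkeeping remark (that each equivalence is with membership in $\cu^n(G_\bullet)$, hence the three are pairwise equivalent) is accurate and harmless.
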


\begin{remark}\label{rem:alternprop3gen}
One may replace ``every $m$-face map" in property $(iii)$ above by ``every morphism". Indeed, if the property holds with every $m$-face map, then the proposition tells us that $\q$ is a cube. But then, given any morphism $\phi: \{0,1\}^m\to \{0,1\}^n$, by composition $\q\co \phi$ is also a cube, so applying property $(iii)$ using as an $m$-face map the identity map on $\{0,1\}^m$, we deduce that $\sigma_m(\q\co\phi)\in G_m$ as required.
\end{remark}

\subsection{Cubes of degree $k$ on abelian groups}\label{sect:deg-k-str}

In this section we record a special case of the previous nilspace structures that will play an important role in the sequel. This is a $k$-step nilspace structure defined on an abelian group.

\begin{defn}\label{def:k-deg-ab-cubes}
Let $\ab$ be an abelian group. The \emph{degree-$k$ structure} on $\ab$, denoted by $\cD_k(\ab)$, is the nilspace formed by $\ab$ together with the cubes $\cu^n(\ab_{\bullet (k)})$, $n\in \N$, where $\ab_{\bullet(k)}$ denotes the maximal filtration of degree $k$ on $\ab$, namely the filtration with $\ab_0=\cdots=\ab_k=\ab$,\; $\ab_{k+1}=\{0_{\ab}\}$.
\end{defn}
\noindent Note that, by Proposition \ref{prop:G-cubes}, we have
\begin{equation}\label{eq:k-ab-cubes}
\cu^n(\cD_k(\ab))= \{\q:\{0,1\}^n\to \ab ~|~ \textrm{for every face map }\phi:\{0,1\}^{k+1}\to \{0,1\}^n,\; \sigma_{k+1}(\q\co\phi)=0\},
\end{equation}
where in this abelian setting we have $\sigma_{k+1}(\q\co\phi)=\sum_{v\in\{0,1\}^{k+1}}(-1)^{|v|} \q\co\phi(v)$. Note also that the nilspace $\cD_k(\ab)$ is $k$-fold ergodic.

\section{Quotients of filtered groups of degree $k$ as $k$-step nilspaces}\label{sec:filnilm}
The purpose of this section is to enlarge further our family of basic examples, to include some natural nilspaces that do not have a group structure. Given a filtered group $(G,G_\bullet)$ and a subgroup $\Gamma$ of $G$, the set of cosets $G/\Gamma$ may not have a group operation ($\Gamma$ may not be a normal subgroup), but one can still define a cube structure on it by projecting the cubes in $\cu^n(G_\bullet)$ down to $G/\Gamma$.

\begin{proposition}\label{prop:quotex}
Let $(G,G_\bullet)$ be a filtered group of degree $k$, let $\Gamma$ be a subgroup of $G$, and let $\pi_\Gamma:G\to G/\Gamma$ be the canonical projection. Then $\ns=G/\Gamma$ together with the sets
\begin{equation}\label{eq:quotex}
\cu^n(\ns)=\{\pi_\Gamma\co \q: \q\in \cu^n(G_\bullet)\} = (\cu^n(G_\bullet)\cdot \Gamma^{\{0,1\}^n})\,/\,\Gamma^{\{0,1\}^n} 
\end{equation}
is a $k$-step nilspace. We shall refer to such nilspaces as \emph{coset nilspaces}.
\end{proposition}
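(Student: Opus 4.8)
The plan is to verify the three nilspace axioms for the cube structure on $\ns=G/\Gamma$ defined by pushing forward cubes under $\pi_\Gamma$. The ergodicity and composition axioms should be essentially immediate from the corresponding properties on $(G,G_\bullet)$, so the real work will be the corner-completion axiom, and then separately the uniqueness of completion needed to conclude that $\ns$ is exactly $k$-step.

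First I would check that the two descriptions of $\cu^n(\ns)$ in \eqref{eq:quotex} agree: this is just the observation that $\pi_\Gamma\co\q$, viewed in $\ns^{\{0,1\}^n}$, is the image of $\q\in\cu^n(G_\bullet)\subset G^{\{0,1\}^n}$ under the coordinatewise projection $G^{\{0,1\}^n}\to \ns^{\{0,1\}^n}=(G/\Gamma)^{\{0,1\}^n}$, whose fibres are the cosets of $\Gamma^{\{0,1\}^n}$. Ergodicity is clear since $\pi_\Gamma$ is surjective, so $\cu^1(\ns)=\ns^{\{0,1\}}$. For composition, given $\phi:\{0,1\}^m\to\{0,1\}^n$ and $\pi_\Gamma\co\q\in\cu^n(\ns)$, I would note that $(\pi_\Gamma\co\q)\co\phi=\pi_\Gamma\co(\q\co\phi)$, and $\q\co\phi\in\cu^m(G_\bullet)$ by the composition axiom on the filtered group (Proposition~\ref{prop:FilGp=nilspace}); hence the composite lies in $\cu^m(\ns)$.

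For corner completion, suppose $\mathfrak{c}':\{0,1\}^n\setminus\{1^n\}\to\ns$ is an $n$-corner. Each restriction to an $(n-1)$-face through $0^n$ is in $\cu^{n-1}(\ns)$, so is $\pi_\Gamma$ of some $(n-1)$-cube on $G$. The key step is to lift the whole corner to an $n$-corner on $G$: I would choose representatives $\tilde{\mathfrak{c}}(v)\in G$ with $\pi_\Gamma(\tilde{\mathfrak{c}}(v))=\mathfrak{c}'(v)$ in a coherent way, so that the resulting map is genuinely a corner on $(G,G_\bullet)$. The natural route is to build the lift face-by-face: lift one $(n-1)$-face through $0^n$ to an actual cube in $\cu^{n-1}(G_\bullet)$, then successively lift the remaining faces, using at each stage that the already-lifted values on the shared lower face must be respected. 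Since any two lifts of a cube on $G/\Gamma$ to $G$ differ (on the overlap) by a $\Gamma$-valued adjustment supported on a face, and one can absorb such a discrepancy by multiplying by the corresponding face-group element of $\Gamma$, the lifts can be matched on all overlaps to produce a bona fide $n$-corner $\tilde{\mathfrak{c}}'$ on $G$. Then the completion axiom for $\cu^n(G_\bullet)$ gives $\tilde{\mathfrak{c}}\in\cu^n(G_\bullet)$ completing $\tilde{\mathfrak{c}}'$, and $\pi_\Gamma\co\tilde{\mathfrak{c}}$ completes $\mathfrak{c}'$ in $\ns$. \textbf{This coherent-lifting step is where I expect the main obstacle to lie:} one must be careful that the $\Gamma$-adjustments used to reconcile the faces respect the filtration degrees (i.e. live in the correct face groups), so that the patched-together map really is a corner and not merely a set-theoretic lift.

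Finally, for the step count, I would show every $(k+1)$-corner on $\ns$ has a \emph{unique} completion while some $k$-step phenomenon genuinely occurs. Uniqueness follows from Corollary~\ref{cor:cube-det}: since $G_\bullet$ has degree $k$, every cube in $\cu^{n}(G_\bullet)$ is determined by its restriction to $\{0,1\}^n_{\le k}$, and this property descends to $\ns$, so a $(k+1)$-cube on $\ns$ is determined by its values at $v$ with $|v|\le k$, forcing uniqueness of the completion of any $(k+1)$-corner. That $\ns$ is not $(k-1)$-step in general would be witnessed by the fact that the degree of $G_\bullet$ is exactly $k$ (so the lift already exhibits a non-unique $k$-corner completion upstairs, which projects down), giving the ``if and only if'' via $\deg(G_\bullet)\le k$ exactly as in Proposition~\ref{prop:FilGp=nilspace}.
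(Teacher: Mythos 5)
Your treatment of ergodicity, composition, and the uniqueness of $(k+1)$-corner completions is sound: in particular the uniqueness claim works because if $\pi_\Gamma\co\q_1$ and $\pi_\Gamma\co\q_2$ agree on $\{0,1\}^n_{\leq k}$ then the pointwise quotient $v\mapsto \q_1(v)^{-1}\q_2(v)$ is again a cube in $\cu^n(G_\bullet)$ (by the group property of cubes) taking values in $\Gamma$ on $\{0,1\}^n_{\leq k}$, hence everywhere by Corollary \ref{cor:cube-det}. The genuine gap is exactly where you flagged it: the coherent lifting of an $n$-corner on $G/\Gamma$ to an $n$-corner on $G$. The mechanism you propose, absorbing the discrepancy by ``multiplying by the corresponding face-group element of $\Gamma$'', does not work as stated. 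The discrepancy between two lifts of the same cube on $G/\Gamma$ is a $\Gamma$-valued \emph{function} on the overlap (in fact a cube on $(\Gamma,\Gamma\cap G_\bullet)$), not a single group element supported on a face; and once more than two of the $(n-1)$-faces through $0^n$ have been patched, the region constraining the next lift is a \emph{union} of lower faces, so one must extend a $\Gamma$-valued morphism from that union to the whole face while respecting all previously fixed values. The simultaneous consistency of these constraints at vertices lying in several faces is essentially the content of the proposition itself and is not addressed. A corner on $G/\Gamma$ does lift to a corner on $G$, but the natural proof of that fact goes \emph{through} the proposition (complete downstairs, then restrict a lift of the completion), so your reduction is circular as it stands.

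The paper avoids this entirely by inducting on $d=\deg(G_\bullet)$ rather than lifting the corner. One passes to $G'=G/G_d$ with $\Gamma'=(\Gamma\cdot G_d)/G_d$, completes the projected corner on $G'/\Gamma'$ by induction, lifts that \emph{completion} (a full cube, which lifts by surjectivity of $\cu^n(G_\bullet)\to\cu^n(G'_\bullet)$) to some $\tilde\q_0\in\cu^n(G_\bullet)$, and observes that the remaining discrepancy between $\pi_\Gamma\co\tilde\q_0$ and the given corner is measured by elements $g_v\in G_d$. Since $G_d\subseteq G_j$ for every $j\leq d$, these $g_v$ lie in \emph{every} face group $G_{(F(v))}$ with $|v|\leq d$, so the vertex-by-vertex correction by $g_v^{F(v)}$ (in increasing order of $|v|$, as in the proof of Proposition \ref{prop:FilGp=nilspace}) genuinely stays inside $\cu^n(G_\bullet)$; agreement at the remaining vertices then propagates by Corollary \ref{cor:cube-det}. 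In other words, the induction on degree is precisely what makes the ``absorb by face-group elements'' idea legitimate: it pushes all discrepancies down into the deepest filtration level, where they belong to every relevant face group. I would restructure your completion argument along these lines.
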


\begin{proof}
The composition axiom follows clearly from that for $(G,G_\bullet)$, and ergodicity follows from the transitivity of the action of $G$ on $G/\Gamma$. To check the completion axiom, one can argue again by induction on the degree $d$. For $d=1$, $G/\Gamma$ is just an abelian group. Let $d>1$, let $\q'$ be an $n$-corner on $\ns=G/\Gamma$ and let $\overline{\pi_d}$ be the projection $\ns\to \ns':= G'/\Gamma'$, where $\pi_d$ denotes the quotient homomorphism $G\to G'=G/G_d$, and $\Gamma'=(\Gamma\cdot G_d)/ G_d$. We thus have the following commutative diagram.
\[
\begin{tikzcd}
G \arrow{r}{\pi_d} \arrow[swap]{d}{\pi_\Gamma} & G' \arrow{d}{\pi_{\Gamma'}} \\
\ns=G/\Gamma \;\;\arrow{r}{\overline{\pi_d}} &\;\; \ns'=G'/\Gamma'
\end{tikzcd}
\]
The surjectivity of the map $\cu^n(G_\bullet)\to \cu^n(G_\bullet')$, $\q\mapsto \pi_d\co \q$  implies the surjectivity of $\cu^n(\ns)\to \cu^n(\ns')$, $\q\mapsto \overline{\pi_d}\co \q$. Note that $G'$ with $G'_\bullet=(G_i/G_d)_i$ is a filtered group of degree $d-1$. By induction there exists a cube in $\cu^n(\ns')$ completing $\overline{\pi_d}\co \q'$, and by the surjectivity just mentioned there exists $\q_0\in \cu^n(\ns)$ such that this completion is $\overline{\pi_d}\co \q_0$, that is we have $\overline{\pi_d}\co \q_0(v)=\overline{\pi_d}\co \q'(v)$ for all $v\neq 1^n$. By definition of $\cu^n(\ns)$ there exists $\tilde \q_0\in \cu^n(G_\bullet)$ such that $\q_0 = \pi_\Gamma\co \tilde \q_0$. Choose also any map $\tilde \q'\in G^{\{0,1\}^n\setminus\{1^n\}}$ such that $\pi_\Gamma\co \tilde \q'=\q'$.
Then $\overline{\pi_d}\co \q_0(v)=\overline{\pi_d}\co \q'(v)$ means that $\tilde \q_0 (v) G_d \Gamma = \tilde \q'(v)G_d \Gamma$. This implies that for each $v\neq 1^n$ there exists $g_v\in G_d$ such that $\pi_\Gamma(\tilde\q_0(v)g_v)=\q'(v)$. We can now carry out the same process as in the proof of Proposition \ref{prop:FilGp=nilspace} to correct $\tilde \q_0$ to a new cube $\tilde \q\in \cu^n(G_\bullet)$ satisfying $\pi_\Gamma(\tilde\q(v))=\q'(v)$ for all $v\neq 1^n$. Then $\q=\pi_\Gamma\co \tilde \q$ is a completion of $\q'$.
\end{proof}
\noindent The coset nilspace construction is important in that it captures the algebraic structure of a large class of nilspaces, namely \emph{filtered nilmanifolds}. From a purely algebraic point of view, filtered nilmanifolds are indeed examples of coset nilspaces, however they also have crucial topological properties afforded by strong topological assumptions on $G$ and $\Gamma$, namely that $G$ is a connected nilpotent Lie group and $\Gamma$ is a discrete, cocompact subgroup of $G$. Nilmanifolds play a crucial role in the topological part of the theory of nilspaces, which studies the \emph{compact nilspaces} mentioned in the introduction (see Chapter 3 of \cite{CamSzeg}; see also \cite{GTlin,GTOrb,GTZ} for more information on nilmanifolds). In these notes we shall only look at the algebraic properties of such spaces. It turns out that many of these general properties are often fully illustrated already by 2-step nilspaces. Among these, the following well-known example can be used as a source of intuition for several concepts and tools treated in Chapter \ref{chap:chargenils}.

\begin{example}[The Heisenberg nilmanifold]\label{ex:Heisen}
Consider the Heisenberg group
\[
H=\begin{psmallmatrix} 1 & \R & \R\\[0.1em]  & 1 & \R \\[0.1em]  &  & 1 \end{psmallmatrix}:=\Big\{ \begin{psmallmatrix} 1 & x_1 & x_3\\[0.1em]  & 1 & x_2 \\[0.1em]  &  & 1 \end{psmallmatrix}: x_i \in \R\Big\},
\]
let $\Gamma=\begin{psmallmatrix} 1 & \Z & \Z\\[0.1em]  & 1 & \Z \\[0.1em]  &  & 1 \end{psmallmatrix}$, and let $H_\bullet$ denote the lower central series on $H$, that is the filtration with $H_{(2)}=\begin{psmallmatrix} 1 & 0 & \R\\[0.1em]  & 1 & 0 \\[0.1em]  &  & 1 \end{psmallmatrix}$ and $H_{(3)}=\{\id_H\}$. Let $\ns$ be the nilspace consisting of the set $H/\Gamma$ together with the cube sets $\cu^n(H_\bullet)$ projected on $H/\Gamma$. By Proposition \ref{prop:quotex} we have that $\ns$ is a 2-step nilspace. We can identify the set $H/\Gamma$ with the fundamental domain $[0,1)^3$ for the right action of $\Gamma$ on $H$.
\end{example}

\section{Motivation for a general algebraic characterization}\label{sec:Motiv}

Looking back at the examples of nilspaces that we have treated so far, we see that while not all of them have a group structure, on all of them there is an \emph{action} by some filtered group $G$, and the cubes on the nilspace are the cubes in $\cu^n(G_\bullet)$ composed with this action. For instance, in the case of a quotient $G/\Gamma$ as in the last section, the action is defined by $(g,x\Gamma)\mapsto (gx)\Gamma$, and a map $\{0,1\}^n\to G/\Gamma$ is a cube if and only if it is of the form $v\mapsto \q(v) x\Gamma$ for some cube $\q\in \cu^n(G_\bullet)$ and some coset $x\Gamma$. 
We can also observe that, whether $\ns$ consists of a group or a coset space, we can always view $\ns$ algebraically as a type of principal bundle with fibre isomorphic to an abelian group. For instance, if $\ns$ consists of $G/\Gamma$ for a filtered group $(G,G_\bullet)$ of degree $d$, then the abelian group $\ab=G_d/(G_d\cap \Gamma)$ has a well-defined free action on $G/\Gamma$. Then, identifying points of $G/\Gamma$ lying in the same orbit of $\ab$ gives a projection (or bundle map) from $G/\Gamma$ to $(G/G_d)/((\Gamma\cdot G_d)/ G_d)$. This observation can be iterated, expressing $G/G_d$ in turn as the same type of bundle using the abelian group  $G_{d-1}/G_d$, and so on. \\
\indent In the next chapter we present a description of the above kind for a general nilspace. More precisely, on one hand we formalize the observation above using the notion of an \emph{abelian bundle} (Definition \ref{def:AbelianBundle}) and we present a central theorem from \cite{CamSzeg}  that describes a general nilspace $\ns$ as an iterated abelian bundle (Subsection \ref{subsec:bundecomp}). On the other hand, we shall define a certain non-trivial group $G$ acting on $\ns$, called the \emph{translation group}, which comes naturally equipped with a filtration $G_\bullet$, such that for each $n$ every cube in $\cu^n(G_\bullet)$ composed with this action is a cube in $\cu^n(\ns)$ (Subsection \ref{subsec:Facegp}). (Note however that, unlike for the nilspaces in this chapter, in general not every cube on $\ns$ will be of this form; see Remark \ref{rem:transcubes}.) To motivate this general description further, we close this chapter by establishing the simplest case of the description, namely, the following characterization of a 1-step nilspace.

\begin{proposition}\label{prop:abel}
Let $\ns$ be a $1$-step nilspace. Then there is an abelian group $\ab$ such that $\ns$ is a principal homogeneous space of $\ab$ and the cubes on $\ns$ are the projections of the degree-1 cubes on $\ab$.
\end{proposition}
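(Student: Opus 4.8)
The plan is to turn the underlying set $\ns$ into an abelian group by reading the group law off cube completion, and then to check that the degree-$1$ structure of this group reproduces all the cube sets $\cu^n(\ns)$. Assuming $\ns\neq\emptyset$, fix a base point $e\in\ns$; for $a,b\in\ns$ let $a+b$ denote the value at $1^2$ of the unique $\q\in\cu^2(\ns)$ with $\big(\q(0^2),\q(10),\q(01)\big)=(e,a,b)$, which is well defined because $\ns$ is $1$-step. First I would establish the group axioms using $\aut(\{0,1\}^2)$ and the composition axiom, together with uniqueness of $2$-corner completion. The maps sending $(0^2,10,01,1^2)$ to $(e,a,e,a)$ and to $(e,e,a,a)$ are the pullbacks of $1$-cubes along the two coordinate projections $\{0,1\}^2\to\{0,1\}$, hence lie in $\cu^2(\ns)$, giving $a+e=a=e+a$; precomposing the cube defining $a+b$ with the coordinate swap gives $b+a=a+b$; and reflecting, in the first coordinate, the cube completing the corner $(a,e,e)$ yields an inverse $\bar a$ with $a+\bar a=e$.

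For associativity I would use a single $3$-cube. The $3$-corner on $\{0,1\}^3\setminus\{1^3\}$ with values $e,a,b,c,a+b,a+c,b+c$ at $000,100,010,001,110,101,011$ has its three $2$-faces through $0^3$ equal to the cubes defining $a+b$, $a+c$ and $b+c$, so by corner completion it extends to some $\omega\in\cu^3(\ns)$ with apex $s:=\omega(1^3)$. Composing $\omega$ with the morphisms $(w_1,w_2)\mapsto(w_1,w_1,w_2)$ and $(w_1,w_2)\mapsto(w_2,w_1,w_1)$ yields $2$-cubes with corners $(e,a+b,c)$ and $(e,b+c,a)$, whence $s=(a+b)+c$ and $s=(b+c)+a$; with commutativity this is associativity. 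Write $\ab=(\ns,+,e)$.

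The second ingredient is a reduction from $n$-cubes to $2$-cubes: in any $1$-step nilspace, $\q\in\cu^n(\ns)$ if and only if every restriction of $\q$ to a $2$-face lies in $\cu^2(\ns)$. The forward direction is composition. For the converse I would induct on $n$: the hypothesis makes every $(n-1)$-face restriction through $0^n$ a cube (induction), so this corner completes to some $\tilde\q\in\cu^n(\ns)$; comparing $\tilde\q$ and $\q$ along a $2$-face map sending $1^2$ to $1^n$ with its remaining vertices elsewhere forces $\tilde\q(1^n)=\q(1^n)$ by uniqueness of $2$-corner completion, so $\q=\tilde\q\in\cu^n(\ns)$.

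Finally I would pin down $\cu^2(\ns)$. Revisiting the associativity cube, now with $s=a+b+c$, its non-principal face $\{x_1=1\}$ shows that $(a,a+b,a+c,a+b+c)\in\cu^2(\ns)$; instantiating the construction with $a=p$, $b=\bar p+q$, $c=\bar p+r$ (so that $a+b=q$, $a+c=r$), this says the completion of an \emph{arbitrary} corner $(p,q,r)$ is $a+b+c=q+r-p$, i.e.\ $\cu^2(\ns)=\{\q:\sigma_2(\q)=0\}$. By Proposition \ref{prop:1-degree-cubes} this is precisely $\cu^2(\cD_1(\ab))$, and $\cD_1(\ab)$ is itself $1$-step by Proposition \ref{prop:1-step-ns}. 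Applying the reduction lemma to both nilspaces, which now share the same $2$-cubes, gives $\cu^n(\ns)=\cu^n(\cD_1(\ab))$ for all $n$, so $\ns=\cD_1(\ab)$; letting $\ab$ act on $\ns$ by translation exhibits $\ns$ as a principal homogeneous space of $\ab$ whose cubes are the projected degree-$1$ cubes. The step I expect to be the main obstacle is exactly this linearity of completion — that corners \emph{not} based at $e$ still complete to $q+r-p$; the pleasant point is that this translation-invariance of $2$-cubes, rather than needing a fresh higher-dimensional construction, already drops out of the associativity $3$-cube via its non-principal face.
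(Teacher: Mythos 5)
Your proposal is correct and follows essentially the same route as the paper: the same definition of $+$ by completing $2$-corners based at $e$, commutativity via the coordinate swap, and associativity via the same $3$-corner with values $e,a,b,c,a+b,a+c,b+c$ read off through two cube morphisms. The one place you go beyond the paper is the final identification of the cube sets, which the paper dismisses as clear; your argument there --- extracting the translation-invariance of $2$-cubes from the non-principal face $\{x_1=1\}$ of the associativity cube and then reducing $n$-cubes to $2$-face restrictions by induction with uniqueness of completion --- is a correct and welcome filling-in of that step (the reduction is in effect the $k=1$ case of Lemma~\ref{lem:cubechar}).
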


\begin{proof}
Fix a point $e\in \ns$. The abelian group $\ab$ is the set $\ns$ together with the binary operation $+$ defined as follows: given $x,y\in \ns$, we let $x+y$ be the unique element $z\in \ns$ such that the map $\q:\{0,1\}^2\to X$ with values $\q(00)=e,\q(10)=x,\q(01)=y,\q(11)=z$ is in $\cu^2(\ns)$. We claim that this is a commutative group operation. To check commutativity, let $\theta \in \aut(\{0,1\}^2)$ be the symmetry permuting $01$ and $10$. Then by composition we have $\q\co \theta \in \cu^2(\ns)$, so by uniqueness of completion we must have $x+y=y+x$. For associativity, suppose that $x,y,z\in \ns$ and consider the following $3$-corner $\q':\{0,1\}^3\setminus \{1^3\}\to \ns$.
\[
\begin{tikzpicture}[%
  back line/.style={densely dotted},
  cross line/.style={preaction={draw=white, -,line width=6pt}}]
  \node (A) {$z$};
  \node [right of=A, node distance=1.8cm] (B) {$x+z$};
  \node [below of=A, node distance=1.8cm] (C) {$e$};
  \node [right of=C, node distance=1.8cm] (D) {$x$};
 
  \node (A1) [right of=A, above of=A, node distance=1cm] {$y+z$};
  \node [right of=A1, node distance=1.8cm] (B1) {};
  \node [below of=A1, node distance=1.8cm] (C1) {$y$};
  \node [right of=C1, node distance=1.8cm] (D1) {$x+y$};
  
  \draw (A) -- (A1) -- (C1) -- (C) -- (A);
  \draw (D1) -- (C1) -- (C) -- (D) -- (D1);
  \draw[back line] (D1) -- (B1) -- (A1);
  \draw[back line] (B) -- (B1);
  \draw[cross line] (A)  -- (B) -- (D);
\end{tikzpicture}
\]
This has a unique completion $\q$. Considering the morphisms $\phi_1:\{0,1\}^2\to \{0,1\}^3$, $v\mapsto (v\sbr{1},v\sbr{1},v\sbr{2})$ and $\phi_2:\{0,1\}^2\to \{0,1\}^3$, $v\mapsto (v\sbr{1},v\sbr{2},v\sbr{2})$, we then have
\[
(x+y)+z=\q\co \phi_1(11)=\q(111)=\q\co \phi_2(11)=x+(y+z).
\]
Finally, given $x$, the corner $\q'(00)=x$, $\q'(01)=\q'(10)=e$ has a completion $y$, which then satisfies $x+y=e$. This abelian group $\ab$, obtained by fixing a point $e$ in $\ns$, clearly has a free and transitive action on $\ns$, and the cubes on $\ns$ are precisely the compositions of degree-1 cubes on $\ab$ with this action.
\end{proof}
\noindent Henceforth, the expression `affine abelian group with the degree-1 structure' (or `degree-1 abelian torsor') will refer to the structure in the above result, that is, a principal homogeneous space of some abelian group $\ab$, with a nilspace structure given by the standard cubes on $\ab$ composed with the $\ab$-action on $e$.
\begin{remark}\label{rem:forget}
Note that if we start with an abelian group $\ab$ and then view it as a 1-step nilspace, then in doing so we forget which element is the  identity. In other words, to think of $\ab$ as a 1-step nilspace amounts to viewing it as an \emph{affine} abelian group. Fixing any element $e$ of the set underlying $\ab$, and applying the argument in the above proof to the 1-step nilspace structure, we recover the original group structure on $\ab$, but only up to a shift. This is an example of the forgetfulness of the functor from the category of filtered groups with polynomial maps to the category of nilspaces with morphisms, mentioned after Theorem \ref{thm:homs=polys}. Alternatively,  one could work in the category of \emph{rooted} nilspaces (i.e. nilspaces with a distinguished point). Such nilspaces are considered for instance in \cite{Szegedy:Regul,Szegedy:HFA}.
\end{remark}

\chapter[Algebraic characterization of nilspaces]{Algebraic characterization of nilspaces}\label{chap:chargenils}

\section{Some basic notions}\label{sec:BasNot}

In this section we collect several tools providing different ways to construct new cubes or new nilspaces out of old ones.

\subsection{Cubespaces, simplicial completion, and concatenation}
Let us recall the definition of a cubespace from Chapter \ref{chap:intro}.
\begin{defn}
A \emph{cubespace} is a set $\ns$ together with a collection of sets $\cu^n(\ns)\subset \ns^{\{0,1\}^n}$, $n\geq 0$,  satisfying the composition axiom from Definition \ref{def:ns} and such that $\cu^0(\ns)=\ns$.
\end{defn}

\begin{defn}\label{def:prodsubspace}
Given cubespaces $\ns,\nss$, their \emph{product} is the cubespace consisting of the cartesian product $\ns\times \nss$ with the cube sets $\cu^n(\ns\times \nss):=\cu^n(\ns)\times \cu^n(\nss)$ for each $n\geq 0$. We say that $\nss$ is a \emph{subcubespace} of $\ns$ if $\cu^n(\nss)\subset \cu^n(\ns)$ for every $n\geq 0$.
\end{defn}
Clearly, the product of two $k$-nilspaces is a $k$-nilspace.
\begin{defn}\label{def:ext-property} Let $P$ be a cubespace and $Q$ be a subcubespace of $P$. We say that $Q$ has the \emph{extension property} in $P$ if for every non-empty nilspace $\ns$ and every morphism $g': Q \to \ns$ there is a morphism $g: P\to\ns$ with $g|_Q=g'$. 
\end{defn}
\noindent If $S$ is a finite set and $h$ is a subset of $S$ then we denote by $\{0,1\}^S_h$ the set of elements of $\{0,1\}^S$ that are supported on $h$. Note that $\{0,1\}^S_h$ can be viewed as the discrete cube of dimension $| h|$.

\begin{defn} Let $S$ be a finite set and let $H$ be a set system in $S$ (a  collection of subsets of $S$). The collection of all cube morphisms $g:\{0,1\}^n\to \{0,1\}^S_h$, $n\geq 0,\, h\in H$,  defines a cubespace structure on $P=\bigcup_{h\in H}\{0,1\}^S_h$.  Cubespaces arising this way will be called \emph{simplicial cubespaces}.
\end{defn}
\noindent In this definition $P$ is a subcubespace of $\{0,1\}^S$.
Note that by composing such morphisms $g$ with discrete-cube morphisms, and invoking the composition axiom, we can deduce that for every $h$ in $H$, for every $h'\subset h$, every $n$-cube morphism $\phi: \{0,1\}^n \to \{0,1\}^S_{h'}$ is a cube in the above cubespace. It follows that we can assume without loss of generality that $H$ is a downset, or simplicial complex,  i.e. that it satisfies the following property: if $h\in H$ and $h'\subset h$ then $h'\in H$.

\begin{lemma}[Simplicial completion]\label{lem:simpcomp} Let $S$ be a finite set and $P$ be a simplicial cubespace corresponding to a set system $H$ in $S$. Then $P$ has the extension property in $\{0,1\}^S$.
\end{lemma}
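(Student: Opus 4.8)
The plan is to build the extension $g$ by filling in, one vertex at a time via the corner-completion axiom of $\ns$, the values of $g'$ on the vertices of $\{0,1\}^S$ that do not lie in $P$. First I would reduce the goal: by the composition axiom it suffices to produce a map $g:\{0,1\}^S\to\ns$ with $g|_P=g'$ that lies in $\cu^{|S|}(\ns)$ (under a fixed identification $\{0,1\}^S\cong\{0,1\}^{|S|}$). Indeed any such $g$ is then automatically a morphism, since for every cube morphism $\phi:\{0,1\}^n\to\{0,1\}^S$ we get $g\co\phi\in\cu^n(\ns)$. Recall we may assume $H$ is a downset. The vertices of $\{0,1\}^S$ outside $P$ are exactly those $v$ with $\supp(v)\notin H$, and I would enumerate them in increasing order of $|v|$ (breaking ties arbitrarily), defining $g(v)$ at each step and keeping $g=g'$ on $P$.

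For $I\subseteq S$ write $\{0,1\}^S_I$ for the coordinate subcube of elements supported on $I$; this is an $|I|$-face of $\{0,1\}^S$ with bottom vertex the all-zero element $0^S$ and top vertex $v_I$ (the element of support exactly $I$). The invariant I would carry through the induction is: whenever every vertex of $\{0,1\}^S_I$ has been assigned a value, the restriction $g|_{\{0,1\}^S_I}$ lies in $\cu^{|I|}(\ns)$. For the base case, the fully-defined coordinate subcubes are precisely those with $I\in H$ (here the downset property of $H$ is used), and for such $I$ the natural bijection $\{0,1\}^{|I|}\cong\{0,1\}^S_I\hookrightarrow P$ is a cube of $P$, so its composition with $g'$ is a cube of $\ns$ because $g'$ is a morphism.

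In the inductive step, when it is time to define $g(v)$ I set $I=\supp(v)$, so $v=v_I$ is the top vertex of $F:=\{0,1\}^S_I$. Every other vertex of $F$ has support a proper subset of $I$, hence of strictly smaller size, and so has already been assigned a value. To apply corner completion to $F$ I must check that the restriction of the current partial map to each $(|I|-1)$-face of $F$ containing $0^S$ is a cube; these faces are exactly the subcubes $\{0,1\}^S_{I\setminus\{i\}}$ for $i\in I$, which are fully defined and are cubes by the invariant. Corner completion then supplies a value $g(v)$ making $g|_F$ a cube in $\cu^{|I|}(\ns)$.

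The step requiring the most care, and the main obstacle, is checking that this assignment preserves the invariant, i.e.\ that $F=\{0,1\}^S_I$ is the \emph{only} coordinate subcube newly completed when $g(v)$ is defined. Any coordinate subcube $\{0,1\}^S_{I'}$ containing $v$ satisfies $I\subseteq I'$; if $I'\supsetneq I$ then its top vertex $v_{I'}$ has $|v_{I'}|>|v|$ and $I'\notin H$ (otherwise $I\in H$ and $v\in P$ by the downset property), so $v_{I'}\neq v$ is still undefined and $\{0,1\}^S_{I'}$ is not yet complete. Thus no larger face is completed "out of order" with possibly non-cube data, and the invariant is maintained. Iterating over all $v\notin P$ defines $g$ on all of $\{0,1\}^S$, and applying the invariant with $I=S$ gives $g\in\cu^{|S|}(\ns)$, the desired extension. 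Everything apart from this bookkeeping point is a direct application of the corner-completion and composition axioms.
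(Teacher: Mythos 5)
Your proof is correct and takes essentially the same route as the paper's: both add the minimal missing faces one at a time (you phrase this as processing the vertices outside $P$ in increasing order of support size, the paper as repeatedly adjoining to $H$ a minimal set $h'\notin H$ all of whose proper subsets lie in $H$) and apply corner completion on the corresponding coordinate subcube. Your invariant-maintenance step is just a more explicit version of the paper's assertion that the extended map remains a morphism on the enlarged simplicial cubespace.
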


\begin{proof} As noted above, we may assume that $H$ is a simplicial complex, so in particular the empty set is in $H$. If $H$ is the whole power set $2^S$ then there is nothing to prove. Otherwise, starting from some set $S\not\in H$ we can go downward in the lattice $2^S$ (passing to subsets) until we find a set $h'$ that is not in $H$ and such that every proper subset of $h'$ is contained in $H$. The new system $H'=H\cup\{h'\}$ is also a simplicial complex. Let $g:\bigcup_{h\in H}\{0,1\}_h^S\to \ns$ be a morphism into some nilspace $\ns$. The restriction of $g$ to $\{0,1\}^{h'}\setminus \{1^{h'}\}$ is an $|h'|$-corner. By the corner-completion axiom, we can therefore extend $g$ to a morphism $\bigcup_ {h\in H'}\{0,1\}^S_h\to \ns$. By repeating this procedure we can extend $g$ to all of $\{0,1\}^S$. 
\end{proof}
\noindent We shall now describe a basic way to form a new cube from two given cubes on a nilspace.

\begin{defn}[Adjacent maps and concatenations]\label{def:adj&concat}
Given a map $f$ defined on $\{0,1\}^n$, recall that for each $i\in\{0,1\}$ we define $f(\cdot,i)$ on $\{0,1\}^{n-1}$ by
$f(v,i)= f( (v,i) )$, where $(v,i):=\big(v\sbr{1},\ldots, v\sbr{n-1}, i\big)\in \{0,1\}^n$. A map $f_1$ on $\{0,1\}^n$ is said to be \emph{adjacent} to another such map $f_2$ if $f_1(\cdot, 1) = f_2(\cdot, 0)$, and we then write $f_1\prec f_2$. We say that $f$ is a \emph{concatenation} of $f_1,f_2,\ldots, f_\ell$ if $f(\cdot,0) = f_1(\cdot, 0)$, $f(\cdot, 1) = f_\ell(\cdot, 1)$, and $f_i\prec f_{i+1}$ for $i\in [\ell-1]$.
\end{defn}

\begin{lemma}\label{lem:concat}
Let $\ns$ be a nilspace and suppose that $\q_1,\q_2$ are cubes in $\cu^n(\ns)$ with $\q_1\prec \q_2$. Then the concatenation of $\q_1,\q_2$ is also in $\cu^n(\ns)$.
\end{lemma}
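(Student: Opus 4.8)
The plan is to realize the concatenation $f$ as the restriction, along a suitable diagonal, of a single $(n+1)$-dimensional cube on $\ns$ that is built to carry $\q_1$ and $\q_2$ on two adjacent facets. Write elements of $\{0,1\}^{n+1}$ as triples $(v,a,b)$ with $v\in\{0,1\}^{n-1}$ and $a,b\in\{0,1\}$, so that $a$ sits in coordinate $n$ and $b$ in coordinate $n+1$. First I would define a partial map $g_0$ on the union $P=\{0,1\}^{n+1}_{h_2}\cup\{0,1\}^{n+1}_{h_1}$ of the two $n$-faces $\{b=0\}$ (supported on $h_2=\{1,\dots,n\}$) and $\{a=0\}$ (supported on $h_1=\{1,\dots,n-1,n+1\}$), by setting $g_0(v,a,0)=\q_2(v,a)$ on the first and $g_0(v,0,b)=\q_1(v,1-b)$ on the second. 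The two prescriptions agree on the shared $(n-1)$-face $\{a=b=0\}$ precisely because $g_0(v,0,0)=\q_2(v,0)=\q_1(v,1)$, which is the adjacency hypothesis $\q_1\prec\q_2$.

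Next I would check that $g_0$ is a morphism from the simplicial cubespace $P$ to $\ns$. Its restriction to the facet $\{b=0\}$ is $\q_2\in\cu^n(\ns)$, and its restriction to $\{a=0\}$ is $\q_1$ precomposed with the automorphism of $\{0,1\}^n$ reflecting the last coordinate, hence a cube by the composition axiom. Since $P$ is the simplicial cubespace attached to the downset generated by $h_1$ and $h_2$, every cube of $P$ factors through one of these two facets, so composing it with $g_0$ lands in some $\cu^m(\ns)$; thus $g_0$ is indeed a morphism. By the simplicial completion lemma (Lemma \ref{lem:simpcomp}), $P$ has the extension property in $\{0,1\}^{n+1}$, so $g_0$ extends to a morphism $Q:\{0,1\}^{n+1}\to\ns$, equivalently an element $Q\in\cu^{n+1}(\ns)$ (apply the defining property of a morphism to the identity cube in $\cu^{n+1}(\{0,1\}^{n+1})$).

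Finally I would extract $f$. By construction $Q(v,0,1)=\q_1(v,0)$ and $Q(v,1,0)=\q_2(v,1)$, so the concatenation satisfies $f(v,c)=Q(v,c,1-c)$ for $c\in\{0,1\}$. The map $\psi:\{0,1\}^n\to\{0,1\}^{n+1}$, $(v,c)\mapsto(v,c,1-c)$, is the restriction of an affine homomorphism and hence a discrete-cube morphism, so $f=Q\co\psi$ lies in $\cu^n(\ns)$ by the composition axiom, as required. The step I expect to be most delicate is the bookkeeping that sets this up correctly: arranging both carrier facets to pass through $0^{n+1}$ so that Lemma \ref{lem:simpcomp} applies forces the reflection $b\mapsto 1-b$ on one of them, which is what turns the natural diagonal into the anti-diagonal $\psi$. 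Verifying that the overlap condition coincides exactly with $\q_1\prec\q_2$ and that $\psi$ recovers precisely $f$ is then the crux, while the remaining verifications are routine.
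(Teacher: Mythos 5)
Your proposal is correct and follows essentially the same route as the paper's proof: glue $\q_2$ and a last-coordinate reflection of $\q_1$ onto two $n$-faces of $\{0,1\}^{n+1}$ through the origin (the adjacency $\q_1\prec\q_2$ being exactly the compatibility condition on the shared $(n-1)$-face), extend by simplicial completion, and restrict along a diagonal-type morphism to recover the concatenation. The only difference is cosmetic bookkeeping — you use $(v,c)\mapsto(v,c,1-c)$ where the paper uses $(v,c)\mapsto(v,1-c,c)$ — and all the verifications check out.
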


\begin{proof} Let $S=[n+1]$, $h_1=[n]$, $h_2=[n+1]\setminus\{n\}$ and $H=\{h_1,h_2\}$. Let $P$ be the simplicial cubespace corresponding to $H$. For $i=1,2$, let $\phi_i$ be an invertible morphism $\{0,1\}^{n+1}_{h_i}\to \{0,1\}^n$ such that the maps $\q_i\co \phi_i$ agree on $\{0,1\}^{n+1}_{h_1}\cap \{0,1\}^{n+1}_{h_2}=\{0,1\}^{n+1}_{[n-1]}$. (We can take $\phi_1:(v\sbr{1},\dots,v\sbr{n},0)\mapsto (v\sbr{1},\dots,v\sbr{n-1},1-v\sbr{n})$ and $\phi_2:(v\sbr{1},\dots,v\sbr{n-1},0,v\sbr{n+1})\mapsto (v\sbr{1},\dots,v\sbr{n-1},v\sbr{n+1})$.) We can then define $f:P\to \ns$ by $f|_{\{0,1\}^{n+1}_{h_i}}=\q_i\co \phi_i$. Then $f$ is a morphism, so by Lemma \ref{lem:simpcomp} it extends to a morphism $f':\{0,1\}^{n+1}\to \ns$. Consider the morphism $\phi:\{0,1\}^n\to \{0,1\}^{n+1},\,v\mapsto \big(v\sbr{1},\ldots,v\sbr{n-1},1-v\sbr{n},v\sbr{n}\big)$, and note that the concatenation of $\q_1,\q_2$ is $f'\co\phi$. By the composition axiom, the result follows.
\end{proof}
\noindent Sometimes we shall have to consider spaces $\ns$ for which the ergodicity axiom is not satisfied. As a first application of concatenations, let us show that such a space can always be partitioned into components each of which satisfies the ergodicity axiom.
\begin{lemma}
Suppose that $\ns$ satisfies the axioms in Definition \ref{def:ns} with ergodicity replaced by the weaker condition $\cu^0(\ns)=\ns$. Then $\ns$ can be decomposed into a disjoint union of nilspaces.
\end{lemma}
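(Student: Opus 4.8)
The plan is to realise the ergodic components of $\ns$ as the classes of an equivalence relation coming from $1$-cubes. For $x,y\in\ns$, let $\q_{x,y}:\{0,1\}\to\ns$ be the map $0\mapsto x$, $1\mapsto y$, and declare $x\sim y$ when $\q_{x,y}\in\cu^1(\ns)$. First I would verify that $\sim$ is an equivalence relation. Reflexivity uses the hypothesis $\cu^0(\ns)=\ns$: the point $x$ is a $0$-cube, and composing it with the unique morphism $\{0,1\}\to\{0,1\}^0$ produces the constant map $\q_{x,x}\in\cu^1(\ns)$. Symmetry follows by composing $\q_{x,y}$ with the reflection in $\aut(\{0,1\})$ that interchanges $0$ and $1$.

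Transitivity is the only step that really uses corner completion, and it is essentially the one-dimensional case of Lemma \ref{lem:concat}. Given $x\sim y$ and $y\sim z$, symmetry gives $\q_{y,x}\in\cu^1(\ns)$, so the map $\q'$ on $\{0,1\}^2\setminus\{1^2\}$ defined by $\q'(00)=y$, $\q'(10)=x$, $\q'(01)=z$ is a $2$-corner: its two restrictions to the $1$-faces through $00$ are $\q_{y,x}$ and $\q_{y,z}$. Completing $\q'$ to some $\q\in\cu^2(\ns)$ and composing with the morphism $\phi:\{0,1\}\to\{0,1\}^2$, $v\mapsto(1-v,v)$ (whose image is $\{10,01\}$), the composition axiom yields $\q_{x,z}=\q\co\phi\in\cu^1(\ns)$, i.e.\ $x\sim z$. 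I would remark here that this argument, like the proofs of Lemmas \ref{lem:simpcomp} and \ref{lem:concat}, invokes only the composition and corner-completion axioms and never ergodicity, which is why it remains available for our weaker $\ns$.

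Let $(\ns_\alpha)_\alpha$ be the equivalence classes, and set $\cu^n(\ns_\alpha):=\{\q\in\cu^n(\ns):\tIm(\q)\subseteq\ns_\alpha\}$. The key structural observation is that the image of any cube already lies in a single class: for adjacent vertices $v,w\in\{0,1\}^n$ the restriction of $\q$ to the $1$-face joining them is a $1$-cube, so $\q(v)\sim\q(w)$, and transitivity propagates this along a path of edges between any two vertices of $\{0,1\}^n$. Consequently $\cu^n(\ns)=\bigsqcup_\alpha\cu^n(\ns_\alpha)$, so these cube sets genuinely partition the cube structure of $\ns$.

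Finally I would check the nilspace axioms on each $\ns_\alpha$. Composition is immediate, since $\tIm(\q\co\phi)\subseteq\tIm(\q)$, and $\cu^0(\ns_\alpha)=\ns_\alpha$ by construction. Ergodicity holds because any $x,y$ in one class satisfy $x\sim y$, that is $\q_{x,y}\in\cu^1(\ns)$ with image in $\ns_\alpha$, hence $\q_{x,y}\in\cu^1(\ns_\alpha)$. For corner completion, an $n$-corner valued in $\ns_\alpha$ is a fortiori an $n$-corner on $\ns$, so it admits a completion $\q\in\cu^n(\ns)$; since $\q(0^n)\in\ns_\alpha$ and $\tIm(\q)$ lies in a single class by the observation above, in fact $\q\in\cu^n(\ns_\alpha)$. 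Thus each $\ns_\alpha$ is a nilspace and $\ns=\bigsqcup_\alpha\ns_\alpha$ is the desired decomposition. I expect the only delicate point to be transitivity — concretely, making sure the corner-completion step does not covertly rely on the ergodicity that $\ns$ lacks.
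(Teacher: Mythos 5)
Your proof is correct and follows essentially the same route as the paper: the same equivalence relation via $1$-cubes, with reflexivity from $\cu^0(\ns)=\ns$ and composition, symmetry from the reflection automorphism, and the same observation that a completion of an $\ns_\alpha$-valued corner stays in $\ns_\alpha$ because every cube has all its values in a single class. Your inline transitivity argument via completing a $2$-corner is just the one-dimensional case of the concatenation lemma that the paper cites, and your remark that this machinery never invokes ergodicity is a correct (and worthwhile) clarification of a point the paper leaves implicit.
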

\begin{proof}
We define a relation on $\ns$ by writing $x\sim y$ if the map $\q :\{0,1\}\to \ns$ with $\q(0)=x$ and $\q(1)=y$ is in $\cu^1(\ns)$. This is an equivalence relation, indeed reflexivity follows from $\cu^0(\ns)=\ns$ and composition with $\mathbf{0}$, symmetry follows also from composition, and transitivity follows from Lemma \ref{lem:concat}. On any equivalence class $\ns'$ the cubes in $\bigcup_n \cu^n(\ns)$ with values in $\ns'$ form an ergodic nilspace. (Note that composition implies that the completion of an $\ns'$-valued corner is also $\ns'$-valued.)
\end{proof}
\noindent Note that if $G$ is a group and a map $g:\{0,1\}^n\to G$ is the concatenation of two other such maps $g_1\prec g_2$, then by \eqref{eq:recurGray} we have the following equality (useful in particular when $G$ is abelian):
\begin{equation}\label{eq:concat-sgn}
\sigma_n(g) = \sigma_n (g_2) \; \sigma_n(g_1).
\end{equation}
We end the subsection with a basic lemma that can be helpful to reduce questions about injective morphisms to ones about face maps. For a morphism $\phi:\{0,1\}^m\to \{0,1\}^n$, recall from \eqref{eq:J-set} that $J=J_\phi\subset [n]$ is the set of indices of coordinates of $\phi(v)$ that genuinely depend on one of the coordinates of $v\in \{0,1\}^m$. We know that $\phi$ is injective if and only if $|J|\geq m$. 
\begin{lemma}\label{lem:inj-morph-decomp}
Let $\phi:\{0,1\}^m\to\{0,1\}^n$ be an injective morphism with $|J_\phi|>m$. Then there is an automorphism $\theta$ of $\{0,1\}^m$ such that $\phi\co\theta$ is the concatenation of at most four injective morphisms $\phi_i$ with $|J_{\phi_i}| < |J_\phi|$ for all $i$.
\end{lemma}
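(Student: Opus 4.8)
The plan is to decompose $\phi$ along a single input coordinate, chosen so that $\phi$ depends on it through at least two output coordinates. Since $\phi$ is injective we have $|J_\phi(i)|\geq 1$ for every $i\in[m]$, and since $\sum_i |J_\phi(i)| = |J_\phi| > m$ there is some $i_0$ with $|J_\phi(i_0)|\geq 2$. I would take $\theta\in\aut(\{0,1\}^m)$ to be the coordinate permutation carrying $i_0$ to position $m$; precomposition by $\theta$ merely permutes the collection $\{J_\phi(i)\}$, so $|J_{\phi\co\theta}|=|J_\phi|$ and $J_{\phi\co\theta}(m)=J_\phi(i_0)$ has size $\geq 2$. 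Writing $\psi:=\phi\co\theta$, $A:=J_\psi(m)$ and $r:=|A|\geq 2$, it then suffices to exhibit the required concatenation for $\psi$.

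The key observation concerns the two half-maps $g_0:=\psi(\cdot,0)$ and $g':=\psi(\cdot,1)$, both morphisms $\{0,1\}^{m-1}\to\{0,1\}^n$. By Lemma \ref{lem:cube-morph-char}, each output coordinate $j\in A$ satisfies $\psi(v)\sbr{j}=\tau_{k_j}(v\sbr{m})$ for some $k_j\in\{0,1\}$, so it is constant equal to $k_j$ in $g_0$ and constant equal to $1-k_j$ in $g'$; every output coordinate $j\notin A$ does not depend on $v\sbr{m}$, so $g_0$ and $g'$ agree there. Thus $g_0$ and $g'$ coincide except on the coordinates of $A$, where each is constant and $g'$ is obtained from $g_0$ by flipping all these constants.

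I would then split this flip into two stages. Fix any partition $A=D_1\sqcup D_2$ into two non-empty parts, and let $g_1$ be the morphism obtained from $g_0$ by changing the constant value on each $j\in D_1$ from $k_j$ to $1-k_j$ (this is still a morphism, being $g_0$ with some constant coordinates altered). For $s=1,2$ (with $g_2:=g'$) define $\phi_s:\{0,1\}^m\to\{0,1\}^n$ to be the map whose output coordinate $j$ equals $\tau_{k_j}(v\sbr{m})$ for $j\in D_s$ and otherwise equals the corresponding (constant-in-$v\sbr{m}$) coordinate of $g_{s-1}$. By Lemma \ref{lem:cube-morph-char} each $\phi_s$ is a discrete-cube morphism, since every output coordinate depends on at most one input coordinate, and one checks directly that $\phi_s(\cdot,0)=g_{s-1}$ and $\phi_s(\cdot,1)=g_s$. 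Moreover $J_{\phi_s}(m)=D_s$, while for $i<m$ we have $J_{\phi_s}(i)=J_\psi(i)$, because $A$ is disjoint from $\bigcup_{i<m}J_\psi(i)$ and $g_{s-1}$ is constant on $A$. Hence every $J_{\phi_s}(i)$ is non-empty, so $\phi_s$ is injective, and
\[
|J_{\phi_s}| = |J_\psi\setminus A| + |D_s| = |J_\phi| - r + |D_s| < |J_\phi|,
\]
since $1\leq |D_s|\leq r-1$. Finally $\phi_1(\cdot,0)=g_0=\psi(\cdot,0)$, $\phi_2(\cdot,1)=g'=\psi(\cdot,1)$ and $\phi_1(\cdot,1)=g_1=\phi_2(\cdot,0)$, so $\psi$ is the concatenation of $\phi_1,\phi_2$ in the sense of Definition \ref{def:adj&concat}. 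This already produces two morphisms, comfortably within the stated bound of four.

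The only step requiring care — and hence the one I would write out in full — is the verification that each interpolating map $\phi_s$ is genuinely a cube morphism and that $J_{\phi_s}=(J_\psi\setminus A)\cup D_s$; both are immediate consequences of the explicit characterization in Lemma \ref{lem:cube-morph-char} together with the disjointness of the sets $J_\psi(i)$. There is no real obstacle beyond this bookkeeping: the essential idea is simply that restricting $\psi$ to the two values of a chosen coordinate differs only by flipping the constant coordinates indexed by $J_\psi(m)$, and that flip can be performed in two sub-steps, each touching a proper (non-empty) subset of those coordinates.
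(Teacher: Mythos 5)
Your proof is correct, and in fact it proves something slightly stronger than the lemma: you obtain a concatenation of exactly \emph{two} injective morphisms, whereas the paper only guarantees at most four. The first reduction (locating $i_0$ with $|J_\phi(i_0)|\geq 2$ and transposing it to position $m$) is identical to the paper's, but the key combinatorial step differs. The paper argues by cases on whether both $\tau_0$ and $\tau_1$ occur among the coordinates indexed by $J_\phi(m)$: when both occur it replaces all $\tau_0$'s by $\mathbf{0}$ in one factor and all $\tau_1$'s by $\mathbf{0}$ in the other, giving two factors; when only one type occurs this would destroy injectivity, so the paper takes a detour through an intermediate morphism $\phi_2$ with $|J_{\phi_2}|=|J_\phi|$ that must itself be split by the first case, whence the bound of four. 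Your construction sidesteps the case distinction entirely: by partitioning $A=J_\psi(m)$ into two non-empty blocks $D_1\sqcup D_2$ and performing the flip of the frozen constants block by block, each factor retains a non-empty $J_{\phi_s}(m)=D_s$ (so injectivity is automatic) while strictly decreasing $|J_{\phi_s}|$, and the argument is uniform in the values $k_j$. What the paper's version buys is only that it manipulates the symbols $\tau_0,\tau_1,\mathbf{0},\mathbf{1}$ directly without introducing the half-maps $g_0,g_1,g'$; your version buys a sharper constant and a cleaner single-case argument. The verifications you flag as routine (that each $\phi_s$ is a morphism via Lemma \ref{lem:cube-morph-char}, and that $J_{\phi_s}=(J_\psi\setminus A)\cup D_s$ using the disjointness of the sets $J_\psi(i)$) are indeed the only points needing care, and they go through as you describe.
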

\begin{proof}
We have $|J_\phi(i)|\geq 2$ for some $i$. Let $\theta$ be the automorphism that transposes coordinates $i$ and $m$. Relabelling $\phi\co\theta$ as $\phi$ we can now assume that $i=m$. We distinguish two cases. In the first case, both maps $\tau_0,\tau_1$ (recall the paragraph before Lemma \ref{lem:cube-morph-char}) occur among the varying coordinates $\phi(v)\sbr{j}$, $j\in J_\phi(m)$. In the second case only one such map occurs.

For the first case, define $\phi_1$ by changing in $\phi(v)|_{J_\phi(m)}$ all maps $\tau_0$ to $\mathbf{0}$, and define $\phi_2$ by changing in $\phi(v)|_{J_\phi(m)}$ all maps $\tau_1$ to $\mathbf{0}$. Then we have $|J_{\phi_1}|, |J_{\phi_2}| < |J_\phi|$, and $\phi$ is the concatenation of $\phi_1\prec \phi_2$.

For the second case, suppose that only $\tau_0$ occurs. Fix any $j\in J_\phi(m)$, then define $\phi_1$ from $\phi$ by switching $\tau_0$ to $\mathbf{0}$ at index $j$, define $\phi_2$ from $\phi$ by switching $\tau_0$ to $\tau_1$ at every index in $J_\phi(m)\setminus \{j\}$, and define $\phi_3$ from $\phi$ by switching $\tau_0$ to $\mathbf{1}$ at $j$. Observe that $\phi$ is the concatenation of $\phi_1\prec \phi_2 \prec \phi_3$. We also have $|J_{\phi_1}|, |J_{\phi_3}|$ both less than $|J_\phi|$ and, although $|J_{\phi_2}|=|J_\phi |$, the first case above applies to $\phi_2$, so we are done. (If only $\tau_1$ occurs, a similar argument works.)
\end{proof}

\subsection{Nilspaces and parallelepiped structures}\label{subsec:altnsdef}

Before we continue the development of the theory, in this subsection we pause to illustrate the use of some of the tools from the previous subsection. We shall  apply these tools to  treat a natural question concerning nilspaces, namely the question of the relation between the nilspace axioms from \cite{CamSzeg} and the axioms from \cite{HKparas} defining parallelepiped structures. In the introduction, we mentioned that the former axioms are a variant of the latter, and here we shall detail this remark.\medskip

\noindent In \cite{HKparas} the notion of an abstract parallelepiped structure was introduced. It was defined explicitly only for parallelepipeds of dimension 2 and 3 but, as noted at the end of \cite{HKparas}, the generalization to higher  dimensions is clear. We formulate this generalization below using notation from previous sections. Recall in particular the notation $\arr{\q_0,\q_1}_k$ for $k$-arrows from Definition \ref{def:arrow}, the notation $\phi_F$ for the face-restriction maps from Definition \ref{def:face-res}, and the notation $\aut(\{0,1\}^n)$ for the group of automorphisms of the discrete cube,   from Definition \ref{def:D-cubes}.

\begin{defn}[Parallelepiped structures]\label{def:paras}
Let $\ns$ be a set and let $n$ be a positive integer. We say that a finite sequence $(P_m)_{m=1}^n$ of sets $P_m\subset \ns^{\{0,1\}^m}$ is an \emph{$n$-parallelepiped structure} on $\ns$ if $P_1=\ns^{\{0,1\}}$ and  the following axioms are satisfied for every $m\in [2,n]$:
\begin{enumerate}[leftmargin=0.8cm]
\item (Face restrictions)\quad  For every $p\in P_m$ and every $(m-1)$-face $F\subset \{0,1\}^m$, we have $p\co \phi_F\in P_{m-1}$.\\\vspace{-0.7cm}

\item (Symmetries)\quad For every $p\in P_m$ and every $\theta\in \aut(\{0,1\}^m)$, we have $p\co \theta\in P_m$.\\
\vspace{-0.7cm}

\item (Equivalence relation)\quad The relation $\approx_{P_m}$ on $P_{m-1}$,  defined by $p\approx_{P_m} p'$ if and only if $\langle p,p'\rangle_1\in P_m$, is an equivalence relation.\\
\vspace{-0.7cm}

\item (Closing property)\quad Let $p' :\{0,1\}^m\setminus \{1^m\}\to \ns$ be such that for every $(m-1)$-face $F$ containing $0^m$ we have $p'\co\phi_F \in P_{m-1}(\ns)$. Then there exists $p\in P_m(\ns)$ such that $p(v)=p'(v)$ for all $v\neq 1^m$.
\end{enumerate}
\end{defn}
\noindent These axioms are the natural generalizations to dimension $n$  of the four axioms in \cite[Definition 4]{HKparas}. Note that the fourth axiom above is just the corner-completion axiom from Definition \ref{def:ns} stated for the elements of $P_m$.

To relate parallelepiped structures to nilspaces, we want to show that the sets $P_m$ above can be viewed as cube sets forming a nilspace structure on $\ns$. However, note that on a nilspace $\ns$ there is a cube set $\cu^n(\ns)$ defined for \emph{every} $n\in \N$, whereas an $n$-parallelepiped structure defines parallelepipeds only of dimension up to $n$. Nevertheless, we can consider what we shall call a \emph{sequence of parallelepiped structures} on a set $\ns$,  namely an infinite sequence $(P_m)_{m\in \N}$ such that for every $n\in\N$ the  initial segment $(P_m)_{m\in [n]}$ is an $n$-parallelepiped structure on $\ns$. We then have the following result.

\begin{proposition}\label{prop:nsparaequiv}
Let $\ns$ be a set and for each $n\in \N$ let $P_n\subset \ns^{\{0,1\}^n}$. Then $\big(\ns, (P_n)_{n\in \N}\big)$ is a nilspace if and only if $(P_n)_{n\in \N}$ is a sequence of parallelepiped structures on $\ns$. 
\end{proposition}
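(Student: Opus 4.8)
The plan is to prove the equivalence by matching up the two sets of axioms one at a time, treating the two directions together wherever possible. The key observation is that a sequence of parallelepiped structures and a nilspace are both specified by the same data, namely a collection $(P_n)_{n\in\N}$ of subsets $P_n\subset\ns^{\{0,1\}^n}$, so the task reduces to showing that the nilspace axioms (composition, ergodicity, corner completion) hold for $(P_n)$ if and only if each initial segment satisfies the four parallelepiped axioms (face restrictions, symmetries, equivalence relation, closing property). Several of these correspondences are immediate: ergodicity is exactly the condition $P_1=\ns^{\{0,1\}}$, and the closing property is verbatim the corner-completion axiom restricted to $P_m$, as the excerpt already notes. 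So the substance lies in reconciling the single composition axiom on one side with the three axioms (face restrictions, symmetries, equivalence relation) on the other.

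First I would prove the forward direction (nilspace $\Rightarrow$ parallelepiped structures), which is the easier one. Given a nilspace, face restrictions and symmetries are both special cases of the composition axiom: a face-restriction map $\phi_F$ and an automorphism $\theta\in\aut(\{0,1\}^m)$ are particular discrete-cube morphisms, so $p\co\phi_F\in\cu^{m-1}(\ns)$ and $p\co\theta\in\cu^m(\ns)$ follow at once. For the equivalence relation axiom, I would show $\approx_{P_m}$ is reflexive, symmetric, and transitive on $P_{m-1}$: reflexivity follows by composing a cube $p$ with the morphism $v\mapsto(v\sbr{1},\dots,v\sbr{m-1},\mathbf{0})$ so that $\arr{p,p}_1$ is a cube; symmetry follows by composing with the reflection in the last coordinate; and transitivity is precisely where I would invoke Lemma~\ref{lem:concat} on concatenations, since if $\arr{p,p'}_1$ and $\arr{p',p''}_1$ are cubes that are adjacent after a suitable reflection, their concatenation gives $\arr{p,p''}_1$ as a cube. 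This is the same mechanism already used in the excerpt to prove transitivity of the relation $x\sim y$ in the ergodic-decomposition lemma, so I would follow that template closely.

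The harder direction is the converse (parallelepiped structures $\Rightarrow$ nilspace), and the main obstacle is recovering the full composition axiom from the weaker face-restriction and symmetry axioms. The difficulty is that composition must hold for an \emph{arbitrary} discrete-cube morphism $\phi:\{0,1\}^m\to\{0,1\}^n$, whereas the parallelepiped axioms only directly give closure under face maps and automorphisms. The strategy here is to decompose a general morphism into manageable pieces. By Lemma~\ref{lem:cube-morph-char}, any morphism factors through its image, so it suffices to treat injective morphisms and then handle the collapsing of constant coordinates separately; for the constant-coordinate case one uses the reflexivity part of the equivalence-relation axiom to insert degenerate (``thin'') cubes. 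For an injective morphism with $|J_\phi|>m$ one applies Lemma~\ref{lem:inj-morph-decomp} to write $\phi\co\theta$ as a concatenation of at most four injective morphisms each with strictly smaller $|J_{\phi_i}|$, and then argues by induction on $|J_\phi|$, using the concatenation machinery (together with the closing property to produce the required completions) to show that the composite of a parallelepiped with such a $\phi$ again lies in the appropriate $P_m$. The base case $|J_\phi|=m$ is exactly the face-map situation covered directly by the face-restriction and symmetry axioms. Marshalling these reductions carefully, and checking that the concatenation and simplicial-completion tools from Section~\ref{sec:BasNot} apply in a setting where we have not yet verified full composition, is the delicate technical core of the argument; the remaining verifications (ergodicity and closing property) are then read off directly as noted above.
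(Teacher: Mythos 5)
Your overall architecture is the same as the paper's: the forward direction reads off axioms (i), (ii), (iv) from composition and completion and gets transitivity of $\approx_{P_m}$ from concatenation, and the backward direction reduces composition to the three cases of face maps, injective morphisms (by induction on $|J_\phi|$ via Lemma~\ref{lem:inj-morph-decomp}), and general morphisms (handling constant coordinates via reflexivity of $\approx_{P_m}$). This is exactly the paper's proof strategy.

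There is, however, one point you flag as ``the delicate technical core'' but leave unresolved, and the resolution you hint at would not work. In the backward direction you propose to use ``the concatenation machinery (together with the closing property to produce the required completions)'' and the ``simplicial-completion tools from Section~\ref{sec:BasNot}''. But Lemma~\ref{lem:concat} is proved via Lemma~\ref{lem:simpcomp}, whose proof already uses the composition axiom (to pass to the simplicial-complex closure of $H$ and to restrict morphisms to faces) --- precisely the axiom you are in the middle of establishing. Invoking it here is circular. The correct observation, which the paper uses, is that axiom (iii) \emph{is} the concatenation property in disguise: if $f_1\prec f_2$ are adjacent elements of $P_m$, then $f_1\in P_m$ means $f_1(\cdot,0)\approx_{P_m}f_1(\cdot,1)$ and $f_2\in P_m$ means $f_2(\cdot,0)\approx_{P_m}f_2(\cdot,1)$, so transitivity of $\approx_{P_m}$ gives $\langle f_1(\cdot,0),f_2(\cdot,1)\rangle_1\in P_m$, which is exactly the concatenation of $f_1,f_2$. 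No completion is needed. A second, smaller imprecision: axiom (i) only gives codimension-one face restrictions, so the base case of your induction (general $m$-face maps into $\{0,1\}^n$ with $n>m+1$) still requires an iteration, namely an induction on $n-m$ in which one factors $\phi_F$ through an intermediate $(n-1)$-face; this is routine but should be said.
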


\begin{proof}
To see the forward implication, note first that the composition axiom for the nilspace immediately implies axioms $(i)$ and $(ii)$ above for each $m$, and trivially implies also axiom $(iv)$ since we are assuming the completion axiom. Moreover, axiom $(iii)$ above is also satisfied, indeed the reflexivity and symmetry of the relation $\approx_{P_m}$ are again direct consequences of the composition axiom, and the transitivity is a consequence of the concatenation property (Lemma \ref{lem:concat}).

To see the backward implication, the only difficulty is in checking that the composition axiom holds, that is, if $p\in P_n$ and $\phi:\{0,1\}^m\to\{0,1\}^n$ is a morphism, then $p\co \phi\in P_m$. We check this in several steps as follows. 

First let us suppose that $\phi$ is an $m$-face map, with image an $m$-face $F\subset \{0,1\}^n$. Note that we may then assume that $\phi$ is just the restriction map $\phi_F$, by axiom $(ii)$ above. We now prove that $p\co\phi_F\in P_m$ by induction on $n-m$, as follows. If $n=m+1$ then we have $p\co\phi_F\in P_m$ immediately by axiom $(i)$. If $n>m+1$ then let $F'$ be an $(n-1)$-face in $\{0,1\}^n$ that contains $F$, and note that by axiom $(i)$ we have $p\co \phi_{F'}\in P_{n-1}$. Since $\phi_{F'}$ is injective we can take its inverse map $\phi_{F'}^{-1}:F'\to \{0,1\}^{n-1}$. Note that $\phi_{F'}^{-1}\co \phi_F$ is a face map $\{0,1\}^m\to\{0,1\}^{n-1}$. Hence, by induction, we have that $P_m\ni (p\co \phi_{F'})\co(\phi_{F'}^{-1}\co \phi_F) = p\co \phi_F$, as required.

Our next step is to check that $p\co \phi\in P_m$ if $\phi$ is an injective morphism. Thus we still have $m\leq n$, we assume that $|J_\phi(i)|\geq 1$ for every $i\in [m]$, and we recall that we have equality here for every $i$ if and only if $\phi$ is an $m$-face map. We shall prove that $p\co\phi\in P_m$ by induction on $|J_\phi|=\sum_{i\in [m]} |J_\phi(i)|$. The basic case $|J_\phi|=m$ is clear by the previous paragraph. Suppose then that $|J_\phi|> m$. By Lemma \ref{lem:inj-morph-decomp} there is $\theta\in \aut(\{0,1\}^m)$ and morphisms $\phi_1,\ldots,\phi_k:\{0,1\}^m\to\{0,1\}^n$, $k\leq 4$, such that $\phi\co\theta$ is the concatenation of $\phi_1\prec\cdots\prec\phi_k$ and $|J_{\phi_i}|<|J_\phi|$ for each $i$. We have by axiom $(ii)$ above that $p\co\phi\in P_m$ if and only if $p\co\phi\co\theta\in P_m$, so for our purposes we may assume that $\theta$ is the identity. Thus $\phi$ is the concatenation of $\phi_1\prec\cdots\prec\phi_k$, and therefore $p\co\phi$ is the concatenation of $(p\co\phi_1)\prec\cdots\prec(p\co\phi_k)$. By induction, each map $p\co\phi_i$ is in $P_m$. On the other hand, if two maps in $P_m$ are adjacent, then by the transitivity of the relation $\approx_{P_m}$ their concatenation is also in $P_m$. Applying this to the maps $p\co\phi_i$ we deduce that $p\co\phi\in P_m$.
 
Finally, we check that $p\co\phi\in P_m$ for \emph{every} morphism $\phi:\{0,1\}^m\to \{0,1\}^n$. Let $\ell=\ell(\phi)$ denote the number of indices $i\in [m]$ such that $J_\phi(i)=\emptyset$. We shall prove that $p\co\phi\in P_m$ by induction  on $\ell$. The basic case $\ell=0$ is clear by the previous paragraph, since $\phi$ is then injective. If $\ell>0$, then $J_\phi(i)=\emptyset$ for some $i$, and note that we may assume that $i=m$ (using an automorphism of $\{0,1\}^m$ if necessary, as in the previous paragraph). But then letting $F$ denote the $(m-1)$-face $\{v_m=0\}\subset \{0,1\}^m$, we have $\ell(\phi\co\phi_F)<\ell(\phi)$, so by induction we have $p\co\phi\co\phi_F\in P_{m-1}$. Since $\phi(v)$ does not depend on $v_m$, we also have that $p\co\phi= \langle p\co\phi\co\phi_F,p\co\phi\co\phi_F\rangle_1$. Hence, by the reflexivity of $\approx_{P_m}$, it follows that $p\co\phi\in P_m$.
\end{proof}

\begin{remark}
In \cite{HKparas} Host and Kra use the term \emph{strong parallelepiped structure} to refer to an $n$-parallelepiped structure for which we also have that for each $p'$ satisfying the premise of the closing property, there is a \emph{unique} $p$ satisfying the conclusion. Parallelepiped structures without this uniqueness in the closing property are called \emph{weak} parallelepiped structures. In light of Proposition \ref{prop:nsparaequiv}, we have that $\big(\ns, (P_n)_{n\in \N}\big)$ is a $k$-step nilspace if and only if $(P_n)_{n\in \N}$ is a sequence of parallelepiped structures on $\ns$ and $(P_n)_{n=1}^{k+1}$ is strong.
\end{remark}

\noindent As we mentioned in the introduction, in \cite{HKparas} a relation was established between parallelepiped structures and nilpotent groups. Indeed, Host and Kra showed that for every strong parallelepiped structure of dimension 3 on a set $\ns$ there is a group acting on $\ns$ by bijections that preserve the parallelepipeds in a certain natural sense, and this group is 2-step nilpotent (see \cite[Proposition 12]{HKparas}). In light of Proposition \ref{prop:nsparaequiv}, this result of Host and Kra is applicable to any 2-step nilspace. One of the central ways in which the results in \cite{CamSzeg} continue the program started in \cite{HKparas} is by providing a generalization, for higher-step nilspaces, of this relation to nilpotent groups. This generalization is detailed in Subsection \ref{subsec:Facegp} below, where we treat the \emph{translation group} of a nilspace.

\subsection{Tricubes and the tricube composition}\label{subsec:tricube}

Here we shall describe another operation that gives a very useful construction of new cubes on a nilspace. To this end we shall use special examples of  cubespaces, called tricubes. These objects play an important role in the theory.  Their associated operation, called the tricube composition, can be viewed in a certain sense as an analogue, for cubes on nilspaces, of the convolution operation for functions.

\begin{defn}\label{def:tricube}
The \emph{tricube} of dimension $n$ is the set $T_n=\{-1,0,1\}^n$ together with the following cubespace structure. First we define, for each $v\in\{0,1\}^n$, the map
\begin{equation}\label{eq:cubemb}
\trem_v:\{0,1\}^n\to T_n,\quad\trem_v\big(w\sbr{1},w\sbr{2},\dots,w\sbr{n}\big)\sbr{j}=(2v\sbr{j}-1)(1-w\sbr{j}).
\end{equation}
Thus $\trem_v$ is an injective map embedding $\{0,1\}^n$ into $T_n$ as the subcube with base point $\trem_v(0^n)$ equal to the `outer point' $(2v\sbr{1}-1,2v\sbr{2}-1,\ldots,2v\sbr{n}-1)$ of $T_n$, and with $\trem_v(1^n) = 0^n$. A map $\q:\{0,1\}^m \to T_n$ is declared to be a cube if there exists a morphism $\phi: \{0,1\}^m\to \{0,1\}^n$ and some element $v\in \{0,1\}^n$ such that $\q = \trem_v\co \phi$.
\end{defn}
\noindent Thus an $m$-cube on $T_n$ is just a map $\{0,1\}^m \to T_n$ that factors as a cube-morphism through one of the embeddings $\trem_v$. Note that this cubespace structure on $T_n$ is the smallest one such that all the maps $\trem_v$ are cubes. Note also the following basic fact.
\begin{lemma}
For each $n\in \N$, the cubespace $T_n$ is the product cubespace $(T_1)^n$.
\end{lemma}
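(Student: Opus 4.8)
The lemma states that the tricube cubespace $T_n$ equals the product cubespace $(T_1)^n$. The underlying sets agree trivially: $T_n = \{-1,0,1\}^n = (\{-1,0,1\})^n = (T_1)^n$. So the real content is that the two cubespace structures coincide, i.e. that for every $m\geq 0$ a map $\q:\{0,1\}^m\to T_n$ is a cube in the tricube sense (it factors as $\trem_v\co\phi$ for some $v\in\{0,1\}^n$ and some morphism $\phi:\{0,1\}^m\to\{0,1\}^n$) if and only if each of its $n$ coordinate projections $\q\sbr{j}:\{0,1\}^m\to T_1$ is a cube on $T_1$.

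**Setting up the plan.** The plan is to unwind both definitions coordinate by coordinate and match them. First I would recall that, by definition of the product cubespace (Definition \ref{def:prodsubspace}), we have $\cu^m\big((T_1)^n\big)=\cu^m(T_1)\times\cdots\times\cu^m(T_1)$, so $\q\in\cu^m\big((T_1)^n\big)$ if and only if each coordinate map $\q\sbr{j}$ lies in $\cu^m(T_1)$. Next I would give the explicit description of an $m$-cube on $T_1$: by Definition \ref{def:tricube} with $n=1$, a map $\{0,1\}^m\to T_1$ is a cube precisely when it equals $\trem_{v_0}\co\psi$ for some $v_0\in\{0,1\}$ and some morphism $\psi:\{0,1\}^m\to\{0,1\}$. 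Using the formula $\trem_{v_0}(w)=(2v_0-1)(1-w)$ together with Lemma \ref{lem:cube-morph-char} (a morphism $\psi$ to $\{0,1\}$ is either constant or of the form $\tau_k(w\sbr{i})$), one sees that $\cu^m(T_1)$ consists exactly of those maps $\{0,1\}^m\to\{-1,0,1\}$ that are either constant with value in $\{-1,0,1\}$, or of the form $w\mapsto (2v_0-1)(1-\tau_k(w\sbr{i}))$ for some coordinate $i$ and signs $v_0,k$.

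**The two inclusions.** The forward inclusion $T_n\subseteq (T_1)^n$ is immediate: if $\q=\trem_v\co\phi$, then by \eqref{eq:cubemb} the $j$-th coordinate is $\q\sbr{j}(w)=(2v\sbr{j}-1)\big(1-\phi(w)\sbr{j}\big)$; since $\phi(w)\sbr{j}$ is, by Lemma \ref{lem:cube-morph-char}, either constant or of the form $\tau_k(w\sbr{i})$, each $\q\sbr{j}$ is visibly an $m$-cube on $T_1$ of the shape described above, with the single $v_0$-bit $v\sbr{j}$. The harder inclusion is the converse, and this is where I expect the main obstacle: given that each coordinate $\q\sbr{j}$ is individually a cube on $T_1$ — hence carries its own sign-bit $v\sbr{j}\in\{0,1\}$ and its own component morphism data $\psi_j$ — I must assemble these into a single global $v\in\{0,1\}^n$ and a single morphism $\phi:\{0,1\}^m\to\{0,1\}^n$ with $\q=\trem_v\co\phi$. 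The subtlety is that for a \emph{constant} coordinate $\q\sbr{j}\equiv c\in\{-1,0,1\}$ the sign-bit $v\sbr{j}$ is not uniquely forced (a constant can arise from $\trem_{v\sbr{j}}$ composed with a constant morphism $\mathbf 0$ or $\mathbf 1$ for either choice of $v\sbr{j}$), so there is genuine freedom, and one must check this freedom is enough to make all the $v\sbr{j}$ assemble coherently.

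**Resolving the obstacle.** I would resolve this by reading off $v$ and $\phi$ directly from the coordinate data. For the $j$-th coordinate, if $\q\sbr{j}$ is non-constant then $\trem_{v\sbr{j}}^{-1}\co\q\sbr{j}$ recovers a well-defined morphism component $\phi(\cdot)\sbr{j}:\{0,1\}^m\to\{0,1\}$ and pins down $v\sbr{j}$; if $\q\sbr{j}\equiv c$ is constant, then I set $v\sbr{j}$ so that the outer point $(2v\sbr{j}-1)$ has the same sign as $c$ (choosing either bit when $c=0$) and let $\phi(\cdot)\sbr{j}$ be the corresponding constant morphism $\mathbf 0$ or $\mathbf 1$ that makes $\trem_{v\sbr{j}}\big(\phi(w)\sbr{j}\big)=c$. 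Crucially, because $\trem_v$ acts \emph{independently} in each coordinate — its $j$-th output depends only on $v\sbr{j}$ and $w\sbr{j}$ — the choices made separately in each coordinate never conflict, so the assembled $\phi=\big(\phi(\cdot)\sbr{1},\ldots,\phi(\cdot)\sbr{n}\big)$ is a legitimate discrete-cube morphism (again by Lemma \ref{lem:cube-morph-char}) and $\q=\trem_v\co\phi$ holds coordinatewise, hence globally. This verifies $\q\in\cu^m(T_n)$ and completes the equality of cubespace structures; since this holds for every $m$, the two cubespaces coincide.
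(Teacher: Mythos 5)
Your proof is correct and follows essentially the same route as the paper's: both rest on the single observation that $\trem_v$ and discrete-cube morphisms factor coordinatewise (the paper phrases this via a binary split $T_n = T_{k_0}\times T_{k_1}$ and the identity $\trem_{v_0}\times\trem_{v_1}=\trem_{(v_0,v_1)}$, while you split directly into $n$ copies of $T_1$). Your worry about assembling the bits $v\sbr{j}$ coherently for constant coordinates is harmless but unnecessary, since the definition of a cube on $T_n$ imposes no constraint linking the coordinates of $v$ — a point your own resolution already makes.
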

Given two maps $f_1,f_2: S\to \ns$ we write $f_1\times f_2$ for the map $S\to \ns\times \ns$, $s\mapsto ( f_1(s),f_2(s) )$.

\begin{proof}
Let $k_0,k_1 \in \N$, let $n=k_0+k_1$, and note that for every $v_0\in \{0,1\}^{k_0}, v_1\in\{0,1\}^{k_1}$ we have from \eqref{eq:cubemb} that $\trem_{v_0}\times \trem_{v_1}(w)=\trem_{(v_0,v_1)}(w)$, $w\in \{0,1\}^n$. Every map $\q:\{0,1\}^m\to T_n$ can be written $\q_0\times \q_1$ for $\q_i:\{0,1\}^m\to T_{k_i}$. Then we have $\q\in \cu^m(T_n)$ if and only if for every $v=(v_0,v_1)\in \{0,1\}^n$ there is a morphism $\phi:\{0,1\}^m\to \{0,1\}^n$ such that $\q=\trem_v\co\phi$. But $\phi$ can also be written $\phi_0\times \phi_1$ and is a morphism if and only if each $\phi_i$ is, whence $\q$ is an $m$-cube on $T_n$ if and only if each map $\q_i\in \cu^m(T_{k_i})$.
\end{proof}

\noindent To form a new cube on a nilspace $\ns$ using $T_n$, we shall typically take $2^n$ cubes $\q_v\in \cu^n(\ns)$, one for each $v\in \{0,1\}^n$, and take  the morphism $\phi: T_n \to \ns$ defined as follows: for each $u\in T_n$, we take any embedding $\trem_v$ containing $u$ in its image, and then we let $\phi(u) = \q_v(\trem_v^{-1}(u))$. Note that this is a well-defined map provided that the cubes $\q_v$ can indeed  be ``glued together into $T_n$", that is for every $v,v'$ the maps $\q_v\co \trem_v^{-1}, \q_{v'}\co \trem_{v'}^{-1}$ agree on the intersection of the images of $\trem_v,\trem_{v'}$. 
Note that if $\phi$ is thus well-defined then it is also a cubespace morphism. Given all this, the new cube on $\ns$ is then obtained by taking the values of $\phi$ on the `outer points' of $T_n$, i.e. the elements of $\{-1,1\}^n$, via the following map.

\begin{defn}[Outer-point map]\label{def:ope}
We denote by $\omega_n$ the embedding $\{0,1\}^n\to T_n$ defined by $\omega_n(v)=\trem_v(0^n)=(2v\sbr{1}-1,\ldots,2v\sbr{n}-1)$.
\end{defn}
\noindent The new cube mentioned above is thus $\phi\co \omega_n$. That this map is indeed a cube is a nontrivial fact, which we record as follows. 

\begin{lemma}[Tricube composition]\label{lem:tricube-comp} Let $\ns$ be a nilspace and let $f:T_n\to \ns$ be a morphism. Then the composition $f \co \omega_n$ is in $\cu^n(\ns)$.
\end{lemma}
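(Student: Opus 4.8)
The plan is to realise the tricube $T_n$, together with its cubespace structure, as a \emph{simplicial} cubespace inside a larger discrete cube $\{0,1\}^{2n}$, and then to deduce the lemma from the simplicial completion lemma (Lemma \ref{lem:simpcomp}) together with the composition axiom; this avoids any induction or auxiliary cube-nilspace. Concretely, I would embed $T_n$ into $\{0,1\}^{2n}$ coordinate-by-coordinate via the map $\iota$ that sends, in the $i$-th coordinate block $\{2i-1,2i\}$, the three values $0,-1,1$ of $T_1$ to $(0,0),(1,0),(0,1)$ respectively. Thus the ``centre'' $0^n$ of $T_n$ goes to the origin, and each of the two one-dimensional tricube edges in a given coordinate becomes a lower $1$-face of the corresponding square. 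The image $P=\iota(T_n)$ is then exactly the set of $w\in\{0,1\}^{2n}$ whose support is a \emph{partial transversal}, i.e. meets each block $\{2i-1,2i\}$ in at most one index; these are precisely the supports lying in the set system $H$ consisting of all partial transversals, so $P=\bigcup_{h\in H}\{0,1\}^{2n}_h$ is a simplicial cubespace, and $H$ is visibly a downset.

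The technical heart of the argument, and the step I expect to require the most care, is to check that $\iota$ is an \emph{isomorphism of cubespaces} from $T_n$ onto $P$. For the forward direction one computes, for each $v\in\{0,1\}^n$, that $\iota\co\trem_v$ is an $n$-face map onto the lower face $\{0,1\}^{2n}_{h_v}$, where $h_v=\{2i-1:v\sbr{i}=0\}\cup\{2i:v\sbr{i}=1\}$ is a full transversal; hence every tricube cube $\trem_v\co\phi$ is carried by $\iota$ to a cube of $P$. For the converse one notes that any cube of $P$ factors through some $\{0,1\}^{2n}_h$ with $h\in H$, extends $h$ to a full transversal $h_v$, and thereby exhibits the cube as $\iota\co(\trem_v\co\phi)$ for a suitable morphism $\phi$. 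Verifying these two directions, using Lemma \ref{lem:cube-morph-char} to recognise the relevant maps as discrete-cube morphisms, is exactly the point where the precise definition of the tricube cubespace structure (Definition \ref{def:tricube}) must be matched against that of the simplicial cubespace.

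Granting this isomorphism, the remainder is routine. Since $f:T_n\to\ns$ is a morphism, $f\co\iota^{-1}:P\to\ns$ is a morphism of the simplicial cubespace $P$, so by Lemma \ref{lem:simpcomp} it extends to a morphism $F:\{0,1\}^{2n}\to\ns$, that is, $F\in\cu^{2n}(\ns)$. Finally I would set $\delta=\iota\co\omega_n:\{0,1\}^n\to\{0,1\}^{2n}$ and observe, directly from the definitions, that $\delta(v)$ has $(2i-1)$-th coordinate $1-v\sbr{i}$ and $2i$-th coordinate $v\sbr{i}$; by Lemma \ref{lem:cube-morph-char} this is a discrete-cube morphism, and its image lies in $P$, so that $F\co\delta=f\co\omega_n$. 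The composition axiom then yields $f\co\omega_n=F\co\delta\in\cu^n(\ns)$, as required.
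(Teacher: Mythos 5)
Your proposal is correct and follows essentially the same route as the paper: the paper's Lemma \ref{lem:tricube-emb} constructs exactly this coordinate-block embedding $q=\lambda^n$ of $T_n$ onto a simplicial subcubespace of $\{0,1\}^{2n}$ (your $\iota$ differs only by swapping the two coordinates within each block), and the proof of Lemma \ref{lem:tricube-comp} then applies simplicial completion and checks that $q\co\omega_n$ is a discrete-cube morphism, just as you do. The one place you are slightly more thorough is in spelling out why every simplicial cube of $P$ pulls back to a cube of $T_n$ (by extending a partial transversal to a full one), a verification the paper asserts implicitly when it states that $f\co q^{-1}$ is a morphism on the subcubespace $q(T_n)$.
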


\noindent The point of this result lies in that $\omega_n$ is not a cube on $T_n$ (if it were a cube then the result would be trivial, since $f$ is a morphism). The idea of the proof is that, while the cube structure on $T_n$ is not rich enough for $\omega_n$ to be in $\cu^n(T_n)$, there is an injective morphism $q:T_n\to \{0,1\}^{2n}$ such that, firstly, $q\co \omega_n$ is a morphism of discrete cubes, and secondly $q(T_n)$ has the extension property, so that $f\co q^{-1}$ extends to a cube in $\cu^{2n}(\ns)$, and therefore $f\co\omega_n = f\co q^{-1}\co q \co \omega_n$ is a cube on $\ns$. Let us detail this further.

First we describe the embedding of $T_n$, which will also be useful later on.

\begin{lemma}\label{lem:tricube-emb}
Let $\lambda:\{-1,0,1\}\to \{0,1\}^2$ be the function $\lambda(1)=(1,0)$, $\lambda(0)=(0,0)$, $\lambda(-1)=(0,1)$. Then $q=\lambda^n:v\mapsto \big(\lambda(v\sbr{1}),\ldots,\lambda(v\sbr{n})\big)$ is an injective morphism  $T_n\to \{0,1\}^{2n}$. Moreover, $q(T_n)$ is simplicial in $\{0,1\}^{2n}$.
\end{lemma}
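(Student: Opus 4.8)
The plan is to handle the three assertions—that $q$ is a morphism, that $q$ is injective, and that $q(T_n)$ is simplicial—by reducing everything to the explicit coordinatewise behaviour of $q$ on the corner embeddings $\trem_v$. First I would verify that $q$ is a cubespace morphism. By Definition \ref{def:tricube} every $m$-cube on $T_n$ has the form $\trem_v\co\phi$ for some $v\in\{0,1\}^n$ and some discrete-cube morphism $\phi:\{0,1\}^m\to\{0,1\}^n$; since discrete-cube morphisms compose, it suffices to show that each map $q\co\trem_v:\{0,1\}^n\to\{0,1\}^{2n}$ is a discrete-cube morphism. Using \eqref{eq:cubemb}, the $j$-th coordinate of $\trem_v(w)$ equals $2v\sbr{j}-1$ when $w\sbr{j}=0$ and $0$ when $w\sbr{j}=1$; applying $\lambda$ coordinatewise, the pair of output coordinates at positions $2j-1,2j$ is $(1-w\sbr{j},0)$ if $v\sbr{j}=1$ and $(0,1-w\sbr{j})$ if $v\sbr{j}=0$. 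In both cases each output coordinate is either constant ($\mathbf{0}$) or of the form $\tau_1(w\sbr{j})=1-w\sbr{j}$, so Lemma \ref{lem:cube-morph-char} applies and $q\co\trem_v$ is a discrete-cube morphism, as needed. Injectivity of $q$ is then immediate, since $\lambda$ is injective on $\{-1,0,1\}$ and hence $\lambda^n$ is injective on $T_n$.

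For the simplicial claim, the key step is to identify the image. The computation above shows that $\lambda$ maps $\{-1,0,1\}$ bijectively onto the three pairs in $\{0,1\}^2$ other than $(1,1)$, so
\[
q(T_n)=\{x\in\{0,1\}^{2n}:(x\sbr{2j-1},x\sbr{2j})\neq(1,1)\text{ for every }j\in[n]\},
\]
which is exactly the set of $x$ whose support meets each pair $\{2j-1,2j\}$ in at most one index. I would then let $H$ be the collection of all $h\subseteq[2n]$ with $|h\cap\{2j-1,2j\}|\leq 1$ for every $j$. This is a downset, and $q(T_n)=\bigcup_{h\in H}\{0,1\}^{[2n]}_h$, so the underlying point set is of the required simplicial form.

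Finally I would check that the cubespace structure that $q(T_n)$ inherits from $T_n$ coincides with the simplicial structure attached to $H$. The image of each corner embedding is $q(\trem_v(\{0,1\}^n))=\{0,1\}^{[2n]}_{h_v}$, where $h_v=\{2j-1:v\sbr{j}=1\}\cup\{2j:v\sbr{j}=0\}$ is a maximal element of $H$; thus every pushed-forward cube $q\co\trem_v\co\phi$ is a discrete-cube morphism landing in some $\{0,1\}^{[2n]}_{h_v}$, i.e. a cube of the simplicial structure. Conversely, a simplicial cube is a discrete-cube morphism into some $\{0,1\}^{[2n]}_h$ with $h\in H$; choosing $v$ with $h\subseteq h_v$ and using that $q\co\trem_v$ restricts to an invertible discrete-cube morphism onto $\{0,1\}^{[2n]}_{h_v}$, I would write it as $q\co\trem_v\co\phi$ with $\phi=(q\co\trem_v)^{-1}\co\psi$ a discrete-cube morphism, exhibiting it as a cube pushed forward from $T_n$. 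This last matching of the two cube sets—rather than merely of the two point sets—is the step I expect to require the most care, as it rests on invoking invertibility of $q\co\trem_v$ on each corner and keeping track of which $h_v$ contains a given $h$.
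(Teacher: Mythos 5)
Your proof is correct and follows essentially the same route as the paper's: the paper likewise checks that $q$ is a morphism by reducing to the coordinatewise action of $\lambda$ (via the product structure $T_n=(T_1)^n$), and proves simpliciality by showing $q(T_n)$ is closed under passing to elements of smaller support, which is equivalent to your explicit identification of $q(T_n)$ with $\bigcup_{h\in H}\{0,1\}^{[2n]}_h$ for the downset $H$ of sets meeting each pair $\{2j-1,2j\}$ in at most one index. Your final step, matching the cube structure pushed forward from $T_n$ with the simplicial structure via the invertible face maps $q\co\trem_v$ onto $\{0,1\}^{[2n]}_{h_v}$, goes beyond what the paper records here; it is not needed for the literal statement (the definition of a simplicial cubespace only constrains the point set), but it is exactly the fact used implicitly in the proof of Lemma \ref{lem:tricube-comp} to see that $f\co q^{-1}$ is a morphism relative to the simplicial structure, so it is a worthwhile addition.
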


\begin{proof}
The map $\lambda$ is easily checked to be a morphism, and it is also easily checked that products of morphisms are morphisms relative to the product cubespace structure, whence $q$ is indeed a morphism. To see that $q(T_n)$ is simplicial, suppose that $w=q(v)\in q(T_n)$ and $u\in \{0,1\}^{2n}$ has $\supp u \subset \supp w$. Let $v'$ be obtained from $v$ by switching to 0 every entry $v\sbr{j}$ such that the indices corresponding to $\lambda(v\sbr{j})$ are not in the support of $u$, i.e. $\{2j-1,2j\}\not\subset \supp u$. Then $u=q(v')$.
\end{proof}

\begin{proof}[Proof of Lemma \ref{lem:tricube-comp}]
Since $q(T_n)$ is simplicial in $\{0,1\}^{2n}$ and $f\co q^{-1}:q(T_n)\to \ns$  is a morphism (relative to $q(T_n)$ as a subcubespace of $\{0,1\}^{2n}$), by Lemma \ref{lem:simpcomp} the map $f\co q^{-1}$ extends to a cube $\q:\{0,1\}^{2n}\to \ns$. Since $f\co\omega_n = \q\co (q \co \omega_n)$, it now suffices to show that $q \co \omega_n: \{0,1\}^n \to \{0,1\}^{2n}$ is a morphism. To see this, note that $\lambda$ restricted to $\{-1,1\}$ equals the map $x\mapsto (\frac{1+x}{2},\frac{1-x}{2})$. For each $j\in [n]$, the $j$-th coordinate of $\omega_n(v)$ is $2v\sbr{j}-1$, whence $\lambda (\omega_n(v)\sbr{j})=(v\sbr{j},1-v\sbr{j})=(\tau_0(v\sbr{j}),\tau_1(v\sbr{j}))$, where $\tau_0,\tau_1$ are the identity and reflection introduced before Lemma \ref{lem:cube-morph-char}. It follows that $q\co \omega_n$ is a morphism $\{0,1\}^n\to\{0,1\}^{2n}$, indeed we have
\[
q\co \omega_n (v)= \Big(\, \big(\tau_0(v\sbr{1}),\tau_1(v\sbr{1})\big)\,,\,\big(\tau_0(v\sbr{2}),\tau_1(v\sbr{2})\big)\,,\ldots, \,\big(\tau_0(v\sbr{n}),\tau_1(v\sbr{n})\big)\,\Big). \qedhere
\]
\end{proof}
 
\begin{remark}
Given Lemma \ref{lem:tricube-emb}, we may give an alternative definition of tricubes. More precisely, if we identify $\{0,1\}^{2n}$ with $(\{0,1\}^2)^n$ the natural way, then thanks to the embedding $q$ we may redefine $T_n$ as the subcubespace of $\{0,1\}^{2n}$ of the form $\{(1,0), (0,0), (0,1)\}^n$. This alternative definition would be natural in the sense that the cubespace structure on the tricube is easily described as the restriction of the one on $\{0,1\}^{2n}$. In particular, the proof of Lemma \ref{lem:tricube-comp} itself relies essentially on this identification of $T_n$ with a simplicial subcubespace of $\{0,1\}^{2n}$. However, the cube structure on $T_n$ as described in Definition \ref{def:tricube} is also intuitively clear and, in addition, we shall make much more use in the sequel of the intuition and notation related to viewing $T_n$ as $\{-1,0,1\}^n$. There may be one way in which this view could be misleading, namely, that one could think that morphisms $T_n\to \ns$ extend automatically to morphisms from more general subsets of $\Z^n$ into $\ns$; let us emphasize that this is not the case.
\end{remark} 
 
\subsection{Arrow spaces}

Recall from Definition \ref{def:arrow} that for two maps $f_0,f_1:\{0,1\}^n\to \ns$, the corresponding $k$-arrow is the map
\[
\arr{f_0,f_1}_k:\{0,1\}^{n+k} \to \ns, \;\;(v,w)\;\mapsto \; \left\{\begin{array}{lr} f_0(v),& w\neq 1^k \\
  f_1(v), & w=1^k\end{array}\right..
\]
Note that taking the $1$-arrow is an invertible operation $\ns^{\{0,1\}^n}\times \ns^{\{0,1\}^n}\to \ns^{\{0,1\}^{n+1}}$, $f_0\times f_1\mapsto \arr{f_0,f_1}_1$, with inverse the map $f\mapsto f(\cdot,0)\times f(\cdot,1)$, which we denote by $\ia_n$.
\begin{defn}[Arrow spaces]\label{def:arrowspace}
Let $\ns$ be a cubespace. For each positive integer $k$, the \emph{$k$-th arrow space} over $\ns$, denoted by $\ns\Join_k \ns$, is the cubespace defined on the cartesian product $\ns\times \ns$ by letting $\q \in \cu^n(\ns\Join_k \ns)$ if and only if $\q=\q_0\times \q_1$ with $\arr{\q_0,\q_1}_k\in \cu^{n+k}(\ns)$.
\end{defn}
\begin{example}
Let us illustrate this construction in the case of filtered groups. If $\ns$ consists of a filtered group $(G,G_\bullet)$ with cube sets $\cu^n(G_\bullet)$, $n\geq 0$, then it follows from Lemma \ref{lem:filtarrow} that a product map $\q_0\times \q_1$ is in $\cu^n(\ns\Join_k\ns)$ if and only if $\q_0,\q_1$ are both in $\cu^n(G_\bullet)$ and the map $v\mapsto \q_0(v)^{-1}\q_1(v)$ is in $\cu^n(G_\bullet^{+k})$ (here recall that given a filtration $G_\bullet$ we denote by $G_\bullet^{+k}$ the shifted filtration, which has $i$-th term $G_{i+k}$). In particular, if $(G,G_\bullet)$ is abelian with the lower central series, then $\q_0\times \q_1$ is in $\cu^n(\ns\Join_1\ns)$ if and only if $\q_0$ is a standard $n$-cube on $G$ and $\q_1$ is a translate of $\q_0$, i.e. for some $h\in G$ we have $\q_1(v)=\q_0(v)+h$ for all $v$. Still in this abelian case, for $k\geq 2$ we have $\q_0\times \q_1\in \cu^n(\ns\Join_k\ns)$ if and only if $\q_1=\q_0$ is a standard $n$-cube on $G$.
\end{example}
\noindent Arrow spaces with $k>1$ will be used starting in Subsection \ref{subsec:Facegp},  for now we use mainly the case $k=1$.

\noindent Note that every cube $\q\in \cu^{n+1}(\ns)$ is a $1$-arrow of $n$-cubes, indeed $\q=\arr{\q(\cdot,0),\q(\cdot,1)}_1$, where each map $\q(\cdot,i)$ is in $\cu^n(\ns)$ by the composition axiom. Therefore, we have
\begin{equation}\label{eq:arrowcubechar}
\cu^n(\ns\Join_1 \ns)=\ia_n(\cu^{n+1}(\ns)) \subset \cu^n(\ns)\times\cu^n(\ns).
\end{equation}
For $k>1$, not every cube in $\cu^{n+k}(\ns)$ is a $k$-arrow of $n$-cubes. Denote by $\textrm{Ar}_k^n(\ns)$ the subset of $\ns^{\{0,1\}^{n+k}}$ that is the image of the set of maps $f_0\times f_1$, $f_i\in \ns^{\{0,1\}^n}$ under $f_0\times f_1\mapsto \arr{f_0,f_1}_k$. Then the latter map is clearly invertible on $\textrm{Ar}_k^n(\ns)$, with inverse denoted by $\ia_{n,k}$ (thus $\ia_{n,1}=\ia_n$). Then we have
\begin{equation}\label{eq:itharrowcubechar}
\cu^n(\ns\Join_k \ns)=\ia_{n,k}(\cu^{n+k}(\ns) \cap \textrm{Ar}_k^n(\ns)) \subset \cu^n(\ns)\times\cu^n(\ns).
\end{equation}

\begin{lemma}\label{lem:arrow}
Let $\ns$ be a $k$-step nilspace and let $i\geq 1$. Then $\ns \Join_i \ns$ is a $k$-step nilspace, not necessarily ergodic. If $\ns$ is $\ell$-fold ergodic then $\ns\Join_i \ns$ is $(\ell-i)$-fold ergodic.
\end{lemma}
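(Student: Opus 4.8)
The plan is to work throughout with the defining correspondence: a product map $\q_0\times\q_1$ is an $n$-cube of $\cX:=\ns\Join_i\ns$ exactly when $\arr{\q_0,\q_1}_i\in\cu^{n+i}(\ns)$, and the underlying set of $\cX$ (its $0$-cubes) is the set $A$ of pairs $(x,y)$ with $\arr{x,y}_i\in\cu^i(\ns)$. The composition axiom is then immediate from the identity $\arr{\q_0\co\phi,\q_1\co\phi}_i=\arr{\q_0,\q_1}_i\co(\phi\times\id)$, valid for every morphism $\phi:\{0,1\}^m\to\{0,1\}^n$: since $\phi\times\id$ is a morphism $\{0,1\}^{m+i}\to\{0,1\}^{n+i}$ and $\ns$ satisfies composition, the right-hand side is a cube, so $(\q_0\co\phi)\times(\q_1\co\phi)\in\cu^m(\cX)$. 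Restricting to single vertices via this same identity shows that cubes of $\cX$ automatically take values in $A$.

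The main step is corner completion. Given an $n$-corner $\q'=\q'_0\times\q'_1$ on $\cX$, I would first note that $\q'_0$ is an $n$-corner on $\ns$: for each $(n-1)$-face $F\ni 0^n$ the hypothesis gives $\arr{\q'_0\co\phi_F,\q'_1\co\phi_F}_i\in\cu^{n-1+i}(\ns)$, and restricting to $\{0,1\}^{n-1}\times\{0^i\}$ yields $\q'_0\co\phi_F\in\cu^{n-1}(\ns)$. Completing $\q'_0$ to a cube $\q_0\in\cu^n(\ns)$ fixes the value $\q_0(1^n)$; the decisive observation is that $\arr{\q_0,\q_1}_i$ is now pinned down at every point of $\{0,1\}^{n+i}$ except $1^{n+i}$, the only free value being $\q_1(1^n)$. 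I would then verify that this partial map is an $(n+i)$-corner on $\ns$ by splitting the faces through $0^{n+i}$ into two types: fixing an $\ns$-coordinate to $0$ gives a face avoiding $1^n$, on which the map equals $\arr{\q'_0,\q'_1}_i$ restricted to an $(n-1)$-face through $0^n$ --- a cube by the $\cX$-corner hypothesis; fixing an arrow-coordinate to $0$ gives a face missing $\{0,1\}^n\times\{1^i\}$, on which the map is $\q_0$ precomposed with a projection --- a cube by composition. Corner completion in $\ns$ then furnishes the value at $1^{n+i}$, i.e. $\q_1(1^n)$, and by construction $\arr{\q_0,\q_1}_i\in\cu^{n+i}(\ns)$, so $\q_0\times\q_1$ completes $\q'$ in $\cX$.

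For the $k$-step property I would invoke the standard strengthening that a $k$-step nilspace has \emph{unique} completion of every $m$-corner with $m\ge k+1$; this follows by a short induction on $m$, since two completions of an $(m+1)$-corner agree off $1^{m+1}$ and restrict on the top face $\{v_{m+1}=1\}$ to two completions of an $m$-corner, equal by the inductive hypothesis. Taking $n=k+1$, any completion of the $\cX$-corner restricts on $\{0,1\}^n\times\{0^i\}$ to a completion of the $(k+1)$-corner $\q'_0$, which is unique, so $\q_0$ is forced; then $\arr{\q_0,\q_1}_i$ is a completion of an $(n+i)$-corner with $n+i\ge k+2$, whose completion is likewise unique, so $\q_1(1^n)$ is forced. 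Hence the completion in $\cX$ is unique. I expect this appeal to uniqueness \emph{above} dimension $k+1$ to be the one delicate point, precisely because the shift by $i\ge 1$ moves the relevant corner past the defining dimension.

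Finally, for the ergodicity count, assume $\ns$ is $\ell$-fold ergodic with $\ell\ge i$ (otherwise the claim is vacuous). First, $\ell$-fold ergodicity descends: any $g:\{0,1\}^m\to\ns$ with $m\le\ell$ is a face-restriction of $g\co p$ for a coordinate projection $p:\{0,1\}^\ell\to\{0,1\}^m$, and $g\co p\in\cu^\ell(\ns)$, so $g\in\cu^m(\ns)$; in particular $\ns$ is $i$-fold ergodic and $A=\ns\times\ns$. Then for arbitrary $\q_0,\q_1:\{0,1\}^{\ell-i}\to\ns$ the arrow $\arr{\q_0,\q_1}_i$ is some map $\{0,1\}^\ell\to\ns$, automatically in $\cu^\ell(\ns)$, whence $\q_0\times\q_1\in\cu^{\ell-i}(\cX)$. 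Thus $\cu^{\ell-i}(\cX)$ is all of $\cX^{\{0,1\}^{\ell-i}}$, i.e. $\cX$ is $(\ell-i)$-fold ergodic.
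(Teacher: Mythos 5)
Your proof is correct and follows essentially the same route as the paper: the same identity $\arr{\q_0\co\phi,\q_1\co\phi}_i=\arr{\q_0,\q_1}_i\co\phi'$ for composition, the same auxiliary $(n+i)$-corner (built from a completion $\q_0$ of $\q_0'$ together with $\q_1'$) with the same two-case face check for completion, and the same observation for the ergodicity count. The only cosmetic difference is in the uniqueness step: you route it through unique completion of the $(n+i)$-corner via the standard fact that $k$-step gives unique completion of all $m$-corners with $m\ge k+1$, whereas the paper notes directly that $\q_0$ and $\q_1$ are the unique completions of the $(k+1)$-corners $\q_0'$ and $\q_1'$ on $\ns$; both are valid.
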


\begin{proof}
Let $\nss=\ns\Join_i \ns$. We first check the composition axiom. If $\q_0\times \q_1$ is an $n$-cube on $\nss$ and $\phi:\{0,1\}^m\to \{0,1\}^n$ is a morphism, then $(\q_0\times \q_1)\co \phi = (\q_0\co \phi) \times (\q_1\co \phi)$ satisfies $\arr{\q_0\co \phi, \q_1\co \phi }_i= \arr{ \q_0,\q_1}_i\co \phi':\{0,1\}^{m+i}\to \ns$, where $\phi':\{0,1\}^{m+i}\to \{0,1\}^{n+i}$, $(v,w)\mapsto (\phi(v),w)$ is a morphism. Hence composition for $\cu^n(\nss)$ follows from composition for $\cu^{n+i}(\ns)$.

We now check the completion axiom. Let $\q':\{0,1\}^n\setminus\{1^n\}\to \nss$ be an $n$-corner, and let $\q_0',\q_1':\{0,1\}^n\setminus\{1^n\}\to \ns$ be the maps such that $\q'=\q_0'\times\q_1'$. Then by assumption for each $(n-1)$-dimensional lower-face $F\subset \{0,1\}^n$ we have $\q'\co \phi_F =(\q_0'\co\phi_F)\times (\q_1'\co\phi_F)\in \cu^{n-1}(\nss)$, that is $\arr{\q_0'\co\phi_F,\q_1'\co\phi_F}_i\in \cu^{n-1+i}(\ns)$. In particular $\q_0',\q_1'$ are both $(n-1)$-corners on $\ns$. Let $\q_0$ be a completion of $\q_0'$. Let $\q'':\{0,1\}^{n+i}\setminus \{1^{n+i}\}\to \ns$ be the map defined by $\q''(v,w)= \q_0(v)$ if $w\neq 1^i$, and $\q''(v,1^i)= \q_1'(v)$ for $v\neq 1^n$. We claim that $\q''$ is an $(n+i)$-corner on $\ns$. To see  this, fix any $j\in [n+i]$ and consider the $(n+i-1)$-dimensional lower face $F_j'=\{u \in \{0,1\}^{n+i}: u\sbr{j}=0\}$. If $j\in [n]$, then $\q''\co \phi_{F'_j}=\arr{\q_0'\co\phi_F,\q_1'\co\phi_F}_i$, where $F$ is the $(n-1)$-dimensional lower face of $\{0,1\}^n$ obtained by deleting the last $i$ coordinates from elements of $F'_j$. Hence $\q''\co \phi_{F'_j}\in \cu^{n-1+i}(\ns)$ for such values of $j$. If $j\in [n+1,n+i]$, then $\q''\co \phi_{F'_j}(v,w)=\q_0(v)$ for all $w\neq 1^{i-1}$. But this equals the cube $\q_0$ composed with the morphism $(v,w)\mapsto v$, so we also have $\q''\co \phi_{F'_j}\in \cu^{n-1+i}(\ns)$ in this case. Thus $\q''$ is indeed an $(n+i)$-corner on $\ns$ and so it can be completed to a $(n+i)$-cube, which by construction has the form $\arr{\q_0,\q_1}_i$ for some $n$-cube $\q_1$ completing $\q_1'$. Hence $\q_0\times \q_1$ completes $\q'$.\\
\indent Note that if $\ns$ is $k$-step then the above $n$-cubes $\q_i$ are the unique completions of $\q_i'$ for $i=0,1$ respectively, and therefore the completion of $\q'$ is unique, so that $\nss$ is also $k$-step.

The last claim in the lemma is clear, for if every map $\{0,1\}^\ell\to \ns$ is a cube, then in particular $\arr{ f_0,f_1}_i\in \cu^\ell(\ns)$ for every $f_0,f_1:\{0,1\}^{\ell-i}\to \ns$.
\end{proof}
\noindent The last construction that we define in this section is roughly speaking a means to `differentiate' the cube structure on $\ns$ with respect to a fixed point $x\in \ns$, in such a way that from a $k$-step nilspace we obtain a $(k-1)$-step nilspace.

\begin{defn}\label{def:partial}
Let $\ns$ be a nilspace and let $x\in \ns$. We define the nilspace $\partial_x \ns$ to be the set $\ns$ with the cubespace structure inherited by embedding $\ns$ in $\ns \Join_1 \ns$ via the map $y\mapsto (x,y)$. Thus we declare $\q:\{0,1\}^n\to \ns$ to be in  $\cu^n( \partial_x \ns)$ if the map\footnote{Here we denote by $x$ the map on $\{0,1\}^n$ with constant value $x$.} $x \times \q:\{0,1\}^n\to \ns\times \ns$, $v\mapsto (x,\q(v))$ is in $\cu^n(\ns \Join_1 \ns)$, that is if the 1-arrow $\arr{x,\q}_1$ is in $\cu^{n+1}(\ns)$.
\end{defn}
The following lemma relates higher iterations of $\partial_x$ to higher-level arrow spaces.
\begin{lemma}
We have $\q\in \cu^n(\partial_x^i \ns)$ if and only if $x\times \q\in \cu^n(\ns \Join_i \ns)$.
\end{lemma}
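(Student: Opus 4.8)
The plan is to induct on $i$, the statement being understood as universally quantified over the dimension $n$ and the map $\q$. The base case $i=1$ requires no work: by Definition \ref{def:partial}, the set $\cu^n(\partial_x\ns)$ is \emph{defined} to consist of exactly those $\q$ for which $x\times\q\in\cu^n(\ns\Join_1\ns)$, which is the claim.

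For the inductive step I would assume the equivalence holds with $i-1$ in place of $i$ (for every dimension and every map into $\ns$) and proceed by unfolding the two sides down to a single condition on $\ns$. Given $\q:\{0,1\}^n\to\ns$, first write $\partial_x^i\ns=\partial_x(\partial_x^{i-1}\ns)$ and apply Definition \ref{def:partial} to the nilspace $\partial_x^{i-1}\ns$: this gives $\q\in\cu^n(\partial_x^i\ns)$ if and only if $\arr{x,\q}_1\in\cu^{n+1}(\partial_x^{i-1}\ns)$. Since $\arr{x,\q}_1$ is a genuine $(n+1)$-dimensional map into $\ns$, I can apply the inductive hypothesis to it, turning this into $x\times\arr{x,\q}_1\in\cu^{n+1}(\ns\Join_{i-1}\ns)$; unfolding Definition \ref{def:arrowspace} rephrases the latter as $\arr{x,\arr{x,\q}_1}_{i-1}\in\cu^{n+i}(\ns)$. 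On the other side, Definition \ref{def:arrowspace} gives directly that $x\times\q\in\cu^n(\ns\Join_i\ns)$ if and only if $\arr{x,\q}_i\in\cu^{n+i}(\ns)$. Thus the whole statement reduces to one identity of maps on $\{0,1\}^{n+i}$.

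The only genuine computation, and the single place where care is needed, is the combinatorial identity $\arr{x,\arr{x,\q}_1}_{i-1}=\arr{x,\q}_i$. Writing a point of $\{0,1\}^{n+i}$ as $(v,t,w)$ with $v\in\{0,1\}^n$, $t\in\{0,1\}$, $w\in\{0,1\}^{i-1}$, the left-hand map equals $x$ unless $w=1^{i-1}$, in which case it equals $\arr{x,\q}_1(v,t)$, namely $x$ for $t=0$ and $\q(v)$ for $t=1$; hence it returns $\q(v)$ exactly when $(t,w)=1^i$ and $x$ otherwise, which is precisely $\arr{x,\q}_i(v,t,w)$. I do not expect a real obstacle here: the lemma is a formal consequence of the definitions, the only subtlety being to keep the nesting of arrow coordinates consistent with the convention that the $k$ arrow coordinates of $\arr{\cdot,\cdot}_k$ are the last $k$. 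With the coordinates split as above the two maps coincide on the nose—no composition with an automorphism of the cube is needed—so the identity holds and the induction closes.
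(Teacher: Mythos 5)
Your proof is correct and is essentially the paper's argument: the paper's one-line proof is precisely the observation that iterating the 1-arrow operation $\q\mapsto\arr{x,\q}_1$ $i$ times yields the $i$-arrow $\arr{x,\q}_i$, which is exactly the combinatorial identity $\arr{x,\arr{x,\q}_1}_{i-1}=\arr{x,\q}_i$ that closes your induction. Your coordinate computation and the observation that no cube automorphism is needed are both accurate.
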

\begin{proof}
This follows from the fact that if we iterate $i$ times the operation of taking a 1-arrow $\q\mapsto \arr{x,\q}_1$ then we obtain the $i$-arrow $ \arr{x,\q}_i$.
\end{proof}
\noindent We now show that $\partial_x(\ns)$ is a $(k-1)$-step nilspace, rather than $k$-step like $\ns \Join_1 \ns$. The reason for the decrease in the step is that, while completing a $k$-corner on $\ns \Join_1 \ns$ amounted to completing a $(k+1)$-cube minus a 1-face on $\ns$, completing a $k$-corner on $\partial_x(\ns)$ amounts to completing a full $(k+1)$-corner on $\ns$.

\begin{lemma}\label{lem:nilspace-dif}
Let $\ns$ be a $k$-step nilspace and let $x\in \ns$. Then $\partial_x \ns$ is a $(k-1)$-step nilspace, not necessarily ergodic. If $\ns$ is $k$-fold ergodic, then $\partial_x \ns$ is $(k-1)$-fold ergodic.
\end{lemma}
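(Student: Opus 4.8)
The plan is to exploit the defining relation $\q\in\cu^n(\partial_x\ns)$ if and only if $\arr{x,\q}_1\in\cu^{n+1}(\ns)$, which converts every completion problem on $\partial_x\ns$ into one on $\ns$ with the dimension shifted up by one. First I would handle the composition axiom: since $\partial_x\ns$ is obtained by pulling back the cube structure of $\ns\Join_1\ns$ along the embedding $\iota:y\mapsto(x,y)$, and $\ns\Join_1\ns$ satisfies composition by Lemma \ref{lem:arrow}, composition for $\partial_x\ns$ is immediate from the identity $\iota\co(\q\co\phi)=(\iota\co\q)\co\phi$.

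The crux is the following corner correspondence. Given an $n$-corner $\q'$ on $\partial_x\ns$, I claim that the partial map $\arr{x,\q'}_1:\{0,1\}^{n+1}\setminus\{1^{n+1}\}\to\ns$ (constant equal to $x$ on the $w=0$ slice, and equal to $\q'$ on the $w=1$ slice minus its apex) is an $(n+1)$-corner on $\ns$. To verify this I would check the restriction of $\arr{x,\q'}_1$ to each of the $n+1$ faces of codimension one containing $0^{n+1}$. The face $\{w=0\}$ gives the constant map $x$, which is a cube. For each face $\{v\sbr{i}=0\}$ with $i\in[n]$, a brief check of the face-map conventions shows that the restriction equals $\arr{x,\q'\co\phi_{F_i}}_1$, where $F_i=\{v\sbr{i}=0\}\subset\{0,1\}^n$ is the corresponding $(n-1)$-face containing $0^n$; since $\q'$ is an $n$-corner on $\partial_x\ns$ we have $\q'\co\phi_{F_i}\in\cu^{n-1}(\partial_x\ns)$, which by definition means precisely $\arr{x,\q'\co\phi_{F_i}}_1\in\cu^n(\ns)$. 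Thus all face restrictions are cubes and the claim holds.

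With this in hand, existence and the step count follow cleanly. For existence (for every $n$) I would complete the $(n+1)$-corner $\arr{x,\q'}_1$ inside the nilspace $\ns$. The completion $Q$ agrees with $\arr{x,\q'}_1$ off $1^{n+1}$, so in particular $Q(\cdot,0)\equiv x$; hence $Q=\arr{x,\q_1}_1$ for $\q_1:=Q(\cdot,1)$, and by definition $\q_1\in\cu^n(\partial_x\ns)$ completes $\q'$. For the step, take $n=k$: now $\arr{x,\q'}_1$ is a $(k+1)$-corner on the $k$-step nilspace $\ns$, so its completion $Q=\arr{x,\q_1}_1$ is unique; since two distinct completions $\q_1,\q_1''$ of $\q'$ in $\partial_x\ns$ would yield distinct completions $\arr{x,\q_1}_1\neq\arr{x,\q_1''}_1$ of $\arr{x,\q'}_1$ in $\ns$, uniqueness transfers downward, so every $k$-corner on $\partial_x\ns$ has a unique completion, i.e.\ $\partial_x\ns$ is $(k-1)$-step. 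The ergodicity clause is then immediate: if $\ns$ is $k$-fold ergodic then every map $\{0,1\}^k\to\ns$ is a cube, so $\arr{x,\q}_1\in\cu^k(\ns)$ for \emph{every} $\q:\{0,1\}^{k-1}\to\ns$, giving $\cu^{k-1}(\partial_x\ns)=\ns^{\{0,1\}^{k-1}}$.

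The main obstacle I anticipate is notational rather than conceptual: correctly matching the face map $\phi_{F'_i}$ on $\{0,1\}^{n+1}$ with $\phi_{F_i}$ on $\{0,1\}^n$ so that the arrow direction (the last coordinate $w$) is preserved after deleting coordinate $i$. One must track that $w$ remains the final coordinate under $\phi_{F'_i}$, so that the restriction genuinely reads as a $1$-arrow over $\{0,1\}^{n-1}$ with the arrow direction intact rather than scrambled; once this bookkeeping is settled, every other part of the argument is formal.
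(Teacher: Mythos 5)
Your proof is correct and follows essentially the same route as the paper: the paper also inherits composition from $\ns\Join_1\ns$ and observes that an $n$-corner on $\partial_x\ns$ corresponds to the $(n+1)$-corner on $\ns$ that is constant $x$ on the lower face, so that completion (and, for $n=k$, uniqueness) transfers directly from $\ns$. You simply spell out the face-restriction verification that the paper leaves implicit.
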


\begin{proof}
Composition and completion are clearly inherited from $\ns\Join_1 \ns$. Concerning completion, note that here an $n$-corner in $\partial_x \ns$ is to be completed to an $n$-cube $\q$ in $\cu^n(\partial_x \ns)$ rather than just in $\cu^n(\ns\Join_1 \ns)$, so $\q$ must be of the form $x\times \q_1$, and is therefore the completion of the $(n+1)$-corner on $\ns$ that equals the constant function $x$ on the lower-face $v\sbr{n+1}=0$ and equals some $n$-corner on $\{v: v\sbr{n+1}=1,\, v\neq 1^{n+1}\}$. Hence this completion is unique for $n=k$ as claimed.
\end{proof}

\begin{example}
Let $\ns$ be a group $G$ with a filtration $G_\bullet$ and the associated cube structure $(\cu^n(G_\bullet))_{n\geq 0}$. Then for $x=\id_G$, using \eqref{eq:cubefactn} we see that $\partial_x \ns$ equals $G$ with the filtration $G_\bullet^{+1}$ and its associated cubes. Note that if $G_0=G_1=G$ but $G_2$ is a proper subgroup of $G$, then while $\ns$ is ergodic, the nilspace $\partial_x \ns$ is not.
\end{example}

\section{$k$-step nilspaces as abelian bundles of degree $k$ acted upon by  filtered groups}\label{sec:bundledecomp}

In this section we work towards a general decomposition theorem for nilspaces, which will be eventually obtained as Theorem \ref{thm:bundle-decomp}. 

To begin with, we describe another way to get a new cubespace from a given one.

\begin{defn}[Quotient cubespace]
Let $\ns$ be a cubespace and let $\sim$ be an equivalence relation on $\ns$. 
The \emph{quotient cubespace} is the quotient set $\nss:= \ns/\sim$ together with the cubes $\pi\co \q$, for every $\q\in \cu^n(\ns)$, $n\geq 0$, where $\pi: \ns\to \nss$ is the canonical projection.
\end{defn}

\begin{defn}[Factor of a nilspace]\label{def:factor}
Let $\ns$ be a nilspace and let $\sim$ be an equivalence relation on $\ns$. If the quotient cubespace $\ns/\sim$ is itself a nilspace, then it is called a \emph{factor} of $\ns$.
\end{defn}

\noindent In the general decomposition of $\ns$ that we shall prove, the building blocks will be special factors of $\ns$ called \emph{characteristic factors}. These are obtained as quotients of $\ns$ under certain equivalence relations, which are defined in the following subsection.

\subsection{Characteristic factors}
The following relations on a nilspace are crucial for the decomposition theorem. They can be viewed as analogues for nilspaces of the  \emph{regionally proximal relations of order} $k$ for a dynamical system, which were defined in \cite[\S 3.3]{HKM} (see also \cite{HKparas,HM}).

\begin{defn}\label{def:simdef} Let $\ns$ be a nilspace. For each positive integer $k$ we denote by $\sim_k$ the relation on $\ns$ defined as follows:
\[
x\sim_k y\;\;\;\Leftrightarrow\;\;\; \exists\,\q_0,\q_1\in \cu^{k+1}(\ns)\;\textrm{ such that }\;\q_0(0^{k+1}) = x,\;\; \q_1(0^{k+1})=y,\;\;\q_0(v) = \q_1(v)\;\; \forall\, v\neq 0^{k+1}.
\]
\end{defn}
\noindent This relation is reflexive and symmetric. To prove that it is also transitive, we use the following result.

\begin{lemma}\label{lem:sim1} Two elements $x,y\in \ns$ satisfy $x\sim_k y$ if and only if the map $\q:\{0,1\}^{k+1}\to \ns$ sending $0^{k+1}$ to $y$ and all other elements $v$ to $x$ is a cube in $\cu^{k+1}(\ns)$.
\end{lemma}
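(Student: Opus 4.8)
I would prove the two implications separately. The ``if'' direction is immediate, and all the work is in the ``only if'' direction, where the point is to replace the \emph{arbitrary} common base of a witness for $x\sim_k y$ by the \emph{constant} map.

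\textbf{The easy direction.} Suppose the map $\q$ (with value $y$ at $0^{k+1}$ and value $x$ elsewhere) lies in $\cu^{k+1}(\ns)$. The constant map $\q_x\equiv x$ is also a cube: by ergodicity the map $\{0,1\}\to\ns$ with both values $x$ is in $\cu^1(\ns)$, and composing it with the constant morphism $\{0,1\}^{k+1}\to\{0,1\}$, $v\mapsto 0$, gives $\q_x$ by the composition axiom. Taking $\q_0=\q_x$ and $\q_1=\q$ then yields two cubes agreeing off $0^{k+1}$ with $\q_0(0^{k+1})=x$ and $\q_1(0^{k+1})=y$, which is exactly a witness for $x\sim_k y$.

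\textbf{The hard direction.} First I would pass to the reflected picture: composing with the automorphism $\rho:v\mapsto 1^{k+1}-v$ and using the composition axiom, a witness for $x\sim_k y$ becomes a pair $a,b\in\cu^{k+1}(\ns)$ agreeing off $1^{k+1}$ with $a(1^{k+1})=x$ and $b(1^{k+1})=y$; and since $\rho$ is an automorphism it suffices to show that the map $m$ with value $y$ at $1^{k+1}$ and $x$ elsewhere is a cube (then $\q=m\co\rho$). The key idea is to \emph{flatten the common base one coordinate at a time} while keeping a valid witness. Concretely I maintain a pair $(U_0,U_1)$ of cubes agreeing off $1^{k+1}$ with $U_0(1^{k+1})=x$, $U_1(1^{k+1})=y$, together with the invariant that $U_0$ is constant in the coordinates of a growing set $S\subseteq[k+1]$, starting from $(a,b)$ with $S=\varnothing$. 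To add a coordinate $j\notin S$ to $S$, relabel coordinates by an automorphism so that $j$ becomes the last one; then the faces $U_0(\cdot,0)$ and $U_1(\cdot,0)$ coincide (they avoid $1^{k+1}$, off which $U_0=U_1$), so with $r$ the reflection of the last coordinate one has $U_0\co r\prec U_1$. By the concatenation lemma (Lemma \ref{lem:concat}) their concatenation $g$ is a cube with $g(\cdot,0)=U_0(\cdot,1)$ and $g(\cdot,1)=U_1(\cdot,1)$. I then set $U_1':=g$ and $U_0':=U_0(\cdot,1)$ composed with the projection $(v,w)\mapsto v$, the latter being a cube as a face of $U_0$ composed with a morphism. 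A direct check shows that $U_0',U_1'$ again agree off $1^{k+1}$ with the same apex values $x,y$, that $U_0'$ is now constant in coordinate $j$, and that $U_0'$ remains constant in every coordinate of $S$; this restores the invariant with $S\cup\{j\}$. After $k+1$ steps $S=[k+1]$, so $U_0$ is globally constant, necessarily equal to its apex value $x$, and $U_1$ is then a cube agreeing with the constant map $x$ off $1^{k+1}$ and taking the value $y$ there, i.e. $U_1=m$. Hence $m$, and therefore $\q=m\co\rho$, is a cube.

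\textbf{Main obstacle.} The difficulty is precisely the replacement of the arbitrary common base by a constant map in the forward direction. One cannot simply glue $\q_0$ and $\q_1$ into one higher-dimensional cube: for instance the $1$-arrow $\arr{\q_0,\q_1}_1$ is a cube only under the stronger relation $x\sim_{k+1}y$, not under $x\sim_k y$. The content therefore lies in the coordinate-by-coordinate flattening above, and the delicate points to verify are the adjacency $U_0\co r\prec U_1$ (which uses that the deleted vertex $1^{k+1}$ is fixed by coordinate permutations, so the relevant faces avoid it and $U_0,U_1$ agree there) and the bookkeeping showing that each concatenation preserves both the ``agree off $1^{k+1}$ with apexes $x,y$'' property and the accumulated constancy of $U_0$.
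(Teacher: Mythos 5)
Your proof is correct, but it takes a genuinely different route from the paper's. The paper handles the hard direction in one stroke with the tricube composition (Lemma \ref{lem:tricube-comp}): it builds a morphism $T_{k+1}\to\ns$ by placing a copy of $\q_0$ in each of the $2^{k+1}$ subcubes of the tricube so that every outer point carries the value $x$; changing the value at a single outer point from $x$ to $y$ turns the one affected subcube restriction into $\q_1$, hence the modified map is still a morphism, and reading off the outer points via $\omega_{k+1}$ yields exactly the map $\q$. You instead flatten the common base of the witness one coordinate at a time, using only the concatenation lemma (Lemma \ref{lem:concat}) together with face restrictions and cube automorphisms; the invariant you maintain (agreement off the apex, fixed apex values $x,y$, a growing set of constant directions for $U_0$) is checked correctly, and the adjacency $U_0\co r\prec U_1$ holds for exactly the reason you give, namely that the relevant faces avoid the apex. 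Both routes ultimately rest on simplicial completion, since that is how both Lemma \ref{lem:concat} and Lemma \ref{lem:tricube-comp} are proved; the paper's version is shorter because the tricube machinery is developed anyway and is reused immediately afterwards (e.g.\ in Lemma \ref{lem:sim2}), while yours is more elementary and self-contained at the cost of $k+1$ rounds of bookkeeping. In effect your scheme is an explicit unwinding of the tricube composition, consistent with the paper's own observation (in the proof of Lemma \ref{lem:tricubesum}) that the outer-point map can be expressed as a sequence of concatenations of the subcube embeddings.
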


\begin{proof}
The `if' direction is clear since every constant map $\{0,1\}^{k+1}\to \ns$ is a cube. For the converse, suppose that $\q_0,\q_1\in \cu^{k+1}(\ns)$ satisfy the condition in Definition \ref{def:simdef}, and let $\phi$ be the map from the tricube $T_{k+1}$ to $\{0,1\}^{k+1}$ defined by $\phi = f^{k+1}$, where $f(-1)=f(1)=1, f(0)=0$ (thus $\phi$ `reflects' each of the $2^{k+1}$ subcubes of $T_{k+1}$ onto $\{0,1\}^{k+1}$). Note that the function $\q_0\co\phi$ is a morphism $T_{k+1}\to \ns$, as explained in the previous section (we can represent this map as $T_{k+1}$ with a reflected copy of $\q_0$ in each of the $2^{k+1}$ sub-cubes). Now observe that, by our assumption on $\q_0,\q_1$, the map obtained from $\q_0\co\phi$ by changing the value at $1^{k+1}$ from $x$ to $y$ is still a morphism $T_{k+1}\to \ns$, which we denote by $g$. By Lemma  \ref{lem:tricube-comp}, we then have $g\co\omega_{k+1} \in \cu^{k+1}(\ns)$, and the result follows.
\end{proof}

\begin{corollary}
For every nilspace $\ns$ and every $k\in\N$ the relation $\sim_k$ is an equivalence relation.
\end{corollary}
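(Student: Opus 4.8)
The relation $\sim_k$ is reflexive (any constant map $\{0,1\}^{k+1}\to\ns$ is a cube) and, as already noted, symmetric; the only substantive point is transitivity. Here the plan is to use Lemma \ref{lem:sim1} to reformulate the relation and then to assemble a single cube by a tricube composition, exactly in the spirit of the proof of Lemma \ref{lem:sim1} itself.

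Suppose $x\sim_k y$ and $y\sim_k z$; by symmetry we also have $y\sim_k x$. The aim is to produce the cube that, by Lemma \ref{lem:sim1}, certifies $x\sim_k z$, namely the map $\{0,1\}^{k+1}\to\ns$ sending $0^{k+1}$ to $z$ and every other vertex to $x$. For each $v\in\{0,1\}^{k+1}$ set $b_v=z$ if $v=0^{k+1}$ and $b_v=x$ otherwise, and let $\q_v:\{0,1\}^{k+1}\to\ns$ be the map sending $0^{k+1}$ to $b_v$ and every other vertex to $y$. Since $y\sim_k z$ and $y\sim_k x$, and since in each case the distinguished value $b_v\in\{x,z\}$ sits over the common background value $y$, Lemma \ref{lem:sim1} gives $\q_v\in\cu^{k+1}(\ns)$ for every $v$. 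I then glue these $2^{k+1}$ cubes into the tricube $T_{k+1}$, placing $\q_v$ in the subcube $\trem_v(\{0,1\}^{k+1})$ as in the construction preceding Definition \ref{def:ope}.

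The crucial point is that this gluing is consistent. Two subcubes $\trem_v,\trem_{v'}$ meet only in points $u\in T_{k+1}$ having at least one zero coordinate, and for such $u$ both preimages $\trem_v^{-1}(u)$ and $\trem_{v'}^{-1}(u)$ differ from $0^{k+1}$ (a zero coordinate $u_j=0$ forces the corresponding $w_j=1$ in $\trem_v(w)=u$). As each $\q_w$ takes the value $y$ at every vertex other than $0^{k+1}$, the two cubes agree, both equal to $y$, on the overlap. The only vertices carrying the distinguishing values $b_v$ are the base points $\trem_v(0^{k+1})=\omega_{k+1}(v)$, the outer points of $T_{k+1}$, and these lie in no other subcube. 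Hence the $\q_v$ define a morphism $F:T_{k+1}\to\ns$. By the tricube composition (Lemma \ref{lem:tricube-comp}), $F\co\omega_{k+1}\in\cu^{k+1}(\ns)$, and by construction $F\co\omega_{k+1}(v)=F(\omega_{k+1}(v))=\q_v(0^{k+1})=b_v$, so $F\co\omega_{k+1}$ is precisely the map sending $0^{k+1}$ to $z$ and every other vertex to $x$. By Lemma \ref{lem:sim1} this yields $x\sim_k z$, as desired.

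The main obstacle is recognizing why a more elementary gluing will not suffice: one cannot simply concatenate a cube witnessing $x\sim_k y$ with one witnessing $y\sim_k z$, since these degeneracies sit over different constant backgrounds ($x$ and $y$) and a concatenation (Lemma \ref{lem:concat}) can only join cubes sharing a common facet. The role of the tricube composition is exactly to merge the two degeneracies at a single vertex while all the glued pieces remain mutually compatible; the one delicate verification, carried out above, is the consistency of the gluing, which reduces to the observation that each $\q_v$ is constantly equal to $y$ away from its base point.
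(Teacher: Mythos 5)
Your argument is correct, but it takes a substantially longer route than the paper's. The paper's proof is essentially one line: since $y\sim_k x$ and $y\sim_k z$, Lemma \ref{lem:sim1} produces two cubes $\q_0,\q_1\in\cu^{k+1}(\ns)$ with $\q_0(0^{k+1})=x$, $\q_1(0^{k+1})=z$, and $\q_0(v)=\q_1(v)=y$ for all $v\neq 0^{k+1}$; this pair is \emph{literally} the data required by Definition \ref{def:simdef} to conclude $x\sim_k z$, so no further construction is needed. You instead aim directly at the single witness cube of Lemma \ref{lem:sim1} for the pair $(x,z)$ (the map sending $0^{k+1}$ to $z$ and all else to $x$) and build it by a fresh tricube composition, gluing the $2^{k+1}$ cubes $\q_v$ into $T_{k+1}$. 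Your gluing and consistency checks are all sound: the overlaps of distinct subcubes consist of points with a zero coordinate, whose preimages under the $\trem$-maps avoid $0^{k+1}$, so both pieces take the value $y$ there, and the outer points lie in a unique subcube. What your approach buys is the explicit cube certifying $x\sim_k z$ in one pass; what it costs is a second run through the tricube machinery that Lemma \ref{lem:sim1} already encapsulates --- once transitivity is known via the definition, that same cube comes for free from the ``only if'' direction of Lemma \ref{lem:sim1}. Your closing remark about why naive concatenation fails is a fair observation, but note that the paper sidesteps the issue entirely by never needing to merge the two witnesses into one cube.
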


\begin{proof} Suppose that $x,y,z\in \ns$ satisfy $x\sim_k y$ and $y\sim_k z$. By symmetry and Lemma \ref{lem:sim1}, there exist $\q_0,\q_1\in \cu^{k+1}(\ns)$ such that $\q_0(0^{k+1})=x,\q_1(0^{k+1})=z$ and $\q_0(v) = \q_1(v)=y$ for every $v\neq 0^{k+1}$. Therefore $x\sim_k z$.
\end{proof}
\begin{example}\label{ex:rels}
We pause to illustrate the relations $\sim_k$ for the main examples of nilspaces from Chapter \ref{chap:Centrexs}. Let $\ns$ consist of a filtered group $(G,G_\bullet)$ with cube sets $\cu^n(G_\bullet)$, $n\geq 0$. Using Lemma \ref{lem:sim1} and composition with an automorphism of $\{0,1\}^{k+1}$ sending $0^{k+1}$ to $1^{k+1}$, we have that $x,y\in \ns$ satisfy $x\sim_k y$ if and only if the map $\q: \{0,1\}^{k+1}\to G$ with $\q(v)=x$ for $v\neq 1^{k+1}$ and $\q(1^{k+1})=y$ is a cube in $\cu^{k+1}(G_\bullet)$. Using the factorization of $\q$ given by Lemma \ref{lem:cubefactn}, we find that the coefficients $g_i$ in this factorization must be as follows: $g_0=x$, then $g_j=\id_G$ for $0<j<2^{k+1}-1$, and then $g_{2^{k+1}-1}$ must   satisfy $x\, g_{2^{k+1}-1}=\q(1^{k+1})=y$. It follows that $x\sim_k y$ if and only if there exists $g\in G_{k+1}$ such that $y=g\, x$. Now if $\ns$ is a quotient $G/\Gamma$ with the coset nilspace structure described in Proposition \ref{prop:quotex}, then two cosets $x\Gamma, y\Gamma$ in $\ns$ satisfy $x\Gamma \sim_k y\Gamma$ if and only if there is a map $\gamma_0:\{0,1\}^{k+1}\to \Gamma$ and a cube $\q\in \cu^n(G_\bullet)$ such that $\q(v)=x\gamma_0(v)$ for $v\neq 1^{k+1}$ and $\q(1^{k+1})=y\gamma_0(1^{k+1})$. Since $\q':v\mapsto x^{-1}\q(v)$ is a cube and $\q'(v)=\gamma_0(v)$ for all $v\neq 1^{k+1}$, by the completion axiom applied on $(\Gamma,\Gamma_\bullet)$ we can find a cube $\gamma\in \cu^{k+1}(\Gamma_\bullet)$ such that $\gamma(v)=\gamma_0(v)$ for all $v\neq 1^{k+1}$. This can then be used to show that in $G$ we have $x\sim_k y \gamma_0( 1^{k+1}) \gamma^{-1}( 1^{k+1})$, and so there exists $g\in G_{k+1}$ such that $y^{-1}gx\in \Gamma$. It follows that the equivalence classes of $\sim_k$ are the orbits of the left action of $G_{k+1}$ on $\ns$.
\end{example}

\noindent Lemma \ref{lem:sim1}, combined with cube automorphisms, tells us that $x\sim_k y$ if and only if, given the $(k+1)$-cube $\q_0$ with constant value $x$ and any vertex $v$, we obtain a $(k+1)$-cube by modifying $\q_0(v)$ from $x$ to $y$. The following lemma generalizes this so that $\q_0$ need not be constant.

\begin{lemma}\label{lem:sim2} For $x,y\in \ns$, we have $x\sim_k y$ if and only if for every $u\in \{0,1\}^{k+1}$ and every $\q_0\in \cu^{k+1}(\ns)$ with $\q_0(u)=x$, the map $\q_1:\{0,1\}^{k+1}\to \ns$, $v\mapsto \; \left\{\begin{array}{lr} \q_0(v),& v\neq u \\
  y, & v=u\end{array}\right.$ is in $\cu^{k+1}(\ns)$.
\end{lemma}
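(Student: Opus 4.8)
The plan is to prove both implications via Lemma~\ref{lem:sim1}, with essentially all the work in the forward direction.

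The backward implication is immediate: assuming the displayed universal condition, I would specialize it to $u=0^{k+1}$ and to the constant cube $\q_0\equiv x$ (constant maps are cubes). The resulting $\q_1$ is then the map sending $0^{k+1}$ to $y$ and every other vertex to $x$, so Lemma~\ref{lem:sim1} gives $x\sim_k y$.

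For the forward implication, I would first reduce to the case $u=0^{k+1}$: choosing $\theta\in\aut(\{0,1\}^{k+1})$ with $\theta(0^{k+1})=u$ and replacing $\q_0$ by $\q_0\co\theta$, the modified map for $\q_0$ at $u$ becomes the modified map for $\q_0\co\theta$ at $0^{k+1}$, and since $\cu^{k+1}(\ns)$ is invariant under composition with cube automorphisms, $\q_1$ is a cube if and only if $\q_1\co\theta$ is. The core of the argument is then a tricube composition. I would introduce the ``truncation'' morphism $\psi:T_{k+1}\to\{0,1\}^{k+1}$ sending each coordinate $u\sbr{j}$ to $\max(u\sbr{j},0)$ (so $1\mapsto 1$ and $0,-1\mapsto 0$); a short check shows each $\psi\co\trem_v$ is a cube-morphism, hence $\psi$ is a morphism, and that $\psi\co\omega_{k+1}$ is the identity of $\{0,1\}^{k+1}$. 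Thus $f:=\q_0\co\psi:T_{k+1}\to\ns$ is a morphism satisfying $f\co\omega_{k+1}=\q_0$.

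Finally I would perturb $f$ at the single outer point $\omega_{k+1}(0^{k+1})=(-1,\dots,-1)$, resetting its value from $x$ to $y$ to obtain a map $\tilde f$ with $\tilde f\co\omega_{k+1}=\q_1$ by construction; by Lemma~\ref{lem:tricube-comp} it then suffices to prove that $\tilde f$ is still a morphism. This rests on two observations: the point $(-1,\dots,-1)$ is the base point $\trem_{0^{k+1}}(0^{k+1})$ and lies in the image of $\trem_v$ only for $v=0^{k+1}$; and $\psi\co\trem_{0^{k+1}}$ is the constant map $0^{k+1}$, so $f\co\trem_{0^{k+1}}$ is constantly $x$. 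Hence the perturbation leaves $f\co\trem_v$ unchanged (a cube) for $v\ne 0^{k+1}$, and turns $f\co\trem_{0^{k+1}}$ into the map sending $0^{k+1}$ to $y$ and every other vertex to $x$, which is exactly the cube supplied by Lemma~\ref{lem:sim1} from the hypothesis $x\sim_k y$. Verifying from these two cases that $\tilde f\co(\trem_v\co\chi)$ is a cube for every cube $\trem_v\co\chi$ of $T_{k+1}$---i.e.\ that the single-point perturbation does not destroy the morphism property, which is precisely where $x\sim_k y$ is used---is the main obstacle; once it is checked, Lemma~\ref{lem:tricube-comp} gives $\q_1=\tilde f\co\omega_{k+1}\in\cu^{k+1}(\ns)$.
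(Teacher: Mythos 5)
Your proposal is correct and is essentially the paper's own argument: reduce to $u=0^{k+1}$ by an automorphism, pull $\q_0$ back to the tricube via the coordinatewise truncation $-1,0\mapsto 0$, $1\mapsto 1$ (the paper's $\phi=f^{k+1}$), perturb the single outer point $(-1,\dots,-1)$ using Lemma~\ref{lem:sim1}, and conclude with Lemma~\ref{lem:tricube-comp}. The step you flag as ``the main obstacle'' is in fact already settled by your two cases, since every cube of $T_{k+1}$ factors as $\trem_v\co\chi$ and the composition axiom then applies.
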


\begin{proof} By composition with automorphisms, it suffices to prove the case $u=0^{k+1}$. The `if' direction is immediate from Lemma \ref{lem:sim1}. For the converse, let $\phi=f^{k+1}:T_{k+1}\to\{0,1\}^{k+1}$ where $f(-1)=0,f(0)=0,f(1)=1$. For each subcube of $T_{k+1}$, with corresponding map $\trem_v:\{0,1\}^{k+1}\to T_{k+1}$ as specified in \eqref{eq:cubemb}, there is a unique subset $S$ of $[k+1]$ of indices of coordinates that can be negative for points in that subcube, namely $S=\{i\in [k+1]:v\sbr{i}=0\}$. Note that $\phi$ is thus the morphism that sends such a subcube to the lower face $\{v\in \{0,1\}^{k+1}: v\sbr{j}=0,\,\forall\, j\in S\}$. In particular, the morphism $\q_0\co \phi$ takes the same values on the set of outer points of $T_{k+1}$ as $\q_0$ does on $\{0,1\}^{k+1}$ (identifying these two sets the natural way). Moreover, on the subcube corresponding to $S=[k+1]$, the map $\q_0\co \phi$ is the constant $x$. By Lemma \ref{lem:sim1}, if we change the value of $\q_0\co \phi\big((-1)^{k+1}\big)$ from $x$ to $y$, then we still have a $(k+1)$-cube and so the resulting function $T_{k+1}\to \ns$ is still a morphism. The result now follows by Lemma \ref{lem:tricube-comp}.
\end{proof}
 By iterating this result, we deduce the following.

\begin{corollary}\label{sim2cor}
Let $k\in\mathbb{N}$ and let $\q_0\in \cu^{k+1}(\ns)$. If a function $\q_1:\{0,1\}^{k+1}\to \ns$ satisfies $\q_1(v)\sim_k \q_0(v)$ for every $v\in\{0,1\}^{k+1}$, then $\q_1\in \cu^{k+1}(\ns)$.
\end{corollary}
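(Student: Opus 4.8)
The plan is to transform $\q_0$ into $\q_1$ by altering one vertex value at a time, invoking Lemma \ref{lem:sim2} at each step to guarantee that we never leave the cube set $\cu^{k+1}(\ns)$. Concretely, I would fix an enumeration $v_1,\dots,v_N$ of the $N=2^{k+1}$ points of $\{0,1\}^{k+1}$ and define intermediate maps $p_0,\dots,p_N:\{0,1\}^{k+1}\to\ns$ by letting $p_0=\q_0$, $p_N=\q_1$, and declaring that $p_j$ agrees with $\q_1$ on $\{v_1,\dots,v_j\}$ and with $\q_0$ on the remaining vertices. The goal is then to show, by induction on $j$, that each $p_j$ lies in $\cu^{k+1}(\ns)$; the case $j=N$ gives the conclusion.

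For the inductive step I would observe that $p_j$ differs from $p_{j-1}$ only at the single vertex $v_j$, where the value changes from $p_{j-1}(v_j)=\q_0(v_j)$ to $\q_1(v_j)$. The hypothesis gives $\q_1(v_j)\sim_k\q_0(v_j)$, and since $\sim_k$ is an equivalence relation (in particular symmetric) we also have $\q_0(v_j)\sim_k\q_1(v_j)$. Applying Lemma \ref{lem:sim2} with $u=v_j$, with the cube $p_{j-1}\in\cu^{k+1}(\ns)$ supplied by the inductive hypothesis (which satisfies $p_{j-1}(v_j)=\q_0(v_j)$), and with the pair $x=\q_0(v_j)$, $y=\q_1(v_j)$, yields exactly that the single-vertex modification $p_j$ is again in $\cu^{k+1}(\ns)$. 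The base case $p_0=\q_0\in\cu^{k+1}(\ns)$ is the hypothesis, so the induction completes with $p_N=\q_1\in\cu^{k+1}(\ns)$.

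The one point that requires care — and on which the whole argument hinges — is that the ``if'' direction of Lemma \ref{lem:sim2} may be applied only when the map we start from is genuinely a cube; this is precisely what the inductive hypothesis provides, and it is the reason for changing the vertices one at a time rather than all simultaneously. A secondary bookkeeping point is the direction of the relation: Lemma \ref{lem:sim2} turns a value $x$ into $y$ under the assumption $x\sim_k y$, so one must pass through the symmetry of $\sim_k$ to reconcile this with the hypothesis $\q_1(v)\sim_k\q_0(v)$. Beyond these observations the argument is a direct iteration, which is exactly the sense in which the statement follows by iterating Lemma \ref{lem:sim2}.
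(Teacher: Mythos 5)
Your proof is correct and is exactly the argument the paper intends: the paper derives this corollary with the single phrase ``by iterating this result'' (referring to Lemma \ref{lem:sim2}), and your vertex-by-vertex induction, including the use of symmetry of $\sim_k$ to get the relation in the direction Lemma \ref{lem:sim2} requires, is precisely that iteration spelled out.
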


Another consequence is the following result generalizing Corollary \ref{cor:cube-det}.

\begin{corollary}\label{cor:sim2cor2} Let $\ns$ be a nilspace and let $k\in \N$. A cube $\q\in \cu^n(\ns/\sim_k)$ is uniquely determined by its values on $\{0,1\}^n_{\leq k}$.
\end{corollary}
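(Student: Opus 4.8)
The plan is to unwind the definition of the quotient cubespace and reduce the uniqueness assertion to a statement about the relation $\sim_k$ on the nilspace $\ns$ itself. Recall that every cube in $\cu^n(\ns/\sim_k)$ is of the form $\pi\co\q$ for some $\q\in\cu^n(\ns)$, where $\pi:\ns\to\ns/\sim_k$ is the canonical projection, and that $\pi\co\q=\pi\co\q'$ holds precisely when $\q(v)\sim_k\q'(v)$ for every $v$. Hence, lifting two cubes on $\ns/\sim_k$ that agree on $\{0,1\}^n_{\leq k}$ to cubes $\q,\q'\in\cu^n(\ns)$, the corollary becomes the following claim: if $\q(v)\sim_k\q'(v)$ for all $v$ with $|v|\leq k$, then $\q(v)\sim_k\q'(v)$ for all $v\in\{0,1\}^n$.

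First I would prove this claim by induction on $|v|$, the case $|v|\leq k$ being the hypothesis. For the inductive step, fix $v$ with $|v|=m>k$. Since $m\geq k+1$, I can choose a $(k+1)$-face map $\phi:\{0,1\}^{k+1}\to\{0,1\}^n$ whose image varies in $k+1$ of the coordinates lying in $\supp(v)$, arranged so that $\phi(1^{k+1})=v$ while $|\phi(w)|<m$ for every $w\neq 1^{k+1}$. Setting $p_0:=\q\co\phi$ and $p_1:=\q'\co\phi$, both lie in $\cu^{k+1}(\ns)$ by the composition axiom, and the induction hypothesis gives $p_0(w)\sim_k p_1(w)$ for every $w\neq 1^{k+1}$.

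The key step is then to deduce $p_0(1^{k+1})\sim_k p_1(1^{k+1})$, which is exactly $\q(v)\sim_k\q'(v)$. To do this I would form the map $\tilde p$ agreeing with $p_1$ off $1^{k+1}$ and with $\tilde p(1^{k+1}):=p_0(1^{k+1})$. Then $\tilde p(w)\sim_k p_0(w)$ for every $w$ (for $w\neq 1^{k+1}$ by the previous paragraph together with symmetry of $\sim_k$, and for $w=1^{k+1}$ by reflexivity), so Corollary \ref{sim2cor} applied to the cube $p_0$ yields $\tilde p\in\cu^{k+1}(\ns)$. Now $\tilde p$ and $p_1$ are two cubes in $\cu^{k+1}(\ns)$ that agree at every vertex except $1^{k+1}$; composing both with the reflection $w\mapsto 1^{k+1}-w$ (which sends $1^{k+1}$ to $0^{k+1}$) and comparing with Definition \ref{def:simdef} shows that their values at $1^{k+1}$, namely $p_0(1^{k+1})$ and $p_1(1^{k+1})$, are $\sim_k$-related, completing the induction.

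The main obstacle is recognizing that ``two $(k+1)$-cubes agreeing everywhere except at a single vertex'' is precisely the defining configuration of $\sim_k$, up to a cube automorphism; once this is seen, Corollary \ref{sim2cor} is exactly the tool that produces the auxiliary cube $\tilde p$ needed to put the argument into that form. The remaining points are routine: the existence of the face map $\phi$ with the prescribed support behaviour, and the bookkeeping verifying $|\phi(w)|<m$ for $w\neq 1^{k+1}$.
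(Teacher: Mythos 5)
Your proof is correct and follows essentially the same route as the paper: the paper handles the case $n=k+1$ by modifying one lift at all vertices except $1^{k+1}$ (via Lemma \ref{lem:sim2}) to reduce to Definition \ref{def:simdef}, and then sketches the case $n>k+1$ as an induction on $|v|$ using $(k+1)$-face maps, exactly as you carry out explicitly. Your variant of the single-vertex step (building $\tilde p$ and invoking Corollary \ref{sim2cor} against $p_0$) is an equivalent packaging of the same idea.
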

\begin{proof} We first check the case $n=k+1$. A cube in $\cu^{k+1}(\ns/\sim_k)$ is by definition of the form $\pi_k\co \q$ for some $\q\in\cu^{k+1}(\ns)$. Suppose that $\pi_k\co \q_0$ and $\pi_k\co \q_1$ are two such cubes with the same value at each $v\neq 1^{k+1}$. Then using Lemma \ref{lem:sim2} at each $v\neq 1^{k+1}$, we can modify $\q_1$ to a new cube $\q_2$ having same values as $\q_0$ except at $1^{k+1}$ where it is still $\q_1(1^{k+1})$. By Definition \ref{def:simdef}, we then have $\q_0(1^{k+1})\sim_k \q_1(1^{k+1})$, so $\pi_k\co \q_0=\pi_k\co \q_1$ as required. For $n>k+1$, one can use an argument by induction on $|v|$ similar to the one in the proof of Lemma \ref{lem:Gray-cubes=>G-cubes} (using the case $k+1$ at each step).
\end{proof}

\begin{lemma}\label{lem:F_k-defin}
Let $k\in\N$ and let $\ns$ be a nilspace. Then $\ns/\sim_k$ with the quotient cubespace structure is a $k$-step nilspace. We denote this factor of $\ns$ by $\cF_k(\ns)$, and $\pi_k$ denotes the canonical projection $\ns\to \cF_k(\ns)$.
\end{lemma}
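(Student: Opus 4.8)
The plan is to verify the three nilspace axioms for $\cF_k(\ns)=\ns/\!\sim_k$ and then establish uniqueness of $(k{+}1)$-completions. Composition is immediate: a cube on $\cF_k(\ns)$ is by definition $\pi_k\co\q$ with $\q\in\cu^n(\ns)$, and $(\pi_k\co\q)\co\phi=\pi_k\co(\q\co\phi)$. Ergodicity follows from that of $\ns$ together with the surjectivity of $\pi_k$: any map $\{0,1\}\to\cF_k(\ns)$ is the projection of a map $\{0,1\}\to\ns$, which is a cube on $\ns$. For the $k$-step property, observe that two completions of a given $(k{+}1)$-corner on $\cF_k(\ns)$ agree on $\{0,1\}^{k+1}\setminus\{1^{k+1}\}=\{0,1\}^{k+1}_{\leq k}$, hence agree everywhere by Corollary \ref{cor:sim2cor2}; so once the completion axiom is established, $(k{+}1)$-corners will have unique completions and $\cF_k(\ns)$ will be $k$-step. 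Thus the whole difficulty lies in proving existence of corner completions.

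The strategy for completion is to lift a corner $\bar\q'$ on $\cF_k(\ns)$ to a corner $\q'$ on $\ns$, complete $\q'$ inside the nilspace $\ns$, and project the result back down by $\pi_k$; since the completion of $\q'$ agrees with $\q'$ off $1^n$, its projection agrees with $\bar\q'$ off $1^n$ and completes it. The one extra tool this lifting needs is the following extension of Corollary \ref{sim2cor} to all dimensions $m\leq k+1$: if $\q_0\in\cu^m(\ns)$ and $\q_1\colon\{0,1\}^m\to\ns$ satisfies $\q_1(v)\sim_k\q_0(v)$ for all $v$, then $\q_1\in\cu^m(\ns)$. For $m=k+1$ this is exactly Corollary \ref{sim2cor}; for $m<k+1$ it follows by padding: composing with the projection morphism $\rho\colon\{0,1\}^{k+1}\to\{0,1\}^m$, $(w\sbr1,\dots,w\sbr{k+1})\mapsto(w\sbr1,\dots,w\sbr m)$, gives $\q_1\co\rho\in\cu^{k+1}(\ns)$ from Corollary \ref{sim2cor}, and restricting along the face map $\iota\colon v\mapsto(v,0^{k+1-m})$, for which $\rho\co\iota=\id$, yields $\q_1=(\q_1\co\rho)\co\iota\in\cu^m(\ns)$.

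I would carry out the lift by assigning the values $\q'(v)$ in order of increasing $|v|$, maintaining the invariant that for each already-placed $v$ the restriction of $\q'$ to the subcube $\{w:\supp w\subseteq\supp v\}$ is a cube of $\ns$ projecting to $\bar\q'$. To place a vertex $v$ with $|v|=m$, the values already assigned below it form an $m$-corner $c'$ on $\ns$ lifting the cube $\bar c:=\bar\q'|_{\{w:\supp w\subseteq\supp v\}}\in\cu^m(\cF_k(\ns))$. If $m\leq k+1$, I lift $\bar c$ to any cube $d\in\cu^m(\ns)$; then $c'(w)\sim_k d(w)$ for all $w\neq v$, so setting $\q'(v):=d(1^m)$ produces a map that is pointwise $\sim_k$-equivalent to $d$, hence a cube by the extension of Corollary \ref{sim2cor} above, and it projects correctly. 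If $m>k+1$, I instead complete $c'$ inside $\ns$; by Corollary \ref{cor:sim2cor2} the projection of this completion already agrees with $\bar c$ at $1^m$ (the two cubes agree on $\{0,1\}^m_{\leq k}\subseteq\{0,1\}^m\setminus\{1^m\}$), so its top value may be taken as $\q'(v)$. Once all $v\neq1^n$ are placed, the invariant applied to the vertices of support $[n]\setminus\{i\}$ shows that $\q'$ restricts to a cube on each $(n-1)$-face through $0^n$, i.e.\ $\q'$ is an $n$-corner on $\ns$; completing it in $\ns$ and projecting gives the required completion of $\bar\q'$.

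The main obstacle is exactly this interaction between lifting and the relation $\sim_k$: a naive completion of the lifted corner in $\ns$ need not project to the prescribed corner value at a top vertex, so one must be able to correct the lift at each vertex while preserving cubicity. The content of that correction is the dimension-split above, and the delicate point is that Corollary \ref{sim2cor} cannot be invoked in all dimensions — it holds for $m\leq k+1$ but genuinely fails for $m>k+1$ (already for cubes on a filtered group, where a $G_{k+1}$-valued map of dimension $>k+1$ need not be a cube) — which is precisely why the regimes $m\leq k+1$ and $m>k+1$ must be handled by different arguments.
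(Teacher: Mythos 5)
Your proof is correct, and its overall strategy -- lift the corner to $\ns$, complete there, project back down, with Corollary \ref{sim2cor} handling low dimensions and Corollary \ref{cor:sim2cor2} handling uniqueness and high-dimensional agreement -- is the same as the paper's. The difference is in how the lifting is organized. The paper lifts $\q'$ only on the truncation $\{0,1\}^n_{\leq k+1}$, where Corollary \ref{sim2cor} shows that \emph{any} pointwise lift is automatically a morphism of the simplicial cubespace; it then extends this to a full cube on $\ns$ in one stroke via simplicial completion (Lemma \ref{lem:simpcomp}) and checks a posteriori, using Corollary \ref{cor:sim2cor2}, that the projection agrees with $\q'$ off $1^n$. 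You instead lift the entire corner vertex by vertex in order of increasing $|v|$, maintaining cubicity of every lower subcube as an invariant, which forces the case split at dimension $k+1$: below it you correct the top value using a lift of the projected cube plus pointwise $\sim_k$-equivalence, above it you complete in $\ns$ and use Corollary \ref{cor:sim2cor2} to see the top value projects correctly. Your route avoids Lemma \ref{lem:simpcomp} at the cost of a longer induction that in effect re-proves the special case of it that is needed here; the paper's route is shorter because the simplicial completion machinery absorbs all the bookkeeping above level $k+1$. Both your padding argument extending Corollary \ref{sim2cor} to dimensions $m\leq k+1$ (which the paper leaves implicit) and your observation that this extension genuinely fails for $m>k+1$ are correct and worth making explicit.
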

In particular we have that $\ns$ is $k$-step if and only if $\ns= \cF_k(\ns)$.

\begin{proof} Note first that $\cF_k(\ns)$ inherits ergodicity from $\ns$. To see that $\cF_k(\ns)$ satisfies the completion axiom, let $\q':\{0,1\}^n\setminus\{1^n\}\to \cF_k(\ns)$ be an $n$-corner. It follows from Corollary \ref{sim2cor} that there exists a morphism $\bar{f}$ from the simplicial cubespace $\{0,1\}^n_{\leq k+1}$ to $\ns$, such that $\pi_k\co \bar{f}$ equals $\q'$ on $\{0,1\}^n_{\leq k+1}$. By simplicial completion (Lemma \ref{lem:simpcomp}), $\bar{f}$ extends to a cube $\bar{\q}:\{0,1\}^n \to \ns$. The maps $\pi_k\co \bar{\q}$ and $\q'$ are equal on $\{0,1\}^n_{\leq k+1}$. By Corollary \ref{cor:sim2cor2}, this equality extends to all of $\{0,1\}^n\setminus\{1^n\}$. Hence, the cube $\q=\pi_k\co \bar{\q}$ completes $\q'$. Completion of $(k+1)$-corners is unique by Corollary \ref{cor:sim2cor2}, so $\mathcal{F}_k(\ns)$ is a $k$-nilspace.
\end{proof}
The following useful result enables us to lift $\mathcal{F}_k(\ns)$-valued morphisms to $\mathcal{F}_{k+1}(\ns)$-valued ones.

\begin{lemma}[Lifting a morphism]\label{lem:lifting} Let $P$ be a subcubespace of $\{0,1\}^n$ with the extension property, let $k\in \N$ and let $\ns$ be a nilspace. Then every morphism $f:P\to\cF_k(\ns)$ can be lifted to $\cF_{k+1}(\ns)$, that is there exists a morphism $f':P\to\cF_{k+1}(\ns)$ such that $\pi_k\co f'= f$.
\end{lemma}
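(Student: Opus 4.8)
The plan is to lift $f$ not merely to $\cF_{k+1}(\ns)$ but all the way to $\ns$ itself, and then project back down. First I would record the elementary fact that makes the $\pi_k$ in the statement meaningful, namely that $\sim_{k+1}$ refines $\sim_k$, so that there is an induced projection $\cF_{k+1}(\ns)\to\cF_k(\ns)$ (also denoted $\pi_k$) through which the canonical projection $\ns\to\cF_k(\ns)$ factors. To see the refinement, if $x\sim_{k+1}y$ then by Lemma \ref{lem:sim1} the map on $\{0,1\}^{k+2}$ sending $0^{k+2}$ to $y$ and every other vertex to $x$ lies in $\cu^{k+2}(\ns)$; restricting it by $\phi_F$ to the $(k+1)$-face $F=\{v:v\sbr{k+2}=0\}$, which contains $0^{k+2}$, yields via the composition axiom the analogous map on $\{0,1\}^{k+1}$, so $x\sim_k y$ by Lemma \ref{lem:sim1} again. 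Writing $\rho:\ns\to\cF_{k+1}(\ns)$ for the canonical projection, the canonical projection $\ns\to\cF_k(\ns)$ then equals $\pi_k\co\rho$.

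The core of the argument is an extend--lift--restrict step. Since $\cF_k(\ns)$ is a nilspace (Lemma \ref{lem:F_k-defin}) and $P$ has the extension property in $\{0,1\}^n$, I would first extend $f:P\to\cF_k(\ns)$ to a morphism $\hat f:\{0,1\}^n\to\cF_k(\ns)$; such a morphism is precisely a cube $\hat f\in\cu^n(\cF_k(\ns))$. By the very definition of the quotient cubespace structure on $\cF_k(\ns)=\ns/\!\sim_k$, every cube on $\cF_k(\ns)$ has the form $\pi_k\co\q$ for some $\q\in\cu^n(\ns)$, so choosing such a $\q$ lifts $\hat f$ to $\ns$. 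Restricting, I set $\tilde f:=\q|_P$. Because a subcubespace of $\{0,1\}^n$ has all of its cubes among the cubes of $\{0,1\}^n$, the restriction to $P$ of the morphism $\q$ is again a morphism $\tilde f:P\to\ns$, and $\pi_k\co\tilde f=\hat f|_P=f$.

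Finally I would put $f':=\rho\co\tilde f:P\to\cF_{k+1}(\ns)$. As a composition of the morphisms $\tilde f$ and $\rho$ it is a morphism, and by the factorization from the first paragraph $\pi_k\co f'=\pi_k\co\rho\co\tilde f=f$, which is exactly the required lift.

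The individual steps are short, so the main subtlety is conceptual rather than computational: one cannot simply pick a preimage in $\ns$ of each value $f(p)$ point by point and hope to obtain a morphism, since there is no reason an arbitrary choice of lifts over the points of the (possibly intricate) cubespace $P$ should assemble into a cube-preserving map. The role of the extension property is exactly to bypass this difficulty: it reduces the lifting problem on $P$ to lifting a single global cube over the free object $\{0,1\}^n$, where a lift exists automatically by surjectivity of the quotient projection on cubes. The only other point needing care is the identification of the $\pi_k$ appearing in the statement with the induced factor projection, which is handled by the refinement $\sim_{k+1}\subseteq\sim_k$ established above.
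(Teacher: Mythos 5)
Your proof is correct and follows essentially the same extend--lift--restrict route as the paper: complete $f$ to a full cube on $\cF_k(\ns)$ using the extension property, lift that cube (which is immediate from the definition of the quotient cube structure), and restrict back to $P$. The only cosmetic difference is that you lift all the way to $\ns$ and then project to $\cF_{k+1}(\ns)$ rather than lifting directly to a cube on $\cF_{k+1}(\ns)$, and you additionally spell out the refinement $\sim_{k+1}\subseteq\sim_k$ that the paper leaves implicit.
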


\begin{proof} Fix any $n\geq 0$. For $P=\{0,1\}^n$ the claim follows  clearly from the definition of $\cu^n(\mathcal{F}_k(\ns))$. For $P$ a proper subset of $\{0,1\}^n$, we can first complete $f$ to a cube $\q\in \cu^n(\mathcal{F}_k(\ns))$, then lift $\q$ to a cube $\bar{\q}\in\cu^n(\mathcal{F}_{k+1}(\ns))$, then set $f'=\bar{\q}|_P$. 
\end{proof}

\begin{remark}\label{rem:cubesuff} Note that by combining Lemma \ref{lem:lifting} with Corollary \ref{sim2cor} we have that if $\q:\{0,1\}^{k+1}\to \cF_k(\ns)$ is a cube, then any lift of $\q$ given by the last lemma is also a cube on $\cF_{k+1}(\ns)$. In particular, for $\q:\{0,1\}^{k+1}\to \cF_{k+1}(\ns)$ to be a cube it suffices to have $\pi_k\co\q$ being a cube on $\cF_k(\ns)$. For instance if $\ns=\cF_2(\ns)$ is a 2-step nilpotent group $G$ (with lower central series), then this is saying that a map $\{0,1\}^2\to G$ is a $2$-cube if and only if its projection to the abelianization $G/[G,G]$ gives an additive quadruple (i.e. a quadruple $(a_{00},a_{10},a_{01},a_{11})$ such that $a_{00}+a_{10}=a_{01}+ a_{11}$).
\end{remark}

\begin{lemma}\label{lem:cubechar} Let $\ns$ be a $k$-step nilspace and let $n\geq k+2$. A map $\q:\{0,1\}^n\to \ns$ is in $\cu^n(\ns)$ if and only if, for every $(k+1)$-dimensional face $F\subset \{0,1\}^n$ containing some point $v$ with $v\sbr{n}=0$, the restriction $\q\co\phi_F$ is in $\cu^{k+1}(\ns)$.
\end{lemma}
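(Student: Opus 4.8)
The forward implication I would dispose of immediately: if $\q\in\cu^n(\ns)$ and $F$ is any $(k+1)$-face, then $\phi_F$ is in particular a morphism, so the composition axiom gives $\q\co\phi_F\in\cu^{k+1}(\ns)$; this holds for all $(k+1)$-faces, hence a fortiori for those containing a point with $v\sbr{n}=0$.

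For the converse the plan is to argue by induction on $n\geq k+2$, the goal being to exhibit $\q$ as a completion of a corner. Write $\q_0=\q(\cdot,0)$, and for $i\in[n]$ let $R_i$ denote the restriction of $\q$ to the $(n-1)$-face $\{v:v\sbr{i}=0\}$, viewed as a map on a copy of $\{0,1\}^{n-1}$ (so $R_n=\q_0$). The first and main task is to show that each $R_i$ lies in $\cu^{n-1}(\ns)$, for then the $(n-1)$-faces of $\{0,1\}^n$ containing $0^n$ are precisely the $\{v\sbr{i}=0\}$, so $\q|_{\{0,1\}^n\setminus\{1^n\}}$ is an $n$-corner. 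In the base case $n=k+2$ each $R_i$ is itself $(k+1)$-dimensional and equals $\q\co\phi_F$ for the $(k+1)$-face $F=\{v\sbr{i}=0\}$, which meets $\{v\sbr{n}=0\}$ (trivially for $i=n$, and for $i<n$ because coordinate $n$ is then free in $F$); hence $R_i\in\cu^{k+1}(\ns)$ directly by hypothesis. In the inductive step $n\geq k+3$: for $i<n$, the $(k+1)$-faces of $\{v\sbr{i}=0\}$ that meet $\{v\sbr{n}=0\}$ are among the faces controlled by hypothesis, so $R_i$ satisfies the hypothesis of the lemma in dimension $n-1$ (with coordinate $n$ still distinguished) and is a cube by induction; for $i=n$, every $(k+1)$-face of $\q_0$ arises as $\q\co\phi_F$ for a $(k+1)$-face $F\subset\{v\sbr{n}=0\}$, all of whose points have $v\sbr{n}=0$, so all $(k+1)$-face restrictions of $\q_0$ are cubes, and the hypothesis of the lemma one dimension down holds for any choice of distinguished coordinate, giving $R_n\in\cu^{n-1}(\ns)$ by induction.

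Granting this, the completion axiom yields a cube $\q^*\in\cu^n(\ns)$ with $\q^*=\q$ on $\{0,1\}^n\setminus\{1^n\}$, and it only remains to check $\q^*(1^n)=\q(1^n)$. Since $n>k+1$, I would choose a $(k+1)$-face $F\ni 1^n$ whose free coordinates include coordinate $n$; then $F$ meets $\{v\sbr{n}=0\}$, so $\q\co\phi_F\in\cu^{k+1}(\ns)$ by hypothesis, while $\q^*\co\phi_F\in\cu^{k+1}(\ns)$ by composition. These two $(k+1)$-cubes agree on all of $F$ except the single vertex mapping to $1^n$. Composing with an automorphism of $\{0,1\}^{k+1}$ sending that vertex to $1^{k+1}$, they become two completions of one common $(k+1)$-corner, so they coincide by the uniqueness of corner completion (valid because $\ns$ is $k$-step). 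Thus $\q(1^n)=\q^*(1^n)$ and $\q=\q^*\in\cu^n(\ns)$.

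I expect the main obstacle to be the inductive bookkeeping in establishing that the $R_i$ are cubes: one must verify that passing to a face $\{v\sbr{i}=0\}$ converts the lemma's \emph{asymmetric} hypothesis (faces meeting $\{v\sbr{n}=0\}$) into a genuine instance of the same hypothesis in dimension $n-1$, keeping careful track of which coordinate plays the distinguished role after restriction. The delicate point is the bottom slice $i=n$, where the distinguished coordinate disappears, so one must first upgrade to the fact that \emph{all} $(k+1)$-face restrictions of $\q_0$ are cubes before the induction can be applied; everything downstream (the completion and the pinning-down of the value at $1^n$ via the $k$-step uniqueness) is then routine.
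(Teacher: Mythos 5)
Your proof is correct, but it takes a genuinely different route from the paper's. The paper works with the $k$-skeleton $P=\{0,1\}^n_{\leq k}$: it observes that every lower $k$-face embeds in a $(k+1)$-face through $0^n$, so $\q|_P$ is a morphism; it then invokes simplicial completion (Lemma \ref{lem:simpcomp}) to produce a single cube $\q'\in\cu^n(\ns)$ agreeing with $\q$ on $P$, and propagates the agreement upward through all weight levels $|v|=t+1$ by repeatedly applying uniqueness of $(k+1)$-corner completion on well-chosen $(k+1)$-faces. You instead induct on the dimension $n$: you show that each restriction of $\q$ to an $(n-1)$-face through $0^n$ is itself a cube (the delicate bookkeeping you flag — checking that the asymmetric hypothesis descends correctly to each slice, with the bottom slice $i=n$ handled by upgrading to \emph{all} $(k+1)$-face restrictions — is done correctly), so that $\q$ minus the top vertex is an $n$-corner; you then complete it and pin down the value at $1^n$ with a single application of $(k+1)$-uniqueness on a face whose free coordinates include coordinate $n$. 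Your argument trades the paper's reliance on the simplicial completion machinery for the plain corner-completion axiom plus a dimension induction, at the cost of the face-restriction bookkeeping; the paper's version handles all of $\{0,1\}^n$ in one pass and makes the role of the $k$-skeleton more transparent. Both are complete proofs.
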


\begin{proof} The forward implication is clear, by the composition axiom. For the converse, let $P=\{0,1\}^n_{\leq k}$, and note that this set is the union of the $k$-dimensional lower faces of the $n$-cube. Any such face $F$ can be embedded in some $(k+1)$-dimensional face, and the latter then contains $0^n$, so by our assumption and composition we have that $\q|_F$ is a cube. Hence $\q|_P$ is a morphism and so by Lemma \ref{lem:simpcomp} there exists $\q'$ in $\cu^n(\ns)$ such that $\q'|_P = \q|_P$. We claim that $\q = \q'$. Let $t$ be the maximal integer such that $\q = \q'$ on $\{0,1\}^n_{\leq t}$. Supposing for a contradiction that $t<n$, there is then $w\in\{0,1\}^n_{t+1}$ such that $\q'(w)\neq \q(w)$. Since $t \geq k$, it follows that we can find a $(k+1)$-face $F$ containing $w$, such that every $v\in F\setminus\{w\}$ has $|v| \leq t$, and such that $F$ contains some point $v$ with $v\sbr{n}=0$. (We can take $F$ to be the lower face defined by $v\sbr{i}=w\sbr{i}$ for every $i\in I$, where $I\subset [n]$ is the complement of the set of the last $k+1$ elements from $\supp(w)$.) Now by our initial assumption we have that $\q|_F\in \cu^{k+1}(\ns)$, so by the uniqueness of completion from $F\setminus\{w\}$ to $F$, we must have $\q|_F=\q'|_F$, contradicting the maximality of $t$.
\end{proof}

\subsection{$k$-fold ergodic nilspaces as degree-$k$ abelian torsors}
Recall from Subsection \ref{sect:deg-k-str} that the degree-$k$ structure $\cD_k(\ab)$ on an abelian group $\ab$ is a $k$-fold ergodic nilspace. An important step towards the general decomposition theorem is to show that all $k$-fold ergodic $k$-step nilspaces arise this way. We have already checked the case $k=1$ of this statement, in Proposition \ref{prop:abel}. Here we shall establish the general case.

\begin{proposition}\label{prop:k-erg}
Let $\ns$ be a $k$-step, $k$-fold ergodic nilspace. Then there is an abelian group $\ab$ such that $\ns$ is isomorphic to $\cD_k(\ab)$. 
\end{proposition}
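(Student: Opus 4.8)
The plan is to induct on $k$. The base case $k=1$ is exactly Proposition \ref{prop:abel}: every nilspace is $1$-fold ergodic, and a $1$-step nilspace is a degree-$1$ abelian torsor, hence isomorphic to $\cD_1(\ab)$ for some abelian group $\ab$. For the inductive step, fix a base point $e\in\ns$ and pass to $\partial_e\ns$. By Lemma \ref{lem:nilspace-dif} this is a $(k-1)$-step, $(k-1)$-fold ergodic nilspace, so by the inductive hypothesis it is isomorphic to $\cD_{k-1}(\ab)$ for some abelian group $\ab$. Transporting the group operation onto the set $\ns$ along this isomorphism, and composing with a translation of $\cD_{k-1}(\ab)$ so that $e$ becomes the identity, we may assume that $\partial_e\ns=\cD_{k-1}(\ab)$ as nilspaces on the underlying set $\ns$, with $0_\ab=e$. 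This is the group I will use; the goal becomes to show that the identity map is an isomorphism $\ns\to\cD_k(\ab)$, i.e.\ $\cu^n(\ns)=\cu^n(\cD_k(\ab))$ for every $n$.

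First I would reduce to dimension $n=k+1$. For $n\le k$, both nilspaces are $k$-fold ergodic (for $\cD_k(\ab)$ this is recorded after Definition \ref{def:k-deg-ab-cubes}, and $k$-fold ergodicity passes down to all smaller dimensions by composing with the coordinate projections), so $\cu^n(\ns)=\ns^{\{0,1\}^n}=\cu^n(\cD_k(\ab))$. For $n\ge k+2$, both $\ns$ and $\cD_k(\ab)$ are $k$-step, so Lemma \ref{lem:cubechar} characterizes their $n$-cubes by the same condition, namely that all $(k+1)$-face restrictions lie in the respective $\cu^{k+1}$; hence once the cube sets agree in dimension $k+1$ they agree in all higher dimensions. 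Everything thus comes down to proving $\cu^{k+1}(\ns)=\cu^{k+1}(\cD_k(\ab))$, and since for dimension exactly $k+1$ every $(k+1)$-face map is an automorphism and $\sigma_{k+1}(\q\co\theta)=\pm\sigma_{k+1}(\q)$, this amounts to showing that $\q\colon\{0,1\}^{k+1}\to\ns$ lies in $\cu^{k+1}(\ns)$ if and only if $\sigma_{k+1}(\q)=0_\ab$, the alternating sum over $\{0,1\}^{k+1}$.

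The next observation halves the work. Since both nilspaces are $k$-fold ergodic, every map on $\{0,1\}^{k+1}\setminus\{1^{k+1}\}$ is a $(k+1)$-corner on each (each $k$-face containing $0^{k+1}$ avoids $1^{k+1}$, so its restriction is automatically a cube), and since both are $k$-step, each such corner has a \emph{unique} completion on each side. Thus $\cu^{k+1}(\ns)$ and $\cu^{k+1}(\cD_k(\ab))$ are both the graph of a completion function defined on the \emph{same} set of corners, so to prove they coincide it suffices to prove one inclusion. I will aim for $\cu^{k+1}(\ns)\subseteq\cu^{k+1}(\cD_k(\ab))$, i.e.\ every $(k+1)$-cube $\q$ on $\ns$ satisfies $\sigma_{k+1}(\q)=0_\ab$: granting this, the $\ns$-completion of any corner is a $\cD_k(\ab)$-cube completing it, hence equals the unique $\cD_k(\ab)$-completion, so the two completion functions agree.

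The remaining, and main, step is this inclusion, for which the essential tool is the arrow description of $\partial_e\ns$. Since $g\in\cu^k(\partial_e\ns)$ iff $\arr{e,g}_1\in\cu^{k+1}(\ns)$, and $\cu^k(\partial_e\ns)=\cu^k(\cD_{k-1}(\ab))=\{g:\sigma_k(g)=0\}$, we get
\[
\arr{e,g}_1\in\cu^{k+1}(\ns)\ \Longleftrightarrow\ \sigma_k(g)=0_\ab .
\]
For a general $\q\in\cu^{k+1}(\ns)$ with faces $\q_0=\q(\cdot,0)$, $\q_1=\q(\cdot,1)$, the recursion \eqref{eq:recurGray} gives $\sigma_{k+1}(\q)=\sigma_k(\q_0)-\sigma_k(\q_1)$, so I must show $\sigma_k(\q_0)=\sigma_k(\q_1)$; the displayed equivalence handles the special case $\q_0\equiv e$. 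The crux is to reduce a general cube to this special form — morally, to show that replacing the lower face $\q_0$ by the constant $e$ (and $\q_1$ by the $\ab$-difference $\q_1-\q_0$) carries $\ns$-cubes to $\ns$-cubes. I expect to realize this through the arrow space $\ns\Join_1\ns$ (Definition \ref{def:arrowspace}), whose $k$-cubes are exactly the pairs $\q_0\times\q_1$ with $\arr{\q_0,\q_1}_1\in\cu^{k+1}(\ns)$, together with the fact that translations $x\mapsto x+t$ are automorphisms of $\partial_e\ns=\cD_{k-1}(\ab)$; alternatively, by gluing the relevant cubes over a tricube and invoking the tricube composition (Lemma \ref{lem:tricube-comp}). \textbf{Main obstacle.} This reduction is where the real content lies and is the step I expect to be hardest: because the addition on $\ns$ is defined only through the once-differentiated space $\partial_e\ns$, relating $\sigma_{k+1}$ of an \emph{arbitrary} $(k+1)$-cube of $\ns$ to it is not a formal manipulation but requires a genuine gluing argument that transfers the extracted group law back into a statement about honest cubes of $\ns$.
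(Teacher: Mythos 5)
Your reductions are all correct and match the paper's: induction on $k$ with Proposition \ref{prop:abel} as the base case, reduction to dimension $k+1$ via $k$-fold ergodicity and Lemma \ref{lem:cubechar}, the observation that one inclusion suffices because both cube sets are graphs of completion functions over the same set of corners, and the reformulation of the target inclusion as ``$\sigma_{k+1}(\q)=0$ for every $\q\in\cu^{k+1}(\ns)$'' using $\cu^k(\partial_e\ns)=\cu^k(\cD_{k-1}(\ab))=\{g:\sigma_k(g)=0\}$. But the step you label the ``main obstacle'' --- passing from the special case $\q(\cdot,0)\equiv e$ to an arbitrary $(k+1)$-cube --- is the entire content of the proposition, and you do not prove it; you only list tools (the arrow space, tricube composition) that might be relevant. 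Note in particular that the operation you propose, replacing $\q_0$ by the constant $e$ and $\q_1$ by the pointwise difference $\q_1-\q_0$, is not available a priori: there is no reason yet why subtracting one $k$-cube of $\ns$ from another should interact well with $\cu^{k+1}(\ns)$, since the group law on $\ns$ has only been related to cubes through the once- (or $(k-1)$-times-) differentiated structure at $e$. So as written this is a genuine gap, not a routine verification.

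The missing idea in the paper is Lemma \ref{lem:k-erg-keyfact}: for $\nss=\ns\Join_1\ns$, two pairs satisfy $(x_0,x_1)\sim_{k-1}(y_0,y_1)$ in $\nss$ if and only if $x_0-x_1=y_0-y_1$ in $\ab$. Crucially, its proof applies the induction hypothesis not to $\partial_e\ns$ but to $\cF_{k-1}(\nss)$, which is a \emph{different} $(k-1)$-fold ergodic $(k-1)$-step nilspace; this is what converts the group law extracted at $e$ into a statement about arbitrary diagonal pairs $(x_0,x_1)\sim_{k-1}(x_0+a,x_1+a)$ anywhere in $\ns$. Granting that, Corollary \ref{sim2cor} lets you modify a $k$-cube of $\nss$ at a single vertex within its $\sim_{k-1}$-class, which translates into: adding $a\in\ab$ to the values of a $(k+1)$-cube of $\ns$ at the two endpoints of a single $1$-face yields another $(k+1)$-cube. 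Iterating this along a Hamiltonian path of $1$-faces reduces any $(k+1)$-cube to a constant one, and since each such modification preserves $\cu^{k+1}(\cD_k(\ab))$ and is reversible, the inclusion $\cu^{k+1}(\ns)\subseteq\cu^{k+1}(\cD_k(\ab))$ follows. Your instinct to work edge-by-edge through $\ns\Join_1\ns$ was the right one, but without the characterization of $\sim_{k-1}$ on the arrow space (and the corollary of the tricube composition that licenses single-vertex modifications) the argument does not close.
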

\noindent In other words, we have that $\ns$ is a principal homogeneous space of $\ab$ and the cubes on $\ns$ are the images of the degree-$k$ cubes on $\ab$. 

Examples of $k$-step $k$-fold ergodic nilspaces arise naturally when we consider the equivalence classes of the relation $\sim_{k-1}$ on a $k$-step coset nilspace $(G/\Gamma,G_\bullet)$. Indeed, as was already mentioned in Section \ref{sec:Motiv}, for such a nilspace the group $\ab=G_k/(G_k\cap \Gamma)$ has a free action on $G/\Gamma$, and combining this with the remarks in Example \ref{ex:rels} one can see that each class of $\sim_{k-1}$ is a principal homogeneous space of $\ab$; one can then see that restricting the cube structure to any such class, one obtains a $k$-fold ergodic $k$-step nilspace, which is precisely $\cD_k(\ab)$. The importance of Proposition \ref{prop:k-erg} lies in that it will enable us to recover a similar picture concerning the equivalence classes of $\sim_{k-1}$ for a \emph{general} $k$-step nilspace.

Note that by Lemma \ref{lem:nilspace-dif}, if we fix an element $e\in \ns$ and apply $\partial_e$ to $\ns$ repeatedly $k-1$ times, then the result is a 1-step nilspace, so it is isomorphic (as a nilspace) to an affine abelian group with the degree-$1$ structure. The proof of the proposition will therefore consist in `integrating' back $k-1$ times to obtain the desired conclusion. To this end, we shall argue by induction on $k$, using the following key fact.

\begin{lemma}\label{lem:k-erg-keyfact}
Let $k\geq 2$ and suppose that Proposition \ref{prop:k-erg} holds for $k-1$. Let $\ns$ be a $k$-fold ergodic $k$-nilspace, fix $e\in \ns$ and let $\ab$ be an abelian group such that $\partial_e^{k-1} \ns$ is isomorphic to $\cD_1(\ab)$. Then two elements $x=(x_0,x_1)$, $y=(y_0,y_1)$ of $\ns\Join_1 \ns$ satisfy $x \sim_{k-1} y$ if and only if $x_0-x_1=y_0-y_1$ in $\ab$.  
\end{lemma}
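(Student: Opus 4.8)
The plan is to characterize the relation $\sim_{k-1}$ on $\nss:=\ns\Join_1\ns$ concretely and then match it with the difference in $\ab$. Throughout I use that $\partial_e^{k-1}\ns$ is a degree-$1$ abelian torsor for $\ab$, so that for $a,b\in\ns$ the difference $a-b\in\ab$ is well defined, and that by Lemma \ref{lem:arrow} the space $\nss$ is a (not necessarily ergodic) $k$-step nilspace. By Lemma \ref{lem:sim1}, $x=(x_0,x_1)\sim_{k-1}y=(y_0,y_1)$ in $\nss$ exactly when the $k$-cube on $\nss$ taking the value $y$ at $0^k$ and $x$ elsewhere lies in $\cu^k(\nss)$; by Definition \ref{def:arrowspace} this is the condition $\arr{\q_0,\q_1}_1\in\cu^{k+1}(\ns)$, where $\q_i\colon\{0,1\}^k\to\ns$ is the ``spike'' equal to $y_i$ at $0^k$ and to $x_i$ elsewhere.

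For the forward implication I would show that the difference map $\delta\colon\nss\to\cD_{k-1}(\ab)$, $(x_0,x_1)\mapsto x_0-x_1$, is a nilspace morphism. Granting this, the implication is immediate: the target $\cD_{k-1}(\ab)$ is $(k-1)$-step, so $\sim_{k-1}$ is trivial on it; composing the morphism $\delta$ with the spike $k$-cube that witnesses $x\sim_{k-1}y$ produces a $k$-cube on $\cD_{k-1}(\ab)$, so by Lemma \ref{lem:sim1} we get $\delta(x)\sim_{k-1}\delta(y)$, whence $\delta(x)=\delta(y)$, i.e.\ $x_0-x_1=y_0-y_1$.

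The backward implication then follows from the completion axiom for $\nss$ and $k$-step uniqueness. Fix $(x_0,x_1)$ and consider the $k$-corner on $\nss$ equal to $(x_0,x_1)$ on $\{0,1\}^k\setminus\{0^k\}$ (after an automorphism of $\{0,1\}^k$ placing the missing vertex at $1^k$); every $(k-1)$-face avoiding $0^k$ carries the constant value $(x_0,x_1)$ and is therefore a cube, so this is a genuine corner. Given any prescribed value $y_0\in\ns$ for the first coordinate at $0^k$, the spike $\q_0$ is a $k$-cube on $\ns$ by $k$-fold ergodicity, and completing the arrow as in the proof of Lemma \ref{lem:arrow} yields a unique compatible second coordinate $y_1$ (uniqueness because two completions agreeing off $(0^k,1)$ would contradict the uniqueness of $(k+1)$-corner completion in the $k$-step nilspace $\ns$). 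By the forward direction this forced value satisfies $y_0-y_1=x_0-x_1$. Hence, for any target $(y_0,y_1)$ with $y_0-y_1=x_0-x_1$, completing with first coordinate $y_0$ reproduces exactly $y_1$, exhibiting the required spike and so $x\sim_{k-1}y$.

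The main obstacle is proving that $\delta$ is a morphism; this is where the hypothesis that Proposition \ref{prop:k-erg} holds for $k-1$ enters. By the composition axiom and the defining equations \eqref{eq:k-ab-cubes} of $\cD_{k-1}(\ab)$, it suffices to verify $\sigma_k(\q_0-\q_1)=0$ in $\ab$ for every $\q_0\times\q_1\in\cu^k(\nss)$. The idea is to reduce this identity, by differentiating at the base point $e$, to a statement about $\partial_e\ns$, which by Lemma \ref{lem:nilspace-dif} is a $(k-1)$-step, $(k-1)$-fold ergodic nilspace and hence, by the inductive hypothesis, is isomorphic to the explicit model $\cD_{k-1}(\ab)$ (the same $\ab$, since applying $\partial_e$ a further $k-2$ times recovers $\partial_e^{k-1}\ns\cong\cD_1(\ab)$). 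In that explicit model the compatibility of the difference with the arrow structure is a direct computation via Lemma \ref{lem:filtarrow}: the difference of two $1$-arrow-compatible cubes is a cube for the once-shifted filtration, hence of one lower degree. The technical heart is to make the differentiation precise — relating cubes of $\nss$ to arrows of cubes on $\partial_e\ns$, and telescoping in the spirit of the proof of Lemma \ref{lem:homs-are-polys} to pass from the differentiated (infinitesimal) identity back to $\sigma_k(\q_0-\q_1)=0$ itself. The base case $k=1$, where $\ns=\cD_1(\ab)$ by Proposition \ref{prop:abel}, is the elementary verification that $\q_0-\q_1$ is constant.
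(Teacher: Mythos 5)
Your backward direction is sound: by $k$-fold ergodicity the relevant partial map is a genuine $(k+1)$-corner on $\ns$, unique completion pins down a single $y_1$ for each $y_0$, and combining this with the forward implication does yield the converse. The problem is the forward direction, where the entire content of the lemma is concentrated in your unproven claim that $\delta\colon(x_0,x_1)\mapsto x_0-x_1$ is a morphism $\ns\Join_1\ns\to\cD_{k-1}(\ab)$, i.e.\ that $\sigma_k(\q_0-\q_1)=0$ for every $\q_0\times\q_1\in\cu^k(\nss)$. Note that this claim already implies $\sigma_{k+1}(\q)=0$ for every $\q\in\cu^{k+1}(\ns)$, which is one half of the conclusion of Proposition \ref{prop:k-erg} for step $k$ itself; so you are very close to assuming what the whole induction is trying to establish. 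The sketched route to it does not work as stated: ``differentiating at $e$'' only relates cubes of the special form $\arr{e,\q}_1$ to cubes on $\partial_e\ns$, whereas a general element of $\cu^k(\nss)$ (and even the spike cubes witnessing $\sim_{k-1}$, which are all you actually need) bears no relation to the base point $e$; and the telescoping in Lemma \ref{lem:homs-are-polys} depends on the unique factorization of cubes into face-group elements (Lemma \ref{lem:cubefactn}), a piece of structure that an abstract $k$-fold ergodic nilspace is not yet known to possess at this stage — indeed producing it is the point of Proposition \ref{prop:k-erg}.

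The paper avoids this by never trying to show $\delta$ is a morphism. It unfolds the definition of $\partial_e^{k-1}\ns$ to translate the condition $x_0-x_1=y_0-y_1$ into the statement that an explicit map $f_{k-1}\in\ns^{\{0,1\}^{k+1}}$, equal to $e$ except near two adjacent vertices, lies in $\cu^{k+1}(\ns)$; equivalently, that the corresponding $f:\{0,1\}^k\to\nss$ (equal to $(e,e)$ except at two adjacent vertices where it takes the values $x$ and $y$) lies in $\cu^k(\nss)$. Then, crucially, it invokes Remark \ref{rem:cubesuff} (i.e.\ Corollary \ref{sim2cor} plus lifting) to reduce ``$f\in\cu^k(\nss)$'' to ``$\pi_{k-1}\co f\in\cu^k(\cF_{k-1}(\nss))$'', and applies the inductive hypothesis to $\cF_{k-1}(\nss)$ — not to $\partial_e\ns$ — to identify that factor with $\cD_{k-1}(\ab')$, where the cube condition becomes the alternating sum $\sigma_k(\pi_{k-1}\co f)=0$, which for this particular $f$ collapses to $\pi_{k-1}(x)=\pi_{k-1}(y)$, i.e.\ $x\sim_{k-1}y$. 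If you want to repair your argument, the missing ingredient is precisely this use of Corollary \ref{sim2cor} together with the inductive hypothesis applied to the $(k-1)$-step factor of the arrow space.
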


\begin{proof}
Let $\nss=\ns\Join_1 \ns$. Let $\q_0,\q_1 :\{0,1\}\to \ns$ be the maps defined by $\q_0(i)=x_i$, $\q_1(i)=y_i$ for $i=0,1$. By definition of $\cD_1(\ab)$, we have $x_0-x_1=y_0-y_1$ in $\ab$ if and only if the 1-arrow $f_0=\arr{\q_0,\q_1}_1$ is in $\cu^2(\cD_1(\ab))$. We know that $\cD_1(\ab)\cong \partial_e^{k-1} \ns$, and so $f_0 \in \cu^2(\cD_1(\ab))$ in turn means that the 2-arrow $\arr{e, f_0}_1$ is in $\cu^3(\partial^{k-2}_e \ns)$. This in turn means that the 3-arrow $\arr{e,\arr{e, f_0}_1}_1\in \cu^4(\partial^{k-3}_e \ns)$, and so on. Continuing like this, we conclude that $x_0-x_1=y_0-y_1$ if and only if the $k$-arrow $f_{k-1}:= \arr{e,\arr{e,\ldots,\arr{e, f_0}_1}_1\cdots}_1$ is in $\cu^{k+1}(\ns)$. Note that $f_{k-1}(v)= f_0(v\sbr{1},v\sbr{2})$ if $v\sbr{3}=\cdots=v\sbr{k+1}=1$, and $f_{k-1}(v)=e$ otherwise. We now claim that $f_{k-1}\in \cu^{k+1}(\ns)$ if and only if $x\sim_{k-1} y$ in $\nss$.\\
\indent Let $f:\{0,1\}^k\to \ns\times \ns$ be defined by $f(v)=f_{k-1}(0,v\sbr{1},v\sbr{2},\dots,v\sbr{k})\times f_{k-1}(1,v\sbr{1},v\sbr{2},\dots,v\sbr{k})$, and note that $f(0,1,\dots,1)=x$, $f(1^k)=y$, and $f(v)=(e,e)$ otherwise. We also have $f_{k-1}\in \cu^{k+1}(\ns)$ if and only if $f\in \cu^k(\nss)$ (using \eqref{eq:arrowcubechar} and cube automorphisms). Since $\nss$ is a $k$-nilspace, by Remark \ref{rem:cubesuff} we have $f\in \cu^k(\nss)$ if and only if $\pi_{k-1}\co f \in \cu^k(\cF_{k-1}(\nss))$. Now, since $\nss$ is $(k-1)$-fold ergodic, so is the $(k-1)$-nilspace $\cF_{k-1}(\nss)$. Hence, by our assumption, $\cF_{k-1}(\nss)$ is isomorphic to $\cD_{k-1}(\ab')$ for some abelian group $\ab'$. Therefore $\pi_{k-1}\co f \in \cu^k(\cF_{k-1}(\nss))$ if and only if $\sigma_k(\pi_{k-1}\co f)=0_{\ab'}$. But given the form of $f$, this is equivalent to $\pi_{k-1}(x) = \pi_{k-1}(y)$, as required.
\end{proof}

\begin{proof}[Proof of Proposition \ref{prop:k-erg}]
We argue by induction on $k$. The case $k=1$ is given by Proposition  \ref{prop:abel}. Fix $k\geq 2$, assume that the statement holds for $k-1$,  fix an element $e\in \ns$, and suppose that $\ab$ is an abelian group such that $\partial_e^{k-1}\ns = \mathcal{D}_1(\ab)$. Our aim is to show that for every $n\geq 0$ we have $\cu^n(\ns)=\cu^n(\cD_k(\ab))$. This holds trivially for $n\leq k$ since $\ns=\ab$ as a set and $\cD_k(\ab)$, $\ns$ are both $k$-fold ergodic. To prove the case $n=k+1$, first we claim that if $\q \in \cu^{k+1}(\ns)$ then adding an element $a\in \ab$ to the values of $\q$ at two endpoints of an arbitrary 1-face  in $\{0,1\}^{k+1}$, we obtain another $(k+1)$-cube $\q'$. To see this, we assume without loss of generality that the values in question are $\q(0^{k+1})=x_0$, $\q(0,\ldots, 0,1)=x_1$. Letting $\nss=\ns\Join_1\ns$, the map $f:=\q(\cdot,0) \times \q(\cdot ,1):\{0,1\}^k \to \nss$ is in $\cu^k(\nss)$, since $\q\in \cu^{k+1}(\ns)$. Moreover, $f$ takes the value $(x_0,x_1)$ at $0^k$. On the other hand, by Lemma \ref{lem:k-erg-keyfact} we have $(x_0,x_1)\sim_{k-1} (x_0+a,x_1+a)$. Therefore, letting $f':\{0,1\}^k\to \nss$ take value $(x_0+a,x_1+a)$ at $0^k$ and $f(v)$ otherwise, by Corollary \ref{sim2cor} we have $f'\in \cu^k(\nss)$. Hence the arrow $\q'= \arr{f'(\cdot,0),f'(\cdot,1)}_1$ is in $\cu^{k+1}(\ns)$, which proves our claim.

Now, given any cube $\q\in \cu^{k+1}(\ns)$, by adding appropriate  elements of $\ab$ on 1-faces of the cube as above, along a Hamiltonian path in the hypercube graph\footnote{The edges of this graph are the 1-faces of $\{0,1\}^{k+1}$.} on $\{0,1\}^{k+1}$, we obtain a cube $\q'$  taking value $e$ everywhere except perhaps at $0^{k+1}$. But since the constant $e$ map is in $\cu^{k+1}(\ns)$, by uniqueness of completion we must have in fact $\q'(0^{k+1})=e$. Thus we have shown that every $\q \in \cu^{k+1}(\ns)$ is reducible to a constant map by these 1-face modifications. Since constant maps are in $\cu^{k+1}(\cD_k(\ab))$, and these modifications conserve this cube set, it follows that $\cu^{k+1}(\ns)\subset \cu^{k+1}(\cD_k(\ab))$. Conversely, if $\q \in \cu^{k+1}(\cD_k(\ab))$, then restricting $\q$ to any lower $k$-face in $\{0,1\}^{k+1}$ we obtain a cube in $\cu^k(\cD_k(\ab))=\cu^k(\ns)$. Therefore $\q$ restricted to $\{0,1\}^{k+1}\setminus \{1^{k+1}\}$ gives a $(k+1)$-corner on $\ns$, which then has a completion $\q'\in \cu^{k+1}(\ns)\subset \cu^{k+1}(\cD_k(\ab))$. Since completions are unique in the latter cube set, we must have $\q'=\q$. Hence $\cu^{k+1}(\ns)= \cu^{k+1}(\cD_k(\ab))$.\\
 \indent We now deduce that $\cu^n(\ns)= \cu^n(\cD_k(\ab))$ for every $n>k+1$, using Lemma \ref{lem:cubechar}.
\end{proof}

\noindent We close this section by using Proposition \ref{prop:k-erg} to characterize equivalence classes of $\sim_{k-1}$ in a $k$-step nilspace. It turns out that every such class is a degree-$k$ abelian group.

\begin{corollary}\label{cor:fibres}
Let $\ns$ be a $k$-step nilspace and let $F$ be an equivalence class of $\sim_{k-1}$ in $\ns$. Then $F$ with the cubespace structure restricted from $\ns$ is isomorphic to $\cD_k(\ab)$ for some abelian group $\ab$.
\end{corollary}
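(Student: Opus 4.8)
The plan is to equip $F$ with the cube sets $\cu^n(F):=\cu^n(\ns)\cap F^{\{0,1\}^n}$ inherited from $\ns$, to show that this makes $F$ a $k$-step, $k$-fold ergodic nilspace, and then to invoke Proposition \ref{prop:k-erg} to obtain an abelian group $\ab$ with $F\cong\cD_k(\ab)$. Two of the nilspace properties transfer with no effort: the composition axiom holds because composing a cube valued in $F$ with a morphism yields a map whose image is still in $F$, and ergodicity holds because $\cu^1(\ns)=\ns^{\{0,1\}}$, so every map $\{0,1\}\to F$ is a $1$-cube of $\ns$ lying in $F$.

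The first real point is $k$-fold ergodicity. Given any $g:\{0,1\}^k\to F$, fix $x\in F$ and let $\q_0$ be the constant cube with value $x$. As $F$ is a single class of $\sim_{k-1}$, we have $g(v)\sim_{k-1}x=\q_0(v)$ for all $v$, so Corollary \ref{sim2cor}, applied with $k-1$ in place of $k$, gives $g\in\cu^k(\ns)$ and hence $g\in\cu^k(F)$. Thus $\cu^k(F)=F^{\{0,1\}^k}$, and hence, composing with face maps, $\cu^n(F)=F^{\{0,1\}^n}$ for every $n\le k$; in particular the completion axiom is vacuous in those dimensions.

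The crux is the completion axiom in dimensions $n\ge k+1$, where the difficulty is to check that the apex of a completion again falls in $F$. Let $\q'$ be an $n$-corner on $F$; viewed as a corner on $\ns$ it has a completion $\q\in\cu^n(\ns)$. Let $\pi_{k-1}\colon\ns\to\cF_{k-1}(\ns)$ be the projection of Lemma \ref{lem:F_k-defin}. Since $\pi_{k-1}$ is constant on $F$, both $\pi_{k-1}\co\q$ and the constant cube with that value complete the corner $\pi_{k-1}\co\q'$. Now $\cF_{k-1}(\ns)$ is a $(k-1)$-step nilspace, and by Corollary \ref{cor:sim2cor2} any of its $n$-cubes is determined by its values on $\{0,1\}^n_{\le k-1}$; as $n\ge k+1$ the point $1^n$ lies outside this set, so the two completions coincide and $\pi_{k-1}\co\q$ is constant. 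Hence $\q(1^n)\sim_{k-1}\q'(0^n)$, giving $\q(1^n)\in F$ and so $\q\in\cu^n(F)$ completes $\q'$. Uniqueness of completion for $(k+1)$-corners on $F$ is inherited verbatim from $\ns$, so $F$ is $k$-step. This projection-plus-uniqueness argument is the main obstacle; the rest is a direct transfer of the nilspace structure or a single application of Corollary \ref{sim2cor}. With $F$ established as a $k$-step, $k$-fold ergodic nilspace, Proposition \ref{prop:k-erg} completes the proof.
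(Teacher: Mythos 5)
Your proposal is correct and follows essentially the same route as the paper: reduce to Proposition \ref{prop:k-erg} by showing $F$ is a $k$-fold ergodic $k$-step nilspace, obtain $k$-fold ergodicity from Corollary \ref{sim2cor} applied to a constant cube, and obtain completion by projecting to $\cF_{k-1}(\ns)$ and using the rigidity of cubes there to force the apex back into $F$. The only cosmetic difference is that you cite Corollary \ref{cor:sim2cor2} for the uniqueness step where the paper appeals directly to uniqueness of completion in the $(k-1)$-step factor; these are the same fact.
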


\begin{proof}
By Proposition \ref{prop:k-erg} it is enough to show that $F$ is a $k$-fold ergodic $k$-nilspace. For every $x\in F$,  the constant $x$ function on $\{0,1\}^k$ is in $\cu^k(\ns)$ and so by Corollary \ref{sim2cor} every function $\{0,1\}^k\rightarrow F$ is in $\cu^k(\ns)$. To see that completion holds, let $\q'$ be an $n$-corner on $F$ with $n\geq k+1$. Since $\ns$ is a $k$-step nilspace, $\q'$ has a unique completion $\q\in \cu^n(\ns)$. But then $\pi_{k-1}\co \q'$ is constant and so $\pi_{k-1}\co \q$ must also be constant, as this is the only completion of $\pi_{k-1}\co \q'$, whence $\q$ is $F$-valued as required.
\end{proof}
\medskip

\subsection{Abelian bundles, and the bundle decomposition}\label{subsec:bundecomp}
\medskip

In this section we shall establish a general decomposition theorem for nilspaces, Theorem \ref{thm:bundle-decomp}. This important result relies on the following basic concept.

\begin{defn}[Abelian bundle]\label{def:AbelianBundle}
Let $\ab$ be an abelian group. An \emph{abelian bundle} over a set $S$ with structure group $\ab$ (or $\ab$-\emph{bundle over} $S$) is a set $\bnd$ with an action $\alpha: \ab\times \bnd\to \bnd$, $(z,x)\mapsto z+x$, and a map $\pi: \bnd\to S$ (called the \emph{bundle map} or \emph{projection}) satisfying the following properties:

1. The action $\alpha$ is free: $\forall \,x\in \bnd$ we have $\{z\in \ab: z+ x=x\}=\{0_{\ab}\}$.

2. The map $s\mapsto \pi^{-1}(s)$ is a bijection from $S$ to the set of orbits of $\ab$ in $\bnd$.

\noindent A set $\bnd$ is a $k$-\emph{fold abelian bundle} with structure groups $\ab_1,\ab_2,\ldots, \ab_k$ if there is a sequence $\bnd_0,\bnd_1,\ldots,\bnd_k=\bnd$ where $\bnd_0$ is a singleton and $\bnd_i$ is a $\ab_i$-bundle over $\bnd_{i-1}$. We denote by $\pi_{i+1,i}$ the bundle map $\bnd_{i+1}\to \bnd_i$. More generally $\pi_{i,j}$ denotes the map $\pi_{j+1,j}\co\pi_{j+2,j+1} \co \cdots \co \pi_{i,i-1}:\bnd_i\to \bnd_j$,   $i\geq j$. We write $\pi_i$ for $\pi_{k,i}$.

A \emph{relative $k$-fold abelian bundle} is a generalization of a $k$-fold abelian bundle in which the ground set $\bnd_0$ can be an arbitrary set. 
\end{defn}

\medskip

\noindent We shall often call the set $S$ the \emph{base} of the bundle. Note that it follows from the second condition above that the action of $\ab$ is transitive on each fibre of $\pi$, i.e. on each set $\pi^{-1}(s)$, $s\in S$. Thus, an abelian bundle has the algebraic properties of a principal bundle with abelian structure group $\ab$, without any topological assumptions.

\begin{defn}\label{def:k-deg-bund}
A \emph{degree-$k$ bundle} is a cubespace $\ns$ that is also a $k$-fold abelian bundle, with factors $\bnd_0,\bnd_1,\ldots, \bnd_k=\ns$ and structure groups $\ab_1,\ab_2,\ldots, \ab_k$, and with the following property: for every integer $i \in [0,k-1]$ and every $n\in \N$, we have $\cu^n(\bnd_i) = \{\pi_i\co \q: \q\in  \cu^n(\ns)\}$, and for every $\q\in \cu^n(\bnd_{i+1})$ we have
\begin{equation}\label{eq:k-deg-bund}
\{\q_2 \in \cu^n(\bnd_{i+1}): \pi_i\co \q = \pi_i\co \q_2\} = \{\q+\q_3: \q_3\in \cu^n(\cD_{i+1}(\ab_{i+1}))\}.
\end{equation}
\end{defn}
\noindent Note that the condition $\q_3\in \cu^n(\cD_j(\ab_j))$ is satisfied for all maps $\q_3\in \ab_j^{\{0,1\}^n}$ if $n\leq j$, since $\cD_j(\ab_j)$ is $j$-fold ergodic.

We can now state the main result of this section.

\begin{theorem}\label{thm:bundle-decomp}
Let $\ns$ be a cubespace. Then $\ns$ is a $k$-step nilspace if and only if it is a degree-$k$ bundle. Moreover, we then have $\cF_i(\ns)=\bnd_i$ for every $i\in [k]$.
\end{theorem}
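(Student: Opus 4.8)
The plan is to prove Theorem~\ref{thm:bundle-decomp} by induction on $k$, using the characteristic factors $\cF_i(\ns)$ from Lemma~\ref{lem:F_k-defin} as the canonical candidates for the bundle factors $\bnd_i$. The equivalence has two directions, but the forward direction (a $k$-step nilspace is a degree-$k$ bundle) is the substantive one; the reverse direction should follow by reversing the construction, since a degree-$k$ bundle comes pre-equipped with data that directly verify the nilspace axioms via an induction on the number of abelian layers.

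For the forward direction, I would set $\bnd_i := \cF_i(\ns)$ for $i\in[k]$ and $\bnd_0$ a singleton (using ergodicity), and let the bundle maps be the canonical projections $\pi_{i+1,i}\colon \cF_{i+1}(\ns)\to\cF_i(\ns)$, which are well defined because $x\sim_i y$ implies $x\sim_{i-1}y$ (so $\sim_i$ refines $\sim_{i-1}$). The heart of the argument is to exhibit the abelian structure group $\ab_{i+1}$ acting freely and transitively on each fibre of $\pi_{i+1,i}$. Here is where Corollary~\ref{cor:fibres} does the essential work: a fibre of $\pi_{k,k-1}$ is exactly an equivalence class of $\sim_{k-1}$ in the $k$-step nilspace $\cF_k(\ns)=\ns$, and the corollary tells us this class is isomorphic to $\cD_k(\ab)$ for some abelian group $\ab$. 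Thus $\ab$ acts freely and transitively on it, giving the top layer. To get $\ab_{i+1}$ uniformly, I would apply the same reasoning to the $(i+1)$-step nilspace $\cF_{i+1}(\ns)$, whose fibres over $\cF_i(\ns)$ are the $\sim_i$-classes, each isomorphic to $\cD_{i+1}(\ab_{i+1})$.

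The delicate part is verifying the cube compatibility condition~\eqref{eq:k-deg-bund}, namely that two cubes in $\cu^n(\bnd_{i+1})$ with the same projection to $\bnd_i$ differ exactly by a cube in $\cu^n(\cD_{i+1}(\ab_{i+1}))$. The inclusion $\supseteq$ (adding such a $\q_3$ keeps us a cube with the same projection) I would obtain from Corollary~\ref{sim2cor}: if $\q_3\in\cu^n(\cD_{i+1}(\ab_{i+1}))$ then at each vertex $v$ the value $\q(v)+\q_3(v)$ is $\sim_i$-equivalent to $\q(v)$ (since the $\ab_{i+1}$-action moves points within a single $\sim_i$-class), so $\q+\q_3$ is again a cube projecting to the same thing. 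The reverse inclusion $\subseteq$ is where I expect the main obstacle: given two cubes $\q,\q_2$ agreeing after projection to $\bnd_i$, I must show their fibrewise ``difference'' $v\mapsto \q_2(v)-\q(v)\in\ab_{i+1}$ is genuinely a cube on $\cD_{i+1}(\ab_{i+1})$, i.e.\ satisfies the $\sigma_{i+2}$ vanishing condition from~\eqref{eq:k-ab-cubes}. I anticipate this requires a careful local analysis, plausibly reducing via Lemma~\ref{lem:cubechar} to the case $n=i+2$ and then invoking the degree-$(i+1)$ structure of the fibres together with Lemma~\ref{lem:k-erg-keyfact}-type arguments inside $\cF_{i+1}(\ns)$ to identify the difference with an honest degree-$(i+1)$ cube.

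For the converse, I would argue by induction on $k$ that a degree-$k$ bundle satisfies composition, ergodicity, and unique corner-completion up to dimension $k$. Composition should pass down through the projections; ergodicity of the top layers follows from the $k$-fold ergodicity of each $\cD_j(\ab_j)$; and corner completion I would establish layer by layer, lifting a completion on the base $\bnd_{k-1}$ (available by induction) and then using~\eqref{eq:k-deg-bund} to correct it within the top fibre using a unique element of $\cu^n(\cD_k(\ab_k))$, exactly as in the completion arguments in the proofs of Proposition~\ref{prop:FilGp=nilspace} and Proposition~\ref{prop:quotex}. The final ``moreover'' clause, that the bundle factors must coincide with $\cF_i(\ns)$, I would deduce from the uniqueness of the characteristic factors: any degree-$k$ bundle structure forces its $i$-th factor to have $\sim_i$ as the fibre relation, and since $\cF_i(\ns)$ is the universal $i$-step factor, the identification $\bnd_i=\cF_i(\ns)$ follows.
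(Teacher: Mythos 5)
Your overall architecture (induction on layers, taking $\bnd_i=\cF_i(\ns)$, Corollary \ref{cor:fibres} for the fibres, layer-by-layer lifting for the converse) matches the paper's, and your treatment of the converse direction is essentially the paper's Lemma \ref{lem:k-degbundisns}. But there are two genuine gaps in the forward direction. First, Corollary \ref{cor:fibres} only gives you, for each $\sim_{k-1}$-class $F$, \emph{some} abelian group $\ab_F$ with $F\cong\cD_k(\ab_F)$; Definition \ref{def:AbelianBundle} requires a \emph{single} group $\ab_k$ acting on all of $\ns$ whose orbits are exactly the fibres. You never explain how to identify the various $\ab_F$ with one another coherently. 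This is where the bulk of the paper's work goes: it introduces the relation $\sim$ on $M=\{(x,y):x\sim_{k-1}y\}$ (Definition \ref{defn:keyrelation}), shows via corner completion that for any $x_0,y_0\in F_0$ and $x_1\in F_1$ there is a unique $y_1\in F_1$ with $(x_0,y_0)\sim(x_1,y_1)$ (Lemma \ref{lem:isokey}), and uses this to build canonical isomorphisms $\vartheta:\ab_{F_0}\to\ab_{F_1}$ (Lemma \ref{lem:fibreiso}). Without this step there is no structure group and hence no bundle.

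Second, your argument for the inclusion $\supseteq$ in \eqref{eq:k-deg-bund} via Corollary \ref{sim2cor} proves too much and therefore cannot be right: Corollary \ref{sim2cor} concerns $(i+1)$-dimensional cubes perturbed pointwise within $\sim_i$-classes, and if your reasoning were valid at general dimension $n$ it would show that $\q+\q_3$ is a cube for \emph{every} $\ab_{i+1}$-valued function $\q_3$, contradicting the inclusion $\subseteq$ you also want (for $n>i+1$ the set $\cu^n(\cD_{i+1}(\ab_{i+1}))$ is a proper subgroup of $\ab_{i+1}^{\{0,1\}^n}$). The paper instead proves both inclusions at $n=k+1$ by a single mechanism: adding $a\in\ab$ to the values of a cube at the two endpoints of a $1$-face preserves $\cu^{k+1}(\ns)$ (proved with the tricube composition, Lemma \ref{lem:tricube-comp}, together with Lemma \ref{lem:relchar}), and then one walks along a Hamiltonian path of $1$-faces to transform any cube with prescribed projection into any other, the cumulative correction being exactly an element of $\cu^{k+1}(\cD_k(\ab))$; higher $n$ then follows from Lemma \ref{lem:cubechar}. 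You correctly sensed that $\subseteq$ is delicate, but the missing idea is this $1$-face modification argument, which is also what your $\supseteq$ direction actually needs.
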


We split the proof into several results. 
\begin{lemma}\label{lem:k-degbundisns}
A degree-$k$ bundle $\ns=\bnd_k$ is a $k$-nilspace. Moreover, we have $\mathcal{F}_i(\ns)=\bnd_i$ for every $i\in [k]$.
\end{lemma}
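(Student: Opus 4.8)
The plan is to argue by induction on $k$. The base case $k=0$ is immediate, since then $\bnd_0=\ns$ is a singleton and hence a $0$-step nilspace. For the inductive step, assume the result for degree-$(k-1)$ bundles. I would first check that $\bnd_{k-1}$, equipped with the cube sets $\cu^n(\bnd_{k-1})=\{\pi_{k,k-1}\co\q:\q\in\cu^n(\ns)\}$, is itself a degree-$(k-1)$ bundle: it is a cubespace because composition passes through $\pi_{k,k-1}$, its factors $\bnd_0,\dots,\bnd_{k-1}$ form a $(k-1)$-fold abelian bundle, and the two conditions of Definition \ref{def:k-deg-bund} for indices $i\le k-2$ are inherited from those for $\ns$ (using $\pi_{k,i}=\pi_{k-1,i}\co\pi_{k,k-1}$ together with the surjectivity of $\q\mapsto\pi_{k,k-1}\co\q$ on cubes). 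By the inductive hypothesis $\bnd_{k-1}$ is then a $(k-1)$-step nilspace with $\cF_i(\bnd_{k-1})=\bnd_i$ for $i\in[k-1]$.

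The workhorse is the following relative description of the top cube sets, deduced from the two conditions of Definition \ref{def:k-deg-bund} at level $i=k-1$: a map $f\colon\{0,1\}^n\to\ns$ lies in $\cu^n(\ns)$ if and only if $\pi_{k,k-1}\co f\in\cu^n(\bnd_{k-1})$ and $f-\q\in\cu^n(\cD_k(\ab_k))$ for one (equivalently any) lift $\q\in\cu^n(\ns)$ of $\pi_{k,k-1}\co f$, where $f-\q$ is the unique element of $\ab_k^{\{0,1\}^n}$ with $f=\q+(f-\q)$, well defined by freeness and transitivity of the action on fibres. Independence of the chosen lift uses that $\cu^n(\cD_k(\ab_k))$ is a subgroup of $\ab_k^{\{0,1\}^n}$, since cubes on a filtered group form a group. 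A useful special case records that for $n\le k$, because $\cD_k(\ab_k)$ is $k$-fold ergodic, \emph{every} lift of a cube on $\bnd_{k-1}$ is a cube on $\ns$.

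Now I turn to the nilspace axioms. Composition holds for free, as $\ns$ is assumed to be a cubespace. Ergodicity ($n=1$) follows from the special case above: any $f\colon\{0,1\}\to\ns$ projects to a cube on $\bnd_{k-1}$, hence lifts, hence is itself a cube. For completion, given an $n$-corner $\q'$ I would project it to an $n$-corner on $\bnd_{k-1}$, complete there by the inductive hypothesis, lift the completion to some $\q\in\cu^n(\ns)$, and record the discrepancies $a_v\in\ab_k$ with $\q'(v)=\q(v)+a_v$ for $v\neq 1^n$; the fibre condition \eqref{eq:k-deg-bund} applied on each $(n-1)$-face through $0^n$ shows that $v\mapsto a_v$ is an $n$-corner on $\cD_k(\ab_k)$, and completing it to $\tilde g\in\cu^n(\cD_k(\ab_k))$ yields the completion $\q+\tilde g$ of $\q'$. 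I expect this gluing of a completion over $\bnd_{k-1}$ with a completion inside the fibre $\cD_k(\ab_k)$ to be the main obstacle, since it is precisely where both inductive inputs must be combined coherently. Uniqueness of $(k+1)$-corner completions is then clean: two completions project to completions of the same $(k+1)$-corner on $\bnd_{k-1}$, which agree since $\bnd_{k-1}$ is $(k-1)$-step and $k+1\ge k$ (uniqueness for corners of dimension at least step $+1$, via Corollary \ref{cor:sim2cor2}); thus the two completions differ by some $h\in\cu^{k+1}(\cD_k(\ab_k))$ vanishing off $1^{k+1}$, and evaluating $\sigma_{k+1}(h)=0$ from \eqref{eq:k-ab-cubes} forces $h(1^{k+1})=0$. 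Hence $\ns$ is a $k$-step nilspace.

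Finally, to identify $\cF_i(\ns)=\bnd_i$, the case $i=k$ is exactly Lemma \ref{lem:F_k-defin}. For $i<k$ I would show that $x\sim_i y$ in $\ns$ if and only if $\pi_{k,k-1}(x)\sim_i\pi_{k,k-1}(y)$ in $\bnd_{k-1}$: the forward direction by projecting the witnessing cubes, and the backward direction by lifting the constant-type cube of Lemma \ref{lem:sim1} (legitimate because its dimension $i+1\le k$, so the special case of the characterization applies). Combining this equivalence with $\cF_i(\bnd_{k-1})=\bnd_i$ shows that the $\sim_i$-classes of $\ns$ are exactly the fibres of $\pi_{k,i}$, and the cube-projection condition $\cu^n(\bnd_i)=\{\pi_{k,i}\co\q:\q\in\cu^n(\ns)\}$ from Definition \ref{def:k-deg-bund} matches the quotient cube sets, giving the required isomorphism $\cF_i(\ns)=\bnd_i$.
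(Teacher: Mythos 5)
Your proof is correct and follows essentially the same route as the paper's: induction on $k$, peeling off the top structure group, using the surjectivity clause to lift completions from $\bnd_{k-1}$ and the fibre condition \eqref{eq:k-deg-bund} to reduce the remaining completion to one in $\cD_k(\ab_k)$, and then identifying the $\sim_i$-classes with the fibres of $\pi_i$. You are in fact somewhat more explicit than the paper in two welcome places: verifying that $\bnd_{k-1}$ is itself a degree-$(k-1)$ bundle, and spelling out the uniqueness of $(k+1)$-corner completions (via agreement on $\bnd_{k-1}$ plus $\sigma_{k+1}(h)=0$), which the paper leaves implicit.
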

\begin{proof}
Ergodicity can be shown to hold by induction on $k$. Indeed, a degree-$1$ bundle is an affine abelian group, so clearly ergodic. For $k>1$, given $f=(x_0,x_1)\in \ns^{\{0,1\}}$, the projection $\pi_{k-1}\co f$ is in $\cu^1(\bnd_{k-1})$  by induction, then by surjectivity there is $\q\in \cu^1(\ns)$ with $\pi_{k-1}\co f = \pi_{k-1}\co \q$, but then $f=\q+\q_3$ where $\q_3\in \ab_k^{\{0,1\}}= \cu^1(\cD_k(\ab_k))$, so by \eqref{eq:k-deg-bund} we have $f\in \cu^1(\bnd_k)$.\\
\indent The completion axiom is also checked by induction. Completion is trivial on $\bnd_0$ so suppose that $k>0$ and we have completion on any degree-$(k-1)$ bundle. Let $\q'$ be an $n$-corner on $\bnd_k$. Then $\pi_{k-1} \co \q'$ has a completion $\q_1\in \cu^n(\bnd_{k-1})$. Since $\cu^n(\bnd_{k-1})=\pi_{k-1}(\cu^n(\ns))$, there exists $\q_2\in \cu^n(\ns)$ such that $\q_1=\pi\co \q_2$. The map $\q_3'=\q'-\q_2$ is an $n$-corner on $\cD_k(\ab_k)$, so it can be completed to an $n$-cube $\q_3:\{0,1\}^n\to \cD_k(\ab_k)$. The cube $\q_2+\q_3$ completes $\q'$.\\
\indent
To check that $\cF_i(\ns)=\bnd_i$ for all $i\in [k]$, we can assume that $\ns=\bnd_{i+1}$ and show that $\pi_i$ induces a bijection from the classes of $\sim_i$ to the points of $\bnd_i$. Suppose first that $x\sim_i y$, i.e. there exist $\q_0,\q_1\in \cu^{i+1}(\ns)$ with $\q_0(1^{i+1})=x$, $\q_1(1^{i+1})=y$ and $\q_0(v)=\q_1(v)$ otherwise. Then $\pi_i\co \q_0$, $\pi_i\co \q_1$ are both $(i+1)$-cubes on the $i$-nilspace $\bnd_i$, and they complete the same $(i+1)$-corner, whence $\pi_i(x)=\pi_i(y)$. Next, note that if $\pi_i(x)=\pi_i(y)$ then there is $z\in \ab_{i+1}$ such that $y=x+z$. The map $\q_0$ taking value $x$ everywhere on $\{0,1\}^{i+1}$ is in $\cu^{i+1}(\bnd_{i+1})$. Letting $\q_1(v)=\q_0(v)$ for $v\neq 1^{i+1}$ and  $\q_1(1^{i+1})=y$, we have $\q_1=\q_0+\q_3$ where $\q_3(1^{i+1})=z$ and $\q_3(v)=0_{\ab_{i+1}}$ otherwise. Since $\cD_{i+1}(\ab_{i+1})$ is $(i+1)$-fold ergodic, we have $\q_3\in \cu^{i+1}(\cD_{i+1}(\ab_{i+1}))$, whence $\q_1 \in \cu^{i+1}(\ns)$ by \eqref{eq:k-deg-bund}, and so $x\sim_i y$. 
\end{proof}
\noindent We now start with a $k$-step nilspace $\ns$. By Corollary \ref{cor:fibres}, each equivalence class (or fibre) $F$ of $\sim_{k-1}$ is isomorphic to $\cD_k(\ab_F)$ for some abelian group $\ab_F$. To complete the proof of Theorem \ref{thm:bundle-decomp}, our task now is to relate different fibres to identify a unique group $\ab_k$ acting on every fibre. To this end we shall use the following relation.
\begin{defn}\label{defn:keyrelation}
Let $\ns$ be a $k$-nilspace. Let $M=\{(x,y)\in \ns^2: x\sim_{k-1} y\}$. We define a relation $\sim$ on $M$ by declaring that $(x_0,y_0)\sim (x_1,y_1)$ if $(x_0,x_1)\sim_{k-1} (y_0,y_1)$ in $\nss=\ns\Join_1 \ns$.
\end{defn}
\noindent Recall that in Lemma \ref{lem:k-erg-keyfact} we used this same relation, but restricted to a degree-$k$ abelian group, which amounts here to a single fibre $F$. In that case we could characterize this relation in terms of the group acting on the fibre. In the present setting, we are working over all of $\ns$.
\begin{lemma}\label{lem:relchar}
Given $(x_0,y_0),(x_1,y_1)\in M$, let $f:\{0,1\}^{k+1}\to \ns$ be defined by setting $f(\cdot,0)$ equal to $y_0$ at $0^k$ and $x_0$ elsewhere, and  $f(\cdot,1)$ equal to $y_1$ at $0^k$ and $x_1$ elsewhere. Then $(x_0,y_0)\sim (x_1,y_1)$ if and only if $f\in \cu^{k+1}(\ns)$. The relation $\sim$ is an equivalence relation on $M$.
\end{lemma}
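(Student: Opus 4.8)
The plan is to first establish the stated equivalence by unwinding the definitions, and then to deduce the three properties of an equivalence relation, using concatenation for the only nontrivial one. Throughout I would work with $\nss=\ns\Join_1\ns$, which is a $k$-step nilspace by Lemma \ref{lem:arrow} (possibly non-ergodic, but this does not affect the arguments, since Definition \ref{def:simdef} and Lemma \ref{lem:sim1} rely only on the composition and completion axioms).

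First I would translate the relation into a statement about a single cube. By Definition \ref{defn:keyrelation}, $(x_0,y_0)\sim(x_1,y_1)$ means exactly $(x_0,x_1)\sim_{k-1}(y_0,y_1)$ in $\nss$. Applying Lemma \ref{lem:sim1} in $\nss$ (with its parameter set to $k-1$, so that $k$-cubes are involved), this holds if and only if the map $g:\{0,1\}^k\to\nss$ taking the value $(y_0,y_1)$ at $0^k$ and $(x_0,x_1)$ at every other vertex lies in $\cu^k(\nss)$. Writing $g=g_0\times g_1$, where $g_0$ (resp.\ $g_1$) is $y_0$ (resp.\ $y_1$) at $0^k$ and $x_0$ (resp.\ $x_1$) elsewhere, Definition \ref{def:arrowspace} gives $g\in\cu^k(\nss)$ if and only if $\arr{g_0,g_1}_1\in\cu^{k+1}(\ns)$. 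The key observation is then simply that $\arr{g_0,g_1}_1=f$: indeed $\arr{g_0,g_1}_1(\cdot,0)=g_0=f(\cdot,0)$ and $\arr{g_0,g_1}_1(\cdot,1)=g_1=f(\cdot,1)$ by Definition \ref{def:arrow}. This yields $(x_0,y_0)\sim(x_1,y_1)\Leftrightarrow f\in\cu^{k+1}(\ns)$, and the remaining assertions can be phrased entirely in terms of this cube $f$.

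For reflexivity and symmetry I would use this characterization with the composition axiom. When $(x_1,y_1)=(x_0,y_0)$, the two slices $f(\cdot,0)$ and $f(\cdot,1)$ coincide with the single map $c$ that is $y_0$ at $0^k$ and $x_0$ elsewhere; since $(x_0,y_0)\in M$ gives $x_0\sim_{k-1}y_0$, Lemma \ref{lem:sim1} shows $c\in\cu^k(\ns)$, and then $f=\arr{c,c}_1$ is just $c$ precomposed with the projection morphism $(v,w)\mapsto v$, so $f\in\cu^{k+1}(\ns)$ by the composition axiom, giving reflexivity. For symmetry, interchanging $(x_0,y_0)$ with $(x_1,y_1)$ replaces $f$ by $f\co\rho$, where $\rho$ is the reflection of $\{0,1\}^{k+1}$ in its last coordinate; as $\rho$ is a cube automorphism, the composition axiom gives $f\co\rho\in\cu^{k+1}(\ns)$ if and only if $f\in\cu^{k+1}(\ns)$.

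The one step that requires an actual idea is transitivity, and here concatenation is the natural tool. Suppose $(x_0,y_0)\sim(x_1,y_1)$ and $(x_1,y_1)\sim(x_2,y_2)$, with witnessing cubes $f_{01},f_{12}\in\cu^{k+1}(\ns)$ as above. The crucial point is that $f_{01}(\cdot,1)$ and $f_{12}(\cdot,0)$ both equal the map that is $y_1$ at $0^k$ and $x_1$ elsewhere, so $f_{01}\prec f_{12}$ in the sense of Definition \ref{def:adj&concat}. By Lemma \ref{lem:concat} their concatenation $h$ lies in $\cu^{k+1}(\ns)$, and it satisfies $h(\cdot,0)=f_{01}(\cdot,0)$ ($=y_0$ at $0^k$, $x_0$ elsewhere) and $h(\cdot,1)=f_{12}(\cdot,1)$ ($=y_2$ at $0^k$, $x_2$ elsewhere); thus $h$ is exactly the cube witnessing $(x_0,y_0)\sim(x_2,y_2)$. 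The main obstacle is therefore just recognizing the adjacency $f_{01}\prec f_{12}$ so that Lemma \ref{lem:concat} applies; once the characterization of the first paragraph is in place, everything else is a direct translation through the definitions.
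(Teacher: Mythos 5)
Your proposal is correct and follows essentially the same route as the paper: applying Lemma \ref{lem:sim1} inside $\ns\Join_1\ns$, identifying the resulting $1$-arrow with the map $f$, and then obtaining reflexivity via composition, symmetry via a cube automorphism, and transitivity via concatenation (Lemma \ref{lem:concat}) of the two adjacent witnessing cubes. The only differences are cosmetic (you spell out a few steps, such as why $\arr{c,c}_1$ is a cube, that the paper leaves implicit).
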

\begin{proof}
By Lemma \ref{lem:sim1}, $(x_0,x_1)\sim_{k-1} (y_0,y_1)$ if and only if there is $\q=\q_0\times \q_1 \in \cu^k(\nss)$ such that $\q(0^k) = (y_0,y_1)$ and $\q(v)=(x_0,x_1)$ otherwise. Recall that $\q \in \cu^k(\nss)$ means that the  arrow $\arr{\q_0, \q_1}_1$ is in $\cu^{k+1}(\ns)$. Noting that $\arr{\q_0, \q_1}_1$ is the map $f$ in the lemma, the first claim follows. We have that $\sim$ is reflexive since if $(x,y)$ is in $M$ then letting $\q_0,\q_1$ both be the cube witnessing that $x\sim_{k-1} y$, we have $f=\arr{\q_0, \q_1}_1\in \cu^{k+1}(\ns)$. Symmetry is clear using automorphisms of cubes. Finally, to see transitivity, note that if $(x_0,y_0)\sim (x_1,y_1) \sim (x_2,y_2)$ then the cube $f_1$ for the first relation can be concatenated with the cube $f_2$ for the second relation, yielding a cube $f$ showing that $(x_0,y_0)\sim (x_2,y_2)$.
\end{proof}
\begin{lemma}\label{lem:isokey}
Let $\ns$ be a $k$-nilspace and let $F_0,F_1$ be distinct equivalence classes of $\sim_{k-1}$ on $\ns$. For any $x_0,y_0\in F_0$ and $x_1\in F_1$,  there exists a unique element $y_1\in F_1$ such that $(x_0,y_0)\sim (x_1,y_1)$.
\end{lemma}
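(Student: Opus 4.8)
The plan is to reduce the statement to a corner-completion problem on $\ns$ via the characterization of $\sim$ in Lemma \ref{lem:relchar}, and then to read off both existence and uniqueness of $y_1$ from the completion axiom and its uniqueness in a $k$-step nilspace. Writing points of $\{0,1\}^{k+1}$ as pairs $(v,w)$ with $v\in\{0,1\}^k$ and $w\in\{0,1\}$ (so that the last coordinate is the ``arrow'' direction), I would consider the partial map $f'$ defined on $\{0,1\}^{k+1}\setminus\{(0^k,1)\}$ that takes the value $y_0$ at $(0^k,0)$, the value $x_0$ at $(v,0)$ for $v\neq 0^k$, and the value $x_1$ at $(v,1)$ for $v\neq 0^k$. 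By Lemma \ref{lem:relchar}, the element $y_1$ we seek is exactly the value at the free vertex $(0^k,1)$ of a cube in $\cu^{k+1}(\ns)$ extending $f'$.

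The first substantive step is to verify that $f'$ is a genuine $(k+1)$-corner with free vertex $(0^k,1)$, i.e. that its restriction to each $k$-face containing the antipodal vertex $(1^k,0)$ lies in $\cu^k(\ns)$; there are $k+1$ such faces. On the face $\{w=0\}$ the map is $y_0$ at $0^k$ and $x_0$ elsewhere, which is a cube precisely because $x_0\sim_{k-1}y_0$, by Lemma \ref{lem:sim1}. Each face $\{v\sbr{i}=1\}$ with $i\in[k]$ avoids $v=0^k$, so on it $f'$ equals $x_0$ when $w=0$ and $x_1$ when $w=1$, independently of the remaining coordinates; this is the $1$-cube $(x_0,x_1)\in\cu^1(\ns)$ (ergodicity) precomposed with the coordinate projection onto $w$, hence a cube by the composition axiom.

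Having checked the corner, I would complete it: after composing with the automorphism of $\{0,1\}^{k+1}$ sending $1^{k+1}$ to $(0^k,1)$ (so that the completion axiom applies in its stated form, using that $\cu^{k+1}(\ns)$ is invariant under cube automorphisms), $f'$ extends to a cube $f\in\cu^{k+1}(\ns)$, and I set $y_1:=f(0^k,1)$. Then Lemma \ref{lem:relchar} gives $(x_0,y_0)\sim(x_1,y_1)$. To confirm $y_1\in F_1$, I would restrict $f$ to the face $\{w=1\}$: this restriction is $y_1$ at $0^k$ and $x_1$ elsewhere, so the converse direction of Lemma \ref{lem:sim1} yields $x_1\sim_{k-1}y_1$, that is $y_1\in F_1$. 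For uniqueness, if $y_1$ and $y_1'$ both satisfy the conclusion, then the two cubes furnished by Lemma \ref{lem:relchar} are completions of the \emph{same} $(k+1)$-corner $f'$; since $\ns$ is $k$-step and $k+1>k$, this completion is unique, forcing $y_1=y_1'$.

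I expect the only real work to be the corner verification in the second paragraph, and in particular recognizing the $\{w=0\}$ face as the configuration appearing in Lemma \ref{lem:sim1} and the faces $\{v\sbr{i}=1\}$ as ``thickened'' $1$-cubes obtained from ergodicity. Once the corner is in place, both the existence of $y_1$ and its uniqueness follow formally from the completion axiom and its uniqueness property in a $k$-step nilspace; the hypothesis that $F_0$ and $F_1$ are distinct is not needed for this particular argument.
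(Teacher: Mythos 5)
Your proof is correct and follows essentially the same route as the paper: restrict the $1$-arrow of Lemma \ref{lem:relchar} to a $(k+1)$-corner with free vertex $(0^k,1)$, verify the corner using Lemma \ref{lem:sim1} on the face $\{w=0\}$ and ergodicity plus composition on the faces $\{v\sbr{i}=1\}$, and then invoke completion and its uniqueness in a $k$-step nilspace. The only divergence is the last step: to show $y_1\in F_1$ you apply Lemma \ref{lem:sim1} directly to the restriction of the completed cube to the face $\{w=1\}$, which is slightly more direct than the paper's argument via uniqueness of completion of the projected corner $\pi_{k-1}\co f'$ in $\cF_{k-1}(\ns)$; both are valid, and your observation that the hypothesis $F_0\neq F_1$ is not needed is also correct.
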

\begin{proof}
First let us show that there exists a unique $y_1\in \ns$ such that $(x_0,y_0)\sim (x_1,y_1)$. It suffices to show that the map $f'$, obtained by restricting to $\{0,1\}^{k+1}\setminus \{(0^k,1)\}$ the 1-arrow $f$ from Lemma \ref{lem:relchar}, is a $(k+1)$-corner (modulo an automorphism), for then by completion on $\ns$ there exists a unique such $y_1$. To see that $f'$ is a $(k+1)$-corner, note that any of its $k$-face restrictions is either a map $f_1$ equal to $y_0$ at $0^k$ and $x_0$ elsewhere in $\{0,1\}^k$, or is a map $f_2$ equal to $x_0$ on one $(k-1)$-face and $x_1$ on the opposite face. We have $f_1\in \cu^k(\ns)$ by Lemma \ref{lem:sim1}, since $x_0\sim_{k-1} y_0$. 
We have $f_2\in \cu^k(\ns)$ because $f_2=f_2'\co \phi$, where $f_2'$ is the 1-cube $(x_0,x_1)$ and $\phi:\{0,1\}^k\to \{0,1\}$ is the morphism projecting to some fixed coordinate. Hence there is indeed a unique $y_1$ completing $f'$ to a cube $f$ of the form given in Lemma \ref{lem:relchar}.\\
\indent To see that $y_1\in F_1$, note that since $\pi_{k-1}(x_0)=\pi_{k-1}(y_0)$, the $(k+1)$-corner $\pi_{k-1}\co f'$ is constant on the $k$-face involving only $x_0,y_0$. We can complete this to the cube that is the constant $\pi_{k-1}(x_1)$ on the opposite face. Since this completion is unique, it must be the cube $\pi_{k-1}\co f$, so we must have $\pi_{k-1}(y_1)=\pi_{k-1}(x_1)$.
\end{proof}
\noindent Recall that by Lemma \ref{lem:k-erg-keyfact}, if $F$ is a fibre of $\sim_{k-1}$ and $x_0,x_1,y_0,y_1\in F$ then $(x_0,y_0)\sim (x_1,y_1)$ if and only if $x_0-x_1=y_0-y_1$ in $\ab_F$. This means that inside any fibre $F$ the elements of $\ab_F$ are in bijection with the fibres of $\sim$, the bijection being well-defined by $(x_0,y_0)\mapsto a=y_0-x_0\in \ab_F$. We now use this fact to relate the groups of two distinct fibres of $\sim_{k-1}$.
\begin{lemma}\label{lem:fibreiso}
Let $\ns$ be a $k$-nilspace and let $F_0,F_1$ be distinct fibres of $\sim_{k-1}$. Define $\vartheta:\ab_{F_0}\to \ab_{F_1}$ by
\begin{equation}
\vartheta(a)=b\;\textrm{ if and only if }\;(x_0,x_0+a)\sim (x_1,x_1+b)\;\textrm{ for some }\;x_0\in F_0,\;x_1\in F_1.
\end{equation}
Then $\vartheta$ is a group isomorphism.
\end{lemma}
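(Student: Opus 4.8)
The plan is to verify in turn that $\vartheta$ is well-defined, that it is a homomorphism, and that it is a bijection, drawing throughout on the within-fibre description of $\sim$ provided by Lemma \ref{lem:k-erg-keyfact} together with the fact (from Lemma \ref{lem:relchar}) that $\sim$ is an equivalence relation. Recall that on a single fibre $F$, Lemma \ref{lem:k-erg-keyfact} says $(x_0,y_0)\sim(x_1,y_1)$ precisely when $y_0-x_0=y_1-x_1$ in $\ab_F$, so inside $F$ the $\sim$-class of a pair is exactly its difference; this is the tool that lets one compare base points.

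First I would establish well-definedness in the strong form that, for each $a\in\ab_{F_0}$, there is a single $b\in\ab_{F_1}$ with $(x_0,x_0+a)\sim(x_1,x_1+b)$ for every $x_0\in F_0$ and $x_1\in F_1$. Existence for a fixed pair of base points is immediate from Lemma \ref{lem:isokey}: taking $y_0=x_0+a$ it produces a unique $y_1\in F_1$ with $(x_0,x_0+a)\sim(x_1,y_1)$, and we set $b=y_1-x_1$. To see that $b$ is independent of the base points, suppose $(x_0,x_0+a)\sim(x_1,x_1+b)$ and $(\tilde x_0,\tilde x_0+a)\sim(\tilde x_1,\tilde x_1+\tilde b)$. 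Since the two pairs $(x_0,x_0+a)$ and $(\tilde x_0,\tilde x_0+a)$ lie in $F_0$ and have the same difference $a$, Lemma \ref{lem:k-erg-keyfact} gives $(x_0,x_0+a)\sim(\tilde x_0,\tilde x_0+a)$; chaining this with the two displayed relations and using symmetry and transitivity of $\sim$ yields $(x_1,x_1+b)\sim(\tilde x_1,\tilde x_1+\tilde b)$. As both pairs now lie in $F_1$, Lemma \ref{lem:k-erg-keyfact} forces $b=\tilde b$. This independence is the main obstacle in the proof; once it is in place the remaining steps are short.

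With $\vartheta$ well-defined, the homomorphism property follows by concatenating witnesses. Unwinding Definition \ref{defn:keyrelation}, the relation $(x,x+a)\sim(x',x'+b)$ on $M$ is the relation $(x,x')\sim_{k-1}(x+a,x'+b)$ on $\nss=\ns\Join_1\ns$. Given $a,a'\in\ab_{F_0}$, I would apply the strong form of well-definedness first with base points $x_0,x_1$ to get $(x_0,x_1)\sim_{k-1}(x_0+a,\,x_1+\vartheta(a))$, and then with base points $x_0+a,\,x_1+\vartheta(a)$ to get $(x_0+a,\,x_1+\vartheta(a))\sim_{k-1}(x_0+a+a',\,x_1+\vartheta(a)+\vartheta(a'))$. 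Since $\sim_{k-1}$ is an equivalence relation on the nilspace $\nss$ (Lemma \ref{lem:arrow}), transitivity gives $(x_0,x_1)\sim_{k-1}(x_0+a+a',\,x_1+\vartheta(a)+\vartheta(a'))$, that is $(x_0,x_0+a+a')\sim(x_1,x_1+\vartheta(a)+\vartheta(a'))$, so $\vartheta(a+a')=\vartheta(a)+\vartheta(a')$. Finally, for bijectivity I would run the same construction with the roles of $F_0$ and $F_1$ exchanged to obtain a map $\vartheta'\colon\ab_{F_1}\to\ab_{F_0}$; by symmetry of $\sim$, the defining condition for $\vartheta'(b)=a$ coincides with that for $\vartheta(a)=b$, so $\vartheta'$ is a two-sided inverse of $\vartheta$, and $\vartheta$ is therefore a group isomorphism.
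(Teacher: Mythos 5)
Your proof is correct and follows essentially the same route as the paper: well-definedness via the within-fibre characterization of $\sim$ from Lemma \ref{lem:k-erg-keyfact} combined with transitivity, the homomorphism property by chaining two instances of the relation and invoking transitivity of $\sim_{k-1}$ on $\ns\Join_1\ns$, and bijectivity from the existence-and-uniqueness statement of Lemma \ref{lem:isokey}. If anything, your well-definedness step is slightly more complete than the paper's one-line version, since you explicitly handle varying the base point in $F_0$ as well as in $F_1$.
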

\begin{proof}
The map $\vartheta$ is well-defined, for if $(x_1',y_1')\sim (x_0,x_0+a)$ with $x_1'\in F_1$, then $y_1'$ is also in $F_1$, and then by transitivity $(x_1,x_1+b)\sim (x_1',y_1')$, so $y_1'=x_1'+b$. Moreover, $\vartheta$ is a bijection because given $b\in \ab_{F_1}$ and any $x_1\in F_1$, for any $x_0\in F_0$ there is a unique $y_0\in F_0$ such that $(x_0,y_0)\sim (x_1,x_1+b)$, but then $\vartheta(y_0-x_0)=b$.\\
\indent To see that $\vartheta$ is a homomorphism, let $a,a'\in \ab_{F_0}$, fix $x_0\in F_0$, let $y_0=x_0+a$, $z_0=y_0+a'$. Fix any $x_1\in F_1$ and let $y_1,z_1\in F_1$ be the unique elements given by Lemma \ref{lem:isokey} such that $(x_0,y_0)\sim (x_1,y_1)$ and $(y_0,z_0)\sim (y_1,z_1)$. By transitivity of $\sim_{k-1}$ we have $(x_0,z_0)\sim (x_1,z_1)$, and it follows from the definitions that $\vartheta(a+a')=\vartheta(a)+\vartheta(a')$.
\end{proof}
\noindent Thanks to these isomorphisms, we can now identify all the groups $\ab_F$ and talk about a single abelian group $\ab$ acting on every fibre $F$, sending $x\in F$ to $x+z\in F$ for each $z\in \ab$. Thus $\ab$ acts on all of $\ns$. The action is also free and transitive on each fibre.\\ 

\noindent We have thus shown that the given $k$-step nilspace $\ns$ is a $\ab$-bundle over $\mathcal{F}_{k-1}(\ns)$, with bundle map the canonical projection $\pi_{k-1}$ for $\sim_{k-1}$. It remains only to show that $\ns$ is a degree-$k$ bundle.
\begin{lemma}\label{lem:Z-bund-is-k-degbund}
The $\ab$-bundle $\ns$ satisfies \eqref{eq:k-deg-bund}.
\end{lemma}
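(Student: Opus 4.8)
The plan is to establish the displayed equation \eqref{eq:k-deg-bund} in the top case $i=k-1$, where $\bnd_{i+1}=\bnd_k=\ns$, $\ab_{i+1}=\ab$, and the bundle map is the canonical projection $\pi_{k-1}\colon\ns\to\cF_{k-1}(\ns)$ for $\sim_{k-1}$ (the lower-level conditions come from the inductive hypothesis on the $(k-1)$-step nilspace $\cF_{k-1}(\ns)=\bnd_{k-1}$). One inclusion is immediate: since $\ab$ acts within the fibres of $\pi_{k-1}$, any $\q_2\in\cu^n(\ns)$ with $\pi_{k-1}\co\q_2=\pi_{k-1}\co\q$ is uniquely of the form $\q+\q_3$ for some $\q_3\colon\{0,1\}^n\to\ab$, and conversely every $\q+\q_3$ automatically satisfies $\pi_{k-1}\co(\q+\q_3)=\pi_{k-1}\co\q$. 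So the real content is the equivalence
\[
\q+\q_3\in\cu^n(\ns)\quad\Longleftrightarrow\quad\q_3\in\cu^n(\cD_k(\ab)),\qquad\text{for }\q\in\cu^n(\ns).
\]

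First I would dispose of the extreme dimensions. For $n\le k$ the nilspace $\cD_k(\ab)$ is $k$-fold ergodic, so $\cu^n(\cD_k(\ab))$ is all of $\ab^{\{0,1\}^n}$ and the equivalence just asserts that every same-fibre perturbation of a cube is a cube; this follows from Corollary \ref{sim2cor} applied with the relation $\sim_{k-1}$ (after padding with constant coordinates to reduce to dimension exactly $k$). For $n\ge k+2$ I would reduce to dimension $k+1$: by Lemma \ref{lem:cubechar}, membership of $\q+\q_3$ in $\cu^n(\ns)$ is detected on the $(k+1)$-face restrictions, while by \eqref{eq:k-ab-cubes} membership of $\q_3$ in $\cu^n(\cD_k(\ab))$ is detected by $\sigma_{k+1}(\q_3\co\phi)=0$ on all $(k+1)$-face maps $\phi$. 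Since $(\q+\q_3)\co\phi=(\q\co\phi)+(\q_3\co\phi)$ and composition preserves both the $\ns$- and the $\cD_k(\ab)$-cube structures (the latter having the same $\pi_{k-1}$-projection), both directions reduce to the case $n=k+1$.

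The heart of the argument is the dimension $n=k+1$, which I would treat through the following \emph{building block}: if $\q\in\cu^{k+1}(\ns)$ and $z\in\ab$, then adding $z$ to the two endpoints of an arbitrary $1$-face of $\{0,1\}^{k+1}$ again yields a cube. To prove this, pass to $\nss=\ns\Join_1\ns$, which is $k$-step by Lemma \ref{lem:arrow}; writing $\q$ as $f=\q(\cdot,0)\times\q(\cdot,1)\in\cu^k(\nss)$ via \eqref{eq:arrowcubechar}, the operation amounts to changing a single value $f(u)=(x_0,x_1)$ to $(x_0+z,x_1+z)$. By Lemma \ref{lem:sim2} (applied to $\nss$ with the relation $\sim_{k-1}$) this preserves cubes provided $(x_0,x_1)\sim_{k-1}(x_0+z,x_1+z)$ in $\nss$, i.e. $(x_0,x_0+z)\sim(x_1,x_1+z)$ in the sense of Definition \ref{defn:keyrelation}. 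But this is exactly the compatibility built into the identification of the fibre groups $\ab_F$ through the isomorphisms of Lemma \ref{lem:fibreiso}, and, when $x_0,x_1$ happen to lie in one fibre, it is Lemma \ref{lem:k-erg-keyfact}. Composing with cube automorphisms covers every $1$-face. Finally I would combine this with a chip-firing reduction. The $1$-face additions from $0$ generate precisely $\cu^{k+1}(\cD_k(\ab))=\ker\sigma_{k+1}$: each addition alters two endpoints of opposite parity, hence leaves $\sigma_{k+1}$ unchanged, while any $\q_3$ with $\sigma_{k+1}(\q_3)=0$ can be swept to the zero map by cancelling its value at each $v\neq 0^{k+1}$ along an edge toward $0^{k+1}$ (in order of decreasing $|v|$), the residual value at $0^{k+1}$ equalling $\sigma_{k+1}(\q_3)=0$. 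For the forward direction this realises $\q+\q_3$ from $\q$ by cube-preserving moves; for the converse, applying the same sweep of $\q_3$ to $\q_2=\q+\q_3$ reaches a cube $\q+\q_3'$ with $\q_3'$ supported at $0^{k+1}$, and since $\q$ and $\q+\q_3'$ are cubes agreeing off $0^{k+1}$, uniqueness of completion in the $k$-step nilspace $\ns$ forces $\q_3'(0^{k+1})=0$, whence $\sigma_{k+1}(\q_3)=\sigma_{k+1}(\q_3')=0$.

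The main obstacle is the building block: it is the only place where the genuine nilspace geometry enters, and it hinges on having a single group $\ab$ act coherently across all fibres, which is precisely the structure assembled in Lemmas \ref{lem:k-erg-keyfact}--\ref{lem:fibreiso}. By contrast, the dimension reductions via Lemma \ref{lem:cubechar} and \eqref{eq:k-ab-cubes} and the chip-firing bookkeeping are routine.
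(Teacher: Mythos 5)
Your proof is correct, and its skeleton coincides with the paper's: reduce to dimension $k+1$ (low dimensions by $k$-fold ergodicity of $\cD_k(\ab)$ together with same-fibre perturbation, high dimensions via Lemma \ref{lem:cubechar} and \eqref{eq:k-ab-cubes}), establish the $1$-face modification claim, sweep an arbitrary discrepancy to a single vertex, and finish with uniqueness of completion. The one point where you genuinely diverge is the proof of the building block. The paper proves it by taking the reflection morphism $\q\co\phi\colon T_{k+1}\to\ns$ from the proof of Lemma \ref{lem:sim2}, altering its values at three points of the tricube using the cube supplied by Lemma \ref{lem:relchar}, and invoking the tricube composition Lemma \ref{lem:tricube-comp}. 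You instead pass to the arrow space $\nss=\ns\Join_1\ns$, note that the $1$-face modification of $\q$ becomes a single-point modification of the $k$-cube $\q(\cdot,0)\times\q(\cdot,1)$ on $\nss$, and apply Lemma \ref{lem:sim2} (equivalently Corollary \ref{sim2cor}) to $\nss$; this is exactly the route the paper takes for the analogous claim inside the proof of Proposition \ref{prop:k-erg}, and it transfers to the present setting because the required relation $(x_0,x_0+a)\sim(x_1,x_1+a)$ across distinct fibres is precisely what Lemma \ref{lem:fibreiso} supplies (with Lemma \ref{lem:k-erg-keyfact} covering the single-fibre case). Both arguments therefore rest on the same key input; yours avoids the explicit tricube manipulation at the cost of invoking the $k$-step structure of $\ns\Join_1\ns$ from Lemma \ref{lem:arrow}. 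Your chip-firing bookkeeping (edge moves preserve $\sigma_{k+1}$, and any $\q_3$ with $\sigma_{k+1}(\q_3)=0$ is a sum of edge functions) is a slightly more explicit rendering of the paper's Hamiltonian-path sweep and is equally valid.
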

\begin{proof}
First we claim that if $\q\in \cu^{k+1}(\ns)$ and we let any $a\in \ab$ act on the values of $\q$ at the two points of an arbitrary 1-face in $\{0,1\}^{k+1}$, then the resulting map $\q'$ is also in $\cu^{k+1}(\ns)$. To see this, note that without loss we can suppose that the given 1-face is $0^{k+1},(1,0,\dots,0)$, and let us denote the  values of $\q$ at these points by $x$, $y$ respectively. We have $(x,x+a)\sim (y,y+a)$, which means that the corresponding map $f:\{0,1\}^{k+1}\to \ns$ from Lemma \ref{lem:relchar} is in $\cu^{k+1}(\ns)$. 
Let $\phi$ be the map $T_{k+1}\to \{0,1\}^{k+1}$ used in the proof of Lemma \ref{lem:sim2}. As noted there, the morphism $\q\co\phi :T_{k+1}\to \ns$ agrees with $\q$ on the outer points, and we also have that it has constant value $x$ on the subcube $\{-1,0\}^{k+1}$. Note moreover that on the subcube containing $(1,-1,\dots,-1)$ the map $\q\co \phi$ is constantly $x$ on face $v\sbr{1}=0$ and constantly $y$ on face $v\sbr{1}=1$. Changing the values $\q\co\phi (-1^{k+1})$ and $\q\co\phi(0,-1,\dots,-1)$ both to $x+a$, changing $\q\co\phi(1,-1,\dots,-1)$ to $y+a$, and using the fact that $f\in \cu^{k+1}(\ns)$, we see that the resulting map is still a morphism $T_{k+1}\to \ns$ and that on  the outer points it agrees with $\q'$. The claim now follows from Lemma \ref{lem:tricube-comp}.

Finally, this last claim enables us to show that \eqref{eq:k-deg-bund} holds. For suppose that $\q_1,\q_2\in \cu^{k+1}(\ns)$ have $\pi_{k-1}\co \q_1 = \pi_{k-1}\co \q_2$. Then starting at any given $1$-face of $\{0,1\}^{k+1}$, we can add an element from $\ab$ to the values of $\q_2$ at the vertices of that face so as to obtain a new cube which agrees with $\q_1$ at one of those vertices. Repeating this along a Hamiltonian path of 1-faces on $\{0,1\}^{k+1}$, we can produce a new $(k+1)$-cube $\q_2'$ differing from $\q_1$ at one vertex at most. Note also that we obtain $\q_2'$ by adding to $\q_2$ only elements from $\cu^{k+1}(\cD_k(\ab))$, so $\q_2' = \q_2+\q_3$ where $\q_3 \in \cu^{k+1}(\cD_k(\ab))$. Moreover, we must then have $\q_2' = \q_1$, by uniqueness of completion on $\ns$. This shows that
\[
\{\q_2\in \cu^{k+1}(\ns): \pi_{k-1}\co \q_2=\pi_{k-1}\co \q_1\} \subset \{\q_1 + \q_3 : \q_3 \in \cu^{k+1}(\cD_k(\ab))\}.
\]
The opposite inclusion follows similarly, modifying $\q_1\in \cu^{k+1}(\ns)$ into $\q_1+\q_3$ along 1-faces, showing thus that $\q_1+\q_3\in \cu^{k+1}(\ns)$. The resulting equality then extends to all $\cu^n(\ns)$ with $n>k+1$, essentially by Lemma \ref{lem:cubechar} (and we had the equality also for $n<k+1$ by induction).
\end{proof}

The proof of Theorem \ref{thm:bundle-decomp} is now complete.

\begin{defn}\label{def:loc-trans}
Let $\ns$ be a $k$-step nilspace, let $F_0,F_1$ be two fibres of $\sim_{k-1}$, and let $x_0\in F_0$, $x_1\in F_1$. We define the \emph{local translation} corresponding to $x_0,x_1$ to be the map $\phi_{x_0,x_1}$ sending each $y_0\in F_0$ to the unique $y_1\in F_1$ such that $(x_0,y_0)\sim(x_1,y_1)$, that is the unique $y_1$ completing the corner $\q':\{0,1\}^{k+1}\setminus\{1^{k+1}\}\to \ns$ defined by $\q'(v,0)=x_0$ for $v\neq 1^k$, $\q'(1^k,0) = y_0$ and $\q'(v,1)=x_1$ for $v\neq 1^k$.
\end{defn}

\subsection{Translations on a $k$-step nilspace form a filtered group of degree $k$}\label{subsec:Facegp}
Given any subset $F$ of $\{0,1\}^n$ and any map $\alpha: \ns \to \ns$, we define the map
\[
\alpha^F: \ns^{\{0,1\}^n}\to \ns^{\{0,1\}^n},\quad  \alpha^F(f): v \mapsto \left\{\begin{array}{lr} \alpha(f(v)),& v\in F \\  f(v), & v\not\in F\end{array}\right..
\]
\begin{defn}[Translations]\label{def:facetrans}
Let $\ns$ be a nilspace and let $i$ be a positive integer. A map $\alpha:\ns\to \ns$ is a \emph{translation of height} $i$ on $\ns$ (or \emph{$i$-translation}) if for every $n\geq i$ and every face $F\subset \{0,1\}^n$ of codimension $i$, the map $\alpha^F$ is cube-preserving, that is for every $\q\in \cu^n(\ns)$ we have $\alpha^F(\q)\in \cu^n(\ns)$. We denote the set of translations of height $i$ by $\tran_i(\ns)$.
\end{defn}
\noindent Using discrete-cube automorphisms, we see that in order for $\alpha$ to be an $i$-translation it suffices to have $\alpha^F$ being cube-preserving for \emph{some} face $F$ of codimension $i$. Note also that $\tran_i(\ns)\supset \tran_j(\ns)$ for any $i\leq j$. We shall often write simply $\tran(\ns)$ for $\tran_1(\ns)$, and refer to translations of height 1 simply as translations.

These translations were introduced by Host and Kra in \cite{HK,HKparas}, where they were studied in relation to parallelepiped structures and shown to form a nilpotent group. Analogously, in this subsection we treat the following main result.

\begin{proposition}\label{prop:TransGroup}
Let $\ns$ be a $k$-step nilspace. Then $\tran(\ns)$ is a group and $(\tran_i(\ns))_{i\geq 0}$ is a filtration of degree at most $k$. \noindent \textup{(}We set $\tran_0(\ns)=\tran_1(\ns)=\tran(\ns)$.\textup{)}
\end{proposition}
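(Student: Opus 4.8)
The plan is to prove the two assertions together by establishing that the sets $\tran_i(\ns)$ are closed under composition and inverses, and that they satisfy the commutator inclusion $[\tran_i(\ns),\tran_j(\ns)]\subset \tran_{i+j}(\ns)$, with the degree bound following from the fact that a height-$(k+1)$ translation must be trivial. First I would record the easy closure facts: if $\alpha,\beta\in\tran_i(\ns)$ then $(\alpha\co\beta)^F=\alpha^F\co\beta^F$ is cube-preserving, so $\tran_i(\ns)$ is closed under composition; moreover each $\alpha\in\tran_i(\ns)$ is a \emph{bijection} of $\ns$, since one can use the completion axiom together with the definition (applied to suitable corners) to produce a two-sided inverse, and $\alpha^{-1}$ is then itself in $\tran_i(\ns)$ because $(\alpha^{-1})^F=(\alpha^F)^{-1}$ preserves the cube set (a bijection that maps $\cu^n(\ns)$ into itself must map it onto itself, using that $\ns$ is $k$-step so the relevant sets of cubes are determined by finite data). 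The nesting $\tran_i(\ns)\supset\tran_{i+1}(\ns)$ is already noted in the text, giving the descending chain.

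The heart of the proof is the commutator inclusion. Given $\alpha\in\tran_i(\ns)$ and $\beta\in\tran_j(\ns)$, I want to show $[\alpha,\beta]=\alpha^{-1}\beta^{-1}\alpha\beta\in\tran_{i+j}(\ns)$. The natural strategy is to work on a discrete cube $\{0,1\}^n$ with $n\geq i+j$ and a face $F$ of codimension $i+j$, and to exhibit $F$ as an intersection $F=F_i\cap F_j$ where $F_i$ has codimension $i$ and $F_j$ has codimension $j$, with $F_i,F_j$ in ``general position'' so that $F_i\setminus F_j$, $F_j\setminus F_i$ are themselves faces of the appropriate codimensions. One then computes, for $\q\in\cu^n(\ns)$, how the composite $(\alpha^{F_i})^{-1}(\beta^{F_j})^{-1}\alpha^{F_i}\beta^{F_j}$ acts: on the overlap $F_i\cap F_j$ it applies the commutator $[\alpha,\beta]$ pointwise, while on $F_i\setminus F_j$ and $F_j\setminus F_i$ the contributions from $\alpha$ and $\beta$ cancel, so the net effect is precisely $[\alpha,\beta]^F(\q)$. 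Since each of $\alpha^{F_i},\beta^{F_j}$ and their inverses is cube-preserving (using $\codim(F_i)=i$, $\codim(F_j)=j$), the composite preserves $\cu^n(\ns)$, and hence so does $[\alpha,\beta]^F$, giving $[\alpha,\beta]\in\tran_{i+j}(\ns)$ as desired.

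I expect the main obstacle to be the bookkeeping in this commutator computation, specifically verifying that the cancellation on $F_i\setminus F_j$ and $F_j\setminus F_i$ is exact and that $[\alpha,\beta]^F$ really emerges as the composite. The subtlety is that $\alpha^{F_i}$ modifies $\q$ on a larger set than $F$, so after applying $\beta^{F_j}$ the value at a point of $F_i\setminus F_j$ has been altered by $\alpha$ but not by $\beta$; one must track these intermediate values carefully to confirm that the reverse applications $(\alpha^{F_i})^{-1},(\beta^{F_j})^{-1}$ undo exactly the off-diagonal modifications. A clean way to organize this is to choose coordinates so that $F_i=\{v:v\sbr{1}=\cdots=v\sbr{i}=1\}$ and $F_j=\{v:v\sbr{i+1}=\cdots=v\sbr{i+j}=1\}$, making the three regions $F_i\cap F_j$, $F_i\setminus F_j$, $F_j\setminus F_i$ explicit, and then to check the claimed identity of maps directly on each region.

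Finally, for the degree bound I would argue that $\tran_{k+1}(\ns)=\{\id_\ns\}$. Indeed, if $\alpha\in\tran_{k+1}(\ns)$, then taking $n=k+1$ and $F=\{1^{k+1}\}$ (the face of codimension $k+1$), cube-preservation of $\alpha^F$ says that modifying the single value of any $\q\in\cu^{k+1}(\ns)$ at $1^{k+1}$ from $\q(1^{k+1})$ to $\alpha(\q(1^{k+1}))$ yields another cube. Applying this to the constant cube at a point $x$ and invoking uniqueness of completion in the $k$-step nilspace $\ns$ (via Lemma \ref{lem:sim1} and Definition \ref{def:simdef}), one finds $\alpha(x)\sim_k x$; but since $\ns$ is $k$-step, $\sim_k$ is the identity relation, forcing $\alpha(x)=x$ for every $x$. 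This establishes $\tran_{k+1}(\ns)=\{\id_\ns\}$, so the filtration $(\tran_i(\ns))_{i\geq 0}$ has degree at most $k$, completing the proof.
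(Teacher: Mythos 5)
Your overall architecture matches the paper's: the commutator inclusion obtained by writing a codimension-$(i+j)$ face as $F=F_1\cap F_2$ and checking pointwise that $[\alpha^{F_1},\beta^{F_2}]=[\alpha,\beta]^F$, and the degree bound obtained from uniqueness of completion at the face $\{1^{k+1}\}$, are exactly the arguments the paper uses, and your versions of them are fine. The genuine gap is in the group axioms, specifically closure under inverses. First, bijectivity of a translation is not dispatched by ``the completion axiom applied to suitable corners'': the paper proves it by induction on the step $k$, using that $\alpha$ induces a translation on $\cF_{k-1}(\ns)$ (invertible by the inductive hypothesis) and that on each class of $\sim_{k-1}$ the map $\alpha$ coincides with the local translation $\phi_{x,\alpha(x)}$ (Lemma \ref{lem:res-trans}), which is a bijection between fibres by unique corner completion; this leans on the bundle decomposition, not on a single corner argument. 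Second, and more seriously, your justification that $(\alpha^{-1})^F=(\alpha^F)^{-1}$ preserves cubes does not work: an injective map from the (generally infinite) set $\cu^n(\ns)$ into itself need not be onto, and the fact that cubes are ``determined by finite data'' does not make $\cu^n(\ns)$ finite, so you cannot infer that $\alpha^F$ maps $\cu^n(\ns)$ \emph{onto} itself from the fact that it maps it \emph{into} itself.

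The paper instead proves $\alpha^{-1}\in\tran_i(\ns)$ by induction on $i$: by inclusion--exclusion one writes $1_F=\sum_j\lambda_j 1_{F_j}$ with $\lambda_j=\pm1$ and each $F_j$ of codimension at most $i-1$, so that $(\alpha^{-1})^F$ is a composition of maps $(\alpha^{\pm1})^{F_j}$, each cube-preserving by the inductive hypothesis, the base case being supplied by the bijectivity and local-translation analysis above. You should also note that this gap propagates: your commutator computation requires $(\alpha^{F_1})^{-1}$ and $(\beta^{F_2})^{-1}$ to be cube-preserving, i.e.\ it already presupposes inverse-closure, so until that step is repaired the filtration inclusion $[\tran_i(\ns),\tran_j(\ns)]\subset\tran_{i+j}(\ns)$ is not established either.
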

\noindent We refer to $\tran(\ns)$ as the \emph{translation group} of $\ns$. Before we turn to the proof of Proposition \ref{prop:TransGroup}, let us collect some basic observations about translations, which will also help to gain intuition about them. We begin with simple examples using the nilspaces from Chapter \ref{chap:Centrexs}.

\begin{example}
Let $(G,G_\bullet)$ be a filtered group of degree $k$, let $\Gamma$ be a subgroup of $G$, and let $\ns$ be the corresponding $k$-step coset nilspace $G/\Gamma$ with cube sets $\cu^n(\ns)=\{\pi_\Gamma\co \q: \q\in\cu^n(G_\bullet)\}$. Let $i\in [k]$,  $g\in G_i$, and let $\alpha:\ns\to \ns$ be the map $x\Gamma\mapsto (gx)\Gamma$. Then $\alpha \in \tran_i(\ns)$. To see this, note that for any $n$-cube $\pi_\Gamma\co \q$ on $\ns$ and any face $F$ of codimension $i$ in $\{0,1\}^n$, we have $\alpha^F(\pi_\Gamma\co \q )= \pi_\Gamma\co(g^F\q)$, where $g^F\q$ is the pointwise-multiplication map $\{0,1\}^n\to G$, $v\mapsto g^F(v)\q(v)$. By the definition and group property of $\cu^n(G_\bullet)$, we have $g^F\q\in \cu^n(G_\bullet)$. Hence $\alpha^F(\pi_\Gamma\co \q)\in \cu^n(\ns)$, and so indeed $\alpha\in \tran_i(\ns)$.
\end{example}
The following result describes some basic general properties of translations.
\begin{lemma}\label{lem:transfact1}
Let $\ns$ be a nilspace. Then every translation on $\ns$ is a nilspace morphism $\ns\to\ns$. Furthermore, for each $i\geq 1$ we have
\begin{equation}\label{eq:transfact11}
\forall\, \alpha\in \tran(\ns),\;\;\textrm{ if }\; x,y\in \ns\; \textrm{satisfy }\; x\sim_i y, \; \textrm{then } \alpha(x)\sim_i \alpha(y),
\end{equation}
and we also have
\begin{equation}\label{eq:transfact12}
\alpha\in \tran_i(\ns) \;\;\Rightarrow  \;\;\forall\,x\in \ns, \; \alpha(x)\sim_{i-1} x.
\end{equation}
\end{lemma}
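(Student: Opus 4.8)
The plan is to prove the three assertions in sequence, using the first to obtain the second almost immediately, and reading the third directly off the defining property of $\tran_i$. The only step requiring an idea is the upgrade of the codimension-$1$ condition defining a translation to the statement that $\alpha$ preserves cubes when applied at \emph{every} vertex simultaneously; everything else is a direct application of the characterisation in Lemma \ref{lem:sim1}.

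First I would show that $\alpha\in\tran(\ns)$ is a nilspace morphism. Fix $\q\in\cu^n(\ns)$ and ``thicken'' it: let $\tilde\q:\{0,1\}^{n+1}\to\ns$, $\tilde\q(v,w)=\q(v)$, which equals $\q\co p$ for the projection morphism $p:(v,w)\mapsto v$, so $\tilde\q\in\cu^{n+1}(\ns)$ by the composition axiom. Taking $F=\{(v,w):w=1\}$, a face of codimension $1$ in $\{0,1\}^{n+1}$, the height-$1$ property gives $\alpha^F(\tilde\q)\in\cu^{n+1}(\ns)$. Restricting this cube to the face $\{w=1\}$, i.e. composing with $\phi_F$, yields exactly $\alpha\co\q$, which is therefore in $\cu^n(\ns)$. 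As $n$ and $\q$ were arbitrary, $\alpha$ is a nilspace morphism.

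Given this, \eqref{eq:transfact11} is immediate from Lemma \ref{lem:sim1}: if $x\sim_i y$, then the map $\q:\{0,1\}^{i+1}\to\ns$ sending $0^{i+1}$ to $y$ and every other vertex to $x$ is a cube, so $\alpha\co\q\in\cu^{i+1}(\ns)$; since $\alpha\co\q$ sends $0^{i+1}$ to $\alpha(y)$ and every other vertex to $\alpha(x)$, the same lemma gives $\alpha(x)\sim_i\alpha(y)$. For \eqref{eq:transfact12} I would argue straight from the definition of $\tran_i$. Fix $x\in\ns$ and take the constant cube $\q_x$ with value $x$ on $\{0,1\}^i$; with $F=\{0^i\}$, a face of codimension $i$ in $\{0,1\}^i$, the defining property of $\tran_i(\ns)$ gives $\alpha^F(\q_x)\in\cu^i(\ns)$. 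This map sends $0^i$ to $\alpha(x)$ and every other vertex to $x$, so Lemma \ref{lem:sim1} with $k=i-1$ yields $x\sim_{i-1}\alpha(x)$, hence $\alpha(x)\sim_{i-1}x$ by symmetry. (When $i=1$ the relation $\sim_0$ is the trivial one in which all points are related, and the conclusion holds vacuously.)

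The main obstacle is the first step: the translation axiom only supplies cube-preservation across faces of codimension $1$, whereas the morphism property requires applying $\alpha$ at every vertex at once, that is across a codimension-$0$ ``face''. The thickening device resolves exactly this, by realising the whole of $\q$ as a codimension-$1$ face of a cube one dimension higher. Once this is available, the remaining two identities are routine consequences of Lemma \ref{lem:sim1}, the first via the morphism property just established and the second via the defining cube-preservation property of $\tran_i$ applied to a constant cube.
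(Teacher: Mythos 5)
Your proof is correct and follows essentially the same route as the paper: parts \eqref{eq:transfact11} and \eqref{eq:transfact12} are argued identically (via Lemma \ref{lem:sim1} applied to the near-constant cubes). The only difference is in showing that a translation is a morphism: the paper writes $\alpha\co\q=\alpha^F\co\alpha^{F'}(\q)$ for two opposite codimension-$1$ faces of $\{0,1\}^n$ itself, whereas you thicken $\q$ to an $(n+1)$-cube, apply $\alpha^F$ on one codimension-$1$ face and restrict back — both are valid one-step uses of the same defining property together with the composition axiom.
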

\begin{proof}
To see that every $\alpha\in \tran(\ns)$ is a morphism, note that given any cube $\q\in \cu^n(\ns)$, the map $\alpha\co \q$ can be written as  the composition $\alpha^F\co \alpha^{F'}(\q)$ for two opposite faces $F,F'$ of codimension 1, so $\alpha\co \q\in \cu^n(\ns)$ as required. 

To see \eqref{eq:transfact11}, recall that by Lemma \ref{lem:sim1} we have $x\sim_i y$ if and only if there is $\q\in \cu^{i+1}(\ns)$ such that $\q(1^{i+1})=y$ and $\q(v)=x$ otherwise. Now given $\alpha\in \tran(\ns)$, for this cube $\q$ we have by the previous paragraph that $\alpha\co \q\in \cu^{i+1}(\ns)$, and therefore $\alpha(x)\sim_i\alpha(y)$.

To see \eqref{eq:transfact12}, note that letting $\q$ denote the constant $i$-dimensional cube with value $x$, and letting $\q'$ denote the map $\{0,1\}^i \to \ns$ such that $\q'(v)=\q(v)=x$ for $v\neq 0^i$ and $\q'(0^i)=\alpha(x)$, we have by the definition of $i$-translations that $\q'$ is also in $\cu^i(\ns)$, so by the fact recalled above we have $\alpha(x) \sim_{i-1} x$.
\end{proof}
\noindent Thus, translations of height $i$ leave the equivalence classes of $\sim_{i-1}$ in $\ns$ globally invariant, and, more generally, translations conserve all the equivalence relations $\sim_j$. When $\ns$ is a $k$-step nilspace, we can deduce the following fact, relating translations to the notion of \emph{local} translation from Definition \ref{def:loc-trans}.
\begin{lemma}\label{lem:res-trans}
Let $\ns$ be a $k$-step nilspace. Then for every $\alpha\in \tran(\ns)$ and $x\in \ns$, the restriction of $\alpha$ to the class of $\sim_{k-1}$ containing $x$ is the local translation $\phi_{x,\alpha(x)}$.
\end{lemma}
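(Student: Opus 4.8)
The plan is to reduce the claim, via uniqueness of completion in a $k$-step nilspace, to exhibiting a single explicit cube, which is then produced by letting $\alpha$ act on one face of a cube supplied by Lemma \ref{lem:sim1}. Write $F_0$ for the $\sim_{k-1}$-class of $x$ and $F_1$ for that of $\alpha(x)$; by \eqref{eq:transfact11} the map $\alpha$ sends $F_0$ into $F_1$ (when $k=1$ the relation $\sim_0$ is trivial, so $F_0=F_1=\ns$, and this is automatic). Fix an arbitrary $y_0\in F_0$; the goal is $\alpha(y_0)=\phi_{x,\alpha(x)}(y_0)$. By Definition \ref{def:loc-trans}, $\phi_{x,\alpha(x)}(y_0)$ is the unique element completing the $(k+1)$-corner $\q'$ given by $\q'(v,0)=x$ and $\q'(v,1)=\alpha(x)$ for $v\neq 1^k$, together with $\q'(1^k,0)=y_0$; this completion is unique because $\ns$ is $k$-step, and the fact that $\q'$ is a genuine corner is part of Lemma \ref{lem:isokey}. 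Hence it suffices to produce a cube in $\cu^{k+1}(\ns)$ that agrees with $\q'$ on $\{0,1\}^{k+1}\setminus\{1^{k+1}\}$ and takes the value $\alpha(y_0)$ at $1^{k+1}$.

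To construct this cube, I would use that $x\sim_{k-1}y_0$: by Lemma \ref{lem:sim1}, composed with the automorphism of $\{0,1\}^k$ interchanging $0^k$ and $1^k$, the map $\q_0:\{0,1\}^k\to\ns$ with $\q_0(1^k)=y_0$ and $\q_0(v)=x$ for $v\neq 1^k$ lies in $\cu^k(\ns)$. Precomposing $\q_0$ with the projection morphism $(v,w)\mapsto v$ gives, by the composition axiom, a cube $g\in\cu^{k+1}(\ns)$ with $g(v,w)=\q_0(v)$ for all $w\in\{0,1\}$. Now apply $\alpha$ along the codimension-one face $F'=\{(v,w):w=1\}$: since $\alpha\in\tran(\ns)=\tran_1(\ns)$, Definition \ref{def:facetrans} gives that $\alpha^{F'}$ is cube-preserving, so $\tilde g:=\alpha^{F'}(g)\in\cu^{k+1}(\ns)$.

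It then remains to read off the values of $\tilde g$. On the face $w=0$ it equals $g$, hence $\tilde g(v,0)=x$ for $v\neq 1^k$ and $\tilde g(1^k,0)=y_0$; on the face $w=1$ it equals $\alpha\co g$, hence $\tilde g(v,1)=\alpha(x)$ for $v\neq 1^k$ and $\tilde g(1^k,1)=\alpha(y_0)$. Thus $\tilde g$ agrees with $\q'$ off $1^{k+1}$ and satisfies $\tilde g(1^{k+1})=\alpha(y_0)$, so by uniqueness of completion $\phi_{x,\alpha(x)}(y_0)=\alpha(y_0)$. Since $y_0\in F_0$ was arbitrary, the restriction of $\alpha$ to $F_0$ coincides with $\phi_{x,\alpha(x)}$.

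The only genuinely delicate point is the coordinate bookkeeping in the last step: the seed cube $\q_0$ and the face $F'$ must be chosen so that $\alpha^{F'}$ reproduces \emph{exactly} the corner $\q'$ of Definition \ref{def:loc-trans}, with the modified vertex sitting precisely at $1^{k+1}$, rather than at an automorphic variant of it. Once the location of the special vertex and the orientation of the face are aligned with that definition, everything else is a formal consequence of Lemma \ref{lem:sim1}, the composition axiom, the defining property of a height-$1$ translation, and the uniqueness of completion in a $k$-step nilspace.
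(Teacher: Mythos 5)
Your proof is correct and is essentially the paper's own argument: the cube you build as $\alpha^{F'}(g)$ is exactly the $1$-arrow $\arr{\q_0,\alpha\co\q_0}_1$ that the paper uses, and both arguments then conclude by uniqueness of completion of the $(k+1)$-corner from Definition \ref{def:loc-trans}. The only cosmetic difference is that the paper routes the final identification through the relation $\sim$ of Definition \ref{defn:keyrelation}, whereas you match the corner of Definition \ref{def:loc-trans} directly.
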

\begin{proof}
Suppose that $y\sim_{k-1} x$, so that there exists $\q\in \cu^k(\ns)$ such that $\q(1^k)=y$ and $\q(v)=x$ otherwise. Consider the corner  $\q':\{0,1\}^{k+1}\setminus\{1^{k+1}\}\to \ns$ defined by letting the restriction of $\q'$ to $\{v\sbr{k}=0\}$ be equal to $\q$ and the restriction of $\q'$ to $\{v\sbr{k}=1\}\setminus\{1^k\}$ be equal to $\alpha(\q)$. Then since the arrow $\arr{\q,\alpha\co\q}_1$ is a $(k+1)$-cube, the unique way to complete $\q'$ is to set the value at $1^{k+1}$ equal to $\alpha(y)$. This means that, letting $\sim$ denote the relation from Definigion \ref{defn:keyrelation}, we have $(x,y)\sim (\alpha(x),\alpha(y))$. Hence, setting $y=x+a$, we must have $\alpha(y)=\alpha(x)+a$, and so by Definition \ref{def:loc-trans} the result follows.
\end{proof}
\noindent Let us turn to the proof of Proposition \ref{prop:TransGroup}. Our first step towards this result is the following lemma relating $\tran_i(\ns)$ to the $i$-th arrow space $\ns\Join_i \ns$ (recall Definition \ref{def:arrowspace}). Note that if $\alpha\in \tran(\ns)$ then for every cube $\q\in \cu^n(\ns)$ the map $\{0,1\}^{n+1}\to \ns$ consisting of $\q$ and $\alpha\co \q$ on opposite faces is a cube. The converse holds as well. The resulting characterization generalizes to translations of any height, as follows.

\begin{lemma}\label{lem:transcriterion}
For $i\geq 1$, a map $\alpha: \ns\to \ns$ is in $\tran_i(\ns)$ if and only if the map $h_\alpha: \ns\to \ns\times \ns$ defined by $h_\alpha(x)=(x,\alpha(x))$ is a morphism into $\ns\Join_i \ns$. In other words, we have $\alpha\in \tran_i(\ns)$ if and only if for every $\q \in \cu^n(\ns)$ the $i$-arrow $\arr{\q,\alpha\co \q}_i$ is in $\cu^{n+i}(\ns)$.
\end{lemma}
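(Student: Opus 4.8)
The plan is to first dispose of the ``in other words'' clause as a pure unfolding of definitions, and then prove the resulting equivalence directly, the whole content being the choice of one well-chosen discrete-cube morphism. By Definition \ref{def:arrowspace} the composite $h_\alpha\co\q=\q\times(\alpha\co\q)$ lies in $\cu^n(\ns\Join_i\ns)$ if and only if $\arr{\q,\alpha\co\q}_i\in\cu^{n+i}(\ns)$; hence $h_\alpha$ is a morphism into $\ns\Join_i\ns$ exactly when $\arr{\q,\alpha\co\q}_i\in\cu^{n+i}(\ns)$ for every $\q\in\cu^n(\ns)$. It therefore suffices to show that $\alpha\in\tran_i(\ns)$ is equivalent to this last condition.

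For the forward direction I would take $\q\in\cu^n(\ns)$ and let $p\colon\{0,1\}^{n+i}\to\{0,1\}^n$, $(v,w)\mapsto v$, be the coordinate projection, which is a morphism, so that $\q\co p\in\cu^{n+i}(\ns)$ by the composition axiom. Writing $F=\{(v,w):w=1^i\}$ for the codimension-$i$ face fixing the last $i$ coordinates to $1$, a direct check gives the identity $\alpha^F(\q\co p)=\arr{\q,\alpha\co\q}_i$. Since $\alpha\in\tran_i(\ns)$ means precisely that $\alpha^F$ preserves cubes on codimension-$i$ faces, this yields $\arr{\q,\alpha\co\q}_i\in\cu^{n+i}(\ns)$.

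The converse is the substantive direction. Assume $\arr{\q,\alpha\co\q}_i\in\cu^{n+i}(\ns)$ for all cubes $\q$, let $F\subset\{0,1\}^m$ be a face of codimension $i$ and let $\q'\in\cu^m(\ns)$; by the remark following Definition \ref{def:facetrans} (composing with an automorphism of $\{0,1\}^m$) I may assume $F=\{(v',w'):w'=1^i\}$ with $v'\in\{0,1\}^{m-i}$, $w'\in\{0,1\}^i$. The key step is to apply the hypothesis to $\q'$ \emph{itself}, obtaining $\arr{\q',\alpha\co\q'}_i\in\cu^{m+i}(\ns)$, and then to compose with the morphism $\psi\colon\{0,1\}^m\to\{0,1\}^{m+i}$, $(v',w')\mapsto(v',w',w')$, which duplicates the last $i$ coordinates. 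By Lemma \ref{lem:cube-morph-char} each output coordinate of $\psi$ is $\tau_0$ of an input coordinate, so $\psi$ is a morphism and $\arr{\q',\alpha\co\q'}_i\co\psi\in\cu^m(\ns)$ by the composition axiom. Finally I would check the identity $\arr{\q',\alpha\co\q'}_i\co\psi=\alpha^F(\q')$: evaluating the left-hand side at $(v',w')$ gives $\q'(v',w')$ when $w'\neq1^i$ and $\alpha(\q'(v',w'))$ when $w'=1^i$, which is exactly $\alpha^F(\q')$. Hence $\alpha^F(\q')\in\cu^m(\ns)$ for every codimension-$i$ face and every cube, i.e.\ $\alpha\in\tran_i(\ns)$. (If one prefers to avoid the automorphism reduction, the same works for an arbitrary face with data $(I,x)$ by replacing $\psi$ with the morphism whose last $i$ coordinates are $\tau_{1-x\sbr{j}}(v\sbr{j})$, $j\in I$.)

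The only genuine obstacle is conceptual rather than computational. It is tempting to attack the converse by induction on the dimension of the face $F$; splitting off a coordinate then forces one into the arrow spaces $\ns\Join_1\ns$ and the relations $\sim_j$, and one repeatedly runs into the fact that a $1$-arrow of two cubes need not be a cube, as well as the non-ergodicity of iterated arrow spaces. The clean route sidesteps all of this by observing that the face-modification $\alpha^F(\q')$ is itself a face-restriction, through $\psi$, of the \emph{single} higher-dimensional cube $\arr{\q',\alpha\co\q'}_i$ already provided by the hypothesis; pinning down the duplication morphism $\psi$ is the entire content of the argument.
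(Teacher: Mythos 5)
Your proof is correct. The forward direction coincides with the paper's: both write $\arr{\q,\alpha\co \q}_i$ as $\alpha^F(\q\co p)$ for the projection $p:(v,w)\mapsto v$ and the codimension-$i$ face $\{w=1^i\}$. For the converse — the substantive direction — you take a genuinely different and noticeably shorter route. The paper realizes $\alpha^{F_0}(\q)$ as a tricube composition $g'\co(\id\times \omega_i)$ of a morphism $g'$ on the product cubespace $\{0,1\}^{n-i}\times T_i$, which requires checking that $g'$ is a morphism cube by cube and then invoking Lemma \ref{lem:tricube-comp}; that lemma in turn rests on simplicial completion (Lemma \ref{lem:simpcomp}) and hence on the corner-completion axiom. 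You instead observe that $\alpha^F(\q)$ is the restriction of the single $(n+i)$-cube $\arr{\q,\alpha\co \q}_i$ along the coordinate-duplicating morphism $\psi:v\mapsto\big(v,v\sbr{n-i+1},\dots,v\sbr{n}\big)$; the identity $\arr{\q,\alpha\co \q}_i\co\psi=\alpha^F(\q)$ is immediate because the last $i$ coordinates of $\psi(v)$ equal $1^i$ exactly when $v\in F$, and $\psi$ is a morphism by Lemma \ref{lem:cube-morph-char}. This uses only the composition axiom, so your argument is both more elementary and slightly more general: it shows the equivalence for any cubespace, whereas the paper's converse genuinely uses that $\ns$ is a nilspace. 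Your parenthetical treatment of an arbitrary face via $\tau_{1-x\sbr{j}}$ is also correct, and your closing diagnosis of why an induction on faces gets entangled with arrow spaces is apt, though the paper's actual workaround is the tricube rather than such an induction.
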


\begin{proof}
If $\alpha\in \tran_i(\ns)$, and $\q\in \cu^n(\ns)$, then $\arr{\q, \alpha\co \q}_i(v,w)=\alpha^F(\q')(v,w)$, where $F$ is the face of $\{0,1\}^{n+i}$ defined by $w=1^i$ (so $\codim(F)=i$) and $\q'$ is the $(n+i)$-cube $\q\co \phi$ where $\phi:\{0,1\}^{n+i}\to \{0,1\}^n$ is the morphism $(v,w)\mapsto v$. Hence $\arr{\q, \alpha\co \q}_i$ is an $(n+i)$-cube on $\ns$ as required.\\
\indent For the converse, suppose that $h_\alpha$ is a morphism into $\ns\Join_i \ns$. Given $n\geq i$, let $F_0$ denote the face $\{v\in \{0,1\}^n: v\sbr{j}=1\;\;\forall\, j\in [n-i+1,n]\}$ of codimension $i$, and let $\q\in \cu^n(\ns)$. We want to show that $\alpha^{F_0}(\q)$ is also in $\cu^n(\ns)$.  Let $Q$ be the product cubespace $\{0,1\}^{n-i}\times T_i$ where $T_i$ is the $i$-dimensional tricube. Let $f_1$ be the identity on $\{0,1\}$ and $f_2$ be the function with $f_2(-1)=0,f_2(0)=1,f_2(1)=1$. Let $f=f_1^{n-i}\times f_2^i:Q\to \{0,1\}^n$. This is a surjective morphism with two useful properties: firstly, the $n$-dimensional subcube of $Q$ containing $1^n$ (i.e. $\{0,1\}^n$ viewed as a subcube of $Q$) is mapped by $f$ onto the face $F_0$ (as $f_2$ is constantly $1^i$ on $\{0,1\}^i$); secondly, letting $\id$ denote the identity map on $\{0,1\}^{n-i}$ and $\omega_i$ the outer-point map $\{0,1\}^i\to\{-1,1\}^i$ (recall Definition \ref{def:ope}), we have $\q\co f\co\,(\id\times \omega_i) = \q$. Now let $g=\q\co f$, and let $g':Q\to \ns$ be the map obtained from $g$ by applying $\alpha$ to the values of $g$ on $\{0,1\}^{n-i}\times \{1^i\}$. Then $g'\co\,(\id\times \omega_i) = \alpha^{F_0}(\q)$. Therefore, to show that $\alpha^{F_0}(\q)\in \cu^n(\ns)$ it suffices to show that $g'$ is a morphism from $Q$ to $\ns$.\\
\indent To show this, note first that $g:Q\to \ns$ is already a morphism (using Lemma \ref{lem:tricube-comp} and the product structure on $Q$). Given any cube (or morphism) $\tilde \q:\{0,1\}^m\to Q$, we want to show that $g'\co \tilde \q\in \cu^m(\ns)$. Now $\tilde \q= \q_0\times \q_1$ where $\q_0$ is a cube $\{0,1\}^m\to \{0,1\}^{n-i}$ and $\q_1$ is a cube $\{0,1\}^m\to T_i$. If $\q_1$ is one of the cubes on $T_i$ not having $1^i$ in its image, then $g'\co \tilde \q = g \co\tilde \q$ is indeed a cube on $\ns$ (since $g$ is a morphism). 
If $\q_1$ is the cube on $T_i$ with $1^i$ in its image, then by Definition \ref{def:tricube} we have $\tilde \q=\q_0\times (\trem_{1^i}\co \phi)$ for some morphism $\phi:\{0,1\}^m\to \{0,1\}^i$. The preimage under $\tilde \q$ of $\{0,1\}^{n-i}\times \{1^i\}$ is a face $F\subset \{0,1\}^m$ of codimension $i$, which we can suppose without loss of generality to be the face with last $i$ components equal to 1. Moreover, by definition of $f$ the map $g\co \tilde \q$ gives the same cube $\q':\{0,1\}^{m-i}\to \ns$ when restricted to each of the other faces $\tilde\q^{-1}(\{0,1\}^{n-i}\times \{w\})$, for each $w\in \{0,1\}^i\setminus\{1^i\}$. It follows that $g' \co \tilde\q=\arr{\q',\alpha\co \q'}_i$, which is a cube by our assumption on $h_\alpha$, so we are done.
\end{proof}
\noindent Next, we show that if $\ns$ is a $k$-step nilspace then the cube-preserving property for $h_\alpha: \ns\to \ns\Join_i \ns$ needs to be checked only for $(k+1)$-cubes.
\begin{lemma}\label{lem:trans-k-suffices}
Let $\ns$ be a $k$-step nilspace. A map $\alpha: \ns\to \ns$ is in $\tran_i(\ns)$ if and only if for every $\q \in \cu^{k+1}(\ns)$ we have $\arr{\q,\alpha\co \q}_i\in \cu^{k+1+i}(\ns)$.
\end{lemma}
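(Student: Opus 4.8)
The plan is to deduce everything from the arrow-space characterization of translations in Lemma~\ref{lem:transcriterion} together with the reduction lemma for cubes on a $k$-step nilspace (Lemma~\ref{lem:cubechar}). The only-if direction is immediate: if $\alpha\in\tran_i(\ns)$ then Lemma~\ref{lem:transcriterion} gives $\arr{\q,\alpha\co\q}_i\in\cu^{n+i}(\ns)$ for \emph{every} cube $\q$, in particular for $\q\in\cu^{k+1}(\ns)$. So the content lies in the converse, and for this the key object is the arrow space $\nss:=\ns\Join_i\ns$, which is a $k$-step nilspace by Lemma~\ref{lem:arrow}. By Lemma~\ref{lem:transcriterion} it suffices to prove that $h_\alpha:x\mapsto(x,\alpha(x))$ is a morphism $\ns\to\nss$, i.e. that $h_\alpha\co\q=\q\times(\alpha\co\q)$ lies in $\cu^n(\nss)$ for every $\q\in\cu^n(\ns)$ and every $n$. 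By Definition~\ref{def:arrowspace} this membership is exactly the condition $\arr{\q,\alpha\co\q}_i\in\cu^{n+i}(\ns)$, so the hypothesis says precisely that $h_\alpha\co\q\in\cu^{k+1}(\nss)$ whenever $\q\in\cu^{k+1}(\ns)$, and the task is to promote this from dimension $k+1$ to all dimensions.

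For dimensions $n\le k+1$ I would use a projection-and-inclusion argument. Let $\psi:\{0,1\}^{k+1}\to\{0,1\}^n$ be the morphism forgetting the last $k+1-n$ coordinates, and let $\iota:\{0,1\}^n\to\{0,1\}^{k+1}$ be the face map appending $k+1-n$ zeros, so that $\psi\co\iota=\id$. Given $\q\in\cu^n(\ns)$, the composition axiom gives $\q\co\psi\in\cu^{k+1}(\ns)$, so the hypothesis yields $h_\alpha\co(\q\co\psi)=(h_\alpha\co\q)\co\psi\in\cu^{k+1}(\nss)$; composing once more with the morphism $\iota$ and using $\psi\co\iota=\id$ then gives $h_\alpha\co\q\in\cu^n(\nss)$.

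For dimensions $n\ge k+2$ I would invoke Lemma~\ref{lem:cubechar} applied to the $k$-step nilspace $\nss$. Given $\q\in\cu^n(\ns)$, I check the $(k+1)$-face restrictions of $h_\alpha\co\q$: for any $(k+1)$-face $F$ we have $(h_\alpha\co\q)\co\phi_F=h_\alpha\co(\q\co\phi_F)$, and $\q\co\phi_F\in\cu^{k+1}(\ns)$ by composition, so the hypothesis gives $h_\alpha\co(\q\co\phi_F)\in\cu^{k+1}(\nss)$. Since this holds for all $(k+1)$-faces (a fortiori for those required by Lemma~\ref{lem:cubechar}), that lemma yields $h_\alpha\co\q\in\cu^n(\nss)$. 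Combining the two cases shows $h_\alpha$ is a morphism, hence $\alpha\in\tran_i(\ns)$ by Lemma~\ref{lem:transcriterion}.

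The step I expect to carry the real weight is the reduction for $n\ge k+2$: the whole point is that, because $\nss$ is $k$-step, membership of $h_\alpha\co\q$ in $\cu^n(\nss)$ is detected purely by its $(k+1)$-dimensional face restrictions, and these are manifestly of the form $h_\alpha\co(\q\co\phi_F)$ with $\q\co\phi_F$ a $(k+1)$-cube. Identifying $\nss$ as $k$-step (via Lemma~\ref{lem:arrow}) so that Lemma~\ref{lem:cubechar} becomes available is the conceptual crux; once that is in place the argument is essentially bookkeeping with the composition axiom.
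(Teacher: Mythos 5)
Your proof is correct and follows essentially the same route as the paper: reduce via Lemma~\ref{lem:transcriterion} to showing $h_\alpha$ is a morphism into the $k$-step arrow space $\ns\Join_i\ns$, then use Lemma~\ref{lem:cubechar} to detect membership in $\cu^n(\ns\Join_i\ns)$ for $n\geq k+2$ through $(k+1)$-face restrictions, which are exactly of the form covered by the hypothesis. Your explicit projection-and-inclusion treatment of the dimensions $n\leq k+1$ is a small point the paper's proof leaves implicit, but it is the same argument in substance.
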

\begin{proof}
By Lemma \ref{lem:transcriterion} it suffices to check that given any $\q \in \cu^n(\ns)$ we have $\q'=\q \times (\alpha\co \q)\in \cu^n(\ns\Join_i\ns)$. By Lemma \ref{lem:cubechar} it suffices to check that any restriction of $\q'$ to a $(k+1)$-dimensional face $F$ containing some $v$ with $v\sbr{k+1}=0$ is a $(k+1)$-cube. Now any such restriction is of the form $\q_0 \times (\alpha\co\q_0)$, where $\q_0\in \cu^{k+1}(\ns)$ is the restriction of $\q$ to $F$. This restriction is in $\cu^{k+1}(\ns\Join_i \ns)$ by definition if $\arr{\q_0,\alpha\co\q_0}_i\in \cu^{k+1+i}(\ns)$, which is the case by our assumption.
\end{proof}

We can now establish the first main claim of Proposition \ref{prop:TransGroup}.
\begin{lemma}\label{lem:transisgroup}
Let $\ns$ be a $k$-step nilspace. Then $\tran_i(\ns)$ is a group for every $i\geq 1$.
\end{lemma}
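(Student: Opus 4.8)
The plan is to show that $\tran_i(\ns)$ is closed under composition and under taking inverses, since it clearly contains the identity map. The key tool will be Lemma \ref{lem:transcriterion}, which characterizes membership $\alpha\in\tran_i(\ns)$ by the condition that for every $\q\in\cu^n(\ns)$ the $i$-arrow $\arr{\q,\alpha\co\q}_i$ lies in $\cu^{n+i}(\ns)$.

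For closure under composition, suppose $\alpha,\beta\in\tran_i(\ns)$ and let $\q\in\cu^n(\ns)$. I would argue directly from the definition via face maps: fix a face $F$ of codimension $i$ and observe that $(\beta\co\alpha)^F=\beta^F\co\alpha^F$ as maps on $\ns^{\{0,1\}^n}$, so that $(\beta\co\alpha)^F(\q)=\beta^F(\alpha^F(\q))$. Since $\alpha\in\tran_i(\ns)$ we have $\alpha^F(\q)\in\cu^n(\ns)$, and then since $\beta\in\tran_i(\ns)$ we have $\beta^F(\alpha^F(\q))\in\cu^n(\ns)$; hence $\beta\co\alpha\in\tran_i(\ns)$. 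This step is essentially formal.

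The main obstacle is closure under inverses, and in particular one must first check that a translation $\alpha$ is even a bijection, so that $\alpha^{-1}$ makes sense. For this I would use Lemma \ref{lem:res-trans}, which (for $i=1$) identifies the restriction of $\alpha$ to each fibre of $\sim_{k-1}$ with a local translation $\phi_{x,\alpha(x)}$; by Lemma \ref{lem:isokey} these local translations are bijections between fibres (the element $y_1$ completing the relevant corner being unique and its inverse correspondence being witnessed by symmetry of $\sim$). Combined with the fact (from \eqref{eq:transfact11} and the bundle decomposition) that $\alpha$ permutes the fibres of each $\sim_j$, one deduces that $\alpha$ is a bijection of $\ns$. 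For general height $i$ one reduces to understanding $\alpha$ fibrewise via the same machinery applied after projecting by $\pi_{k-1}$, using \eqref{eq:transfact12} to locate the action on fibres. The delicate point is to verify that the inverse map $\alpha^{-1}$ again satisfies the arrow criterion: given $\q\in\cu^n(\ns)$, one wants $\arr{\q,\alpha^{-1}\co\q}_i\in\cu^{n+i}(\ns)$. I would obtain this by applying the forward direction to the cube $\q_1:=\alpha^{-1}\co\q$ (which is a cube because $\alpha^{-1}$ will be shown to be a morphism), giving $\arr{\q_1,\alpha\co\q_1}_i=\arr{\alpha^{-1}\co\q,\q}_i\in\cu^{n+i}(\ns)$, and then composing with the automorphism of $\{0,1\}^{n+i}$ that swaps the two relevant faces (the reflection in the last $i$ coordinates) to convert $\arr{\alpha^{-1}\co\q,\q}_i$ into $\arr{\q,\alpha^{-1}\co\q}_i$; invariance of $\cu^{n+i}(\ns)$ under cube automorphisms then yields the claim. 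The crux is therefore establishing bijectivity of $\alpha$ together with the fact that $\alpha^{-1}$ is cube-preserving, after which the arrow criterion and the symmetry of the arrow under a suitable automorphism finish the argument.
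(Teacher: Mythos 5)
Your reduction of the problem to closure under inverses is the right focus, and the composition step is fine, but the key step of your inverse argument fails for $i\geq 2$. By Definition \ref{def:arrow}, the second entry of an $i$-arrow occupies the single codimension-$i$ face $\{(v,w):w=1^i\}$ of $\{0,1\}^{n+i}$, while the first entry occupies its complement, a set of $2^n(2^i-1)$ points which is not a face when $i\geq 2$. No automorphism of $\{0,1\}^{n+i}$ can exchange these two sets, so there is no automorphism converting $\arr{\alpha^{-1}\co\q,\q}_i$ into $\arr{\q,\alpha^{-1}\co\q}_i$; in particular the reflection in the last $i$ coordinates sends $\arr{f_0,f_1}_i$ to the map equal to $f_1$ on $\{w=0^i\}$ and $f_0$ elsewhere, which is not $\arr{f_1,f_0}_i$ unless $i=1$. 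A second gap is that even for $i=1$ your argument needs $\alpha^{-1}\co\q$ to be a cube, i.e.\ that $\alpha^{-1}$ is a morphism; you defer this (``will be shown'') but never establish it, and since morphism-ness of a translation is normally deduced from the translation property (Lemma \ref{lem:transfact1}), invoking it for $\alpha^{-1}$ at this point is essentially circular. Finally, in the bijectivity step, the assertion that $\alpha$ \emph{permutes} the fibres is exactly what requires proof: \eqref{eq:transfact11} only gives that $\alpha$ maps each fibre into a fibre, and one needs the induction on $k$ that the paper carries out (the induced map on $\cF_{k-1}(\ns)$ is a translation on a $(k-1)$-step nilspace, hence invertible by the inductive hypothesis, and then Lemma \ref{lem:res-trans} handles each fibre).

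The paper's proof avoids both problems. Bijectivity is proved by induction on $k$ as just described. Then the fact that $\alpha^{-1}$ is an $i$-translation is proved by induction on $i$, without any appeal to arrows or to $\alpha^{-1}$ being a morphism: one writes the indicator function of a codimension-$i$ face by inclusion--exclusion as $1_F=\sum_j\lambda_j 1_{F_j}$ with $\lambda_j\in\{-1,1\}$ and $F_j$ of codimension at most $i-1$, so that ${\alpha^{-1}}^F$ is a composition of maps $({\alpha^{-1}})^{\lambda_j\,F_j}$, each cube-preserving either by the inductive hypothesis on $i$ or because $\alpha\in\tran_i(\ns)\subset\tran_{\codim(F_j)}(\ns)$. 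If you want to keep your arrow-based route you would need a genuinely different argument for $i\geq 2$; the cleanest fix is to adopt this inclusion--exclusion reduction to the height-one case.
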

\noindent Since every translation in $\tran(\ns)$ is a nilspace morphism from $\ns$ to itself, with Lemma \ref{lem:transisgroup} we then have that $\tran(\ns)$ is a subgroup of the group $\aut(\ns)$ of nilspace automorphisms of $\ns$.
\begin{proof}
It is clear that composition of two elements of $\tran_i(\ns)$ is still in $\tran_i(\ns)$, so we just need to check invertibility.\\
\indent First we show, by induction on $k$, that any translation is an invertible map. For $k=1$ this is clear since in this case a translation is just addition of some fixed group-element. Now suppose that invertibility holds for $k-1$ and let $\alpha$ be a translation on $\ns$. By Lemma \ref{lem:res-trans}, $\alpha$ preserves the classes of $\sim_{k-1}$, so $\alpha$ induces a well-defined map on $\mathcal{F}_{k-1}(\ns)$, namely $\pi_{k-1}(x)\mapsto \pi_{k-1}\co\alpha(x)$. We denote this map by $h(\alpha)$.  If $\q'$ is an $n$-cube on $\mathcal{F}_{k-1}(\ns)$ and $F$ is a face of codimension 1, then letting $\q\in \cu^n(\ns)$ satisfy $\pi_{k-1}\co \q = \q'$, we have $\alpha^F (\q)\in \cu^n(\ns)$, so $h(\alpha)^F (\q') = \pi_{k-1} \co \alpha^F(\q)$ is in $\cu^n(\mathcal{F}_{k-1}(\ns))$, hence $h(\alpha)$ is a translation on $\mathcal{F}_{k-1}(\ns)$. By induction, this has an inverse $h(\alpha)^{-1}$. Now fix any $y\in \ns$ and let $F_2$ denote the $\sim_{k-1}$ class containing $y$. Let $F_1$ denote the class mapped onto $F_2$ by $\alpha$, namely $F_1= \pi_{k-1}^{-1}\big(h(\alpha)^{-1} \pi_{k-1}(y)\big)$. Since $h(\alpha)$ is bijective, any $x$ satisfying $\alpha(x)=y$ must lie in $F_1$.  Lemma \ref{lem:res-trans} implies that $\alpha$ is a bijection from $F_1$ to $F_2$, so there must be a unique such $x$ in $F_1$.

Finally, note that the inverse of an $i$-translation $\alpha$ is an $i$-translation. This can be shown by induction on $i$. For $i>1$, let $F$ be the face $\{v\in \{0,1\}^n:v\sbr{j}=1,\;\forall\, j\in[n-i+1,n]\}$. It suffices to show that ${\alpha^{-1}}^F$ is cube-preserving. By inclusion-exclusion the  indicator-function $1_F$ on $\{0,1\}^n$ can be written as a linear  combination $1_F=\sum_j \lambda_j 1_{F_j}$ where $\lambda_j\in \{-1,1\}$ and $F_j$ are  faces of codimension at most $i-1$. Each map ${\alpha^{-1}}^{F_j}$ is cube-preserving, by induction, and so their composition ${\alpha^{-1}}^F$ is also cube-preserving.
\end{proof}
\noindent Note that the map $h$ in the above proof is a homomorphism from $\tran(\ns)$ to $\tran(\cF_{k-1}(\ns))$. We can now establish the filtration property.
\begin{lemma}
For every $i,j\geq 1$ we have $[\tran_i(\ns),\tran_j(\ns)]\subset \tran_{i+j}(\ns)$.
\end{lemma}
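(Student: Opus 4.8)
The plan is to use the arrow characterization of translations from Lemma \ref{lem:transcriterion}: a map $\gamma\colon\ns\to\ns$ lies in $\tran_{i+j}(\ns)$ as soon as, for every $n$ and every $\q\in\cu^n(\ns)$, the $(i+j)$-arrow $\arr{\q,\gamma\co\q}_{i+j}$ is in $\cu^{n+i+j}(\ns)$. So, fixing $\alpha\in\tran_i(\ns)$ and $\beta\in\tran_j(\ns)$, it suffices to realize the arrow $\arr{\q,[\alpha,\beta]\co\q}_{i+j}$ as a cube, for an arbitrary $\q\in\cu^n(\ns)$. The idea is to obtain this arrow from a trivial cube by applying four cube-preserving face-operations, one for each letter of the commutator $[\alpha,\beta]=\alpha^{-1}\beta^{-1}\alpha\beta$.

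Concretely, I would write points of $\{0,1\}^{n+i+j}$ as triples $(v,w,u)$ with $v\in\{0,1\}^n$, $w\in\{0,1\}^i$, $u\in\{0,1\}^j$, and consider the two faces $F_w=\{w=1^i\}$ and $F_u=\{u=1^j\}$, of codimensions $i$ and $j$ respectively. Starting from the cube $\q'=\q\co\phi\in\cu^{n+i+j}(\ns)$, where $\phi\colon(v,w,u)\mapsto v$, I would form
\[
f:=(\alpha^{-1})^{F_w}\,(\beta^{-1})^{F_u}\,\alpha^{F_w}\,\beta^{F_u}\,\q'.
\]
Each of these four operations is cube-preserving: those on $F_w$ because $\alpha,\alpha^{-1}\in\tran_i(\ns)$, and those on $F_u$ because $\beta,\beta^{-1}\in\tran_j(\ns)$, where the presence of the inverses is guaranteed by the group property already established in Lemma \ref{lem:transisgroup}. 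Hence $f\in\cu^{n+i+j}(\ns)$.

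Tracing the value of $f$ at a point $(v,w,u)$ then finishes the argument. Off the intersection $F=F_w\cap F_u$ the four modifications cancel in pairs: on $F_u\setminus F_w$ the map $\beta$ is undone by $\beta^{-1}$, on $F_w\setminus F_u$ the map $\alpha$ is undone by $\alpha^{-1}$, and away from $F_w\cup F_u$ nothing is applied, so $f(v,w,u)=\q(v)$ there. On $F$ (of codimension $i+j$) the four maps act in the order $\beta,\alpha,\beta^{-1},\alpha^{-1}$, giving $f(v,w,u)=\alpha^{-1}\beta^{-1}\alpha\beta\,\q(v)=[\alpha,\beta]\,\q(v)$. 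Identifying the last $i+j$ coordinates as a single arrow direction, this says precisely that $f=\arr{\q,[\alpha,\beta]\co\q}_{i+j}$, so $[\alpha,\beta]\in\tran_{i+j}(\ns)$ by Lemma \ref{lem:transcriterion}. The only point requiring care is the bookkeeping on the overlap face $F$, where both the $\alpha$- and the $\beta$-operations act and must compose into exactly the commutator; the rest is a direct check, and the construction works uniformly in $n$.
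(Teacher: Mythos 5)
Your proof is correct and is essentially the paper's argument: the heart of both is the identity $(\alpha^{-1})^{F_1}(\beta^{-1})^{F_2}\alpha^{F_1}\beta^{F_2}=[\alpha,\beta]^{F_1\cap F_2}$ for faces $F_1,F_2$ of codimensions $i,j$ meeting in a face of codimension $i+j$, together with the invertibility of translations from Lemma \ref{lem:transisgroup}. The only cosmetic difference is that you phrase the conclusion via the arrow criterion of Lemma \ref{lem:transcriterion} applied to the specific face $\{w=1^i,u=1^j\}$, whereas the paper applies the definition directly to an arbitrary face of codimension $i+j$ written as $F_1\cap F_2$; both are valid since cube-preservation for one face of a given codimension suffices.
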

(Note that setting $\tran_0(\ns)=\tran_1(\ns)$ the lemma then holds for all $i,j\geq 0$.)
\begin{proof}
Let $\alpha_1$ be in $\tran_i(\ns)$ and $\alpha_2\in\tran_j(\ns)$. Let $F$ be a face in $\{0,1\}^n$ of codimension $i+j$, thus $F$ is defined by a choice of $i+j$ coordinates that are each fixed to be 0 or 1. We can write $F=F_1\cap F_2$ where $F_1$ is a face of codimension $i$ and $F_2$ is a face of codimension $j$. We then have $[\alpha_1^{F_1},\alpha_2^{F_2}]={\alpha_1^{F_1}}^{-1}{\alpha_2^{F_2}}^{-1}\alpha_1^{F_1}\alpha_2^{F_2}=[\alpha_1,\alpha_2]^F$. Therefore, for every $\q\in \cu^n(\ns)$ we have $[\alpha_1,\alpha_2]^F(\q)\in \cu^n(\ns)$ and so $[\alpha_1,\alpha_2]^F\in \tran_{i+j}(\ns)$.
\end{proof}
\begin{corollary}
If $\ns$ is a $k$-step nilspace then $(\tran_i(\ns))_{i\geq 0}$ is a filtration of degree $k$.
\end{corollary}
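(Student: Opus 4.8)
The plan is to assemble the statement from the lemmas already proved for Proposition \ref{prop:TransGroup}, checking in turn the requirements in the definition of a filtration and then the bound on the degree. First I would recall that each $\tran_i(\ns)$ is a subgroup of $\tran(\ns)$, which is precisely Lemma \ref{lem:transisgroup}. Next, the chain is decreasing: by the observation following Definition \ref{def:facetrans} we have $\tran_i(\ns)\supseteq\tran_j(\ns)$ whenever $i\le j$, since the defining cube-preservation condition attached to a face of codimension $j$ is weaker than that for codimension $i$. With the convention $\tran_0(\ns)=\tran_1(\ns)=\tran(\ns)$ this yields a decreasing sequence of subgroups $\tran_0(\ns)\supseteq\tran_1(\ns)\supseteq\tran_2(\ns)\supseteq\cdots$ indexed by all $i\ge 0$.

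The commutator condition $[\tran_i(\ns),\tran_j(\ns)]\subset\tran_{i+j}(\ns)$ is exactly the content of the lemma immediately preceding this corollary, valid for $i,j\ge 1$ and, by the parenthetical remark there together with $\tran_0(\ns)=\tran_1(\ns)$, for all $i,j\ge 0$. These three points together establish that $(\tran_i(\ns))_{i\ge 0}$ is a filtration on $\tran(\ns)$.

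It then remains only to control the degree, namely to show $\tran_{k+1}(\ns)=\{\id\}$; this is where the actual content of the corollary lies, the previous points being direct citations. I would argue as follows: if $\alpha\in\tran_{k+1}(\ns)$, then \eqref{eq:transfact12} gives $\alpha(x)\sim_k x$ for every $x\in\ns$. The decisive extra input is that $\ns$ is a $k$-step nilspace, so by Lemma \ref{lem:F_k-defin} we have $\ns=\cF_k(\ns)=\ns/\!\sim_k$; hence $\sim_k$ is the equality relation, forcing $\alpha(x)=x$ for all $x$, i.e. $\alpha=\id$. Thus $\tran_{k+1}(\ns)=\{\id\}$, so the filtration has degree at most $k$, in agreement with the bound in Proposition \ref{prop:TransGroup}, and the corollary follows. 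The only subtlety to watch is this last identification of $\sim_k$ with equality on a $k$-step nilspace; everything else is bookkeeping over the previously established group, nesting, and commutator properties.
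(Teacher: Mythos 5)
Your proof is correct and takes essentially the same route as the paper: the paper's proof shows directly that for $\alpha\in\tran_{k+1}(\ns)$ the constant $(k+1)$-cube perturbed at one vertex by $\alpha$ is still a cube, so uniqueness of completion forces $\alpha(x)=x$. Your version packages exactly this as \eqref{eq:transfact12} (whose proof is that same perturbed-constant-cube argument) combined with the fact that $\sim_k$ is the equality relation on a $k$-step nilspace, with the remaining filtration properties correctly delegated to the preceding lemmas as the paper also does.
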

\begin{proof}
The group $\tran_{k+1}(\ns)$ consists of maps $\alpha: \ns \to \ns$ such that given $\q\in \cu^{k+1}(\ns)$ and the $(k+1)$-codimensional face $F=\{1^{k+1}\}$ we have $\alpha^F( \q)\in \cu^{k+1}(\ns)$. For every $x\in \ns$, the cube $\q\in \cu^{k+1}(\ns)$ with constant value $x$ is the unique completion of the restriction of $\alpha^F(\q)$ to $\{0,1\}^{k+1}\setminus F$, so we must have $\alpha(x)=x$. Hence $\alpha$ is the identity.
\end{proof}
This completes the proof of Proposition \ref{prop:TransGroup}.\\ \vspace{-0.3cm}

\noindent We conclude this subsection with a few additional remarks on translation groups.

Firstly, one may wonder whether there is a simple relationship between the groups $\tran_i(\ns)$ and the groups $\ab_i$ describing the bundle structure of $\ns$. For general $k$-step nilspaces, it is not hard to establish such a relation in the case of the group $\tran_k(\ns)$.

\begin{lemma}\label{lem:transfact2}
Let $\ns$ be a $k$-step nilspace. Let $\tau$ denote the map sending $z\in \ab_k$ to the map $\tau_z:\ns\to\ns$, $x\mapsto x+z$. Then we have 
$\tran_k(\ns) = \tau(\ab_k)$.
\end{lemma}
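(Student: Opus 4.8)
The plan is to prove the two inclusions $\tau(\ab_k)\subset \tran_k(\ns)$ and $\tran_k(\ns)\subset \tau(\ab_k)$ separately, relying on the bundle decomposition from Theorem \ref{thm:bundle-decomp} to identify $\ab_k$ as the structure group acting on $\ns$ over $\cF_{k-1}(\ns)=\bnd_{k-1}$, and on Lemma \ref{lem:trans-k-suffices}, which reduces the verification of the $k$-translation property to a single cube dimension. First I would establish $\tau(\ab_k)\subset \tran_k(\ns)$. Fix $z\in\ab_k$ and let $\tau_z:x\mapsto x+z$. By Lemma \ref{lem:trans-k-suffices} it suffices to show that for every $\q\in\cu^{k+1}(\ns)$ the $k$-arrow $\arr{\q,\tau_z\co\q}_k$ lies in $\cu^{k+1+k}(\ns)$. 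The key observation is that $\tau_z\co \q = \q + \q_3$ where $\q_3$ is the constant-$z$ map; using \eqref{eq:arrowcubechar}, \eqref{eq:itharrowcubechar} and the fact that $\cD_k(\ab_k)$ is $k$-fold ergodic (so any $\ab_k$-valued map on a cube of dimension $\leq k$ is a cube), I would check directly that the relevant arrow is built by adding to a genuine cube an element of $\cu^{k+1}(\cD_k(\ab_k))$, invoking \eqref{eq:k-deg-bund}. Concretely, the constant action of $z$ on the top face $\{w=1^k\}$ corresponds to adding a $\cD_k(\ab_k)$-cube supported appropriately, which preserves the cube set by the defining property of a degree-$k$ bundle.

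For the reverse inclusion $\tran_k(\ns)\subset\tau(\ab_k)$, I would take $\alpha\in\tran_k(\ns)$ and use the structural facts already proved about translations of height $k$. By \eqref{eq:transfact12} in Lemma \ref{lem:transfact1}, for every $x\in\ns$ we have $\alpha(x)\sim_{k-1}x$, so $\alpha$ maps each fibre of $\sim_{k-1}$ into itself; since $\ab_k$ acts freely and transitively on each such fibre (by Theorem \ref{thm:bundle-decomp} and the identification of all $\ab_F$ with a single group $\ab=\ab_k$ in Lemmas \ref{lem:fibreiso}--\ref{lem:Z-bund-is-k-degbund}), for each $x$ there is a unique $z(x)\in\ab_k$ with $\alpha(x)=x+z(x)$. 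The whole task then reduces to showing that $z(x)$ is independent of $x$. To prove this I would show $z$ is constant across a single fibre and constant between adjacent fibres. Within a fibre, applying Lemma \ref{lem:res-trans} identifies $\alpha$ restricted to the fibre with the local translation $\phi_{x,\alpha(x)}$, and the proof of that lemma shows that $y=x+a$ forces $\alpha(y)=\alpha(x)+a$, i.e. $z(y)=z(x)$; so $z$ is constant on each $\sim_{k-1}$-class. Across distinct fibres, I would use the fact that $\alpha$ is a nilspace morphism (first part of Lemma \ref{lem:transfact1}) together with the isomorphism $\vartheta$ of Lemma \ref{lem:fibreiso}: because the identifications between $\ab_{F_0}$ and $\ab_{F_1}$ are defined precisely so that $\alpha$'s cube-preserving behaviour is compatible with the $\ab$-action, the constant value of $z$ on $F_0$ and on $F_1$ must agree.

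The cleanest way to handle the inter-fibre step is to apply $\alpha$ to a suitable cube linking two fibres. Given $x_0\in F_0$ and $x_1\in F_1$, choose a cube $\q\in\cu^{k+1}(\ns)$ whose values realize the pairing $(x_0,x_1)\sim(x_0+z(x_0),x_1+z(x_1))$; since $\alpha\co\q$ is again a cube (morphism property) and has the form required by Lemma \ref{lem:relchar}, the defining relation $\sim$ on $M$ forces, via Lemma \ref{lem:fibreiso}, that $\vartheta(z(x_0))=z(x_1)$. Because $\vartheta$ is the canonical identification used to define the single group $\ab_k$, this exactly says $z(x_0)=z(x_1)$ as elements of $\ab_k$. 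Hence $z$ is globally constant, say equal to $z_0$, and $\alpha=\tau_{z_0}\in\tau(\ab_k)$.

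I expect the main obstacle to be the inter-fibre argument: verifying rigorously that the single abelian group $\ab_k$ obtained by gluing the fibrewise groups via the isomorphisms $\vartheta$ interacts with a global $k$-translation so that its additive ``shift'' is genuinely constant, rather than merely fibrewise constant. This requires careful bookkeeping to ensure that the identification $\vartheta$ (which is defined through the relation $\sim$ of Definition \ref{defn:keyrelation}) is precisely the one under which $\alpha$'s action becomes a constant translation; once the pairing $(x_0,x_0+z(x_0))\sim(x_1,x_1+z(x_1))$ is produced from a cube and its image under $\alpha$, the conclusion is forced, but assembling that cube and tracking the identifications is the delicate part.
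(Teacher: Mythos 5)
Your first inclusion $\tau(\ab_k)\subset\tran_k(\ns)$ is essentially the paper's argument (the identity $\arr{\q,\tau_z\co\q}_k=\arr{\q,\q}_k+\arr{0,z}_k$ with $\arr{0,z}_k\in\cu^{2k+1}(\cD_k(\ab_k))$ and \eqref{eq:k-deg-bund}), and the reduction of the reverse inclusion to ``$z(x)$ is fibrewise constant'' via \eqref{eq:transfact12} and Lemma \ref{lem:res-trans} also matches the paper. The gap is exactly where you predicted it: the inter-fibre step. As written, you invoke the \emph{morphism} property ($\alpha\co\q$ is a cube) to produce the witnessing cube, but that property is shared by every element of $\tran_1(\ns)$ and cannot possibly force the fibrewise shifts to agree. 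Concretely, on $\ns=\cD_1(\Z)\times\cD_2(\Z)$ the map $\alpha:(x,y)\mapsto(x,y+x)$ is a translation of height $1$ and a nilspace morphism, it preserves every fibre of $\sim_1$, and it acts on the fibre over $x$ by adding $z(x)=x$, which is not constant; so no argument using only fibre-preservation and the morphism property can close the proof. Moreover, composing $\alpha$ with the constant-$x_0$/constant-$x_1$ cube yields the constant-$(x_0+z_0)$/constant-$(x_1+z_1)$ cube, which does \emph{not} have the shape required by Lemma \ref{lem:relchar}; and ``choose a cube whose values realize the pairing'' is circular, since the existence of such a cube is precisely what must be proved.

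The missing idea is to use the defining property of $\tran_k(\ns)$ itself, i.e.\ the action of $\alpha$ on a face of codimension $k$, not on the whole cube. Take $\q\in\cu^{k+1}(\ns)$ with $\q(v,0)=x_0$ and $\q(v,1)=x_1$ for all $v\in\{0,1\}^k$ (a cube by ergodicity and composition), and let $F=\{(0^k,0),(0^k,1)\}$, a face of codimension $k$. Since $\alpha\in\tran_k(\ns)$, the map $\q'=\alpha^F(\q)$, which changes exactly the two values at $F$ to $x_0+z(x_0)$ and $x_1+z(x_1)$, is again a cube; this $\q'$ \emph{is} the map of Lemma \ref{lem:relchar}, so it witnesses $(x_0,x_0+z(x_0))\sim(x_1,x_1+z(x_1))$ and Lemma \ref{lem:fibreiso} gives $z(x_0)=z(x_1)$ under the canonical identification. (The paper concludes equivalently by noting $\pi_{k-1}\co\q=\pi_{k-1}\co\q'$ and $\sigma_{k+1}(\q'-\q)=\pm(z(x_0)-z(x_1))=0$ from the degree-$k$ bundle property.) With this substitution your proof is correct and coincides with the paper's.
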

\begin{proof}
To see that $\tran_k(\ns)\subset\tau(\ab_k)$, note first that if $\alpha\in \tran_k(\ns)$ then by \eqref{eq:transfact12} we have $\alpha(x)\sim_{k-1} x$ for all $x\in \ns$. Combined with Lemma \ref{lem:res-trans}, this implies that for each fibre $\pi_{k-1}^{-1}(y)$, $y\in \ns_{k-1}$, there exists $z_y\in \ab_k$ such that on this fibre $\alpha$ is the map $x\mapsto x+z_y$ . Now if $y, y'$ are distinct points in $\ns_{k-1}$ then for any $x,x'$ in the fibres $\pi_{k-1}^{-1}(y)$, $\pi_{k-1}^{-1}(y')$ respectively, note that the map $\q:\{0,1\}^{k+1}\to \ns$ with $\q(\cdot,0)=x$ and $\q(\cdot,1)=x'$ is in $\cu^{k+1}(\ns)$ (by the ergodicity and composition axioms). The map $\q'$ obtained from $\q$ by changing $\q(0^{k+1})$ to $x+z_y$ and $\q(0^k,1)$ to $x'+z_{y'}$ is still a cube, since it is $\alpha^F(\q)$ for the $k$-codimensional face $F=\{0^{k+1}, (0^k,1)\}$. Then, since $\pi_{k-1}\co \q=\pi_{k-1}\co \q'$, we must have $z_y=z_{y'}$ (since $\ns$ is a degree-$k$ bundle and so $\sigma_{k+1}(\q-\q')=0$). Hence $\alpha\in \tau(\ab_k)$. To see that $\tau(\ab_k) \subset \tran_k(\ns)$, note that if $\alpha=\tau(z)$, then for every $\q\in \cu^{k+1}(\ns)$ we have the $k$-arrow identity $\arr{\q,\alpha\co\q}_k=\arr{\q,\q}_k+\arr{0,z}_k$. This is in $\cu^{2k+1}(\ns)$, by \eqref{eq:k-deg-bund}, because $\arr{\q,\q}_k\in \cu^{2k+1}(\ns)$ and $\arr{0,z}_k\in \cu^{2k+1}(\cD_k(\ab_k))$. Hence $\alpha\in \tran_k(\ns)$ by Lemma \ref{lem:transcriterion}. 
\end{proof}
\noindent For general nilspaces the relationship between other translation groups and structure groups is less easy to describe. In the topological part of the theory, we shall be able to give such a description under some natural topological assumptions (see \cite[Proposition 2.9.20]{Cand:Notes2}).

A second remark is that one may distinguish, among all cubes on a nilspace, those that arise from the action of the translation groups, in the following sense.

\begin{defn}\label{def:transequivcubes} Two cubes $\q_1,\q_2\in \cu^n(\ns)$ are \emph{translation equivalent} if there are translations $\alpha_1\in \tran_{i_1}(\ns),\ldots, \alpha_\ell\in \tran_{i_\ell}(\ns)$, and a face $F_j$ of codimension $i_j$ for each $j\in [\ell]$, such that $\q_2=\alpha_\ell^{F_\ell}\cdots \alpha_1^{F_1} (\q_1)$. A cube is called a \emph{translation cube} if it is translation equivalent to a constant cube.
\end{defn}
\noindent In other words, letting $G=\tran(\ns)$ and $G_\bullet$ be the filtration with $G_i=\tran_i(\ns)$, we have by Definition \ref{def:G-cubes} that $\q_1,\q_2\in \cu^n(\ns)$ are translation equivalent if and only if there is $\q\in \cu^n(G_\bullet)$ such that $\q_2(v)=\q(v)(\q_1(v))$ for all $v\in\{0,1\}^n$.
\begin{remark}\label{rem:transcubes}
As indicated in Section \ref{sec:Motiv}, all the examples of nilspaces $\ns$ seen up to that section have the property that all cubes on $\ns$ are  translation cubes. This is not the case for a general nilspace. Indeed, it follows from the ergodicity axiom that if every cube on $\ns$ is a translation cube then the action of $\tran(\ns)$ on $\ns$ is transitive. There are examples of nilspaces for which this transitivity does not hold, see \cite[Example 6]{HKparas}.
\end{remark}
\noindent In the remainder of this chapter we gather additional algebraic tools. Apart from the valuable information that these tools can provide on a  nilspace, they are also very useful in the topological part of the theory.
\medskip

\section{Additional tools}

\subsection{Nilspace morphisms as bundle morphisms}

Theorem \ref{thm:bundle-decomp} describes a $k$-step nilspace as a degree-$k$ bundle. In light of this, it is natural to ask for a description of how a morphism between two $k$-nilspaces relates the bundle structures. The first result of this section, Proposition \ref{prop:nilmorph-bundlemorph}, provides such a description. To formulate it we need the following concept.

\begin{defn}[Bundle morphism]\label{def:bundmorph} Let $\bnd$ and $\bnd'$ be two $k$-fold abelian bundles with factors $\bnd_i, \bnd'_i$ ($i\in [0, k]$), structure groups $\ab_i, \ab_i'$ and projections $\pi_i,\pi_i'$ ($i\in [k]$). A \emph{bundle morphism} from $\bnd$ to $\bnd'$ is a map $\psi:\bnd\rightarrow \bnd'$ satisfying the following properties:
\begin{enumerate}
\item For every $i\in [k]$, if $\pi_i(x)=\pi_i(y)$ then $\pi_i(\psi(x))=\pi_i(\psi(y))$. Thus $\psi$ induces well defined maps $\psi_i:\bnd_i\rightarrow \bnd'_i$.
\item For every $i\in [k]$ there is a map $\alpha_i:\ab_i\rightarrow \ab_i'$ such that for every $x\in \bnd_i$ and $a\in \ab_i$ we have $\psi_i(x+a)=\psi_i(x)+\alpha_i(a)$.
\end{enumerate}
The maps $\alpha_i$ are called the \emph{structure morphisms} of $\psi$. We say that $\psi$ is \emph{totally surjective} if all its structure morphisms are surjective.
\end{defn}
Note that the condition required of the maps $\alpha_i$ implies that they  are group homomorphisms.

\begin{proposition}\label{prop:nilmorph-bundlemorph}
Let $\ns, \ns'$ be $k$-step nilspaces. Then a nilspace morphism $\ns \to \ns'$ is a bundle morphism between the corresponding $k$-degree bundles.
\end{proposition}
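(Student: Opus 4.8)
The plan is to show that a nilspace morphism $\psi:\ns\to\ns'$ satisfies the two conditions of Definition \ref{def:bundmorph}, where both nilspaces are viewed as degree-$k$ bundles via Theorem \ref{thm:bundle-decomp}, so that in particular $\bnd_i=\cF_i(\ns)$ and $\bnd_i'=\cF_i(\ns')$, with structure groups $\ab_i,\ab_i'$ and projections $\pi_i,\pi_i'$. The key observation is that nilspace morphisms respect the relations $\sim_i$. Indeed, if $x\sim_i y$ then by Lemma \ref{lem:sim1} the map $\q:\{0,1\}^{i+1}\to\ns$ sending $0^{i+1}$ to $y$ and every other vertex to $x$ is in $\cu^{i+1}(\ns)$; applying the morphism $\psi$ and using the definition of nilspace morphism, $\psi\co\q\in\cu^{i+1}(\ns')$, and this is precisely the map witnessing (again by Lemma \ref{lem:sim1}) that $\psi(x)\sim_i\psi(y)$. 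Thus $x\sim_i y$ implies $\psi(x)\sim_i\psi(y)$, which is exactly condition (i): since $\pi_i=\pi_i^{\ns}$ is the canonical projection for $\sim_i$ (by the ``moreover'' clause of Theorem \ref{thm:bundle-decomp}, identifying $\cF_i(\ns)$ with $\bnd_i$), the equality $\pi_i(x)=\pi_i(y)$ means $x\sim_i y$, whence $\pi_i'(\psi(x))=\pi_i'(\psi(y))$. This gives the well-defined induced maps $\psi_i:\bnd_i\to\bnd_i'$.

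Next I would establish condition (ii), namely that on each level there is a group homomorphism $\alpha_i:\ab_i\to\ab_i'$ with $\psi_i(x+a)=\psi_i(x)+\alpha_i(a)$. The natural approach is to define $\alpha_i$ via the action of $\ab_i$ on fibres of $\sim_{i-1}$, using the characterization of that action in terms of the relation $\sim$ from Definition \ref{defn:keyrelation} and Lemma \ref{lem:fibreiso}. Recall that within a fibre $F$ of $\sim_{i-1}$ (in the $i$-step factor $\cF_i(\ns)$, or equivalently working level by level), the element $a\in\ab_i$ is characterized by $(x,x+a)\sim(x',x'+a)$ for all $x,x'\in F$, and the relation $\sim$ is detected by membership of a specific $(i+1)$-cube in $\cu^{i+1}$ (Lemma \ref{lem:relchar}). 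To define $\alpha_i$, fix $x\in F$ and $a\in\ab_i$; I would set $\alpha_i(a)$ to be the unique element $b\in\ab_i'$ such that $\psi_i(x+a)=\psi_i(x)+b$ in the fibre of $\sim_{i-1}$ containing $\psi_i(x)$. The point is that this $b$ does not depend on the choice of $x$ within $F$, nor on the choice of $F$: if $(x,x+a)\sim(x',x'+a)$, then applying $\psi$ to the $(i+1)$-cube witnessing $\sim$ (Lemma \ref{lem:relchar}) shows $(\psi_i(x),\psi_i(x+a))\sim(\psi_i(x'),\psi_i(x'+a))$, so the displacement $b$ is the same across the fibre, and Lemma \ref{lem:isokey} and Lemma \ref{lem:fibreiso} let one transport this consistently between distinct fibres. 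That $\alpha_i$ is a homomorphism then follows exactly as in the proof of Lemma \ref{lem:fibreiso}: additivity of $a\mapsto\alpha_i(a)$ is obtained by concatenating the cubes witnessing $(x,x+a)\sim(\dots)$ and $(x+a,x+a+a')\sim(\dots)$, using transitivity of $\sim$.

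I expect the main obstacle to be the well-definedness and globality of $\alpha_i$: one must check that the single homomorphism $\alpha_i:\ab_i\to\ab_i'$ works uniformly over all fibres of $\sim_{i-1}$ simultaneously, not just within one fibre. This is where the relation $\sim$ of Definition \ref{defn:keyrelation} and the isomorphisms $\vartheta$ of Lemma \ref{lem:fibreiso} are essential, since they are precisely the devices that identify the groups $\ab_F$ across different fibres into the single group $\ab_i$. The verification amounts to showing that $\psi$ intertwines the $\sim$-structure on $M\subset\ns^2$ with that on $M'\subset(\ns')^2$, which again reduces to applying the morphism property of $\psi$ to the distinguished $(i+1)$-cubes of Lemma \ref{lem:relchar}. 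The final remark (already noted after Definition \ref{def:bundmorph}) that the defining condition forces $\alpha_i$ to be a homomorphism can then be invoked, or established directly by the concatenation argument above; either way the two conditions of Definition \ref{def:bundmorph} are met, completing the proof.
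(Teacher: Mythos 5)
Your proposal is correct and follows essentially the same route as the paper: condition (i) via Lemma \ref{lem:sim1} and cube preservation, and condition (ii) via preservation of the relation $\sim$ of Definition \ref{defn:keyrelation} together with the fibre identifications of Lemma \ref{lem:fibreiso}. The only (harmless) difference is that the paper first shows $\psi_i$ restricted to a single fibre is an affine homomorphism of degree-$i$ torsors using Proposition \ref{prop:k-erg} and the $\partial_x$ construction, whereas you obtain additivity of $\alpha_i$ for free from the uniformity of the displacement, as the remark after Definition \ref{def:bundmorph} already permits.
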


\begin{proof}
Condition (i) from Definition \ref{def:bundmorph} holds, for if $x\sim_i y$ then there is an $(i+1)$-cube $\q$ equal to $x$ everywhere except at one vertex where it equals $y$, and then $\psi\co\q$ is a cube equal to $\psi(x)$ everywhere except at one vertex where it equals $\psi(y)$, so $\psi(x)\sim_i \psi(y)$.

For condition (ii), we first consider the case where $\ns$ and $\ns'$ are $i$-fold ergodic and therefore degree-$i$ abelian torsors (by Proposition \ref{prop:k-erg}). By Lemma \ref{lem:nilspace-dif} and Proposition \ref{prop:abel}, if we fix $x\in \ns$ and apply $\partial_x^{i-1}$ and $\partial_{\psi_i(x)}^{i-1}$ to $\ns,\ns'$  respectively, the resulting nilspaces are isomorphic to $\cD_1(\ab_i),\cD_1(\ab_i')$ respectively, for some abelian groups $\ab_i,\ab_i'$. Since $\psi_i$ preserves cubes, it follows that it is an affine homomorphism between the torsors $\ns,\ns'$, which means that there is a homomorphism $\alpha: \ab_i \to \ab'_i$ such that for each $x\in \ns$ we have $\psi_i(x+a)-\psi_i(x)=\alpha(a)$. 

Now every equivalence class $F$ of $\sim_{i-1}$ in $\bnd_i$ is isomorphic to such a degree-$i$ abelian torsor $\cD_i(\ab_i)$ and so $\psi_i$ restricted to $F$ satisfies $\psi_i(x+a)=\psi_i(x)+\alpha_F(a)$, for some homomorphism $\alpha_F: \ab_i\to \ab_i'$ which could depend on $F$. We claim that $\alpha_F$ is in fact independent of $F$, modulo the isomorphism from Lemma \ref{lem:fibreiso}. Indeed, if $F_1,F_2$ are two classes in $\ns$ and we have $(x,x+a)\sim (y,y+a)$ (where $\sim$ is the relation from Definition \ref{defn:keyrelation}, so $(x,y)\sim_{i-1}(x+a,y+a)$ in $\ns \Join_1 \ns$), then since $\sim$ is defined in terms of cubes this relation is preserved by $\psi_i$, so we have $(\psi_i(x),\psi_i(x)+\alpha_{F_1}(a))\sim (\psi_i(y),\psi_i(y)+\alpha_{F_2}(a))$, which implies that $\alpha_{F_1}(a)$ and $\alpha_{F_2}(a)$ are indeed identified by the isomorphism from Lemma \ref{lem:fibreiso}.
\end{proof}
\noindent Recall from Definition \ref{def:AbelianBundle} the notation $\pi_{i,j}$ for the projection $\bnd_i\to\bnd_j$, $i\geq j$.
\begin{defn}[Sub-bundle]\label{def:sub-bund}
Let $\bnd$ be a $k$-fold abelian bundle with factors $\bnd_0,\bnd_1,\dots,\bnd_k=\bnd$, structure groups $\ab_1,\ab_2,\dots, \ab_k$, and projections $\pi_1,\pi_2,\dots,\pi_k$. A \emph{sub-bundle} of $\bnd$ is a $k$-fold abelian bundle $\bnd'$ with factors $\bnd'_0=\bnd_0,\bnd'_1\subseteq \bnd_1,\ldots,\bnd_k'\subseteq \bnd_k$, structure groups $\ab_i'\leq \ab_i$, $i\in [k]$, and with each projection $\pi_{i,i-1}'$ being the restriction of $\pi_{i,i-1}$ to $\bnd_i'$ and satisfying the following condition: for every $x\in \bnd_i'$ we have ${\pi'}^{\,-1}_{i,i-1}(\pi_{i,i-1}'(x)) = x+\ab_i'$, equivalently $\{z\in \ab_i:x+z\in \bnd_i'\}=\ab_i'$.
\end{defn}

\noindent In particular, if $k=1$ then a sub-bundle is an orbit of $\ab_1'$ inside a principal homogeneous space of $\ab_1$.\\

\noindent Given a bundle morphism $\psi: \bnd \to \bnd'$, we shall now define a certain relative abelian bundle on $\bnd'$ that generalizes in a natural way the concept of the kernel of a homomorphism between abelian groups. Let us first discuss briefly the intuition that leads to this definition.\\
\indent Given abelian groups $\ab,\ab'$ and a surjective homomorphism $\psi:\ab\to\ab'$, the kernel $\ker \psi$ is the preimage $\psi^{-1}(\{0\})$, and  $\ab$ is the disjoint union of cosets of $\ker \psi$, these cosets being the  preimages $\psi^{-1}(t)$, $t\in \ab'$. We can thus view $\ab$ as an abelian bundle over $\ab'$ with structure group $\ker \psi$. When we consider more generally a bundle morphism $\psi:\bnd\to \bnd'$, defining the kernel of $\psi$ as the preimage of a single point as in the abelian case is not clear since typically there is no distinguished point on $\bnd'$ playing the role of $0_{\ab'}$. However, one can still view $\bnd$ as a \emph{relative} bundle with base $\bnd'$ (recall Definition \ref{def:AbelianBundle}), and then restrict this bundle structure to each preimage $\psi^{-1}(t)$, $t\in \bnd'$, to obtain coherent bundle structures on these preimages. This is what we shall establish formally in the next two lemmas. (Recall from Definition \ref{def:bundmorph} the induced maps $\psi_i$ and the structure morphisms $\alpha_i$.)\\

\begin{defn}[Kernel of a bundle morphism]\label{defn:bundmorphker}
Let $\bnd,\bnd'$ be $k$-fold abelian bundles, and let $\psi:\bnd\to \bnd'$ be a totally surjective bundle morphism. The \emph{kernel} of $\psi$ is the relative $k$-fold abelian bundle denoted by $K$, with structure groups $\{\ker(\alpha_i)\}_{i=1}^k$, defined as follows. First, for each $i\in [k]$ we define $K_i=\{(x,y)\in \bnd_i\times \bnd' : \psi_i(x)=\pi_i(y)\}$.
We then define, for each $i\in [k]$, an action of $\ker(\alpha_i)$ on $K_i$ by $(x,y)+a=(x+a,y)$. Finally, the projection $\pi_i:K_j\to K_i$ is defined by $\pi_i(x,y)=(\pi_i(x),y)$ for $j\geq i$.
\end{defn}
\noindent Note that we can identify $K=K_k$ with $\bnd_k$ using the bijection $(x,y)\leftrightarrow x$, and $K_0$ can be identified with $\bnd'$ using the bijection $(x,y)\leftrightarrow y$. These bijections enable us to identify $\pi_0:K\to K_0$ with $\psi: \bnd\to \bnd'$.\\
\indent Let us confirm that this definition makes $K$ a relative abelian bundle.
\begin{lemma}
For each $i\in [k]$, $K_i$ is a $\ker(\alpha_i)$-bundle over $K_{i-1}$ with bundle map $\pi_{i-1}$.
\end{lemma}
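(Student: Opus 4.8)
The plan is to verify directly the two axioms of Definition \ref{def:AbelianBundle} for the data specified in Definition \ref{defn:bundmorphker}: the structure group is $\ker(\alpha_i)$ acting by $(x,y)+a=(x+a,y)$, and the bundle map is the restriction to $K_i$ of $\pi_{i-1}$, namely $(x,y)\mapsto(\pi_{i,i-1}(x),y)\in K_{i-1}$. Before starting I would record the one structural fact that drives the whole argument: since each $\psi_i$ is the map induced by $\psi$ on the $i$-th factor, $\psi$ commutes with the projections, i.e. $\psi_i\co\pi_{j,i}=\pi'_{j,i}\co\psi_j$ for all $j\geq i$ (with $\pi'$ denoting projections in $\bnd'$). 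This is immediate from lifting a point of $\bnd_j$ to $\bnd_k$ and using the consistency $\pi_{j,i}\co\pi_{k,j}=\pi_{k,i}$. Using this, I would also note at the outset that the action is well-defined (if $(x,y)\in K_i$ and $a\in\ker(\alpha_i)$ then $\psi_i(x+a)=\psi_i(x)+\alpha_i(a)=\psi_i(x)=\pi_i(y)$, so $(x+a,y)\in K_i$) and that the bundle map really lands in $K_{i-1}$ (from $\psi_{i-1}(\pi_{i,i-1}(x))=\pi'_{i,i-1}(\psi_i(x))=\pi'_{i,i-1}(\pi_i(y))=\pi_{i-1}(y)$).

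Freeness is the easy step: if $(x,y)+a=(x,y)$ then $x+a=x$ in $\bnd_i$, and freeness of the $\ab_i$-action on the bundle $\bnd_i$ forces $a=0_{\ab_i}$, hence $a$ is trivial in $\ker(\alpha_i)$. For the second axiom I would show that the fibres of $(x,y)\mapsto(\pi_{i,i-1}(x),y)$ are exactly the $\ker(\alpha_i)$-orbits. The inclusion ``orbit $\subseteq$ fibre'' is clear since $\pi_{i,i-1}(x+a)=\pi_{i,i-1}(x)$. Conversely, if $(x,y)$ and $(x',y')$ have the same image then $y=y'$ and $\pi_{i,i-1}(x)=\pi_{i,i-1}(x')$; because $\bnd_i$ is a $\ab_i$-bundle over $\bnd_{i-1}$ there is a unique $a\in\ab_i$ with $x'=x+a$, and then comparing $\psi_i(x')=\pi_i(y')=\pi_i(y)=\psi_i(x)$ with $\psi_i(x+a)=\psi_i(x)+\alpha_i(a)$ gives $\alpha_i(a)=0$ by freeness of the $\ab'_i$-action, so $a\in\ker(\alpha_i)$ and the two points lie in a single orbit.

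The main obstacle is the \emph{surjectivity} of the bundle map $K_i\to K_{i-1}$, and this is the only place where total surjectivity of $\psi$ enters. Given $(u,y)\in K_{i-1}$ (so $\psi_{i-1}(u)=\pi_{i-1}(y)$), I would first lift $u$ to some $x_0\in\bnd_i$ with $\pi_{i,i-1}(x_0)=u$, using surjectivity of $\bnd_i\to\bnd_{i-1}$. The commuting relation then gives $\pi'_{i,i-1}(\psi_i(x_0))=\psi_{i-1}(u)=\pi_{i-1}(y)=\pi'_{i,i-1}(\pi_i(y))$, so $\psi_i(x_0)$ and $\pi_i(y)$ lie in one fibre of $\bnd'_i\to\bnd'_{i-1}$, hence differ by some $b\in\ab'_i$, say $\pi_i(y)=\psi_i(x_0)+b$. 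Surjectivity of $\alpha_i$ supplies $a\in\ab_i$ with $\alpha_i(a)=b$, and setting $x:=x_0+a$ yields both $\pi_{i,i-1}(x)=u$ and $\psi_i(x)=\psi_i(x_0)+\alpha_i(a)=\pi_i(y)$, so $(x,y)\in K_i$ projects to $(u,y)$. Combining freeness, the equality of fibres with orbits, and this surjectivity shows that $s\mapsto\pi_{i-1}^{-1}(s)$ is a bijection from $K_{i-1}$ onto the set of $\ker(\alpha_i)$-orbits in $K_i$, which is exactly what makes $K_i$ a $\ker(\alpha_i)$-bundle over $K_{i-1}$.
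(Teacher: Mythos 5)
Your proof is correct and follows essentially the same route as the paper's: surjectivity of the bundle map via lifting to $\bnd_i$, commuting $\psi$ with the projections, and using surjectivity of $\alpha_i$; and transitivity on fibres by producing $a\in\ab_i$ from the bundle structure of $\bnd_i$ and then forcing $\alpha_i(a)=0$. The only difference is that you also spell out the preliminary well-definedness checks (that the action preserves $K_i$ and that the bundle map lands in $K_{i-1}$), which the paper leaves implicit.
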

\begin{proof}
First we check that $\pi_{i-1}$ is surjective. Let $(x,y)\in K_{i-1}$, so $\psi_{i-1}(x)=\pi_{i-1}(y)$.  Let $z\in \bnd_i$ be such that $\pi_{i-1}(z)=x$. We have $\pi_{i-1}(\psi_i(z))=\psi_{i-1}(\pi_{i-1}(z))=\psi_{i-1}(x)=\pi_{i-1}(y)$, so there exists $a'\in \ab_i'$ such that $\psi_i(z)+a'=\pi_i(y)$. Since $\alpha_i$ is surjective by assumption, there is $a\in \ab_i$ with $\alpha_i(a)=a'$. Then the pair $(z+a,y)$ is in $K_i$ and maps to $(x,y)$ under $\pi_{i-1}$.   

Clearly $\ker(\alpha_i) \subset \ab_i$ acts freely on $K_i$, so it remains to check that it acts transitively on the fibres of $\pi_{i-1}$. Let $(x_1,y)$ and $(x_2,y)$ be any elements in the same fibre of $\pi_{i-1}:K_i\to K_{i-1}$. Since $\pi_{i-1}(x_1) = \pi_{i-1}(x_2)$ there is $a\in \ab_i$ with $x_1=x_2+a$. Then $\pi_i(y)=\psi_i(x_1)=\psi_i(x_2+a)=\psi_i(x_2)+\alpha_i(a)=\pi_i(y)+\alpha_i(a)$, whence $a\in\ker(\alpha_i)$. 
\end{proof}

\noindent With the bijections $K \leftrightarrow \bnd$ and $K_0 \leftrightarrow \bnd'$ described above, we can thus view $\bnd$ as a bundle over $\bnd'$. Note that no set $K_i$ is contained in $\bnd_i$, we just have a map $(x,y)\mapsto x$ from $K_i$ to $\bnd_i$. Now a key fact is that, while this map is not a bijection, if we fix a single $t\in \bnd'$  then the map restricts to a bijection identifying the set $\{(x,y)\in K_i: y=t\}$ with $\psi_i^{-1}(\pi_i(t))$. From this it follows promptly that $\psi^{-1}(t)$ has a $k$-fold abelian-bundle structure, inherited from $K$ via these bijections (thus with structure groups $\ker \alpha_i$), and this makes $\psi^{-1}(t)$ a sub-bundle of $\bnd$. This gives a useful description of a preimage $\psi^{-1}(t)$, which we record as follows.

\begin{lemma}\label{lem:preimbund}
Let $\bnd,\bnd'$ be $k$-fold abelian bundles and let $\psi:\bnd\to \bnd'$ be a totally surjective bundle morphism. Then for any $t\in \bnd'$ the preimage $\psi^{-1}(t)$ is a sub-bundle of $\bnd$, with factors $\psi_i^{-1}(\pi_i(t))$ and structure groups $\ker(\alpha_i)$, $i\in [k]$.
\end{lemma}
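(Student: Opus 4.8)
The plan is to obtain the bundle structure on $\psi^{-1}(t)$ by slicing the kernel bundle $K$ from Definition \ref{defn:bundmorphker} at the fixed point $t$, exactly as anticipated in the discussion preceding the statement. For each $i\in[k]$ I would set $K_i^t=\{(x,y)\in K_i: y=t\}$ and recall that the map $(x,y)\mapsto x$ restricts to a bijection $K_i^t\to \psi_i^{-1}(\pi_i(t))$, since $(x,t)\in K_i$ means precisely $\psi_i(x)=\pi_i(t)$. I would then transport the bundle structure of $K$ through these bijections: the structure groups become $\ker(\alpha_i)$, the factors become the subsets $\psi_i^{-1}(\pi_i(t))\subseteq \bnd_i$, and for $i=k$ (where $\pi_k=\id$) the top factor is $\psi^{-1}(t)$ itself.

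The key observation that makes the slicing legitimate is that both the $\ker(\alpha_i)$-action $(x,y)+a=(x+a,y)$ and the bundle maps $\pi_{i-1}$ on $K$ leave the $\bnd'$-coordinate $y$ untouched. Hence they restrict to the slice $\{y=t\}$, carrying $K_i^t$ to itself and to $K_{i-1}^t$ respectively. Moreover the three verifications in the preceding lemma --- freeness, surjectivity of $\pi_{i-1}$, and transitivity of $\ker(\alpha_i)$ on its fibres --- never alter the second coordinate in their constructions, so they apply verbatim with $y$ frozen at $t$. In particular the surjectivity step still relies on the total surjectivity of $\psi$, i.e.\ on each $\alpha_i$ being surjective. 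This shows $K_i^t$ is a $\ker(\alpha_i)$-bundle over $K_{i-1}^t$, so $\big(\psi_i^{-1}(\pi_i(t))\big)_{i=0}^k$ is a genuine $k$-fold abelian bundle with structure groups $\ker(\alpha_i)$.

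It then remains to check the conditions of Definition \ref{def:sub-bund}. The projection $\pi_{i,i-1}$ does send $\psi_i^{-1}(\pi_i(t))$ into $\psi_{i-1}^{-1}(\pi_{i-1}(t))$, because the induced maps satisfy $\psi_{i-1}\co\pi_{i,i-1}=\pi_{i,i-1}'\co\psi_i$, so $\psi_i(x)=\pi_i(t)$ forces $\psi_{i-1}(\pi_{i,i-1}(x))=\pi_{i,i-1}'(\pi_i(t))=\pi_{i-1}(t)$; and under the bijections above the transported bundle map is literally this restriction of $\pi_{i,i-1}$. The defining equation of a sub-bundle is then immediate from the structure-morphism identity: for $x\in \psi_i^{-1}(\pi_i(t))$ and $z\in\ab_i$ one has $\psi_i(x+z)=\psi_i(x)+\alpha_i(z)=\pi_i(t)+\alpha_i(z)$, which equals $\pi_i(t)$ if and only if $z\in\ker(\alpha_i)$; thus $\{z\in\ab_i: x+z\in\psi_i^{-1}(\pi_i(t))\}=\ker(\alpha_i)$. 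I expect no serious obstacle here, as the content is essentially bookkeeping; the only points demanding care are confirming that each restricted projection genuinely coincides with the restriction of $\pi_{i,i-1}$ (via the commutation $\psi_{i-1}\co\pi_{i,i-1}=\pi_{i,i-1}'\co\psi_i$) and that the fibrewise transitivity --- hence the claim that the slice is a bundle at all --- does use the surjectivity of the $\alpha_i$.
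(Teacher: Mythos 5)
Your proposal is correct and follows essentially the same route as the paper: the paper's proof simply points back to the preceding discussion, which constructs the bundle structure on $\psi^{-1}(t)$ by slicing the kernel bundle $K$ at $t$ and transporting it through the bijections $(x,y)\mapsto x$, exactly as you do. You supply the bookkeeping (that the action and the maps $\pi_{i-1}$ on $K$ fix the second coordinate, and the verification of the sub-bundle condition via $\psi_i(x+z)=\pi_i(t)+\alpha_i(z)$ together with freeness of the $\ab_i'$-action) in slightly more detail than the paper chooses to.
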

\begin{proof}
One way to prove this was described above, namely by restricting the bundle structure from $K$ to $\psi^{-1}(t)$ via the bijections $(x,y)\mapsto x$. This way has the advantage of showing how all these preimages live inside the bundle $K$.

Alternatively, one may check directly that the stated factors and structure groups do indeed make $\psi^{-1}(t)$ a sub-bundle of $\bnd$, essentially by  repeating the arguments in the previous proof.
\end{proof}

\subsection{Fibre-surjective morphisms and restricted morphisms}

This section describes specific kinds of morphisms between nilspaces. This material is useful mainly in the topological part of the theory, but since the material itself is purely algebraic we include it here. 

To begin with, recall that on one hand, by Proposition \ref{prop:nilmorph-bundlemorph}, a morphism of nilspaces is a bundle morphism of the corresponding abelian bundles, and on the other hand, for abelian bundles we have a notion of totally-surjective bundle morphisms (recall Definition \ref{def:bundmorph}). The following definition gives a  counterpart of this notion for nilspaces, as shown in Lemma \ref{lem:fib-surj=tot-surj}.

\begin{defn}[Fibre-surjective morphism]\label{def:fibsurjmorph}
Let $\ns,\ns'$ be nilspaces. A morphism $\psi:\ns \to \ns'$ is said to be \emph{fibre surjective} if for every $n\in \N$ the image of an equivalence class of $\sim_n$ in $\ns$ is a full equivalence class of $\sim_n$ in $\ns'$.
\end{defn}
\noindent Note that if $\ns$ is $k$-step then $\ns'$ must also be $k$-step. Indeed, if $\q'$ is a $(k+1)$-corner on $\ns'$, then any two completions $\q_1,\q_2$ of $\q'$ satisfy $\q_1(1^{k+1})\sim_k \q_2(1^{k+1})$, so by surjectivity there must be a class of $\sim_k$ in $\ns$ in which some element is sent to $\q_1(1^{k+1})$ and another is sent to $\q_2(1^{k+1})$ by $\psi$. Since $\ns$ is $k$-step this class is a singleton, so $\q_1(1^{k+1})=\q_2(1^{k+1})$.

\begin{lemma}\label{lem:fib-surj=tot-surj}
Let $\ns,\ns'$ be $k$-step nilspaces. A map $\psi:\ns \to \ns'$ is a fibre-surjective morphism if and only if it is a totally surjective morphism between the corresponding $k$-fold abelian bundles.
\end{lemma}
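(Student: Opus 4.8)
The plan is to translate both conditions into statements about the bundle projections furnished by Theorem \ref{thm:bundle-decomp}, and then to bridge them by a lifting argument through the bundle tower. By Proposition \ref{prop:nilmorph-bundlemorph}, $\psi$ is a bundle morphism, so it induces maps $\psi_i:\bnd_i\to\bnd_i'$ and structure morphisms $\alpha_i:\ab_i\to\ab_i'$ with $\psi_i(x+a)=\psi_i(x)+\alpha_i(a)$ and $\psi_i\co\pi_{m,i}=\pi_{m,i}'\co\psi_m$ for $m\geq i$ (these commutations follow from the definition of the induced maps). By Theorem \ref{thm:bundle-decomp} we have $\bnd_i=\cF_i(\ns)$ and $\bnd_i'=\cF_i(\ns')$, and the equivalence classes of $\sim_n$ in $\ns$ (resp. $\ns'$) are exactly the fibres of $\pi_{k,n}$ (resp. $\pi_{k,n}'$); for $n\geq k$ these are singletons since $\ns,\ns'$ are $k$-step, and at the degenerate bottom level (where the single class is the whole space) the statement is simply that $\psi$ is onto. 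Since $\psi$ is a morphism, $x\sim_n y$ forces $\psi(x)\sim_n\psi(y)$, so $\psi$ sends each fibre of $\pi_{k,n}$ into a fibre of $\pi_{k,n}'$; thus fibre-surjectivity is precisely the assertion that for every $n$, $\psi$ maps each fibre of $\pi_{k,n}$ \emph{onto} the corresponding fibre of $\pi_{k,n}'$.

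The first step is an elementary local observation. Restricted to a one-step fibre $\pi_{i,i-1}^{-1}(p)$, the map $\psi_i$ is equivariant through $\alpha_i$, and $\ab_i$ acts transitively on that fibre; hence for any base point $x_0$ the image is the single coset $\psi_i(x_0)+\mathrm{im}(\alpha_i)$, which fills the whole target fibre $\psi_i(x_0)+\ab_i'$ if and only if $\alpha_i$ is surjective. Therefore total surjectivity ($\alpha_i$ surjective for all $i\in[k]$) is equivalent to the statement that for every $i$, $\psi_i$ maps each fibre of the one-step map $\pi_{i,i-1}$ onto a full fibre.

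The crux is to show that this one-step surjectivity (over all $i$) is equivalent to the composite surjectivity (over all $n$) that defines fibre-surjectivity. For the forward bridge I would prove, by induction on $m-n$, that one-step surjectivity at levels $n+1,\dots,m$ implies that $\psi_m$ maps every fibre of $\pi_{m,n}$ onto a full fibre of $\pi_{m,n}'$: given a target point, first lift its image under $\pi_{m+1,m}'$ using the inductive hypothesis for $\psi_m$, then complete the last step using one-step surjectivity at level $m+1$, invoking surjectivity of $\pi_{m+1,m}$ and the commutation relations to land in the correct fibre; taking $m=k$ gives composite surjectivity at every level. For the converse, given composite surjectivity at level $i-1$, I would lift a target point of a fibre of $\pi_{i,i-1}'$ up to $\bnd_k'$, pull it back through $\psi_k$ using composite surjectivity, and project it down by $\pi_{k,i}$ to obtain the desired $\psi_i$-preimage in the right fibre. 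Matching indices, the levels $n=1,\dots,k-1$ recover surjectivity of $\alpha_2,\dots,\alpha_k$, the bottom level (surjectivity of $\psi$) recovers $\alpha_1$, and the levels $n\geq k$ are automatic; combining the two bridges with the local observation yields both implications of the lemma.

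The main obstacle I anticipate is the lifting induction in the third paragraph: the bookkeeping must guarantee that each intermediate preimage is found in the correct fibre, which is exactly where surjectivity of the bundle projections and the commutation $\psi_i\co\pi_{m,i}=\pi_{m,i}'\co\psi_m$ are used. A related point one must not overlook is the bottom level: surjectivity of $\psi$ itself is genuinely needed and corresponds to surjectivity of $\alpha_1$, since the one-step conditions at levels $i\geq 2$ alone do not force $\psi$ to be onto.
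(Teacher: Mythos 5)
Your proposal is correct and follows essentially the same route as the paper: reduce both conditions to surjectivity statements about the bundle projections via Proposition \ref{prop:nilmorph-bundlemorph} and Theorem \ref{thm:bundle-decomp}, using that the classes of $\sim_n$ are the fibres of $\pi_{k,n}$ and that the structure groups act freely and transitively on the one-step fibres. The paper compresses all of this into two sentences, so your explicit induction bridging the one-step fibre surjectivity (of the $\alpha_i$) with the composite fibre surjectivity (of the $\sim_n$-classes), and your remark that the bottom level encodes surjectivity of $\psi$ itself, are exactly the details the paper leaves implicit.
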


\begin{proof}
A totally surjective bundle morphism between $k$-step nilspaces is fibre-surjective by definition. To see the converse, note first that by Proposition \ref{prop:nilmorph-bundlemorph} we know that $\psi$ is a bundle morphism. Then, since the orbits of the groups $\ab_i,\ab_i'$ are  equivalence classes of $\sim_i$, the condition for fibre-surjectivity implies that all the structure morphisms from Definition \ref{def:bundmorph} are surjective.
\end{proof}
\noindent The main result concerning fibre-surjective morphisms here is the following lemma stating that they are actually factor maps. More precisely, letting $\sim$ denote the equivalence relation on $\ns$ defined by $x\sim y$ if and only if $\psi(x)=\psi(y)$, then $\ns/\sim$ with the quotient cubespace structure is isomorphic to $\ns'$. Thus $\ns'$ can be viewed as a factor of $\ns$ in the sense of Definition \ref{def:factor}.

\begin{lemma}\label{lem:lifting2}
Let $\ns,\ns'$ be $k$-step nilspaces and let $\psi:\ns\to \ns'$ be a fibre-surjective morphism. Then every cube $\q'\in \cu^n(\ns')$ can be lifted to a cube $\q\in \cu^n(\ns)$ such that $\psi \co \q = \q'$. In other words,  $\ns'$ is a factor nilspace of $\ns$.
\end{lemma}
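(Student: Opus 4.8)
The plan is to argue by induction on the step $k$, using the description of $\ns,\ns'$ as degree-$k$ bundles given by Theorem \ref{thm:bundle-decomp}. Write the factors of $\ns$ as $\bnd_0,\dots,\bnd_k=\ns$ with structure groups $\ab_1,\dots,\ab_k$ and projections $\pi_i$, and similarly $\bnd'_i,\ab'_i,\pi'_i$ for $\ns'$; recall that $\bnd_{k-1}=\cF_{k-1}(\ns)$ and $\bnd'_{k-1}=\cF_{k-1}(\ns')$. By Proposition \ref{prop:nilmorph-bundlemorph} and Lemma \ref{lem:fib-surj=tot-surj}, $\psi$ is a totally surjective bundle morphism, so it induces a map $\psi_{k-1}:\cF_{k-1}(\ns)\to\cF_{k-1}(\ns')$ satisfying $\psi_{k-1}\co\pi_{k-1}=\pi'_{k-1}\co\psi$, together with surjective structure morphisms $\alpha_i:\ab_i\to\ab'_i$ (and $\psi=\psi_k$, with top structure map $\alpha_k$). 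The base case $k=0$ is trivial since every cube is then constant, so I assume the lemma for $(k-1)$-step nilspaces.

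First I would verify that $\psi_{k-1}$ is itself fibre-surjective. Since $\sim_m$ refines $\sim_{m'}$ for $m\ge m'$, for $m\le k-1$ each $\sim_m$-class of $\cF_{k-1}(\ns)$ is the $\pi_{k-1}$-image of a single $\sim_m$-class of $\ns$; combining this with the fibre-surjectivity of $\psi$ and the identity $\psi_{k-1}\co\pi_{k-1}=\pi'_{k-1}\co\psi$ shows that its $\psi_{k-1}$-image is a full $\sim_m$-class of $\cF_{k-1}(\ns')$ (the case $m=0$ giving in particular that $\psi$, and hence $\psi_{k-1}$, is onto). Granting this, the inductive hypothesis applied to $\psi_{k-1}$ lets me lift the cube $\pi'_{k-1}\co\q'\in\cu^n(\cF_{k-1}(\ns'))$ to a cube $\bar\q\in\cu^n(\cF_{k-1}(\ns))$ with $\psi_{k-1}\co\bar\q=\pi'_{k-1}\co\q'$. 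Since $\cu^n(\bnd_{k-1})=\{\pi_{k-1}\co\q:\q\in\cu^n(\ns)\}$ by Definition \ref{def:k-deg-bund}, I can lift $\bar\q$ further to some $\q_0\in\cu^n(\ns)$ with $\pi_{k-1}\co\q_0=\bar\q$.

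Now $\psi\co\q_0$ and $\q'$ are cubes on $\ns'$ with the same projection to $\bnd'_{k-1}$, because $\pi'_{k-1}\co(\psi\co\q_0)=\psi_{k-1}\co\pi_{k-1}\co\q_0=\psi_{k-1}\co\bar\q=\pi'_{k-1}\co\q'$. Hence by the bundle equation \eqref{eq:k-deg-bund} for $\ns'$ (taken with $i=k-1$) there is $\q_3'\in\cu^n(\cD_k(\ab'_k))$ with $\q'=(\psi\co\q_0)+\q_3'$. The remaining task is to lift $\q_3'$ through $\alpha_k$: using the unique factorization of Lemma \ref{lem:cubefactn} I write $\q_3'=(g_0')^{F_0}\cdots (g'_{2^n-1})^{F_{2^n-1}}$, where each $g_i'\in\ab'_k$ is nontrivial only on faces of codimension at most $k$, choose preimages $g_i\in\ab_k$ (possible since $\alpha_k$ is surjective, and the trivial ones kept trivial), and set $\q_3=g_0^{F_0}\cdots g_{2^n-1}^{F_{2^n-1}}\in\cu^n(\cD_k(\ab_k))$, so that $\alpha_k\co\q_3=\q_3'$. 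Finally I put $\q=\q_0+\q_3$; this lies in $\cu^n(\ns)$ by \eqref{eq:k-deg-bund}, and since $\psi$ is a bundle morphism with $\psi(x+a)=\psi(x)+\alpha_k(a)$, I obtain $\psi\co\q=(\psi\co\q_0)+(\alpha_k\co\q_3)=(\psi\co\q_0)+\q_3'=\q'$, as required. The final assertion then follows: letting $\sim$ be the relation $x\sim y\iff\psi(x)=\psi(y)$, surjectivity of $\psi$ identifies $\ns/\!\sim$ with $\ns'$ as a set, and the lifting just proved shows the cubes of $\ns/\!\sim$ are exactly $\cu^n(\ns')$, so $\ns'$ is a factor of $\ns$ in the sense of Definition \ref{def:factor}.

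I expect the main obstacle to be the bookkeeping that makes the two lifts compatible, namely ensuring that after lifting the $\bnd_{k-1}$-projection one can still correct the top layer so that $\psi\co\q$ matches $\q'$ exactly; the enabling facts are the surjectivity of $\alpha_k$ together with the unique factorization of cubes, which make the degree-$k$ abelian correction liftable, and \eqref{eq:k-deg-bund}, which guarantees the corrected map remains a cube. Checking fibre-surjectivity of the induced map $\psi_{k-1}$ is the other point requiring some care.
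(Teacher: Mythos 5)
Your proof is correct and follows essentially the same route as the paper's: induction on the step $k$, lifting $\pi'_{k-1}\co\q'$ through the induced fibre-surjective morphism on the $(k-1)$-step factors, lifting the result to a cube on $\ns$, and then correcting by a cube in $\cu^n(\cD_k(\ab_k))$ lifted through the surjective structure morphism $\alpha_k$. The only (harmless) difference is in how that last lift is produced — you use the unique factorization of Lemma \ref{lem:cubefactn} to lift the face-group coefficients individually, whereas the paper lifts an $n$-corner of $\q_3'$ and invokes unique completion; both work.
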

\begin{proof}
We argue by induction on $k$. For $k=0$ the claim is clear (a $0$-step non-empty nilspace is a 1-point space). Let $k>0$ and suppose that the claim holds for $k-1$. Viewing $\psi$ as a bundle morphism, we see that it  induces a fibre-surjective map $\psi'$ from $\cF_{k-1}(\ns)$ to $\cF_{k-1}(\ns')$. By induction, there exists $\q_1\in \cu^n(\cF_{k-1}(\ns))$ such that $\psi'\co \q_1=\pi_{k-1}\co \q'$. Lifting $\q_1$ to a cube $\q_2\in \cu^n(\ns)$ (using Lemma  \ref{lem:lifting}), the fact that $\psi$ preserves the fibres of $\sim_{k-1}$ implies that $\pi_{k-1}\co\psi\co\q_2=\q'$. Hence, the map $\q_3=\psi\co\q_2-\q'$ is in $\cu^n(\cD_k(\ab_k'))$. Now, letting $\alpha_k:\ab_k\to \ab_k'$ be the surjective homomorphism, note that if there was a cube $\q_4\in \cu^n(\cD_k(\ab_k))$ such that of $\alpha_k\co \q_4=\q_3$, then we would have $\psi\co(\q_2-\q_4)=\psi\co\q_2-\alpha_k\co\q_4=\q'+\q_3-\alpha_k\co\q_4=\q$, and so we would have a lift $\q=\q_2-\q_4$ as desired. To see that $\q_4$ exists, note that this is trivial for $n\leq k$ (by $k$-ergodicity of degree-$k$ abelian groups), and for $n\geq k+1$ note that we can take an $n$-corner of $\q_3$, lift this under $\alpha_k$ to an $n$-corner on $\cD_k(\ab_k)$ (by induction on $n$), and then complete it (uniquely) to an $n$-cube $\q_4$, which then has to satisfy $\alpha_k\co \q_4=\q_3$.
\end{proof}

We now move on to another specific type of morphism.

\begin{defn}[Restricted morphism]
Let $P$ and $\ns$ be cubespaces, let $S$ be a subcubespace of $P$, and let $f:S \to \ns$ be an arbitrary function. We define the \emph{restricted morphism set} $\hom_f(P,\ns)$ to be the set of morphisms $P\to \ns$ whose restriction to $S$ equals $f$.
\end{defn}
\noindent For instance, given an abelian group $\ab$, the set $\hom_{S\to 0}(P,\cD_k(\ab))$ (used below) is the set of morphisms $\phi$ from $P$ to  $\cD_k(\ab)$ such that $\phi(S)=0_{\ab}$. Note that this set is itself an abelian group under pointwise addition in $\ab$.

We shall now describe $\hom_f(P,\ns)$ as a bundle.

\begin{lemma}\label{lem:restrmorph=subbund}
Let $P$ be a subcubespace of $\{0,1\}^n$ with the extension property, let $S$ be a subcubespace of $P$ with the extension property in $P$, let $\ns$ be a $k$-step nilspace and $f:S\to \ns$ be a morphism. Then $\hom_f(P,\ns)$ is a $k$-fold abelian bundle that is a sub-bundle of $\ns^P$ with factors $\hom_{\pi_i\co f}(P,\ns_i)$ and structure groups $\hom_{S\to 0}(P,\cD_i(\ab_i))$, where $\ab_i$ is the $i$-th structure group of $\ns$.
\end{lemma}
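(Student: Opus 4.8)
The plan is to prove Lemma \ref{lem:restrmorph=subbund} by induction on $k$, mirroring the bundle decomposition of $\ns$ itself (Theorem \ref{thm:bundle-decomp}) and using the projections $\pi_i:\ns\to\ns_i=\cF_i(\ns)$ to peel off one structure group at a time. The base case $k=0$ is immediate, since a $0$-step nilspace is a single point and the only morphism is constant. For the inductive step, I would take the canonical projection $\pi_{k-1}:\ns\to\ns_{k-1}$ and use it to relate $\hom_f(P,\ns)$ to $\hom_{\pi_{k-1}\co f}(P,\ns_{k-1})$, the latter being by induction a $(k-1)$-fold abelian bundle with the stated factors and structure groups. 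The top structure group $\ab_k$ should then supply the final layer of the bundle over $\hom_{\pi_{k-1}\co f}(P,\ns_{k-1})$, with the fibre being a coset of $\hom_{S\to 0}(P,\cD_k(\ab_k))$.

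First I would set up the abelian-group action on each fibre. Since $\ns$ is a degree-$k$ bundle, $\ab_k$ acts freely and transitively on the classes of $\sim_{k-1}$, and by \eqref{eq:k-deg-bund} adding a cube in $\cu^n(\cD_k(\ab_k))$ to a cube on $\ns$ that agrees on the $\pi_{k-1}$-projection yields another cube on $\ns$. The key point is that if $g_1,g_2\in\hom_f(P,\ns)$ satisfy $\pi_{k-1}\co g_1=\pi_{k-1}\co g_2$, then their ``difference'' is a well-defined morphism $P\to\cD_k(\ab_k)$ vanishing on $S$ (because both equal $f$ there), i.e.\ an element of $\hom_{S\to0}(P,\cD_k(\ab_k))$; conversely adding such an element to $g_1$ produces another morphism in $\hom_f(P,\ns)$. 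This uses that $g_1,g_2$ agree pointwise modulo $\sim_{k-1}$, so for each $v$ there is a unique $a(v)\in\ab_k$ with $g_2(v)=g_1(v)+a(v)$, and one must verify that the map $v\mapsto a(v)$ is itself a cube-preserving morphism into $\cD_k(\ab_k)$ — this follows by restricting to cubes and invoking \eqref{eq:k-deg-bund} together with the fact that $\cu^n(\cD_k(\ab_k))$ is determined by the difference structure. This establishes both the free action and the transitivity of $\hom_{S\to0}(P,\cD_k(\ab_k))$ on each fibre of the map $\hom_f(P,\ns)\to\hom_{\pi_{k-1}\co f}(P,\ns_{k-1})$, $g\mapsto\pi_{k-1}\co g$.

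The surjectivity of this projection map is where the extension hypotheses do their work, and I expect this to be the main obstacle. Given a morphism $g_1\in\hom_{\pi_{k-1}\co f}(P,\ns_{k-1})$, I must produce a lift $g\in\hom_f(P,\ns)$ with $\pi_{k-1}\co g=g_1$, i.e.\ a morphism $P\to\ns$ reducing to $g_1$ on $\ns_{k-1}$ and restricting to $f$ on $S$. The natural approach is to first lift $g_1$ to some morphism $\tilde g:P\to\ns$ using Lemma \ref{lem:lifting} (the morphism-lifting lemma, applicable since $P$ has the extension property); then $\tilde g|_S$ and $f$ both project to $\pi_{k-1}\co f$, so they differ by an element of $\hom_{f'\to0}(S,\cD_k(\ab_k))$ for the appropriate restriction, i.e.\ a $\cD_k(\ab_k)$-valued correction term on $S$. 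Here I would use the extension property of $S$ in $P$ to extend this correction term from $S$ to all of $P$ as a morphism into $\cD_k(\ab_k)$, and then subtract it from $\tilde g$, using \eqref{eq:k-deg-bund} once more to guarantee the corrected map is still a morphism into $\ns$ with the right projection. The delicate point is ensuring that the correction is simultaneously compatible with being a $\cD_k(\ab_k)$-morphism on $P$ and equal to the prescribed difference on $S$; this is exactly the content of the extension property applied to the degree-$k$ abelian torsor, analogous to the argument producing $\q_4$ in the proof of Lemma \ref{lem:lifting2}.

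Finally, I would record that the resulting structure exhibits $\hom_f(P,\ns)$ as a $\hom_{S\to0}(P,\cD_k(\ab_k))$-bundle over $\hom_{\pi_{k-1}\co f}(P,\ns_{k-1})$, and that the latter is by induction a $(k-1)$-fold abelian bundle with the advertised factors and structure groups; stacking these gives the full $k$-fold bundle with factors $\hom_{\pi_i\co f}(P,\ns_i)$ and structure groups $\hom_{S\to0}(P,\cD_i(\ab_i))$. The sub-bundle claim relative to $\ns^P$ follows by checking the sub-bundle condition from Definition \ref{def:sub-bund}: the structure group $\hom_{S\to0}(P,\cD_i(\ab_i))$ sits inside the full group $\hom(P,\cD_i(\ab_i))$ governing $\ns^P$ (the $i$-th structure group of the product bundle $\ns^P$), and the fibre condition ${\pi'}^{-1}_{i,i-1}(\pi'_{i,i-1}(x))=x+\ab_i'$ is precisely the transitivity established above restricted to morphisms vanishing on $S$.
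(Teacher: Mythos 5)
Your proposal is correct and follows essentially the same route as the paper: induction on $k$, lifting a morphism $P\to\cF_{k-1}(\ns)$ to $\ns$ via Lemma \ref{lem:lifting}, correcting the discrepancy with $f$ on $S$ by extending a $\cD_k(\ab_k)$-valued morphism from $S$ to $P$ using the extension property of $S$ in $P$, and identifying each fibre as a coset of $\hom_{S\to 0}(P,\cD_k(\ab_k))$ via \eqref{eq:k-deg-bund}. The only cosmetic difference is that the paper starts the induction at $k=1$ (principal homogeneous space) rather than $k=0$.
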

\begin{proof}
We argue by induction on $k$ to check the condition in Definition \ref{def:sub-bund}. For $k=1$ we know that $\ns$ is the principal homogeneous space of an abelian group $\ab$, and then $\hom_f(P,\ns)$ is a principal homogeneous space of $\hom_{S\to 0}(P,\cD_1(\ab))$ in $\ns^P$.\\
\indent For $k>1$ suppose that $\ns$ is a $k$-step nilspace and the result is already established for $\ns_{k-1}=\cF_{k-1}(\ns)$. Let $f_2:P\to \ns_{k-1}$ be a morphism with restriction $f_2|_S=\pi_{k-1}\co f$. We claim that $f_2$ can be lifted to an element $f_3\in \hom_f(P,\ns)$.\\
\indent Indeed, by Lemma \ref{lem:lifting}, there is a lift $g:P\to \ns$ of $f_2$. Then the function $g_2=f-g|_S$ is a morphism from $S$ to $\cD_k(\ab_k)$. By the extension property there is a morphism $g_3:P\to \cD_k(\ab_k)$ extending $g_2$. Then $f_3=g+g_3$ is in $\hom_f(P,\ns)$ and is a lift of $f_2$, as claimed.\\
\indent Note that any other lift $f_3'$ must satisfy $f_3-f_3'\in \hom_{S\to 0}(P,\cD_k(\ab_k))$, and it follows that the set of possible lifts of $f_2$ is exactly $f_3+\hom_{S\to 0}(P,\cD_k(\ab_k))$, as required. 
\end{proof}
We end this section with a result relating various fibre-surjective morphisms.
\begin{lemma}\label{lem:collection} Let $P\subset\{0,1\}^n$ be a subcubespace with the extension property in $\{0,1\}^n$ and $S\subset P$ be a subcubespace with the extension property in $P$. Let $\ns,\ns'$ be $k$-step nilspaces and let $\psi:\ns \rightarrow \ns'$ be a fibre-surjective morphism with structure morphisms $\alpha_i:\ab_i\to\ab_i'$ \textup{(}recall Definition \ref{def:bundmorph}\textup{)}. Then we have
\begin{enumerate}
\item $\hom(P,\ns)$ is a sub-bundle of $\ns^P$ with structure groups $\hom(P,\cD_i(\ab_i))\leq \ab_i^P$.
\item $\psi^P:\hom(P,\ns)\to\hom(P,\ns')$ is a totally surjective bundle morphism with structure morphisms $\alpha_i^P:\hom(P,\cD_i(\ab_i))\to\hom(P,\cD_i(\ab_i'))$.
\item The preimage of $t\in\hom(P,\ns')$ under $\psi^P$ is a bundle with structure groups $\hom(P,\cD_i({\rm ker}(\alpha_i)))$.
\item Let $t\in\hom(P,\ns')$ and let $s\in\hom(S,\ns')$ be its restriction to $S$. Then the projection $\pi_S$ from $(\psi^{P})^{-1}(t)$ to $(\psi^{S})^{-1}(s)$ is a totally-surjective bundle morphism.
\end{enumerate}
\end{lemma}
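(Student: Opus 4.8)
The plan is to derive all four parts from results already in hand, treating them in order and reserving the real work for the surjectivity in part $(ii)$. For part $(i)$ I would simply invoke Lemma \ref{lem:restrmorph=subbund} in the degenerate case $S=\emptyset$: the empty subcubespace has the extension property in $P$ (any constant map furnishes an extension into a non-empty nilspace), and the empty restriction gives $\hom_\emptyset(P,\ns)=\hom(P,\ns)$ with structure groups $\hom_{\emptyset\to 0}(P,\cD_i(\ab_i))=\hom(P,\cD_i(\ab_i))$, which is exactly the assertion.

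For part $(ii)$, write $\psi^P$ for the map $g\mapsto \psi\co g$, which sends $\hom(P,\ns)$ into $\hom(P,\ns')$ since a composite of morphisms is a morphism. I would verify the two conditions of Definition \ref{def:bundmorph} directly. Condition $(i)$ is immediate from the fact that $\psi$ is itself a bundle morphism (Proposition \ref{prop:nilmorph-bundlemorph}), so that $\pi_i'\co\psi=\psi_i\co\pi_i$; composing pointwise shows $\pi_i\co g=\pi_i\co h$ implies $\pi_i'\co\psi^P(g)=\pi_i'\co\psi^P(h)$, and that $\psi^P$ induces the factor maps $g_i\mapsto \psi_i\co g_i$ on $\hom(P,\ns_i)$. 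For condition $(ii)$, given $c\in\hom(P,\cD_i(\ab_i))$ and $g_i\in\hom(P,\ns_i)$, the defining property $\psi_i(x+a)=\psi_i(x)+\alpha_i(a)$ gives pointwise $\psi_i\co(g_i+c)=(\psi_i\co g_i)+(\alpha_i\co c)$, so the structure morphism is $\alpha_i^P:c\mapsto \alpha_i\co c$; here $\alpha_i\co c\in\hom(P,\cD_i(\ab_i'))$ because the homomorphism $\alpha_i$ commutes with the alternating sums $\sigma_{i+1}$ and is therefore a morphism $\cD_i(\ab_i)\to\cD_i(\ab_i')$.

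The hard part is the total surjectivity in $(ii)$, i.e. that each $\alpha_i^P$ is onto. Given $d\in\hom(P,\cD_i(\ab_i'))$ I would first extend it, using the extension property of $P$ in $\{0,1\}^n$, to a cube $\tilde d\in\cu^n(\cD_i(\ab_i'))$. Since $\alpha_i$ is a surjective homomorphism and both $\cD_i(\ab_i),\cD_i(\ab_i')$ are $i$-fold ergodic (so their only nontrivial bundle level is level $i$), $\alpha_i$ is a totally surjective bundle morphism between these $i$-step nilspaces, hence fibre-surjective by Lemma \ref{lem:fib-surj=tot-surj}. Lemma \ref{lem:lifting2} then lifts $\tilde d$ to a cube $\tilde c\in\cu^n(\cD_i(\ab_i))$ with $\alpha_i\co\tilde c=\tilde d$, and restricting gives $c=\tilde c|_P\in\hom(P,\cD_i(\ab_i))$ with $\alpha_i^P(c)=d$. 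This lifting step is the only genuinely new argument.

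Parts $(iii)$ and $(iv)$ are then formal. For $(iii)$, $\psi^P$ is now a totally surjective bundle morphism, so Lemma \ref{lem:preimbund} presents $(\psi^P)^{-1}(t)$ as a sub-bundle of $\hom(P,\ns)$ with structure groups $\ker(\alpha_i^P)$, and $\ker(\alpha_i^P)=\{c:\alpha_i\co c=0\}=\hom(P,\cD_i(\ker(\alpha_i)))$ since $\ker(\alpha_i)\leq \ab_i$ and the degree-$i$ cube condition is inherited by the subgroup. For $(iv)$, I note first that $S$ has the extension property in $\{0,1\}^n$ (compose the extensions $S\to P$ and $P\to\{0,1\}^n$), so $(iii)$ applies to $S$ as well, giving $(\psi^S)^{-1}(s)$ as a bundle with structure groups $\hom(S,\cD_i(\ker(\alpha_i)))$. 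The restriction map $\pi_S:g\mapsto g|_S$ carries $(\psi^P)^{-1}(t)$ into $(\psi^S)^{-1}(s)$ because $\psi\co(g|_S)=(\psi\co g)|_S=t|_S=s$; it is a bundle morphism with induced factor maps $g_i\mapsto g_i|_S$ and structure morphisms the restriction homomorphisms $c\mapsto c|_S$ from $\hom(P,\cD_i(\ker(\alpha_i)))$ to $\hom(S,\cD_i(\ker(\alpha_i)))$. These are surjective precisely because $S$ has the extension property in $P$, so $\pi_S$ is totally surjective, completing the proof.
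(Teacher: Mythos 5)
Your proposal is correct and follows essentially the same route as the paper: part (i) via Lemma \ref{lem:restrmorph=subbund} with empty $S$, part (ii) by checking Definition \ref{def:bundmorph} directly with surjectivity of $\alpha_i^P$ obtained from Lemma \ref{lem:lifting2} applied to the fibre-surjective morphism $\cD_i(\ab_i)\to\cD_i(\ab_i')$, part (iii) via Lemma \ref{lem:preimbund}, and part (iv) via the extension property of $S$ in $P$. Your extra detail in the surjectivity step (extending $d$ to a full cube before lifting and then restricting) is a correct instantiation of the lifting argument the paper leaves implicit.
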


\begin{proof}
Statement $(i)$ is just the special case of Lemma \ref{lem:restrmorph=subbund} with $S=\emptyset$ (note that by Definition \ref{def:ext-property} and the fact that there is always a morphism from $P$ to a non-empty nilspace, we have that $\emptyset$ has the extension property in $P$).

For statement $(ii)$ let us check that $\psi^P$ satisfies the two properties from Definition \ref{def:bundmorph}. For property $(i)$ it suffices to show that the map $\psi^P$ preserves the relation $\sim_i$ from Definition \ref{def:simdef} for each $i$. This is seen by a straightforward argument using Lemma \ref{lem:sim1} (using that $\psi$ preserves cubes). For property $(ii)$, note that we have $\psi_i^P(x+a)=\psi_i^P(x)+\alpha_i^P(a)$ (since $\psi_i$ satisfies this on each  component of $X^P$), so the structure morphisms of $\psi^P$ are indeed the maps $\alpha_i^P$ on $\hom(P,\cD_i(\ab_i))$. To see that each map $\alpha_i^P$ is onto $\hom(P,\cD_i(\ab_i'))$, we use Lemma \ref{lem:lifting2}.

For statement $(iii)$, note that by Lemma \ref{lem:preimbund} we have that $(\psi^P)^{-1}(t)$ is a sub-bundle of $\hom(P,\ns)$ with structure groups $\ker(\alpha_i^P)\cap \hom(P,\cD_i(\ab_i))=\hom(P,\cD_i(\ker(\alpha_i)))$.

Finally, to see statement $(iv)$, we first check from Definition \ref{def:bundmorph} that $\pi_S$ is indeed a bundle morphism. By statement $(iii)$, the structure morphisms of $\pi_S$ are seem to be the restriction maps
$\hom(P,\cD_i({\rm ker}(\alpha_i)))\to\hom(S,\cD_i({\rm ker}(\alpha_i)))$. Since $S$ has the extension property in $P$, these restriction maps are surjective.
\end{proof}

\subsection{Extensions and cocycles}\label{sec:exts&cohom}

This subsection treats an important method of building a new nilspace from an old one. The resulting new nilspace is called an extension, and consists in an abelian bundle over the old nilspace, equipped with a cube structure adequately related to the one on the old nilspace. The formal definition is the following.

\begin{defn}\label{def:extension}
Let $\ns$ be a nilspace. A \emph{degree-$k$ extension} of $\ns$ is an abelian bundle $\nss$ over $\ns$, with structure group $\ab$ and bundle map $\pi:\nss\to\ns$, such that $\nss$ is a cubespace with the following properties:\\ \vspace{-0.7cm}
\begin{enumerate}
\item For every $n\in \N$ the map $\q\mapsto \pi\co \q$ is a surjection $\cu^n(\nss)\to \cu^n(\ns)$. \vspace{-0.2cm}
\item For every $\q_1\in \cu^n(\nss)$ we have\\ \vspace{-0.4cm}
\begin{equation}\label{eq:ext-cube-corresp}
\{\q_2\in \cu^n(\nss):\pi\co\q_2=\pi\co\q_1\}=\{\q_1+\q_3:\q_3 \in \cu^n(\cD_k(\ab))\}.
\end{equation}
\end{enumerate}
The extension $\nss$ is called a \emph{split extension} if there is a (cube-preserving) morphism $m:\ns\to \nss$ such that $\pi\co m$ is the identity map on $\ns$.
\end{defn}
\noindent This notion is motivated by the fact that, by the bundle-decomposition result (Theorem \ref{thm:bundle-decomp}), any $k$-step nilspace can be built up from the one-point space by $k$ consecutive extensions of increasing degree.\\
\indent With each extension one can associate a certain function called a cocycle, which encodes algebraic information about the extension. These cocycles can then be used to parametrize the different extensions of a nilspace. This is very useful in particular in the topological part of the theory, to classify compact nilspaces.\\
\indent To define cocycles we use the following notation. Recall that $\aut(\{0,1\}^k)$ is generated by permutations of $[k]$ and coordinate-reflections. For $\theta\in \aut(\{0,1\}^k)$ we write $r(\theta)$ for the number of reflections involved in $\theta$. Equivalently, $r(\theta)$ is the number of coordinates equal to 1 in $\theta(0^k)$.
\begin{defn}[Cocycle]\label{def:cocycle} 
Let $\ns$ be a nilspace, let $\ab$ be an abelian group, and let $k\geq -1$ be an integer. A \emph{cocycle of degree} $k$ is a function $\rho: \cu^{k+1}(\ns)\to \ab$ with the following two properties:\\ \vspace{-0.6cm}
\begin{enumerate}
\item If $\q \in \cu^{k+1}(\ns)$ and $\theta\in \aut(\{0,1\}^{k+1})$ then $\rho(\q\co\theta )=(-1)^{r(\theta)}\rho(\q)$.\vspace{-0.2cm}
\item If $\q_3$ is the concatenation of adjacent cubes $\q_1,\q_2\in \cu^{k+1}(\ns)$ then $\rho(\q_3)=\rho(\q_1)+\rho(\q_2)$.
\end{enumerate}
The set of $\ab$-valued cocycles of degree $k$ is an abelian group under pointwise addition, denoted by  $Y_k(\ns,\ab)$. Note that $Y_{-1}(\ns,\ab)$ is just $\ab^{\ns}$.
\end{defn}
\noindent In going through the results below on extensions and cocycles, it can be useful to have a concrete case in mind as a source of intuition.
\begin{example}\label{ex:Heisenext}
Consider the Heisenberg nilmanifold $H/\Gamma= \begin{psmallmatrix} 1 & \R & \R\\[0.1em]  & 1 & \R \\[0.1em]  &  & 1 \end{psmallmatrix} / \begin{psmallmatrix} 1 & \Z & \Z\\[0.1em]  & 1 & \Z \\[0.1em]  &  & 1 \end{psmallmatrix}$ from Example \ref{ex:Heisen}. As mentioned there, one can identify $H/\Gamma$ with $[0,1)^3$. Let us write this as $H/\Gamma \cong \begin{psmallmatrix} 1 & [0,1) & [0,1)\\[0.1em]  & 1 & [0,1) \\[0.1em]  &  & 1 \end{psmallmatrix}$. Now, identifying the circle group $\T$ with $[0,1)$, we have that the map \[
\pi: H/\Gamma \to \T^2,\;\; \begin{psmallmatrix} 1 & x_1 & x_3\\[0.1em]  & 1 & x_2 \\[0.1em]  &  & 1 \end{psmallmatrix}\mapsto (x_1,x_2)
\]
is a bundle map showing that $H/\Gamma$ is an abelian bundle with fibre $\T$ over $\T^2$. Moreover, since $H/\Gamma$ is a 2-step nilspace, by the bundle characterization we have that it is a degree-2 extension of $\T^2$ by $\T$ (recall the proof of Lemma \ref{lem:Z-bund-is-k-degbund}). We can then find a cocycle of degree 2 associated with this extension, with the following procedure (which will be treated more formally and generally below). Consider the map $\cs:\T^2\to H/\Gamma$, $(x_1,x_2)\mapsto \begin{psmallmatrix} 1 & x_1 & 0\\[0.1em]  & 1 & x_2 \\[0.1em]  &  & 1 \end{psmallmatrix}$. This is a cross section for this extension, i.e. it satisfies $\pi\co\cs(x)=x$ on $\T^2$ (see Definition \ref{def:crossec}). We then have that for every $x\in H/\Gamma$, the points $x$ and $\cs\co\pi(x)$ are in the same fibre of the map $\pi$. This fibre is an affine version of $\T$, so we can take the difference $\cs\co\pi(x)-x \in \T$. We have thus defined a function $f:H/\Gamma\to \T$, $x\mapsto \cs\co\pi(x)-x$. Now consider the function $\rho$ on $\cu^3(H/\Gamma)$ sending a cube $\q$ to $\sigma_3(f\co\q)=\sum_{v\in \{0,1\}^3} (-1)^{|v|} f(\q(v))$. Using the defining property \eqref{eq:ext-cube-corresp} of the extension, it can be checked that $\rho(\q)$ is unchanged if we vary the values $\q(v)$ within their $\T$-fibres in $H/\Gamma$. More precisely, for every other cube $\q'$ such that $\pi\co\q'=\pi\co\q$, we have $\rho(\q')=\rho(\q)$. (We shall prove this more generally in Lemma \ref{lem:crossecgendef}.) Thus $\rho$ can be viewed as a function on $\cu^3(\T^2)$, and one can check that it is a cocycle (see Lemma \ref{lem:crossecgendef}). Note that, while $\rho$ was defined using the structure of $H/\Gamma$, we end up with a function of cubes on $\T^2$. This observation is important because it suggests that we may be able to go in the opposite direction, that is, that given a cocycle on the cubes of a given nilspace $\ns$ (here $\ns=\T^2$), it may be possible to construct an extension of $\ns$ that has some cross section with associated cocycle equal to $\rho$. This will be confirmed in Proposition \ref{prop:extcompleted} and subsequent results.
\end{example}
\noindent Before we go into the details indicated by the above example, let us describe how, given a cocycle $\rho$ of degree $k$, we can define a new cocycle of degree $k+1$ by taking the difference of $\rho$ on opposite faces of $(k+2)$-cubes. This will lead to the notion of a coboundary.
\begin{defn}
Let $k\in \N$ and let $\rho: \cu^k(\ns)\to \ab$ be a cocycle. We define $\partial \rho: \cu^{k+1}(\ns)\to \ab$ by $\partial\rho (\q)=\rho (\q(\cdot,0)) - \rho (\q(\cdot,1))$.
\end{defn}

\begin{lemma}
Let $k\geq 0$ and let $\rho$ be a cocycle of degree $k-1$. Then $\partial \rho$ is a cocycle of degree $k$.
\end{lemma}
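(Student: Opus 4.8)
The plan is to verify the two defining properties of a degree-$k$ cocycle from Definition~\ref{def:cocycle} directly for $\partial\rho$. Note first that if $\rho$ is a cocycle of degree $k-1$ then its domain is $\cu^k(\ns)$, and for any $\q\in\cu^{k+1}(\ns)$ the halves $\q(\cdot,0),\q(\cdot,1)$ lie in $\cu^k(\ns)$ by the composition axiom; hence $\partial\rho(\q)=\rho(\q(\cdot,0))-\rho(\q(\cdot,1))$ is well defined on $\cu^{k+1}(\ns)$, which is the correct domain for a degree-$k$ cocycle.

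I would dispose of the additivity property~(ii) first, as it is a short telescoping. Suppose $\q\in\cu^{k+1}(\ns)$ is the concatenation of adjacent cubes $\q_1\prec\q_2$, so that $\q(\cdot,0)=\q_1(\cdot,0)$, $\q(\cdot,1)=\q_2(\cdot,1)$ and $\q_1(\cdot,1)=\q_2(\cdot,0)$ by Definition~\ref{def:adj&concat}. Since both the concatenation and the operator $\partial$ single out the last coordinate, the two middle terms cancel:
\begin{align*}
\partial\rho(\q_1)+\partial\rho(\q_2)
&=\rho(\q_1(\cdot,0))-\rho(\q_1(\cdot,1))+\rho(\q_2(\cdot,0))-\rho(\q_2(\cdot,1))\\
&=\rho(\q_1(\cdot,0))-\rho(\q_2(\cdot,1))=\partial\rho(\q).
\end{align*}

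For the symmetry property~(i) I would first record that $\theta\mapsto(-1)^{r(\theta)}$ is a homomorphism $\aut(\{0,1\}^{k+1})\to\{\pm1\}$: writing $r(\theta)=|\theta(0^{k+1})|$ and using that permutations preserve Hamming weight while $|a\oplus b|\equiv|a|+|b|\pmod2$, one gets $r(\theta_1\theta_2)\equiv r(\theta_1)+r(\theta_2)$. In view of the identity $\q\co(\theta\theta')=(\q\co\theta)\co\theta'$, it then suffices to check $\partial\rho(\q\co\theta)=(-1)^{r(\theta)}\partial\rho(\q)$ for $\theta$ in a generating set and to extend by induction on word length. Recalling $\aut(\{0,1\}^{k+1})\cong S_{k+1}\ltimes(\Zmod2)^{k+1}$ (as recorded after Definition~\ref{def:D-cubes}), I would use three types of generators. (a) For $\theta=\theta'\times\id$ fixing the last coordinate, with $\theta'\in\aut(\{0,1\}^k)$, one has $(\q\co\theta)(\cdot,i)=\q(\cdot,i)\co\theta'$, so property~(i) of $\rho$ together with $r(\theta)=r(\theta')$ gives the claim. (b) For the reflection $\theta$ of the last coordinate, $(\q\co\theta)(\cdot,0)=\q(\cdot,1)$ and $(\q\co\theta)(\cdot,1)=\q(\cdot,0)$, whence $\partial\rho(\q\co\theta)=-\partial\rho(\q)$, matching $r(\theta)=1$.

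The remaining generator, the transposition $\theta$ exchanging coordinates $k$ and $k+1$, is where I expect the only real work, and where the concatenation axiom of $\rho$ enters essentially; here $r(\theta)=0$, so the goal is $\partial\rho(\q\co\theta)=\partial\rho(\q)$. Writing a vertex as $(u,s,t)$ with $u\in\{0,1\}^{k-1}$, $s=v_k$, $t=v_{k+1}$, the four relevant $k$-faces of $\q$ are $A\colon(u,s)\mapsto\q(u,0,s)$ and $D\colon(u,s)\mapsto\q(u,1,s)$ (the halves of $\q\co\theta$), and $C\colon(u,s)\mapsto\q(u,s,0)$ and $B\colon(u,s)\mapsto\q(u,s,1)$ (the halves of $\q$), so that $\partial\rho(\q\co\theta)=\rho(A)-\rho(D)$ and $\partial\rho(\q)=\rho(C)-\rho(B)$. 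One checks $A\prec B$ (both $A(\cdot,1)$ and $B(\cdot,0)$ equal $u\mapsto\q(u,0,1)$) and $C\prec D$ (both halves equal $u\mapsto\q(u,1,0)$); these are genuine cubes in $\cu^k(\ns)$ by composition, so by Lemma~\ref{lem:concat} their concatenations are cubes. Crucially, the concatenation of $A,B$ and that of $C,D$ have the same two halves, namely $u\mapsto\q(u,0,0)$ and $u\mapsto\q(u,1,1)$; since a map on $\{0,1\}^k$ is determined by $f(\cdot,0)$ and $f(\cdot,1)$, these concatenations coincide in a single cube $E$. Property~(ii) of $\rho$ then gives $\rho(A)+\rho(B)=\rho(E)=\rho(C)+\rho(D)$, which rearranges to $\rho(A)-\rho(D)=\rho(C)-\rho(B)$, i.e.\ $\partial\rho(\q\co\theta)=\partial\rho(\q)$. (When $k=0$ there is no such transposition and case~(b) alone suffices.) Combining the three cases with the homomorphism property completes the verification that $\partial\rho$ is a degree-$k$ cocycle.
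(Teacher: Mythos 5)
Your proof is correct, and for property (ii) it coincides with the paper's one-line telescoping. For property (i) you take a different, and in one respect more careful, route. The paper argues directly for an arbitrary $\theta\in\aut(\{0,1\}^{k+1})$: it locates the unique coordinate $j$ with $\theta(v)\sbr{j}=v\sbr{k+1}$ or $1-v\sbr{k+1}$, writes $\q\co\theta(\cdot,i)=\q_{j,i}\co\theta'$ for the restrictions $\q_{j,i}$ of $\q$ to the faces $\{v\sbr{j}=i\}$ and an induced $\theta'\in\aut(\{0,1\}^k)$, and applies property (i) of $\rho$ to $\theta'$. Its final step equates $\rho(\q_{j,0})-\rho(\q_{j,1})$ with $\partial\rho(\q)=\rho(\q_{k+1,0})-\rho(\q_{k+1,1})$, i.e.\ it uses, without comment, that the face-difference of $\rho$ is independent of which coordinate direction one splits along. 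You instead reduce to generators of $S_{k+1}\ltimes(\Zmod{2})^{k+1}$ via the (correctly verified) multiplicativity of $(-1)^{r(\theta)}$, and your transposition case is precisely an explicit proof of that coordinate-independence: the two pairs of opposite faces concatenate (by Lemma \ref{lem:concat}) to the same cube $E$, so property (ii) of $\rho$ gives $\rho(A)+\rho(B)=\rho(E)=\rho(C)+\rho(D)$. So your argument supplies the justification the paper leaves implicit, at the cost of the extra bookkeeping of a generating set; your handling of the degenerate case $k=0$ is also correct.
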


\begin{proof}
We check that properties (i) and (ii) from Definition \ref{def:cocycle} hold. 

Let $\theta$ be an automorphism of $\{0,1\}^{k+1}$ and note that there is a unique $j\in [k+1]$ such that $\theta(v)\sbr{j}=v\sbr{k+1}$ or $1-v\sbr{k+1}$. Suppose first that $\theta(v)\sbr{j}=v\sbr{k+1}$. Let $\theta'$ be the restriction of $\theta$ to $\{0,1\}^k$, with the image of $\theta'$ being the cube obtained from $\{0,1\}^{k+1}$ by omitting the $j$-th coordinate. For $i=0,1$ let $\q_{j,i}$ denote the restriction of $\q$ to the face $v\sbr{j}=i$. We then have
\begin{eqnarray*}
\partial\rho(\q\co \theta) & = & \rho(\q\co \theta (\cdot,0))-\rho(\q \co \theta(\cdot,1))  \hspace{2cm} = \;\; \rho(\q_{j,0}\co \theta')-\rho(\q_{j,1}\co \theta')\\
& = & (-1)^{|\theta'(0^k)|} \rho(\q_{j,0})- (-1)^{|\theta'(0^k)|} \rho(\q_{j,1})
 \;\hspace{0.2cm} = \;\; (-1)^{|\theta(0^{k+1})|} (\rho(\q_{j,0})- \rho(\q_{j,1}))\\
& = & (-1)^{|\theta(0^{k+1})|} \partial \rho(\q).
\end{eqnarray*}
If $\theta(v)\sbr{j}$ was $1-v\sbr{k+1}$, then instead of the expression after the second equality above, we would have  $\rho(\q_{j,1}\co \theta')-\rho(\q_{j,0}\co \theta')=
-(-1)^{|\theta'(0^k)|} (\rho(\q_{j,0})- \rho(\q_{j,1}))=(-1)^{|\theta(0^{k+1})|} \partial\rho (\q)$. This proves (i).\\
\indent For property (ii), suppose that $\q''$ is the concatenation of $\q,\q'\in \cu^{k+1}(\ns)$. Then we have
\[
\partial\rho(\q'')=\rho(\q(\cdot,0))-\rho(\q'(\cdot,1))=\rho(\q(\cdot,0))-\rho(\q(\cdot,1))+\rho(\q'(\cdot,0))-\rho(\q'(\cdot,1))=\partial \rho (\q) +\partial \rho (\q'). \qedhere
\]
\end{proof}
\noindent Note also that if $\rho_1,\rho_2$ are two cocycles of degree $k-1$, then $\partial (\rho_1+\rho_2)=\partial\rho_1+\partial\rho_2$. Thus, for every $k\geq 0$ the map $\partial$ is a homomorphism from $Y_{k-1}(\ns,\ab)$ to $Y_k(\ns,\ab)$.

Recall from Definition \ref{defn:gen-sign} that given an abelian group $\ab$ and a function $f:\{0,1\}^n\to \ab$, we write $\sigma_n(f)$ for $\sum_{v\in \{0,1\}^n} (-1)^{|v|}f(v)$.

\begin{defn}\label{def:cobo}
A \emph{coboundary} of degree $k$ is an element of the abelian group $\partial^{k+1}Y_{-1}(\ns,\ab)$. Equivalently, a coboundary of degree $k$ is an element of $Y_k(\ns,\ab)$ of the form $\q\mapsto \sigma_{k+1}(f\co \q)$ for some function $f:\ns\to \ab$. We then define the abelian group
\begin{equation}
H_k(\ns,\ab) = Y_k(\ns,\ab)\;/\;\partial^{k+1} Y_{-1}(\ns,\ab).
\end{equation}
\end{defn}

\begin{example}
In the case of degree-2 cocycles $\cu^3(\T^2)\to \T$, such as those constructed using the Heisenberg nilmanifold in Example \ref{ex:Heisenext}, we have that two such cocycles $\rho_1,\rho_2$ represent the same element of $H_2(\T^2,\T)$ if and only if the difference map $\rho_1-\rho_2$ is of the form $\q\to \sum_{v\in \{0,1\}^3} (-1)^{|v|} f\co \q(v)$ for some function $f:\T^2\to \T$.
\end{example}
\noindent Our aim now is to show that every degree-$k$ extension of $\ns$ can be represented by an element of $H_k(\ns,\ab)$. This will eventually be achieved in Corollary \ref{cor:classrep}. Our first step in this direction is the following.
\medskip
\subsubsection{Associating an element of $H_k(\ns,\ab)$ with a given degree-$k$ extension}
\medskip
\begin{defn}\label{def:crossec}
Let $\nss$ be a degree-$k$ extension of $\ns$ with structure group $\ab$ and projection $\pi:\nss\to \ns$. A \emph{cross section} for this extension is a map $\cs:\ns\to \nss$ such that $\pi\co \cs$ is the identity on $\ns$.
\end{defn}

\begin{lemma}\label{lem:crossecgendef}
Let $\nss$ be a degree-$k$ extension of $\ns$ with structure group $\ab$ and projection $\pi:\nss\to \ns$, let $\cs$ be a cross section, and  define $f:\nss\to \ab$ by $f(y)=\cs\co \pi(y)-y$. Define $\rho:\cu^{k+1}(\ns)\to \ab$ by $\rho(\q)=\sigma_{k+1}(f\co \q')$ for some $\q'\in \cu^{k+1}(\nss)$ such that $\pi\co \q'=\q$. Then $\rho$ is a cocycle of degree $k$. We shall refer to $\rho$ as the \emph{cocycle generated by the cross section} $\cs$, and denote it $\rho_{\cs}$.
\end{lemma}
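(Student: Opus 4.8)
The plan is to verify, in order, that $\rho$ is well-defined (independent of the chosen lift $\q'$) and then that it satisfies the two properties of Definition \ref{def:cocycle}. Throughout I use that each fibre $\pi^{-1}(s)$ is an $\ab$-torsor, so that for $y\in\nss$ the element $f(y)=\cs\co\pi(y)-y\in\ab$ is the unique group element with $f(y)+y=\cs\co\pi(y)$, and that a lift $\q'$ of $\q$ exists by the surjectivity in Definition \ref{def:extension}($i$). For well-definedness I would take two lifts $\q_1',\q_2'\in\cu^{k+1}(\nss)$ of $\q$. By Definition \ref{def:extension}($ii$) we have $\q_2'=\q_1'+\q_3$ with $\q_3\in\cu^{k+1}(\cD_k(\ab))$, and since $\pi\co\q_2'=\pi\co\q_1'=\q$, torsor arithmetic gives $f\co\q_2'=f\co\q_1'-\q_3$ as maps $\{0,1\}^{k+1}\to\ab$. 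Hence $\sigma_{k+1}(f\co\q_2')=\sigma_{k+1}(f\co\q_1')-\sigma_{k+1}(\q_3)$, and taking $\phi=\id$ in the characterization \eqref{eq:k-ab-cubes} of $\cu^{k+1}(\cD_k(\ab))$ yields $\sigma_{k+1}(\q_3)=0$, so the two values coincide.

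For property ($i$), given $\theta\in\aut(\{0,1\}^{k+1})$ I would lift $\q$ to $\q'$ and observe that $\q'\co\theta$ is a lift of $\q\co\theta$, and is a cube by the composition axiom; thus $\rho(\q\co\theta)=\sigma_{k+1}\big((f\co\q')\co\theta\big)$. It then suffices to establish the elementary identity $\sigma_{k+1}(g\co\theta)=(-1)^{r(\theta)}\sigma_{k+1}(g)$ for every $g:\{0,1\}^{k+1}\to\ab$. Since $\theta$ is a composition of a coordinate permutation with $r(\theta)$ coordinate reflections, one has $|\theta(v)|\equiv|v|+r(\theta)\pmod 2$; substituting $u=\theta(v)$ in $\sigma_{k+1}(g\co\theta)=\sum_v(-1)^{|v|}g(\theta(v))$ and using $(-1)^{|v|}=(-1)^{r(\theta)}(-1)^{|\theta(v)|}$ gives the claim, whence $\rho(\q\co\theta)=(-1)^{r(\theta)}\rho(\q)$.

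The main work is property ($ii$). Given the concatenation $\q_3$ of adjacent cubes $\q_1\prec\q_2$ in $\cu^{k+1}(\ns)$, the idea is to produce \emph{adjacent} lifts and then concatenate upstairs. First lift $\q_1$ to $\q_1'\in\cu^{k+1}(\nss)$ and take any lift $\tilde\q_2$ of $\q_2$. The restrictions $\q_1'(\cdot,1)$ and $\tilde\q_2(\cdot,0)$ are two lifts of $\q_1(\cdot,1)=\q_2(\cdot,0)$, so by Definition \ref{def:extension}($ii$) and the $k$-fold ergodicity of $\cD_k(\ab)$ they differ by some $c\in\ab^{\{0,1\}^k}=\cu^k(\cD_k(\ab))$. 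Setting $\q_2'=\tilde\q_2+(c\co\mathrm{pr})$, where $\mathrm{pr}(v,w)=v$, the map $c\co\mathrm{pr}$ lies in $\cu^{k+1}(\cD_k(\ab))$ by the composition axiom, hence $\q_2'\in\cu^{k+1}(\nss)$ by Definition \ref{def:extension}($ii$); it lifts $\q_2$, and a direct check gives $\q_2'(\cdot,0)=\q_1'(\cdot,1)$, i.e. $\q_1'\prec\q_2'$. Then the concatenation $\q_3'$ of $\q_1',\q_2'$ is a cube in $\cu^{k+1}(\nss)$ by Lemma \ref{lem:concat}, and it lifts $\q_3$ because $\pi$ commutes with concatenation. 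Finally $f\co\q_3'$ is the concatenation of $f\co\q_1'\prec f\co\q_2'$, so \eqref{eq:concat-sgn} yields $\rho(\q_3)=\sigma_{k+1}(f\co\q_3')=\sigma_{k+1}(f\co\q_2')+\sigma_{k+1}(f\co\q_1')=\rho(\q_1)+\rho(\q_2)$.

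The step I expect to be the main obstacle is the appeal to Lemma \ref{lem:concat} upstairs, since it requires $\nss$ \emph{itself} to be a nilspace. I would therefore first record this fact: it follows from $\ns$ being a nilspace together with properties ($i$)–($ii$) of the extension, by an argument parallel to the proof of Lemma \ref{lem:k-degbundisns} (ergodicity and corner completion are checked by lifting to $\ns$, solving the problem downstairs, lifting back, and correcting the resulting discrepancy within $\cD_k(\ab)$). With $\nss$ a nilspace in hand, all the constructions above go through and the three verifications combine to show that $\rho=\rho_{\cs}$ is a cocycle of degree $k$.
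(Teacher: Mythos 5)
Your proposal is correct and follows essentially the same route as the paper: well-definedness via the difference $\q_3\in\cu^{k+1}(\cD_k(\ab))$ killing $\sigma_{k+1}$, property (i) by the parity identity $|\theta(v)|\equiv|v|+r(\theta)\pmod 2$ and reindexing, and property (ii) via \eqref{eq:concat-sgn}. The only difference is that the paper dispatches property (ii) in one sentence, whereas you rightly fill in the implicit steps — constructing adjacent lifts by correcting with an element of $\cu^{k+1}(\cD_k(\ab))$, and noting that $\nss$ is itself a nilspace so that Lemma \ref{lem:concat} applies upstairs — all of which is accurate.
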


\begin{proof}
We first check that $\rho$ is well-defined. Suppose that $\q''$ is another $(k+1)$-cube on $\nss$ such that $\pi\co \q''=\pi\co \q' = \q$. Then by definition of the extension we have $\q''-\q'=\q_2\in \cu^{k+1}(\mathcal{D}_k(\ab))$. Then
\begin{eqnarray*}
\sigma_{k+1}(f\co \q'') & = & \sum_{v\in \{0,1\}^{k+1}} (-1)^{|v|} f\co (\q'+\q_2) (v)\;\;\; = \;\sum_{v\in \{0,1\}^{k+1}} (-1)^{|v|} \big(\;\cs\co\pi(\q'(v))-(\q'(v)+\q_2(v))\;\big)\\
&=& \sigma_{k+1}(f\co \q') - \sigma_{k+1}(\q_2)\;\;\;=\;\;\; \sigma_{k+1}(f\co \q').
\end{eqnarray*}
Let us now check that the cocycle properties hold.

Let $\theta\in \aut(\{0,1\}^{k+1})$. Then
\begin{eqnarray*}
\rho(\q\co \theta)  \;=\;  \sigma_{k+1} (f\co\, (\q\co \theta))
&=& \sum_{v\in\{0,1\}^{k+1}}  (-1)^{|v|-|\theta(v)|}  (-1)^{|\theta(v)|} f(\q \co \theta(v))\\
&=& (-1)^{r(\theta)} \sum_{v\in\{0,1\}^{k+1}} (-1)^{|\theta(v)|} f(\q \co \theta(v)) \;=\;  (-1)^{r(\theta)}\rho(\q),
\end{eqnarray*}
where we have used that $|\,|\theta(v)|-|v|\,|$ is exactly the number of coordinates that $\theta$ switches, which is precisely the number of coordinates equal to 1 in $\theta(0^{k+1})$.\\
\indent The second property follows from the basic property of $\sigma_{k+1}$ observed in \eqref{eq:concat-sgn}.
\end{proof}
Let us now examine how the cocycle $\rho_{\cs}$ varies when we change the cross section $\cs$.
\begin{lemma}\label{lem:cocyvar}
Let $\nss$ be a degree-$k$ extension of $\ns$ with structure group $\ab$ and projection $\pi:\nss\to \ns$, and let $\cs:\ns\to\nss$ be a cross section. Then for every other cross section $\cs'$, we have $\rho_{\cs'}\in \rho_{\cs} + \partial^{k+1}Y_{-1}(\ns,\ab)$. Conversely, every element of $\rho_{\cs} + \partial^{k+1}Y_{-1}(\ns,\ab)$ is a cocycle $\rho_{\cs'}$ for some cross section $\cs'$.
\end{lemma}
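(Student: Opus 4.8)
The plan is to prove both directions by relating cross sections through their pointwise differences in the structure group $\ab$. The key observation is that any two cross sections $\cs,\cs'$ for the extension $\nss\to\ns$ differ by an $\ab$-valued function on $\ns$: since $\pi\co\cs=\pi\co\cs'=\id_\ns$, for each $x\in\ns$ the points $\cs(x),\cs'(x)$ lie in the same fibre of $\pi$, so there is a unique $u(x)\in\ab$ with $\cs'(x)=\cs(x)+u(x)$. This defines a function $u:\ns\to\ab$, i.e. an element $u\in Y_{-1}(\ns,\ab)=\ab^\ns$. Conversely, given any cross section $\cs$ and any $u\in\ab^\ns$, the map $\cs':x\mapsto \cs(x)+u(x)$ is again a cross section. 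This sets up the bijective correspondence between cross sections and elements of $\ab^\ns$ (once one cross section is fixed) that drives the whole proof.

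First I would compute how $\rho_{\cs}$ changes under $\cs\mapsto\cs'$ in terms of $u$. Recall $f(y)=\cs\co\pi(y)-y$ and let $f'(y)=\cs'\co\pi(y)-y$ be the corresponding function for $\cs'$. Then for any $y\in\nss$ we have $f'(y)-f(y)=\cs'\co\pi(y)-\cs\co\pi(y)=u(\pi(y))$, that is $f'=f+(u\co\pi)$ as $\ab$-valued functions on $\nss$. Now fix $\q\in\cu^{k+1}(\ns)$ and choose a lift $\q'\in\cu^{k+1}(\nss)$ with $\pi\co\q'=\q$. By definition $\rho_{\cs'}(\q)=\sigma_{k+1}(f'\co\q')=\sigma_{k+1}(f\co\q')+\sigma_{k+1}((u\co\pi)\co\q')$, using additivity of $\sigma_{k+1}$ in the abelian group $\ab$. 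The first term is $\rho_{\cs}(\q)$, and since $\pi\co\q'=\q$ the second term is $\sigma_{k+1}(u\co\q)$. Thus
\[
\rho_{\cs'}(\q)=\rho_{\cs}(\q)+\sigma_{k+1}(u\co\q).
\]
By Definition \ref{def:cobo}, the map $\q\mapsto\sigma_{k+1}(u\co\q)$ is exactly a coboundary of degree $k$, i.e. an element of $\partial^{k+1}Y_{-1}(\ns,\ab)$ (it equals $\partial^{k+1}u$). This proves $\rho_{\cs'}\in\rho_{\cs}+\partial^{k+1}Y_{-1}(\ns,\ab)$, the forward direction.

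For the converse, I would run the same computation backwards. Given an arbitrary element of $\rho_{\cs}+\partial^{k+1}Y_{-1}(\ns,\ab)$, it has the form $\rho_{\cs}+\partial^{k+1}u$ for some $u\in\ab^\ns=Y_{-1}(\ns,\ab)$, where $\partial^{k+1}u$ is the coboundary $\q\mapsto\sigma_{k+1}(u\co\q)$. Define $\cs'(x):=\cs(x)+u(x)$; this is a cross section since $\pi\co\cs'=\pi\co\cs=\id_\ns$. By the displayed identity just established, $\rho_{\cs'}=\rho_{\cs}+\sigma_{k+1}(u\co\,\cdot\,)=\rho_{\cs}+\partial^{k+1}u$, which is precisely the prescribed element. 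I do not anticipate a serious obstacle here; the only point requiring a little care is the well-definedness of $\rho_{\cs'}$ (independence of the chosen lift $\q'$), but this is already guaranteed by Lemma \ref{lem:crossecgendef}, and the computation above is valid for any fixed choice of lift since the extra term $\sigma_{k+1}(u\co\q)$ depends only on $\q$ and not on $\q'$. The main conceptual step is simply recognizing that the difference of two cross sections descends to a function on the base $\ns$ and that $\sigma_{k+1}$ of its pullback is by definition a coboundary.
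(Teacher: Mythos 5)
Your proposal is correct and follows essentially the same route as the paper: both directions rest on the identity $\rho_{\cs'}(\q)=\rho_{\cs}(\q)+\sigma_{k+1}((\cs'-\cs)\co\q)$, obtained by inserting $\cs\co\pi(\q'(v))$ into the defining sum, together with the observation that cross sections differ by an element of $\ab^{\ns}=Y_{-1}(\ns,\ab)$. Your explicit introduction of $u=\cs'-\cs$ and the pullback identity $f'=f+(u\co\pi)$ is just a slightly more spelled-out version of the paper's computation.
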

\noindent Thus the coset $\rho_{\cs} + \partial^{k+1}Y_{-1}(\ns,\ab)$ is the set of
all cocycles generated by cross sections for the extension $\nss$. This coset is the element of $H_k(\ns,\ab)$ that we associate with the extension $\nss$.
\begin{proof}
Letting $\q'$ be any lift of $\q$, we have
\begin{eqnarray*}
\rho_{\cs'}(\q) & = & \sigma_{k+1}(\cs'\co \pi(\q')-\q')= \sum_{v\in\{0,1\}^{k+1}} (-1)^{|v|} \big(\,\cs'\co\pi(\q'(v))-\q'(v)\,\big)\\
& = & \sum_{v\in\{0,1\}^{k+1}} (-1)^{|v|} [\, \cs'\co\pi(\q'(v))-\cs\co\pi(\q'(v)) + \cs\co\pi(\q'(v))-\q'(v)\,]\\
& = & \sigma_{k+1}(\cs\co \pi(\q')-\q')+\sigma_{k+1}(\cs'\co \q - \cs\co \q)=\rho_{\cs}(\q)+\sigma_{k+1}((\cs'-\cs)\co \q).
\end{eqnarray*}
Since $\cs'-\cs$ is in $Y_{-1}(\ns,\ab)$, the function $\q\mapsto \sigma_{k+1}((\cs'-\cs)\co\q)$ is a coboundary of degree $k$ as claimed.

Conversely, given any coboundary $\q \mapsto \sigma_{k+1}(f\co \q)$ for some $f:\ns \to \ab$, and a cocycle $\rho_{\cs}$, we have $\rho_{\cs}(\q)+\sigma_{k+1}(f\co \q)=\sigma_{k+1}((\cs+f)\co\pi(\q')-\q')=\rho_{\cs+f}$, where $\cs+f:x\mapsto \cs(x)+f(x)$ is indeed a cross-section (where $+$ denotes the action of $\ab$ on $\nss$).
\end{proof}

\noindent Thus, every degree-$k$ extension $\nss$ of $\ns$ with structure group $\ab$ has an associated element $\rho_{\cs} + \partial^{k+1}Y_{-1}(\ns,\ab)$ of $H_k(\ns,\ab)$, which is the same well-defined element for any choice of cross section $\cs$.
\begin{defn}
We say that two degree-$k$ extensions of $\ns$ with structure group $\ab$ are \emph{equivalent} if they correspond to the same element of $H_k(\ns,\ab)$.
\end{defn}
\noindent Note that if $\nss$ is a split extension, with cube-preserving cross section $\cs$, then $\rho(\q)=\sigma_{k+1}(\cs\co \q-\q')=0$ since $\cs \co \q$ is a $(k+1)$-cube on $\nss$ with image $\q$ under $\pi$, so $\cs \co \q-\q'$ must be in $\cu^{k+1}(\cD_k(\ab))$. Thus the element of $H_k(\ns,\ab)$ corresponding to $\nss$ is the identity, as expected.\\

\noindent We shall now go in the opposite direction, to show that given any element of $H_k(\ns,\ab)$, there is a unique class of equivalent extensions of degree $k$ over $\ns$ with group $\ab$ corresponding to this element. In \cite{CamSzeg},  Camarena and Szegedy leave this task for the topological part of the paper, to avoid repetition, and in that part they combine the purely algebraic construction with topological tools to obtain, given a measurable cocycle, an extension which is also a \emph{compact} nilspace (see \cite[Section 3.5]{CamSzeg}). In the following subsection we shall isolate the purely algebraic part of this construction.
\medskip
\subsubsection{Obtaining an extension from a cocycle}
\medskip

Let $\ns$ be a nilspace and let $\ab$ be an abelian group. Our aim here is to show that any $\ab$-valued cocycle $\rho$ on $\cu^k(\ns)$ yields a $\ab$-bundle over $\ns$ which admits a compatible nilspace structure.\\
\indent For each $x\in \ns$ we denote by $\cu^k_x(\ns)$ the set of $k$-cubes $\q$ satisfying $\pi(\q):=\q(0^k)=x$. We consider the restrictions of the cocycle $\rho$ to each such set $\cu^k_x(\ns)$, denoting such a restriction by $\rho_x$. The extension that we shall construct consists essentially of the functions $\rho_x$, but we need to define an action of $\ab$ on these functions  such that there is a natural projection from this space to $\ns$ that respects the orbits of the action. This leads to the following natural definition of the extension.

\begin{defn}
Let $\rho:\cu^k(\ns)\to \ab$ be a cocycle of degree $k-1$. We then define the set $M=M(\rho)$ as follows.
\begin{equation}\label{eq:cocyclext}
M=\bigcup_{x\in \ns} \{ \rho_x+z\,:\,z\in\ab \}.
\end{equation}
We define the map $\tilde\pi:M\to \ns$ by $\rho_x+z\mapsto x$, for every $x\in \ns$ and $z\in \ab$. We also define an action of $\ab$ on $M$ by $(\rho_x+z,z')\mapsto \rho_x+z+z'$.
\end{defn}
\noindent One checks easily that $M$ is a $\ab$-bundle over $\ns$ with projection $\tilde\pi$. 

Now we shall define cubes on $M$. Given any function $f:\{0,1\}^k\to M$, we  denote by $a=a_f$ the function $\{0,1\}^k\to \ab$, $v\mapsto \rho_x(v)-f(v)$, where $x=\tilde\pi(f(v))\in \ns$.
\begin{defn}\label{def:extcubes}
We define $\cu^k(M)$ to be the set of functions $f:\{0,1\}^k\to M$ such that $\tilde\pi\co f\in \cu^k(\ns)$ and
\begin{equation}\label{eq:extcubes}
\rho(\tilde\pi\co f) = \sigma_k(a).
\end{equation}
For $n\neq k$, a function $f:\{0,1\}^n\to M$ is declared to be in $\cu^n(M)$ if $\tilde\pi\co f\in \cu^n(\ns)$ and every $k$-dimensional face-restriction of $f$ is in $\cu^k(M)$. \footnote{If $n<k$ then there are no $k$-dimensional face restrictions and so the condition is just that $\tilde\pi\co f\in \cu^n(\ns)$.}
\end{defn}
We can now complete the main task of this subsection.

\begin{proposition}\label{prop:extcompleted}
The set $M$ together with the cube sets $\cu^n(M)$, $n\geq 0$, is a nilspace.
\end{proposition}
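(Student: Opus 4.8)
The plan is to show that $M$ is a degree-$(k-1)$ extension of $\ns$ in the sense of Definition \ref{def:extension} and then to read off the nilspace axioms from this. Concretely, I would verify three things: (a) $M$ together with the sets $\cu^n(M)$ is a cubespace (the composition axiom); (b) surjectivity, i.e. $\q\mapsto \tilde\pi\co\q$ maps $\cu^n(M)$ onto $\cu^n(\ns)$; and (c) the fibre correspondence \eqref{eq:ext-cube-corresp} with structure group $\ab$ in degree $k-1$. Granting (a)--(c), $M$ is an abelian bundle over $\ns$ satisfying the two defining properties of an extension, and since both $\ns$ and $\cD_{k-1}(\ab)$ are nilspaces, the ergodicity and corner-completion axioms for $M$ follow by transcribing the inductive step in the proof of Lemma \ref{lem:k-degbundisns}, with $M$ in the role of $\bnd_k$, $\ns$ in the role of $\bnd_{k-1}$, and $\cD_{k-1}(\ab)$ in the role of $\cD_k(\ab_k)$; ergodicity is then the case $n=1$, which is unconstrained as soon as $k\geq 2$ since $1$ lies below the distinguished dimension $k$.

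For the composition axiom (a) the key point is a lemma: for any $\q\in\cu^n(M)$ and any morphism $\psi:\{0,1\}^k\to\{0,1\}^n$ one has $\q\co\psi\in\cu^k(M)$. Composition then follows, since a $k$-dimensional face restriction of $\q\co\phi$ is of the form $\q\co(\phi\co\phi_F)$ with $\phi\co\phi_F$ a morphism into $\{0,1\}^n$, while $\tilde\pi\co(\q\co\phi)\in\cu^m(\ns)$ is immediate from composition on $\ns$. To prove the lemma I would first record two stability properties of the defining equation $\rho(\tilde\pi\co f)=\sigma_k(a_f)$ at dimension $k$: it is invariant under precomposition with $\theta\in\aut(\{0,1\}^k)$, because $\rho(\cdot\co\theta)=(-1)^{r(\theta)}\rho(\cdot)$ by property (i) of a cocycle while $a_{f\co\theta}=a_f\co\theta$ and $\sigma_k$ transforms by the same sign; and it is stable under concatenation, because $\rho$ is additive on concatenations by property (ii) of a cocycle, $a_f$ is itself the concatenation of the $a_{f_i}$, and $\sigma_k$ is additive on concatenations by \eqref{eq:concat-sgn}. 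I would also note that a $k$-cube independent of some coordinate has $\rho=0$, since such a cube is its own concatenation with itself, forcing $\rho=\rho+\rho$, and likewise $\sigma_k(a_f)=0$ for such $f$. With these in hand I would treat $\psi$ by cases exactly as in the proof of Proposition \ref{prop:nsparaequiv}: face maps reduce to the definition via an automorphism; injective $\psi$ are handled by induction on $|J_\psi|$ using the decomposition of Lemma \ref{lem:inj-morph-decomp} together with the concatenation stability; and non-injective $\psi$ are handled by the degeneracy observation, both sides of the defining equation vanishing.

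The fibre correspondence (c) is a direct computation: if $\q_1\in\cu^n(M)$ and $\q_3:\{0,1\}^n\to\ab$ then $a_{\q_1+\q_3}=a_{\q_1}-\q_3$, so for each $k$-face $F$ the defining equation for $\q_1+\q_3$ holds if and only if $\sigma_k(\q_3\co\phi_F)=0$; hence $\q_1+\q_3\in\cu^n(M)$ exactly when $\sigma_k(\q_3\co\phi_F)=0$ for every $k$-face $F$, which by \eqref{eq:k-ab-cubes} is precisely $\q_3\in\cu^n(\cD_{k-1}(\ab))$.

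The remaining, and hardest, step is surjectivity (b) in dimensions $n>k$. For $n\leq k$ it is immediate: any lift works when $n<k$, and for $n=k$ one chooses the fibre coordinates of a lift $f$ so that $\sigma_k(a_f)=\rho(\q)$, e.g. by placing all of $\rho(\q)$ at the vertex $0^k$. For $n>k$ the task is to realise the prescribed family of values $F\mapsto\rho(\q\co\phi_F)$, over all $k$-faces $F$ of $\{0,1\}^n$, as $F\mapsto\sigma_k(a\co\phi_F)$ for a single vertex function $a:\{0,1\}^n\to\ab$. I would prove this by induction on $n$, slicing $\{0,1\}^n=\{0,1\}^{n-1}\times\{0,1\}$: the $k$-faces inside a slice are handled by the inductive hypothesis applied to $\q(\cdot,0)$ and $\q(\cdot,1)$, while the remaining ``crossing'' $k$-faces impose, via the recursion \eqref{eq:recurGray} for $\sigma_k$, a realisation problem one degree lower inside $\cD_{k-1}(\ab)$, solvable after correcting the slice at $1$ by an element of $\cu^{n-1}(\cD_{k-1}(\ab))$. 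The crux is that the compatibility conditions making this correction solvable are exactly the defining relations of $\rho$ (equivalently, that $\partial\rho$ is again a cocycle): the difference of $\rho(\q\co\phi_F)$ over opposite $k$-faces of a $(k+1)$-face equals $\partial\rho$ evaluated there, and $\partial\rho$ transforms under the automorphisms relating the coordinate directions by the same signs as $\sigma_{k+1}$, which is precisely what the $\sigma$-identities demand. This cohomological realisation --- morally the acyclicity of the discrete cube --- is the main obstacle; everything else is bookkeeping or a direct transcription of Lemma \ref{lem:k-degbundisns}.
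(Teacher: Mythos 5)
Your proposal is correct, and on the composition axiom and the fibre correspondence it takes essentially the same route as the paper: reduce composition to $k$-dimensional restrictions, dispose of non-injective morphisms by degeneracy, handle injective ones via Lemma \ref{lem:inj-morph-decomp} together with the additivity of both $\rho$ and $\sigma_k$ on concatenations, namely \eqref{eq:concat-sgn} and property (ii) of Definition \ref{def:cocycle}, and read off \eqref{eq:M-k-deg} from $a_{\q_1+\q_3}=a_{\q_1}-\q_3$ together with \eqref{eq:k-ab-cubes}. The genuine difference is your treatment of the surjectivity of $\q\mapsto\tilde\pi\co\q$ for $n>k$: the paper lists this as one of the two conditions needed to run the completion argument of Lemma \ref{lem:k-degbundisns}, but then only verifies the fibre correspondence, so you are supplying an argument the paper leaves implicit, and you correctly identify it as the only step with real content. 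Your slicing induction does work. The compatibility you need is that for a $(k+1)$-cube $\q^*$ the difference $\rho(\q^*|_{v_i=0})-\rho(\q^*|_{v_i=1})$ is independent of the coordinate direction $i$, and this follows from the two cocycle axioms: if $\theta$ reflects the last coordinate of $\{0,1\}^k$, then the concatenation of $(\q^*|_{v_{k+1}=0})\co\theta$ with $\q^*|_{v_k=0}$ and the concatenation of $\q^*|_{v_k=1}$ with $(\q^*|_{v_{k+1}=1})\co\theta$ are the same map (a concatenation of two adjacent cubes is determined by its outer slices), and equating their $\rho$-values via properties (i) and (ii) gives the identity. Two details deserve care in a full write-up. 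First, the correction step is itself a realisation problem (prescribing $\sigma_{k-1}$ on all $(k-1)$-faces subject to membership of $\cu^{n-1}(\cD_{k-1}(\ab))$), so your induction is really a double induction on $n$ and on the face dimension; it closes because the compatibility forces the prescribed values to be constant on parallel classes of $(k-1)$-faces, after which an explicit vertex function supported on sets of the form $\{v: v|_S=0^S\}$ realises them. Second, your ergodicity remark requires $k\geq 2$, which is the intended range here; for $k=1$ the structure group would act through $\cD_0(\ab)$ and the construction degenerates.
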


\begin{proof}
To see the composition axiom, suppose that $f:\{0,1\}^n\to M$ is a cube and that $\phi:\{0,1\}^m\to \{0,1\}^n$ is a morphism. If $m<k$ then $f\co\phi\in \cu^m(M)$ just by Definition \ref{def:extcubes} and the composition axiom for $\ns$.  Supposing then that $m\geq k$, it suffices to check that for every face map $\phi':\{0,1\}^k\to \{0,1\}^m$ we have that $f\co\phi\co\phi'\in \cu^k(M)$. Now $\phi\co\phi'$ is a morphism $\{0,1\}^k\to \{0,1\}^n$. If $\phi\co\phi'$ is not injective then we are done because $\phi\co\phi'$ factors through a lower-dimensional morphism $\psi:\{0,1\}^\ell \to \{0,1\}^n$, $\ell<k$ and we have that $f\co\psi$ is in $\cu^\ell(M)$ as in the case $m<k$ above. If $\phi\co\phi'$ is injective, then using Lemma \ref{lem:inj-morph-decomp} repeatedly we see that it is a concatenation of face maps $\phi_j:\{0,1\}^k\to\{0,1\}^n$ (modulo automorphisms of $\{0,1\}^k$). For each of these face maps, we have that $f\co \phi_j$ satisfies \eqref{eq:extcubes} by assumption. It follows that $f\co\phi\co\phi'$ also satisfies \eqref{eq:extcubes}, since both functions involved in this equation are additive on adjacent cubes (for the cocycle this holds by definition, and for $\sigma_k$ it follows from \eqref{eq:concat-sgn}).

To see the completion axiom, note that, since the axiom holds for $\ns$, by the argument establishing completion in the proof of Lemma \ref{lem:k-degbundisns}, it suffices to check that for every $n\in \N$ we have $\cu^n(\ns) = \{\tilde\pi\co \q: \q\in  \cu^n(M)\}$ and for every $\q\in \cu^n(M)$ we have
\begin{equation}\label{eq:M-k-deg}
\{\q_2 \in \cu^n(M): \tilde\pi\co \q = \tilde\pi\co \q_2\} = \{\q+\q_3: \q_3\in \cu^n(\cD_{k-1}(\ab))\}.
\end{equation}
For $n< k$ this equation follows from the definition of $\cu^n(M)$ and the fact that $\cD_{k-1}(\ab)$ is $(k-1)$-fold ergodic. To see the case $n\geq k$, recall that $\q_3\in \cu^n(\cD_{k-1}(\ab))$ if and only if $\sigma_k(\q_3\co\phi)=0$ for every $k$-face map $\phi$ into $\{0,1\}^n$.
\end{proof}
\noindent Note that if $\ns$ is of step $s$, then $M(\rho)$ is of step $\max(s+1,k-1)$, as can be seen by checking the uniqueness of completion in the proof of Lemma \ref{lem:k-degbundisns}. The typical examples of extensions involve extending a $(k-1)$-step nilspace by a cocycle of degree $k$ to obtain a $k$-step nilspace (as in Example \ref{ex:Heisenext}), but it can be useful to consider a degree-$k$ extension of an $s$-step nilspace with $k<s$ (see for instance Lemma \ref{lem:transext}).\medskip

\noindent Recall from Lemma \ref{lem:crossecgendef} that given a degree-$k$ extension of a nilspace, every cross section $\cs$ for this extension generates a cocycle $\rho_{\cs}$. As our last main goal in this subsection, we want to show that, up to isomorphisms of nilspaces, every extension of $\ns$ by $\ab$ is of the form  $M(\rho)$ above for some cocycle $\rho$. The formal statement will be given as Corollary \ref{cor:classrep}. The main idea is that we can just take $\rho$ to be the cocycle generated by some cross section for the extension.\\
\indent First let us confirm a fact that ought to be true, namely that for $M(\rho)$ itself, the obvious cross-section does indeed generate $\rho$.
\begin{lemma}\label{lem:obviouscrossec}
Let $\ns$ be a nilspace, let $\ab$ be an abelian group, and let $\rho:\cu^k(\ns)\to \ab$ be a cocycle. Let $\cs:\ns \to M(\rho)$ be the cross section $x\mapsto \rho_x$. Then $\rho_{\cs}=\rho$.
\end{lemma}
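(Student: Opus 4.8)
The plan is to unwind the definitions of the cross section $\cs$, the associated function $f$, and the cocycle $\rho_{\cs}$, and to verify that they collapse back to $\rho$. First I would recall from Lemma \ref{lem:crossecgendef} that $\rho_{\cs}$ is defined by $\rho_{\cs}(\q) = \sigma_k(f\co\q')$ for any lift $\q'\in\cu^k(M(\rho))$ of $\q$, where $f:M(\rho)\to\ab$ is given by $f(y)=\cs\co\tilde\pi(y)-y$. Since $\cs$ here is the map $x\mapsto\rho_x$, for any point $y\in M(\rho)$ we have $y=\rho_x+z$ for $x=\tilde\pi(y)$ and some $z\in\ab$, and then $\cs\co\tilde\pi(y)=\rho_x$, so that $f(y)=\rho_x-(\rho_x+z)=-z$. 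In other words, $f$ simply reads off the negative of the $\ab$-component of a point of $M(\rho)$ relative to the base point $\rho_x$.

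Next I would compute $\rho_{\cs}(\q)$ using this description of $f$. Fix $\q\in\cu^k(\ns)$ and choose the obvious lift $\q'=\cs\co\q$, i.e. $\q'(v)=\rho_{\q(v)}$ for each $v\in\{0,1\}^k$; this is a valid lift since $\tilde\pi\co\q'=\q$, and I should check it lies in $\cu^k(M(\rho))$. For this lift, the function $a=a_{\q'}$ from the definition preceding Definition \ref{def:extcubes} is $v\mapsto \rho_{\q(v)}(v)-\q'(v)=\rho_{\q(v)}(v)-\rho_{\q(v)}=0$, hence $\sigma_k(a)=0$, and by definition $\q'\in\cu^k(M(\rho))$ precisely because $\rho(\tilde\pi\co\q')=\rho(\q)=\sigma_k(a)$ forces $\sigma_k(a)=\rho(\q)$; so I instead take a general lift. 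With a general lift $\q'$ one has $f\co\q'(v)=-a_{\q'}(v)$ directly from the computation $f(y)=-z$ above, since $a_{\q'}(v)=\rho_{\tilde\pi(\q'(v))}(v)-\q'(v)$ is exactly the $\ab$-offset $z$ at $v$. Therefore $\sigma_k(f\co\q')=\sigma_k(-a_{\q'})=-\sigma_k(a_{\q'})$, and by the defining equation \eqref{eq:extcubes} for membership of $\q'$ in $\cu^k(M(\rho))$ we have $\sigma_k(a_{\q'})=\rho(\tilde\pi\co\q')=\rho(\q)$.

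Combining these gives $\rho_{\cs}(\q)=\sigma_k(f\co\q')=-\rho(\q)$, which is off by a sign, so the real work is to track the sign conventions carefully. I expect the main (and only genuine) obstacle to be reconciling the sign in $f(y)=\cs\co\pi(y)-y$ with the sign in $a_f(v)=\rho_x(v)-f(v)$: depending on how the offset $z$ is oriented in the two definitions, the two minus signs should cancel rather than compound. I would resolve this by writing out, for a single lift, the identity $f\co\q'(v)=\cs\co\tilde\pi(\q'(v))-\q'(v)$ and comparing it termwise with $a_{\q'}(v)=\rho_{\tilde\pi(\q'(v))}(v)-\q'(v)$; since $\cs\co\tilde\pi(\q'(v))=\rho_{\tilde\pi(\q'(v))}$ as an element of $M(\rho)$ and the subtraction in both cases denotes the $\ab$-difference within a fibre, the two expressions agree, giving $f\co\q'=a_{\q'}$ (not $-a_{\q'}$) and hence $\rho_{\cs}(\q)=\sigma_k(a_{\q'})=\rho(\q)$, as desired. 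Once the fibrewise subtraction convention is pinned down, the proof is a one-line substitution.
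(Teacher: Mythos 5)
Your proposal is correct and follows essentially the same route as the paper: take an arbitrary lift $\q'$ of $\q$, observe that $f\co\q'(v)=\cs\co\tilde\pi(\q'(v))-\q'(v)=\rho_{\q(v)}-\q'(v)=a_{\q'}(v)$ termwise, and conclude $\rho_{\cs}(\q)=\sigma_k(a_{\q'})=\rho(\q)$ from the defining equation \eqref{eq:extcubes}. The sign worry in your middle paragraph is spurious (with $\q'(v)=\rho_{\q(v)}+z_v$ one has $a_{\q'}(v)=-z_v$, not $z_v$, so there is no discrepancy to begin with), and your final resolution lands exactly on the paper's one-line computation.
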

\noindent (In particular, every cocycle can be viewed as a cocycle generated by a cross section of some extension.)
\begin{proof}
Let $\q\in \cu^k(\ns)$ and let $\q'$ be a cube in $\cu^k(M)$ with $\tilde\pi\co\q'=\q$. We thus have $\q'(v)=\rho_{\tilde\pi(\q'(v))}-a(v)=\rho_{\q(v)}-a(v)$ for a $\ab$-valued function $a$ satisfying $\sigma_k(a)=\rho(\q)$. Then by definition of $\rho_{\cs}$, we have
\[
\rho_{\cs}(\q)=\sigma_k \big(\cs\co\tilde\pi(\q')-\q'\big)=\sum_{v\in \{0,1\}^k} (-1)^{|v|}\big(\cs\co \q(v)-(\rho_{\q(v)}-a(v))\big)=\sum_{v} (-1)^{|v|} a(v) =\rho(\q). \qedhere
\]
\end{proof}

\begin{lemma}\label{lem:extisotoM} Let $\ns$ be a nilspace. Let $\nss$ be a degree-$k$ extension of $\ns$ by an abelian group $\ab$. Let $\cs:\ns\to \nss$ be a cross-section and let $\rho=\rho_{\cs}$ be the associated cocycle. Then $\nss$ is isomorphic as a nilspace to the extension $M=M(\rho)$ in \eqref{eq:cocyclext}.
\end{lemma}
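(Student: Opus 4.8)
The plan is to write down an explicit candidate isomorphism and then verify that it matches the two cube structures, reducing the verification to a single ``critical'' dimension and bootstrapping from there using the bundle-extension axioms.

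First I would define the map. Recall the function $f:\nss\to\ab$, $f(y)=\cs\co\pi(y)-y$, associated with the cross section $\cs$ as in Lemma \ref{lem:crossecgendef}, and recall that the elements of $M=M(\rho)$ lying over a point $x\in\ns$ are exactly the $\rho_x+z$, $z\in\ab$. Define
\[
\Phi:\nss\to M,\qquad \Phi(y)=\rho_{\pi(y)}-f(y).
\]
Its inverse is visibly $\rho_x+z\mapsto \cs(x)+z$, so $\Phi$ is a bijection. A short computation shows $\Phi$ is a bundle isomorphism over $\ns$: it commutes with the projections, $\tilde\pi\co\Phi=\pi$, and it is $\ab$-equivariant, since $\pi(y+z)=\pi(y)$ and $f(y+z)=f(y)-z$ give $\Phi(y+z)=\Phi(y)+z$. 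It therefore remains only to check that $\Phi$ and $\Phi^{-1}$ send cubes to cubes.

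The key computation is at dimension $k+1$. For $\q:\{0,1\}^{k+1}\to\nss$, writing $g=\Phi\co\q$, the associated function $a_g$ from Definition \ref{def:extcubes} is simply $a_g=f\co\q$: indeed $a_g(v)=\rho_{\pi(\q(v))}-g(v)=\rho_{\pi(\q(v))}-\big(\rho_{\pi(\q(v))}-f(\q(v))\big)=f(\q(v))$, so the $\rho$-part cancels. Hence, if $\q\in\cu^{k+1}(\nss)$, then $\q$ is itself a lift of $\pi\co\q$, so by the very definition of $\rho=\rho_{\cs}$ (Lemma \ref{lem:crossecgendef}) we get $\rho(\pi\co\q)=\sigma_{k+1}(f\co\q)=\sigma_{k+1}(a_g)$, which is exactly the condition \eqref{eq:extcubes} for $g=\Phi\co\q$ to lie in $\cu^{k+1}(M)$. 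This proves $\Phi$ preserves $(k+1)$-cubes. For the converse at this dimension, given $\Phi\co\q\in\cu^{k+1}(M)$ one lifts $\pi\co\q=\tilde\pi\co\Phi\co\q$ to some $\q''\in\cu^{k+1}(\nss)$ using the surjectivity in Definition \ref{def:extension}, applies the forward direction to $\q''$, and then compares $\Phi\co\q$ with $\Phi\co\q''$: these have the same projection, so by \eqref{eq:M-k-deg} they differ by a cube $\q_3\in\cu^{k+1}(\cD_k(\ab))$; pulling this back through the $\ab$-equivariant bijection $\Phi$ gives $\q=\q''+\q_3$, whence $\q\in\cu^{k+1}(\nss)$ by \eqref{eq:ext-cube-corresp}.

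Finally I would bootstrap to arbitrary $n$. For $n\le k$ both cube sets consist of all maps whose projection is a cube on $\ns$ (since $\cD_k(\ab)$ is $k$-fold ergodic), so preservation is immediate from $\tilde\pi\co\Phi=\pi$. For $n\ge k+1$, the forward direction follows from the composition axiom for $\nss$ together with Definition \ref{def:extcubes}: every $(k+1)$-face restriction of $\q$ is a $(k+1)$-cube on $\nss$, hence its image under $\Phi$ (which is the corresponding face restriction of $\Phi\co\q$) is a $(k+1)$-cube on $M$, and $\tilde\pi\co\Phi\co\q\in\cu^n(\ns)$. The converse is handled exactly as at the critical dimension: lift $\pi\co\q$ to $\q''\in\cu^n(\nss)$, apply the forward direction, and use \eqref{eq:M-k-deg} and \eqref{eq:ext-cube-corresp}, both valid in every dimension since $\nss$ and $M$ are degree-$k$ extensions (the latter by Proposition \ref{prop:extcompleted}). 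I expect the only thing to watch to be bookkeeping: the construction of $M(\rho)$ in the preceding subsection is phrased for a degree-$(k-1)$ cocycle on $\cu^k$, whereas here $\rho=\rho_{\cs}$ is a degree-$k$ cocycle on $\cu^{k+1}$, so one must consistently shift each index ($\sigma_k\rightsquigarrow\sigma_{k+1}$, $\cD_{k-1}\rightsquigarrow\cD_k$, critical dimension $k\rightsquigarrow k+1$) and keep the sign in the definition of $a_g$ straight; the conceptual content is otherwise routine.
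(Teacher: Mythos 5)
Your proposal is correct and follows essentially the same route as the paper: the map $\Phi(y)=\rho_{\pi(y)}-f(y)$ is exactly the paper's isomorphism $\theta:x\mapsto\rho_{\pi(x)}+(x-\cs\co\pi(x))$, and the central verification — that the associated function $a$ equals $f\co\q$ so that \eqref{eq:extcubes} reduces to the defining identity $\rho(\pi\co\q)=\sigma(f\co\q)$ of $\rho_{\cs}$ — is the same computation. You are somewhat more explicit than the paper about the inverse direction and the index bookkeeping (the paper disposes of both with ``the cases $n\neq k$ follow similarly''), which is a welcome but not essentially different elaboration.
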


\begin{proof} Let $\pi:\nss\to \ns$ be the projection for the extension. The isomorphism is given by the following map:
\begin{equation}
\theta:\nss\to M,\;\; x\mapsto \rho_{\pi(x)}+(x-\cs\co\pi(x)).
\end{equation}
It is easily checked that $\theta$ is a bijection. We claim that for every $\q\in \cu^n(\nss)$, the map $\theta\co \q$ satisfies the conditions in Definition \ref{def:extcubes}. We check this for $n=k$. The first condition is that $\tilde\pi\co\theta\co \q\in \cu^k(\ns)$. But $\tilde\pi\co\theta\co \q=\pi\co\q$, so the condition holds. Next, consider the function $a(v)=\rho_{\tilde\pi\co\theta\co \q}(v)-\theta\co\q(v)=\cs\co\pi(\q(v))-\q(v)$. The second condition is that $a$ satisfies \eqref{eq:extcubes}, that is $\rho(\tilde\pi\co\theta\co \q)=\sigma_k(a)$. But this holds indeed, since  $\rho(\tilde\pi\co\theta\co \q)=\rho(\pi\co \q)=\sigma_k(a)$, by definition of $\rho_{\cs}$ (recall Lemma \ref{lem:crossecgendef}). The cases $n\neq k$ follow similarly.
\end{proof}

\noindent Recall from Subsection \ref{sec:exts&cohom} that $H_k(\ns,\ab)$ denotes the quotient of the abelian group of degree-$k$ cocycles by the subgroup of coboundaries. Combining the results from this section, we now deduce the following correspondence between $H_k(\ns,\ab)$ and the degree-$k$ extensions of $\ns$ up to isomorphisms of nilspaces.
\begin{corollary}\label{cor:classrep} Let $\Phi$ denote the map which sends each class $C\in H_k(\ns,\ab)$ to the isomorphism class of $M(\rho)$, for any choice of $\rho\in C$. Then $\Phi$ is a surjection from $H_k(\ns,\ab)$ to the set of isomorphism classes of degree-$k$ extensions of $\ns$ by $\ab$.
\end{corollary}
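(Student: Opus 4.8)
The plan is to establish the two assertions implicit in the statement: first that $\Phi$ is well-defined, i.e. the isomorphism class of $M(\rho)$ depends only on the class $C$ and not on the chosen representative $\rho\in C$; and second that $\Phi$ is onto. Only surjectivity is claimed, so no injectivity argument is needed, and the whole corollary is a synthesis of the preceding lemmas.

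The first preliminary step is to record that for every cocycle $\rho$ the space $M(\rho)$ is genuinely a degree-$k$ extension of $\ns$ by $\ab$ in the sense of Definition \ref{def:extension}. By Proposition \ref{prop:extcompleted} the set $M(\rho)$ with its cube sets is a nilspace, and by construction the map $\tilde\pi:M(\rho)\to\ns$ together with the $\ab$-action makes it an $\ab$-bundle over $\ns$; it then remains to read off conditions $(i)$ and $(ii)$ of Definition \ref{def:extension} from the completion part of the proof of Proposition \ref{prop:extcompleted}, condition $(ii)$ being exactly equation \eqref{eq:M-k-deg}. I expect this bookkeeping --- checking that the degree indexing matches and that \eqref{eq:M-k-deg} is the required relation with $\cD_k(\ab)$ --- to be the only genuinely delicate point of the argument.

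For well-definedness, I would fix $C\in H_k(\ns,\ab)$ and two representatives $\rho_1,\rho_2\in C$. By Lemma \ref{lem:obviouscrossec} the obvious cross section $\cs_1:x\mapsto \rho_{1,x}$ of the extension $M(\rho_1)$ generates $\rho_1$. Lemma \ref{lem:cocyvar} then identifies the full set of cocycles generated by cross sections of $M(\rho_1)$ with the single coset $\rho_1+\partial^{k+1}Y_{-1}(\ns,\ab)=C$. Since $\rho_2\in C$, there is a cross section $\cs_2$ of $M(\rho_1)$ with $\rho_{\cs_2}=\rho_2$, and applying Lemma \ref{lem:extisotoM} to the extension $M(\rho_1)$ equipped with $\cs_2$ yields $M(\rho_1)\cong M(\rho_2)$. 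Hence the isomorphism class of $M(\rho)$ is constant on $C$, so $\Phi$ is well-defined.

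For surjectivity, given an arbitrary degree-$k$ extension $\nss$ of $\ns$ by $\ab$ with projection $\pi:\nss\to\ns$, I would choose any cross section $\cs:\ns\to\nss$ (one exists since $\pi$ is surjective, each fibre being a nonempty $\ab$-orbit), form the cocycle $\rho:=\rho_{\cs}$ via Lemma \ref{lem:crossecgendef}, and set $C:=[\rho]\in H_k(\ns,\ab)$. Lemma \ref{lem:extisotoM} gives $\nss\cong M(\rho)$, so $\Phi(C)$ is precisely the isomorphism class of $\nss$. Thus $\Phi$ hits every isomorphism class of degree-$k$ extensions, which completes the proof.
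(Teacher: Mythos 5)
Your proposal is correct and follows essentially the same route as the paper: well-definedness via Lemma \ref{lem:obviouscrossec} (the obvious cross section of $M(\rho_1)$ generates $\rho_1$), Lemma \ref{lem:cocyvar} (to realize any other representative of $C$ as $\rho_{\cs'}$ for some cross section $\cs'$ of $M(\rho_1)$), and Lemma \ref{lem:extisotoM} (to conclude $M(\rho_1)\cong M(\rho_2)$); and surjectivity by choosing a cross section of an arbitrary extension and invoking Lemma \ref{lem:extisotoM} again. Your extra preliminary remark that $M(\rho)$ is indeed a degree-$k$ extension (modulo the degree-index shift between Definition \ref{def:cocycle} and the construction of $M(\rho)$) is left implicit in the paper but is a fair point to make explicit.
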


\begin{proof}
We first check that $\Phi(C)$ is well-defined. If $\rho'$ is another element of the class $C$ of $\rho$ then, as seen in Lemma \ref{lem:cocyvar}, there is another cross section $\cs'$ of $\nss$ such that $\rho'=\rho_{\cs'}$. Then, by Lemma \ref{lem:extisotoM} we have $M(\rho)\cong \nss \cong M(\rho')$. To see that $\Phi$ is surjective, note that given a degree-$k$ extension $\nss$ of $\ns$, there is a cross section $\cs:\ns\to \nss$ and corresponding cocycle $\rho=\rho_{\cs}$, and so the isomorphism class of $\nss$ is $\Phi(C)$ where $C$ is the class of $\rho$.
\end{proof}
\begin{remark} Note that $\Phi$ need not be injective. This is a counterpart for nilspaces of known facts concerning group extensions, for instance that the abelian group of classes of extensions $1\to A\to E\to G \to 1$ of a group $G$ by an abelian group $A$ can be larger than the set of extensions $E$ themselves up to group isomorphism.
\end{remark}
\noindent We end this subsection with an alternative formulation of condition \eqref{eq:extcubes} defining cubes on the extension $M(\rho)$. This formulation uses tricubes and is very useful in particular in the topological part of the theory.\\
\indent Recall from Definition \ref{def:tricube} and Lemma \ref{lem:tricube-comp} the concept of a morphism from a tricube $T_k$ into a nilspace $\ns$, and  the tricube composition $t\co \omega_k$ (where $\omega_k$ is the outer-point map from Definition \ref{def:ope}). Let $\ab$ be an abelian group, and let $\xi:\cu^k(\ns)\to \ab$ be an arbitrary function. We then define the alternating sum
\begin{equation}\label{eq:altcubesum2}
\beta(t,\xi)=\sum_{v\in\{0,1\}^k}(-1)^{|v|}\,\xi(t \co \trem_v).
\end{equation}
For a cocycle $\rho$, the following result reduces $\beta(t,\rho)$ to the value of $\rho$ on the outer-point set of $T_k$.
\begin{lemma}\label{lem:tricubesum} Let $t:T_k\to \ns$ be a morphism into a nilspace $\ns$, and let $\rho$ be a cocycle of degree $k-1$ on $\ns$. Then 
\[
\beta(t,\rho)=\rho(t \co \omega_k).
\]
\end{lemma}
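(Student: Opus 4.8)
The plan is to prove the identity by collapsing the coordinates of the tricube one at a time, each collapse turning an \emph{active} coordinate (which ranges over $\{-1,0,1\}$ through an embedding $\trem_v$) into a coordinate that ranges only over the two outer values $\{-1,1\}$, at the cost of halving the number of summands. After $k$ collapses the single surviving term will be exactly $\rho(t\co\omega_k)$. Concretely, for $0\le j\le k$ and $v\in\{0,1\}^{k-j}$ I would introduce the map
\[
g^{(j)}_v(w)=t\big(\trem_v(w\sbr{1},\dots,w\sbr{k-j}),\;\omega_j(w\sbr{k-j+1},\dots,w\sbr{k})\big),
\]
so that $g^{(0)}_v=t\co\trem_v$ and $g^{(k)}_{(\,)}=t\co\omega_k$, and prove by induction on $j$ the statement
\[
\beta(t,\rho)=\sum_{v\in\{0,1\}^{k-j}}(-1)^{|v|}\,\rho\big(g^{(j)}_v\big),
\]
whose case $j=0$ is the definition of $\beta(t,\rho)$ and whose case $j=k$ is the desired equality.

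The engine of the induction is a local computation. Splitting the $j$-th sum according to the last active coordinate and using the embedding formula \eqref{eq:cubemb}, one checks that $\trem_{(v',0)}$ and $\trem_{(v',1)}$ (two embeddings whose defining vectors differ only in one coordinate) agree on the face where that coordinate is sent to $0$. Writing $\theta$ for the reflection of that coordinate, so $r(\theta)=1$, the first cocycle property of Definition \ref{def:cocycle} gives $-\rho(t\co\trem_{(v',1)})=\rho(t\co\trem_{(v',1)}\co\theta)$, while the face agreement shows that $t\co\trem_{(v',0)}$ is adjacent to $t\co\trem_{(v',1)}\co\theta$. Hence Lemma \ref{lem:concat} produces a genuine cube $c_{v'}$ as their concatenation, and the second cocycle property yields
\[
\rho(t\co\trem_{(v',0)})-\rho(t\co\trem_{(v',1)})=\rho(c_{v'}),
\]
where a direct reading of the two faces of $c_{v'}$ identifies it as $(w',w_k)\mapsto t\big(\trem_{v'}(w'),\omega_1(w_k)\big)$, i.e. the coordinate in question is now collapsed to the outer points $\{-1,1\}$. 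Applying this to the coordinate $k-j$ in each summand is precisely the step $j\to j+1$, and it also shows inductively that every $g^{(j)}_v$ is a cube (each is a concatenation of two cubes coming from the previous stage), so that $\rho$ is defined on all of them.

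The points that will require care are the following. First, I must verify from \eqref{eq:cubemb} that the relevant pairs of embeddings really do coincide on the appropriate face, since this is what makes the concatenation legitimate and lets Lemma \ref{lem:concat} guarantee that the collapsed maps remain cubes. Second, and this is the only genuine bookkeeping obstacle, the reflection-and-concatenation must be carried out along a coordinate other than the last one. To handle this cleanly I would first record the routine observation that, because any transposition of coordinates is a permutation of $\aut(\{0,1\}^k)$ with no reflections and therefore leaves $\rho$ invariant (the case $r=0$ of property (i)), both Lemma \ref{lem:concat} and the additivity (ii) remain valid for concatenation along any fixed coordinate. With these two observations in hand, the induction reduces to tracking signs, the only nontrivial one being the $-1$ introduced by each coordinate reflection, which is exactly what converts the alternating sum into a telescoping chain of concatenations terminating at $\rho(t\co\omega_k)$.
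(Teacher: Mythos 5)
Your proof is correct and is essentially the argument the paper intends: the paper's proof is a one-sentence sketch stating that $\omega_k$ is obtained by concatenating the embeddings $\trem_v$ composed with automorphisms of $\{0,1\}^k$, and your coordinate-by-coordinate collapse, using the reflection property to supply the sign and the concatenation property to telescope the alternating sum, is precisely a careful implementation of that sketch. The two points you flag (checking the face agreement from \eqref{eq:cubemb}, and transporting concatenation to an arbitrary coordinate via a sign-free permutation) are exactly the details the paper leaves implicit, and your treatment of them is sound.
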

\begin{proof}
This follows from the fact that the outer-point map $\omega_k$ can be expressed  as a sequence of concatenations of cubes of the form $\trem_v$ composed with automorphisms of $\{0,1\}^k$.
\end{proof}
\noindent Given a cube $\q\in \cu^k(\ns)$, let us denote by $\hom_{\q\co\omega_k^{-1}}(T_k,\ns)$ the set of morphisms $t:T_k\to \ns$ that agree with $\q$ on the subset $\{-1,1\}^k$. We can now give the alternative form of \eqref{eq:extcubes}. 
\begin{lemma}\label{lem:extcubes2}
Condition \eqref{eq:extcubes} is equivalent to the following equation holding for some \textup{(}and therefore every\textup{)} $t\in \hom_{(\tilde\pi\co f)\co\omega_k^{-1}}(T_k,\ns)\colon$
\begin{equation}\label{eq:extcubes2}
\sum_{v\in\{0,1\}^k}(-1)^{|v|}\,f(v)(t\co\trem_v)=0.
\end{equation}
\end{lemma}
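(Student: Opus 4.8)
The plan is to show that \eqref{eq:extcubes} and the vanishing of the left-hand side of \eqref{eq:extcubes2} both express one and the same element of $\ab$, namely $\rho(\tilde\pi\co f)-\sigma_k(a_f)$. First I would record that the set $\hom_{(\tilde\pi\co f)\co\omega_k^{-1}}(T_k,\ns)$ is nonempty: since $\tilde\pi\co f\in\cu^k(\ns)$ (the standing first condition of Definition \ref{def:extcubes}), I can take $\phi=\mu^k:T_k\to\{0,1\}^k$, where $\mu:\{-1,0,1\}\to\{0,1\}$ sends $-1,0\mapsto 0$ and $1\mapsto 1$. One checks that $\phi$ is a cubespace morphism and that $\phi\co\omega_k$ is the identity on $\{0,1\}^k$, so $t:=(\tilde\pi\co f)\co\phi$ is a morphism $T_k\to\ns$ with $t\co\omega_k=\tilde\pi\co f$, hence $t$ lies in the required restricted-morphism set.

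Next comes the key evaluation of each term $f(v)(t\co\trem_v)$. Because any $t$ in $\hom_{(\tilde\pi\co f)\co\omega_k^{-1}}(T_k,\ns)$ agrees with $\tilde\pi\co f$ on the outer points, the cube $t\co\trem_v\in\cu^k(\ns)$ has base point $(t\co\trem_v)(0^k)=t(\omega_k(v))=\tilde\pi(f(v))=:x(v)$, so it lies in the fibre $\cu^k_{x(v)}(\ns)$ on which the function $f(v)$ is defined, and there $\rho_{x(v)}$ is simply the restriction of $\rho$. Using the defining relation $a_f(v)=\rho_{x(v)}-f(v)$ in the $\ab$-torsor $\tilde\pi^{-1}(x(v))$, I get $f(v)(t\co\trem_v)=\rho_{x(v)}(t\co\trem_v)-a_f(v)=\rho(t\co\trem_v)-a_f(v)$. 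Summing with signs and recalling the definition \eqref{eq:altcubesum2} of $\beta(t,\rho)$ then yields
\[
\sum_{v\in\{0,1\}^k}(-1)^{|v|}\,f(v)(t\co\trem_v)=\beta(t,\rho)-\sigma_k(a_f).
\]

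Finally I would invoke Lemma \ref{lem:tricubesum} to get $\beta(t,\rho)=\rho(t\co\omega_k)$, and combine this with the identity $t\co\omega_k=\tilde\pi\co f$ (immediate from membership in $\hom_{(\tilde\pi\co f)\co\omega_k^{-1}}(T_k,\ns)$) to obtain $\beta(t,\rho)=\rho(\tilde\pi\co f)$. Thus the left-hand side of \eqref{eq:extcubes2} equals $\rho(\tilde\pi\co f)-\sigma_k(a_f)$; in particular it is independent of the choice of $t$, which gives the ``for some (and therefore every) $t$'' clause at once, and it vanishes precisely when $\rho(\tilde\pi\co f)=\sigma_k(a_f)$, i.e. precisely when \eqref{eq:extcubes} holds. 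The computation itself is short; the one point I would be careful to verify explicitly is the base-point bookkeeping that places each $t\co\trem_v$ in the correct fibre $\cu^k_{x(v)}(\ns)$, since this is exactly what legitimizes both the evaluation $f(v)(t\co\trem_v)$ and the identification $\rho_{x(v)}=\rho|_{\cu^k_{x(v)}(\ns)}$ used above.
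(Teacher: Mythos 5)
Your proof is correct and follows essentially the same route as the paper: evaluate each term as $f(v)(t\co\trem_v)=\rho(t\co\trem_v)-a_f(v)$ using the base-point identification, sum to get $\beta(t,\rho)-\sigma_k(a_f)$, and apply Lemma \ref{lem:tricubesum} to conclude the left side of \eqref{eq:extcubes2} equals $\rho(\tilde\pi\co f)-\sigma_k(a_f)$ independently of $t$. Your explicit verification that $\hom_{(\tilde\pi\co f)\co\omega_k^{-1}}(T_k,\ns)$ is nonempty (via $\mu^k$ with $\mu(-1)=\mu(0)=0$, $\mu(1)=1$) is a small but worthwhile addition that the paper leaves implicit.
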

\begin{proof}
Given such a morphism $t$, note that for each $v\in \{0,1\}^k$, the map $t\co\trem_v$ is a cube in $\cu^k(\ns)$ with base-point $t\co\psi_v(0^k)=t\co\omega_k(v)=\tilde\pi\co f(v)$. Therefore, we have trivially that $\rho_{\tilde\pi\co f(v)}(t\co\trem_v)=\rho (t\co\trem_v)$. By Lemma \ref{lem:tricubesum}, we then have
\begin{eqnarray*}
\sum_{v\in\{0,1\}^k}(-1)^{|v|} \,f(v)(t\co\trem_v)& = & \sum_{v\in\{0,1\}^k} (-1)^{|v|}\, \rho_{\tilde\pi\co f(v)}(t\co\trem_v)\;-\;(-1)^{|v|}\, a(v)\\
& = & \beta(t,\rho)- \sum_{v\in\{0,1\}^k}(-1)^{|v|}\, a(v)\; = \; \rho(\tilde\pi\co f)- \sigma_k(a),
\end{eqnarray*}
and the result follows.
\end{proof}
\medskip
\subsection{Translation bundles}\label{subsec:transbundles}
\medskip
Given a nilspace $\ns$, recall that $\pi_{k-1}$ denotes the projection to the factor $\cF_{k-1}(\ns)$.

\begin{defn}[Translation lift]
Let $\ns$ be a $k$-step nilspace and let $\alpha\in\tran_i(\cF_{k-1}(\ns))$. We say that $\alpha'\in \tran_i(\ns)$ is a \emph{lift of $\alpha$ to} $\tran_i(\ns)$ if for every $x\in \ns$ we have $\pi_{k-1}(\alpha'(x))=\alpha(\pi_{k-1}(x))$.
\end{defn}
\noindent The main result below, Proposition \ref{prop:transliftcrit}, gives a useful criterion for whether a translation on $\cF_{k-1}(\ns)$ can be lifted to a translation on $\ns$. The result involves the following construction.

\begin{defn}\label{def:transpairs}
Given a translation $\alpha\in \tran_i(\cF_{k-1}(\ns))$, we define
\begin{equation}\label{eq:transnilspace}
\cT=\cT(\alpha,\ns,i):= \{ (x_0,x_1)\in \ns^2: \alpha(\pi_{k-1}(x_0))=\pi_{k-1}(x_1)\}.
\end{equation}
\end{defn}

\begin{lemma}\label{lem:transbundlespace}
Let $\ns$ be a $k$-step nilspace, let $\alpha\in\tran_i(\cF_{k-1}(\ns))$, and let $i< k$. Then the set $\cT$, together with the restriction to $\cT$  of cubes on $\ns\Join_i \ns$, is a $k$-step nilspace.
\end{lemma}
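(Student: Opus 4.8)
Write $\nss=\ns\Join_i\ns$, which is a $k$-step nilspace by Lemma~\ref{lem:arrow}, and equip $\cT$ with the cube sets $\cu^n(\cT)=\{\q\in\cu^n(\nss):\q(\{0,1\}^n)\subseteq\cT\}$. The composition axiom is then inherited from $\nss$, since if $\q\in\cu^n(\cT)$ and $\phi$ is a morphism, then $\q\co\phi\in\cu^n(\nss)$ still takes values in $\cT$. The whole argument rests on one auxiliary observation that I would establish first: \emph{if $m\le k$ and $G:\{0,1\}^m\to\ns$ satisfies $\pi_{k-1}\co G\in\cu^m(\cF_{k-1}(\ns))$, then $G\in\cu^m(\ns)$}. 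To see this I would lift the cube $\pi_{k-1}\co G$ to $\cF_k(\ns)=\ns$ by Lemma~\ref{lem:lifting}, obtaining $\bar G\in\cu^m(\ns)$ with $\pi_{k-1}\co\bar G=\pi_{k-1}\co G$; then $G(v)\sim_{k-1}\bar G(v)$ for all $v$, and since $m\le k$ forces $\sim_{k-1}\,\subseteq\,\sim_{m-1}$ (a cube witnessing $x\sim_{k-1}y$ restricts, via Lemma~\ref{lem:sim1}, to one witnessing $x\sim_{m-1}y$), Corollary~\ref{sim2cor} upgrades $\bar G$ to a cube, giving $G\in\cu^m(\ns)$.

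\textbf{Ergodicity.} Given $(x_0,x_1),(y_0,y_1)\in\cT$ with coordinate maps $\q_0,\q_1:\{0,1\}\to\ns$, I must show $\arr{\q_0,\q_1}_i\in\cu^{1+i}(\ns)$. Projecting, $\pi_{k-1}\co\arr{\q_0,\q_1}_i=\arr{\pi_{k-1}\co\q_0,\pi_{k-1}\co\q_1}_i$. Since $\alpha\in\tran_i(\cF_{k-1}(\ns))$ and $\pi_{k-1}\co\q_0$ is a $1$-cube, Lemma~\ref{lem:transcriterion} gives $\arr{\pi_{k-1}\co\q_0,\alpha\co\pi_{k-1}\co\q_0}_i\in\cu^{1+i}(\cF_{k-1}(\ns))$, while the defining relation of $\cT$ says precisely $\alpha\co\pi_{k-1}\co\q_0=\pi_{k-1}\co\q_1$. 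Hence the projection of $\arr{\q_0,\q_1}_i$ is a cube, and as $1+i\le k$ the auxiliary observation yields $\arr{\q_0,\q_1}_i\in\cu^{1+i}(\ns)$, i.e.\ the map lies in $\cu^1(\cT)$.

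\textbf{Completion (the main point).} Let $\q'=\q_0'\times\q_1'$ be an $n$-corner on $\cT$. Following the proof of Lemma~\ref{lem:arrow}, I would complete $\q_0'$ to $\q_0\in\cu^n(\ns)$ and form the $(n+i)$-corner $\q''$ on $\ns$ given by $\q''(v,w)=\q_0(v)$ for $w\ne 1^i$ and $\q''(v,1^i)=\q_1'(v)$ for $v\ne 1^n$. The extra ingredient is the cube $G_0:=\arr{\pi_{k-1}\co\q_0,\ \alpha\co\pi_{k-1}\co\q_0}_i$, which lies in $\cu^{n+i}(\cF_{k-1}(\ns))$ by Lemma~\ref{lem:transcriterion}; using that $\q'$ takes values in $\cT$, one checks that $G_0$ agrees with $\pi_{k-1}\co\q''$ off the apex $(1^n,1^i)$, so $G_0$ completes $\pi_{k-1}\co\q''$. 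The goal is to produce a completion $G\in\cu^{n+i}(\ns)$ of $\q''$ with $\pi_{k-1}\co G=G_0$. Granting this, I set $\q_1:=G(\cdot,1^i)$; then $G=\arr{\q_0,\q_1}_i$ certifies $\q_0\times\q_1\in\cu^n(\nss)$, this map completes $\q'$, and $\pi_{k-1}\co\q_1=\alpha\co\pi_{k-1}\co\q_0$ forces every value $(\q_0(v),\q_1(v))$ into $\cT$, so $\q_0\times\q_1\in\cu^n(\cT)$ is the desired completion.

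\textbf{The hard part} is the construction of $G$, which I expect to be the main obstacle and which splits by dimension. If $n+i\le k$, I would lift $G_0$ to $\bar G\in\cu^{n+i}(\ns)$ by Lemma~\ref{lem:lifting} and then overwrite its values off the apex by those of $\q''$; these differ from $\bar G$ only within $\sim_{k-1}$-classes, hence within $\sim_{n+i-1}$-classes, so Corollary~\ref{sim2cor} keeps the result a cube whose projection is still $G_0$. If instead $n+i\ge k+1$, then $\q''$ has a \emph{unique} completion $G$ in the $k$-step nilspace $\ns$, and since $\pi_{k-1}\co G$ and $G_0$ both complete $\pi_{k-1}\co\q''$ on the $(k-1)$-step nilspace $\cF_{k-1}(\ns)$ in dimension $n+i\ge k$, uniqueness there gives $\pi_{k-1}\co G=G_0$. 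Finally, $\cT$ is $k$-step because a $(k+1)$-corner on $\cT$ is a $(k+1)$-corner on $\nss$, whose completion is unique by Lemma~\ref{lem:arrow}; the completion constructed inside $\cT$ is in particular such a completion, hence unique.
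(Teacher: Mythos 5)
Your proof is correct, and while it follows the paper's skeleton for composition and ergodicity (your auxiliary observation is the mild generalization of Remark \ref{rem:cubesuff} to dimensions $m\le k$ that the paper invokes without spelling out), the finishing move in the completion step is genuinely different. The paper also completes $\q_0'$ to $\q_0$, forms the corner $\q''$, lifts $\arr{\pi_{k-1}\co\q_0,\alpha\co\pi_{k-1}\co\q_0}_i$ to a cube $\tilde\q$ on $\ns$, and then \emph{corrects} $\tilde\q$ to agree with $\q''$ off the apex by adding elements of $\ab_k$ at $1$-faces along a Hamiltonian path, exactly as in the proof of Lemma \ref{lem:Z-bund-is-k-degbund}; this relies on the degree-$k$ bundle structure of $\ns$ and handles all dimensions in one (somewhat informal) sweep. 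You instead split by dimension: for $n+i\le k$ you overwrite a lift within $\sim_{k-1}$-classes and invoke Corollary \ref{sim2cor} (correct, since $\sim_{k-1}\subseteq\sim_{n+i-1}$ there), and for $n+i\ge k+1$ you bypass any correction entirely by observing that the unique completion $G$ of $\q''$ in the $k$-step nilspace $\ns$ must project to $G_0$, by uniqueness of completion in the $(k-1)$-step nilspace $\cF_{k-1}(\ns)$. The high-dimensional branch is arguably cleaner than the paper's, since it needs no bundle machinery, only the two uniqueness statements; the price is the case split and the need to check separately that $G$ has the arrow form $\arr{\q_0,G(\cdot,1^i)}_i$, which you do. You also make explicit two points the paper leaves implicit: that $G_0$ completes $\pi_{k-1}\co\q''$ (using that $\q'$ is $\cT$-valued), and that uniqueness of completion of $(k+1)$-corners on $\cT$ is inherited from $\ns\Join_i\ns$, so $\cT$ is indeed $k$-step.
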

\begin{proof}
Recall that by Definition \ref{def:arrowspace} we have $\q=\q_0\times \q_1$ in $\cu^n(\ns\Join_i \ns)$ if the $i$-th arrow $\arr{\q_0,\q_1}_i$ is in $\cu^{n+i}(\ns)$. The composition axiom for $\cT$ follows immediately from that for $\ns\Join_i \ns$.\\
\indent Next we claim that if $i\leq k-1$ then $\cT$ satisfies the ergodicity axiom. To see this let $f\in \cT^{\{0,1\}}$ be arbitrary, with $f(0)=(x_0,x_1)$ and $f(1)=(y_0,y_1)$ where $\alpha(\pi_{k-1}(x_0))=\pi_{k-1}(x_1)$ and $\alpha(\pi_{k-1}(y_0))=\pi_{k-1}(y_1)$. We have $f=\q_0\times \q_1$ where $\q_i(0)=x_i$, $\q_i(1)=y_i$ for $i=0,1$, and where $\q_0,\q_1$ are in $\cu^1(\ns)$ by ergodicity in $\ns$. We are claiming that $f\in \cu^1(\cT)$, i.e. that $\arr{\q_0,\q_1}_i\in \cu^{1+i}(\ns)$. By the assumption on $\alpha$ and Lemma \ref{lem:transcriterion},  we have that $\arr{\pi_{k-1}\co \q_0,\alpha\co \pi_{k-1}\co \q_0}_i\in \cu^{1+i}(\cF_{k-1}(\ns))$. But the latter cube is $\arr{\pi_{k-1}\co \q_0, \pi_{k-1}\co \q_1}_i=\pi_{k-1}\co \arr{\q_0,\q_1}_i$. Recalling Remark \ref{rem:cubesuff}, we see that if $i\leq k-1$ then $\pi_{k-1}\co \arr{\q_0,\q_1}_i \in \cu^{i+1}(\cF_{k-1}(\ns))$ implies $\arr{\q_0,\q_1}_i \in \cu^{i+1}(\ns)$. \\
\indent To check the completion axiom, let $\q'$ be an $n$-corner on $\cT$. Thus, as an $n$-corner on $\ns\Join_i\ns$, the map $\q'$ equals $\q_0'\times \q_1'$ where $\q_0',\q_1'$ are $n$-corners on $\ns$. We know from Lemma \ref{lem:arrow} that $\q'$ can be completed to an $n$-cube on $\ns\Join_i\ns$, but here we need to complete it on $\cT$, which can be done as follows. Let $\q_0\in \cu^n(\ns)$ be a completion of $\q_0'$. Then by Lemma  \ref{lem:transcriterion} we have $(\pi_{k-1}\co \q_0)\times (\alpha\co\pi_{k-1}\co \q_0)\in \cu^n(\cF_{k-1}(\ns)\Join_i\cF_{k-1}(\ns))$, that is $\arr{\pi_{k-1}\co \q_0,\alpha\co\pi_{k-1}\co \q_0}_i\in \cu^{n+i}(\cF_{k-1}(\ns))$. Let us lift the latter cube to $\tilde\q\in \cu^{n+i}(\ns)$, that is $\pi_{k-1}\co \tilde\q=\arr{\pi_{k-1}\co \q_0,\alpha\co\pi_{k-1}\co \q_0}_i$. Now let $\q''$ be the $(n+i)$-corner on $\ns$ defined by $\q''(v,w)=\q_0(v)$ if $w\neq 1^i$ and $\q''(v,1^i)=\q_1'(v)$ for $v\neq 1^n$. (This was shown to be a corner in the proof of Lemma \ref{lem:arrow}.) Note that, since $\pi_{k-1}\co\q_1'=\alpha\co \pi_{k-1}\co\q_0'$, we have that $\q''(v,w)$ lies above $\arr{\pi_{k-1}\co \q_0,\alpha\co\pi_{k-1}\co \q_0}_i(v,w)$ for every $(v,w)\neq 1^{n+i}$, and so $\tilde \q(v,w) \sim_{k-1} \q''(v,w)$ for such $(v,w)$. Now, by modifying the values of the cube $\tilde \q$ at 1-faces along an appropriate Hamiltonian path on $\{0,1\}^{n+i}$ (as in previous arguments, e.g. the proof of Lemma \ref{lem:Z-bund-is-k-degbund}), we obtain a cube $\overline{\q}\in \cu^{n+i}(\ns)$ which agrees with $\q''$ at every $(v,w)\neq 1^{n+i}$, and such that $\overline{\q}$ is still a lift of $\arr{\pi_{k-1}\co \q_0,\alpha\co\pi_{k-1}\co \q_0}_i$. It follows that $\overline{\q}=\arr{\q_0,\q_1}_i$ for some completion $\q_1$ of $\q_1'$ such that $\pi_{k-1}\co \q_1(1^n)=\alpha\co\pi_{k-1}\co \q_0(1^n)$, and so $\q_0\times\q_1$ completes $\q'$ on $\cT$.
\end{proof}

The nilspace that captures whether $\alpha$ can be lifted is the factor
\begin{equation}
\cT^*:=\cF_{k-1}(\cT(\alpha,\ns,i)).
\end{equation}

\noindent In order to establish the main result, we need to relate $\cT^*$ to $\ns$. To do so, the key fact is that each element of $\cT^*$ corresponds in a natural way to some local translation on $\ns$ (recall Definition \ref{def:loc-trans}). To state this fact formally, we shall be careful to distinguish the projection $\cT\to\cT^*$, which we denote by $\pi_{k-1,\cT}$, from the projection $\ns\to \cF_{k-1}(\ns)$, denoted by $\pi_{k-1,\ns}$.

\begin{proposition}\label{prop:Tstar-char}
Let $\tau\in \cT^*$, and fix any $(x_0,x_1)\in \cT$ such that $\tau=\pi_{k-1,\cT}((x_0,x_1))$. Then $\tau$ is the following equivalence class of points in $\cT$:
\begin{equation}\label{eq:Tstar-char}
\tau=\{(y_0,y_1)\in \ns\times\ns: \pi_{k-1,\ns}(y_0)=\pi_{k-1,\ns}(x_0),\; \pi_{k-1,\ns}(y_1)=\pi_{k-1,\ns}(x_1),\;y_1=\phi_{x_0,x_1}(y_0)\},
\end{equation}
for the local translation $\phi_{x_0,x_1}$.
\end{proposition}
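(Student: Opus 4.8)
The set $\tau$ is by definition the equivalence class of $(x_0,x_1)$ under $\sim_{k-1}$ in the $k$-step nilspace $\cT$, so the plan is to prove the asserted equality by two inclusions. Throughout I would write $\bar z=\pi_{k-1,\ns}(z)$, let $\ab=\ab_k$ be the top structure group of the degree-$k$ bundle $\ns$ (Theorem \ref{thm:bundle-decomp}), and recall that after the identifications of Lemma \ref{lem:fibreiso} the local translation obeys $\phi_{x_0,x_1}(x_0+a)=x_1+a$ for every $a\in\ab$. The common tool in both directions is Lemma \ref{lem:sim1}, applied in $\cT$ and composed with the automorphism sending $0^k$ to $1^k$: it shows that $(x_0,x_1)\sim_{k-1}(y_0,y_1)$ in $\cT$ if and only if the product map $g_0\times g_1$ lies in $\cu^k(\cT)$, where $g_0$ equals $x_0$ off $1^k$ and $y_0$ at $1^k$, and $g_1$ equals $x_1$ off $1^k$ and $y_1$ at $1^k$. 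By Definition \ref{def:arrowspace} this amounts to $\arr{g_0,g_1}_i\in\cu^{k+i}(\ns)$ together with $g_0\times g_1$ having values in $\cT$. Thus everything reduces to comparing this $i$-arrow condition with the condition $y_1=\phi_{x_0,x_1}(y_0)$.

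For the inclusion of $\tau$ into the right-hand side, I would first note that the two coordinate projections $p_0,p_1\colon\cT\to\ns$ are nilspace morphisms, since a product cube restricts to a cube in each coordinate; being morphisms they preserve $\sim_{k-1}$ (apply $p_0$, resp.\ $p_1$, to the cube witnessing the relation via Lemma \ref{lem:sim1}), which gives $\bar y_0=\bar x_0$ and $\bar y_1=\bar x_1$. Next, composing $\arr{g_0,g_1}_i$ with the discrete-cube morphism $(v,\epsilon)\mapsto(v,\epsilon,\dots,\epsilon)\colon\{0,1\}^{k+1}\to\{0,1\}^{k+i}$ returns exactly $\arr{g_0,g_1}_1$, so the composition axiom yields $\arr{g_0,g_1}_1\in\cu^{k+1}(\ns)$; by Lemma \ref{lem:relchar} and Definition \ref{def:loc-trans} this is precisely the statement $y_1=\phi_{x_0,x_1}(y_0)$.

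The reverse inclusion is where the real work lies. Assuming $\bar y_0=\bar x_0$, $\bar y_1=\bar x_1$ and $y_1=\phi_{x_0,x_1}(y_0)$, I would write $y_0=x_0+a$ for the unique $a\in\ab$; then $\phi_{x_0,x_1}(x_0+a)=x_1+a$ forces $y_1=x_1+a$ with the \emph{same} $a$. A direct pointwise comparison then gives
\[
\arr{g_0,g_1}_i=\arr{c_{x_0},c_{x_1}}_i+a^{E},
\]
where $c_{x_j}$ denotes the constant $k$-cube with value $x_j$ and $E=\{v\in\{0,1\}^{k+i}:v\sbr{1}=\dots=v\sbr{k}=1\}$ is the $i$-dimensional face of codimension $k$. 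Here $\arr{c_{x_0},c_{x_1}}_i$ lies in $\cu^{k+i}(\ns)$ because the constant map with value $(x_0,x_1)$ is a cube on $\cT$ — this is the step that genuinely uses the hypothesis $(x_0,x_1)\in\cT$ — while $a^{E}\in\cu^{k+i}(\cD_k(\ab))$ because it is a face-group element for a face of codimension $k=\deg(\ab_{\bullet(k)})$. Since the two summands on the right have the same $\pi_{k-1}$-image, the defining relation \eqref{eq:k-deg-bund} of the degree-$k$ bundle $\ns$ shows that their sum $\arr{g_0,g_1}_i$ is again in $\cu^{k+i}(\ns)$. Finally, all values of $g_0\times g_1$ project to $\bar x_0$ in the first coordinate and $\bar x_1$ in the second, and $\alpha(\bar x_0)=\bar x_1$, so this map takes values in $\cT$ and hence witnesses $(x_0,x_1)\sim_{k-1}(y_0,y_1)$ in $\cT$.

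I expect this last inclusion to be the main obstacle: the point is that upgrading the $1$-arrow condition to the stronger $i$-arrow condition is impossible for arbitrary pairs, and becomes possible only because the constraint $(x_0,x_1)\in\cT$ makes $\arr{c_{x_0},c_{x_1}}_i$ a genuine cube, after which the degree-$k$ bundle structure \eqref{eq:k-deg-bund} absorbs the remaining codimension-$k$ face contribution $a^{E}$, which is automatically a degree-$k$ cube. The hypothesis $i<k$ enters only through Lemma \ref{lem:transbundlespace}, ensuring that $\cT$ is a nilspace so that $\sim_{k-1}$ and Lemma \ref{lem:sim1} are available on it.
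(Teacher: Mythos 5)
Your proof is correct, and it is worth noting that it does strictly more than the paper's own argument. For the inclusion $\tau\subseteq\{\dots\}$ your route coincides with the paper's: project to the two coordinates to get $\pi_{k-1,\ns}(y_j)=\pi_{k-1,\ns}(x_j)$, then downgrade the witnessing $i$-arrow to a $1$-arrow (the paper phrases this as $\cu^k(\ns\Join_i\ns)\subset\cu^k(\ns\Join_1\ns)$, you phrase it as composition with $(v,\epsilon)\mapsto(v,\epsilon,\dots,\epsilon)$ — the same step) and invoke the characterization of the relation $\sim$ from Definition \ref{defn:keyrelation} via Lemma \ref{lem:relchar}. The paper's proof stops there, i.e.\ it only establishes one containment of the asserted equality; you correctly identify the reverse containment as the nontrivial remaining point and supply a valid argument for it. Your decomposition $\arr{g_0,g_1}_i=\arr{c_{x_0},c_{x_1}}_i+a^{E}$ is right: the first summand is a cube of $\ns$ because the constant map at $(x_0,x_1)$ is a cube of the nilspace $\cT$ (this is where ergodicity of $\cT$, hence $i<k$ via Lemma \ref{lem:transbundlespace}, is used), the second lies in $\cu^{k+i}(\cD_k(\ab_k))$ as a face-group element for a face of codimension $k$, and \eqref{eq:k-deg-bund} at the top level of the bundle then makes the sum a cube of $\ns$; since all values of $g_0\times g_1$ lie in $\cT$, Lemma \ref{lem:sim1} applied in $\cT$ gives $(x_0,x_1)\sim_{k-1}(y_0,y_1)$ there. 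The only cosmetic caveat is that "the constant map with value $(x_0,x_1)$ is a cube on $\cT$" is, unwound, exactly the assertion $\arr{c_{x_0},c_{x_1}}_i\in\cu^{k+i}(\ns)$, so you should say explicitly that this follows from $\cT$ being an ergodic nilspace (compose a point of $\cu^0(\cT)$ with the constant discrete-cube morphism) rather than leave it as a restatement; with that spelled out, the argument is complete.
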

\begin{proof}
Given an element $\tau=\pi_{k-1,\cT}((x_0,x_1))\in \cT^*$, first we have that $\alpha\co \pi_{k-1,\ns}(x_0)=\pi_{k-1}(x_1)$ (since $(x_0,x_1)\in \cT$), and so the fibres $F_0=\{y_0\in \ns: y_0\sim_{k-1} x_0\}$ and $F_1=\{y_1\in \ns: y_1\sim_{k-1} x_1\}$ satisfy $\alpha(F_0)=F_1$. The equivalence class $\tau$ is the set of couples $(y_0,y_1) \in \cT$ such that $(x_0,x_1)\sim_{k-1} (y_0,y_1)$ in $\cT$, i.e. such that there exists a cube $\q\in \cu^k(\cT)$ satisfying $\q(v)=(x_0,x_1)$ for $v\neq 1^k$ and $\q(1^k)=(y_0,y_1)$. But  then, since by definition $\q\in \cu^k(\ns \Join_i \ns)$, and $\cu^k(\ns \Join_i \ns)\subset \cu^k(\ns \Join \ns)$, we deduce that $(x_0,x_1)\sim_{k-1} (y_0,y_1)$ in $\ns \Join \ns$. This, by Lemma \ref{lem:fibreiso}, tells us that $y_0=x_0+a$ and $y_1=x_1+b$ where $b=a$ (under the identification of $\ab_{F_0}$ and $\ab_{F_1}$ with $\ab_k$ provided by Lemma \ref{lem:fibreiso}), so we have indeed $y_0\in F_0,y_1\in F_1$ and $y_1=\phi_{x_0,x_1}(y_0)$ where $\phi_{x_0,x_1}$ is the local translation corresponding to $x_0,x_1$.
\end{proof}
\noindent With this characterization of $\cT^*$, we can see that $\ab_k$ has an action on $\cT^*$ via the first coordinate, namely for $a\in \ab_k$ we set
\begin{equation}\label{eq:trans-bund-action}
a+\tau = \pi_{k-1,\cT}((a+x_0 ,x_1)).
\end{equation}
The fact that this action is well-defined follows from the fact that $\ab_k\times \ab_k$ has an action on $\cT$ given by $(a,b)+(x_0,x_1)=(x_0+a,x_1+b)$, which is clear. We now want to use this $\ab_k$-action to view $\cT^*$ as an extension of $\cF_{k-1}(\ns)$. To that end we shall apply the following result concerning $i$-arrows.

\begin{lemma}\label{lem:k-deg-abarrow-crit}
Let $\ab$ be an abelian group, let $\q_0,\q_1:\{0,1\}^n\to \ab$, and let $i,k\in \N$. Then $\arr{\q_0,\q_1}_i$ lies in $\cu^{n+i}(\cD_k(\ab))$ if and only if $\q_0\in \cu^n(\cD_k(\ab))$ and $\q_1-\q_0\in \cu^n(\cD_{k-i}(\ab))$.
\end{lemma}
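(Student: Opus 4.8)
The plan is to derive this as the abelian specialization of Lemma~\ref{lem:filtarrow}. Recall that by Definition~\ref{def:k-deg-ab-cubes} we have $\cu^m(\cD_k(\ab))=\cu^m(\ab_{\bullet(k)})$ for every $m$, where $\ab_{\bullet(k)}$ denotes the maximal degree-$k$ filtration on $\ab$. Thus the claim is exactly the content of Lemma~\ref{lem:filtarrow} applied to the filtered group $(\ab,\ab_{\bullet(k)})$, with the index data taken to be $(i_1,\dots,i_{n+i})=(1,\dots,1)$ so that $\cu^{n+i}_{(1,\dots,1)}(\ab_{\bullet(k)})=\cu^{n+i}(\ab_{\bullet(k)})$, and with the arrow dimension equal to $i$.

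First I would invoke Lemma~\ref{lem:filtarrow} in this setting: it gives that $\arr{\q_0,\q_1}_i\in\cu^{n+i}(\ab_{\bullet(k)})$ if and only if $\q_0\in\cu^n(\ab_{\bullet(k)})$ and $\q_0^{-1}\q_1\in\cu^n(\ab_{\bullet(k)}^{+\ell})$, where $\ell=\sum_{j=n+1}^{n+i}1=i$. Since $\ab$ is abelian, in additive notation $\q_0^{-1}\q_1$ is simply $\q_1-\q_0$, which is precisely the quantity appearing in the statement. Combined with $\cu^n(\cD_k(\ab))=\cu^n(\ab_{\bullet(k)})$, this already matches both the hypothesis $\q_0\in\cu^n(\cD_k(\ab))$ and the conclusion, apart from identifying the shifted filtration.

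The main (and essentially only) point to verify is therefore that the shifted filtration $\ab_{\bullet(k)}^{+i}$ coincides with the maximal degree-$(k-i)$ filtration $\ab_{\bullet(k-i)}$, so that $\cu^n(\ab_{\bullet(k)}^{+i})=\cu^n(\cD_{k-i}(\ab))$. This is immediate from the definitions: the $j$-th term of $\ab_{\bullet(k)}^{+i}$ is $(\ab_{\bullet(k)})_{j+i}$, which equals $\ab$ exactly when $j+i\le k$, i.e. when $j\le k-i$, and is trivial otherwise; this is precisely the description of $\ab_{\bullet(k-i)}$. In the degenerate range $i\ge k$ the shifted filtration is trivial from index $0$ onward, so $\cu^n(\ab_{\bullet(k)}^{+i})$ reduces to the constant map $0$, matching the natural reading of $\cu^n(\cD_{k-i}(\ab))$ as consisting of constant maps. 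I do not anticipate any genuine obstacle: all the substance is carried by Lemma~\ref{lem:filtarrow}, and the remaining work is the routine bookkeeping identification of the shifted filtration just indicated.
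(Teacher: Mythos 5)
Your proof is correct and is precisely the paper's argument: the paper disposes of this lemma with the one-line remark that it is the special case of Lemma \ref{lem:filtarrow} for the filtration $\ab_{\bullet(k)}$ with all indices $i_j=1$, which is exactly what you have written out, including the key bookkeeping identification $\ab_{\bullet(k)}^{+i}=\ab_{\bullet(k-i)}$ and the translation of $\q_0^{-1}\q_1$ into $\q_1-\q_0$. (The only quibble is your aside on the degenerate range: for $i=k$ the shifted filtration still has $0$-th term $\ab_k=\ab$, so $\cu^n(\ab_{\bullet(k)}^{+k})=\cu^n(\cD_0(\ab))$ consists of all constant maps rather than only the zero map; this does not affect the argument.)
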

\begin{proof}
This is obtained as a special case of Lemma \ref{lem:filtarrow}, recalling Definition \ref{def:k-deg-ab-cubes}. 
\end{proof}

\begin{lemma}\label{lem:transext}
Let $\gamma:\cT^*\to \cF_{k-1}(\ns),\; \pi_{k-1,\cT}((x_0,x_1))\mapsto \pi_{k-1,\ns}(x_0)$. Then $\cT^*$ is a degree-$(k-i)$ extension of $\cF_{k-1}(\ns)$ with bundle map $\gamma$ and structure group $\ab_k$.
\end{lemma}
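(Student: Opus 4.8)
The plan is to verify directly the two defining conditions of a degree-$(k-i)$ extension from Definition \ref{def:extension}, the base being $\cF_{k-1}(\ns)$, the bundle map $\gamma$ and the structure group $\ab_k$, after first confirming that $\gamma$ makes $\cT^*$ a $\ab_k$-bundle over $\cF_{k-1}(\ns)$. Throughout I will represent a cube in $\cu^n(\cT^*)$ as $\pi_{k-1,\cT}\co(\q_0\times\q_1)$ with $\q_0\times\q_1\in\cu^n(\cT)$, that is $\q_0,\q_1\in\cu^n(\ns)$ with $\arr{\q_0,\q_1}_i\in\cu^{n+i}(\ns)$ and $\pi_{k-1,\ns}\co\q_1=\alpha\co\pi_{k-1,\ns}\co\q_0$. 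Then $\gamma\co\big(\pi_{k-1,\cT}\co(\q_0\times\q_1)\big)=\pi_{k-1,\ns}\co\q_0$, which is a cube on $\cF_{k-1}(\ns)$, so $\gamma$ indeed sends cubes to cubes; since $\cT$ is a $k$-step nilspace by Lemma \ref{lem:transbundlespace}, $\cT^*=\cF_{k-1}(\cT)$ is a $(k-1)$-step nilspace by Lemma \ref{lem:F_k-defin}, consistent with the target statement.

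First I would establish the bundle structure. That $\gamma$ is well defined and surjective as a set map and that the action \eqref{eq:trans-bund-action} is well defined is already in hand; it remains to see the action is free with orbits equal to the fibres of $\gamma$. For this I would use Proposition \ref{prop:Tstar-char} together with Lemma \ref{lem:fibreiso} to identify each fibre $\gamma^{-1}(p)$ with $\ab_k$: fixing a base pair $(x_0,x_1)$ over $p$, the class $\pi_{k-1,\cT}((x_0',x_1'))$ is determined by $(x_1'-x_1)-(x_0'-x_0)\in\ab_k$, with two pairs over $p$ being $\sim_{k-1}$-equivalent in $\cT$ exactly when these elements agree. Under this identification the action of $a\in\ab_k$ becomes $c\mapsto c-a$, which is free and transitive, so the orbits are precisely the fibres.

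Next, the surjectivity condition (i). Given $\q'\in\cu^n(\cF_{k-1}(\ns))$, I would lift it to $\q_0\in\cu^n(\ns)$ and then produce $\q_1$ with $\q_0\times\q_1\in\cu^n(\cT)$. By Lemma \ref{lem:transcriterion} the arrow $\arr{\pi_{k-1}\co\q_0,\alpha\co\pi_{k-1}\co\q_0}_i$ is a cube on $\cF_{k-1}(\ns)$; lifting it to $\cu^{n+i}(\ns)$ yields a cube agreeing with the desired arrow only modulo $\sim_{k-1}$. I would correct it to an honest arrow $\arr{\q_0,\q_1}_i$ by modifying values via the $\ab_k$-action along a Hamiltonian path of $1$-faces ending at $1^{n+i}$, exactly as in the completion arguments of Lemma \ref{lem:Z-bund-is-k-degbund} and Lemma \ref{lem:transbundlespace}; these modifications preserve both cubeness and the $\pi_{k-1}$-projection, so the corrected cube has first component $\q_0$ and projects correctly, giving the required $\q_1$ and hence $\gamma\co\big(\pi_{k-1,\cT}\co(\q_0\times\q_1)\big)=\q'$.

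Finally, condition (ii), the equation \eqref{eq:ext-cube-corresp} with $\cD_{k-i}(\ab_k)$, which is where the degree $k-i$ rather than $k$ genuinely appears and which I expect to be the crux. Fix $f=\pi_{k-1,\cT}\co(\q_0\times\q_1)$ and $f'=\pi_{k-1,\cT}\co(\q_0'\times\q_1')$ in $\cu^n(\cT^*)$ with $\gamma\co f=\gamma\co f'$; then $\pi_{k-1}\co\q_0=\pi_{k-1}\co\q_0'$ and consequently $\pi_{k-1}\co\q_1=\pi_{k-1}\co\q_1'$, so the pointwise $\ab_k$-differences $\delta_j=\q_j-\q_j'$ are well defined via the degree-$k$ bundle structure of $\ns$. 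The coordinate computation from the bundle step then identifies the pointwise $\ab_k$-difference of $f'$ from $f$ with $\delta_1-\delta_0$. Since $\arr{\q_0,\q_1}_i$ and $\arr{\q_0',\q_1'}_i$ lie in $\cu^{n+i}(\ns)$ with the same $\pi_{k-1}$-projection, \eqref{eq:k-deg-bund} gives $\arr{\delta_0,\delta_1}_i\in\cu^{n+i}(\cD_k(\ab_k))$, and Lemma \ref{lem:k-deg-abarrow-crit} forces $\delta_1-\delta_0\in\cu^n(\cD_{k-i}(\ab_k))$, which is one inclusion of \eqref{eq:ext-cube-corresp}. The reverse inclusion runs the computation backwards: for $\q_3\in\cu^n(\cD_{k-i}(\ab_k))$, Lemma \ref{lem:k-deg-abarrow-crit} gives $\arr{0,\q_3}_i\in\cu^{n+i}(\cD_k(\ab_k))$, so subtracting it from a representative arrow keeps a cube on $\ns$ and realizes $f+\q_3$ as a cube in $\cu^n(\cT^*)$. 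The main obstacle is thus the bookkeeping that converts the arrow condition in $\ns$ into the degree-$(k-i)$ abelian condition through Lemma \ref{lem:k-deg-abarrow-crit}; the surjectivity correction is routine but relies on the $1$-face modification device.
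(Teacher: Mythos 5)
Your proposal is correct and follows essentially the same route as the paper's proof: surjectivity of $\gamma$ on cubes via lifting the arrow $\arr{\pi_{k-1}\co\q_0,\alpha\co\pi_{k-1}\co\q_0}_i$ and correcting along a Hamiltonian path of $1$-faces, and condition (ii) via the pointwise $\ab_k$-differences $\delta_0,\delta_1$, equation \eqref{eq:k-deg-bund}, and Lemma \ref{lem:k-deg-abarrow-crit} applied to $\arr{\delta_0,\delta_1}_i$. The only (harmless) deviations are that you spell out the fibre identification with $\ab_k$ more explicitly than the paper does, and in the reverse inclusion you realize $f+\q_3$ through the second coordinate via $\arr{0,\q_3}_i$ rather than the first via $\arr{\q_3,0}_i$, which is equivalent under the relation of Proposition \ref{prop:Tstar-char}.
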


\begin{proof}
That $\gamma$ is well-defined follows from the fact that if $(y_0,y_1)$ is any other couple in $\tau=\pi_{k-1,\cT}((x_0,x_1))$ then $x_0\sim_{k-1}y_0$, by \eqref{eq:Tstar-char}.

Next, we check that $\gamma$ is a morphism with the surjectivity property (i) in Definition \ref{def:extension}. Every $n$-cube $\q$ on $\cT^*$ is $\pi_{k-1,\cT}\co \q'$ for some $n$-cube $\q'$ on $\cT$, and the latter has the form $\q_0\times \q_1$ for two $n$-cubes on $\ns$. It follows that $\gamma\co \q=\pi_{k-1,\ns}\co \q_0$ is an $n$-cube on $\cF_{k-1}(\ns)$, whence $\gamma$ is a morphism. To see the surjectivity, let $\pi_{k-1,\ns}\co \q_0$ be an $n$-cube on $\cF_{k-1}(\ns)$, with $\q_0\in \cu^n(\ns)$. Then $\q_1'=\alpha \co \pi_{k-1,\ns}\co \q_0$ is also an $n$-cube on $\cF_{k-1}(\ns)$, satisfying $(\pi_{k-1,\ns}\co \q_0) \times \q_1' \in \cu^n (\cF_{k-1}(\ns) \Join_i \cF_{k-1}(\ns))$, since $\alpha \in \tran_i(\cF_{k-1}(\ns))$. Thus $\q_1'=\pi_{k-1,\ns}\co \q_1$ for some $\q_1\in \cu^n(\ns)$, and $\arr{\pi_{k-1,\ns}\co \q_0,\pi_{k-1,\ns}\co \q_1}_i= \pi_{k-1,\ns}\co \arr{\q_0,\q_1}_i\in \cu^{n+i}(\cF_{k-1}(\ns))$. There is therefore some $\tilde \q\in \cu^{n+i}(\ns)$ such that $\pi_{k-1,\ns}\co \q'=\pi_{k-1,\ns}\co \arr{\q_0,\q_1}_i$. Modifying $\tilde\q$ as we did in the proof of Lemma \ref{lem:transbundlespace}, we obtain a new cube $\overline{\q}$ still satisfying $\pi_{k-1,\ns}\co \overline{\q}=\pi_{k-1,\ns}\co \arr{\q_0,\q_1}_i$ but now with $\overline{\q}=\arr{\q_0,\q_1''}$ for some $n$-cube $\q_1''$ with $\q_1''(v)\sim_{k-1} \q_1(v)$ for all $v$. Thus $\q:=\q_0\times \q_1''$ is an $n$-cube on $\cT$, and $\pi_{k-1,\cT}\co \q$ is an $n$-cube on $\cT^*$ with $\gamma$-image equal to $\pi_{k-1,\ns}\co \q_0$, as required.\\
\indent Finally, we check condition (ii) from Definition \ref{def:extension}.  Let $\pi_{k-1,\cT}\co \q$ be an arbitrary cubes in $\cu^n(\cT^*)$, thus $\q=\q_0\times \q_1$ is in $\cu^n(\cT)$.\\
\indent We first show that the left side of condition (ii) is included in the right side. Assume that $\pi_{k-1,\cT}\co \q'$ is another cube with $\q'=\q_0'\times \q_1'\in \cu^n(\cT)$ such that $\gamma \co \pi_{k-1,\cT}\co \q' = \gamma\co \pi_{k-1,\cT}\co  \q$. This means that $\pi_{k-1,\ns}\co \q_0' = \pi_{k-1,\ns}\co \q_0$, and since $\pi_{k-1,\ns}\co \q_1=\alpha\co \pi_{k-1,\ns}\co \q_0$ and similarly for $\q_1'$, we have in fact $\pi_{k-1,\ns}\co \q'=\pi_{k-1,\ns}\co \q$. We therefore have $\ab_k$-valued functions $\q^*_0:=\q_0-\q_0'$ and $\q^*_1:=\q_1-\q_1'$, and these are cubes in $\cu^n(\cD_k(\ab_k))$ since $\ns$ is a degree-$k$ extension of $\cF_{k-1}(\ns)$ with structure group $\ab_k$. Moreover, we have $\arr{\q_0,\q_1}_i,\arr{\q_0',\q_1'}_i\in \cu^{n+i}(\ns)$. and these cubes are also equal modulo $\pi_{k-1,\ns}$, so we can also take their $\ab_k$-valued difference, which is $\arr{\q_0^*,\q_1^*}_i$. The latter cube must again be in $\cu^{n+i}(\cD_k(\ab_k))$, and by Lemma \ref{lem:k-deg-abarrow-crit} this holds if and only if $\q_1^*-\q_0^* \in \cu^n(\cD_{k-i}(\ab_k))$. Now, our assumption above implies that we can take the $\ab_k$-valued difference $\gamma\co \pi_{k-1,\cT}\co \q'\,- \,\gamma\co \pi_{k-1,\cT}\co  \q$, relative to the $\ab_k$-action defined in \eqref{eq:trans-bund-action}.  This difference is a function $a:\{0,1\}^n\to \ab_k$, and we claim that $a(v)= \q_1^*(v)-\q_0^*(v)$. To see this, observe using \eqref{eq:trans-bund-action} that for each $v\in \{0,1\}^n$ the difference $a(v)$ is the element of $\ab_k$ which has to be added to $\q_0(v)$ in order to have $(\q_0(v)+a(v))-\q_0'(v)= \q_1(v)-\q_1'(v)$. (Indeed, this is how to ensure that $a(v)+\pi_{k-1,\cT}\co \q(v)=\pi_{k-1,\cT}\co \q'(v)$.) Our claim follows, and then as shown above we have $a=\q_1^*-\q_0^*\in \cu^n(\cD_{k-i}(\ab_k))$.\\
\indent The opposite inclusion in condition (ii) is seen similarly. Indeed, if we add $a\in \cu^n(\cD_{k-i}(\ab_k))$ to $\pi_{k-1,\cT}\co\q$ we obtain the map $\pi_{k-1,\cT}\co ((\q_0+a)\times \q_1))$, which is a cube on $\cT^*$ if $(\q_0+a)\times \q_1 \in \cu^n(\cT)$, which holds if $\arr{\q_0+a,\q_1}_i \in \cu^{n+i}(\ns)$. But this does hold, because $\arr{\q_0+a,\q_1}_i=\arr{\q_0,\q_1}_i+\arr{a,0}_i$ and $\arr{a,0}_i\in \cu^{n+i}(\cD_k(\ab_k))$ by Lemma \ref{lem:k-deg-abarrow-crit}.
\end{proof}

We can now establish the main result.
\begin{proposition}\label{prop:transliftcrit}
Let $\ns$ be a $k$-step nilspace and let $\alpha\in \tran_i(\cF_{k-1}(\ns))$. If $\cT^*=\cT^*(\alpha,\ns,i)$ is a split extension then $\alpha$ has a lift in $\tran_i(\ns)$.
\end{proposition}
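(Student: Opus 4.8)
The plan is to use the splitting of $\cT^*$ to construct the lift $\alpha'$ explicitly by means of local translations, and then to verify that $\alpha'$ is an $i$-translation via the arrow-space criterion. Recall from Lemma \ref{lem:transext} that $\cT^*=\cF_{k-1}(\cT(\alpha,\ns,i))$ is a degree-$(k-i)$ extension of $\cF_{k-1}(\ns)$ with bundle map $\gamma:\pi_{k-1,\cT}((x_0,x_1))\mapsto\pi_{k-1,\ns}(x_0)$; being split (Definition \ref{def:extension}) means there is a morphism $m:\cF_{k-1}(\ns)\to\cT^*$ with $\gamma\co m=\mathrm{id}$. (We may assume $i<k$, since for $i\ge k$ the group $\tran_i(\cF_{k-1}(\ns))$ is trivial and $\alpha=\mathrm{id}$ lifts to $\mathrm{id}$.) Given $x\in\ns$, I would set $y=\pi_{k-1,\ns}(x)$, choose any representative $(x_0,x_1)\in\cT$ of the class $m(y)\in\cT^*$, and observe that $\gamma(m(y))=y$ forces $\pi_{k-1,\ns}(x_0)=y=\pi_{k-1,\ns}(x)$, so $x$ lies in the source fibre of the local translation $\phi_{x_0,x_1}$ (Definition \ref{def:loc-trans}). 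I then define $\alpha'(x):=\phi_{x_0,x_1}(x)$. By Proposition \ref{prop:Tstar-char} the local translation $\phi_{x_0,x_1}$ depends only on the class $m(y)$, hence only on $y$, so $\alpha'$ is well defined; and $\phi_{x_0,x_1}(x)$ lies in the target fibre, whose $\pi_{k-1,\ns}$-image is $\alpha(y)$, so $\alpha'$ is a lift of $\alpha$.

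The core is to show $\alpha'\in\tran_i(\ns)$. By Lemma \ref{lem:transcriterion} this is equivalent to the map $h:\ns\to\ns\times\ns$, $x\mapsto(x,\alpha'(x))$, being a morphism into $\ns\Join_i\ns$; since $\alpha'$ is a lift, $h$ takes values in $\cT$, so it suffices to prove that $h$ is a morphism $\ns\to\cT$. The key identity I would establish first is
\[
\pi_{k-1,\cT}\co h = m\co\pi_{k-1,\ns},
\]
which follows from Proposition \ref{prop:Tstar-char}, since $(x,\phi_{x_0,x_1}(x))$ belongs to the class $\pi_{k-1,\cT}((x_0,x_1))=m(y)$. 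As $m$ and $\pi_{k-1,\ns}$ are morphisms, the right-hand side shows $\pi_{k-1,\cT}\co h$ is a morphism $\ns\to\cT^*=\cF_{k-1}(\cT)$. By Lemma \ref{lem:trans-k-suffices} it is then enough to check $h\co\q\in\cu^{k+1}(\cT)$ for each $\q\in\cu^{k+1}(\ns)$.

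For such a $\q$, I would first handle the $k$-faces: every $k$-face restriction of $h\co\q$ has the form $h\co\q_k$ with $\q_k\in\cu^k(\ns)$, and since $\pi_{k-1,\cT}\co(h\co\q_k)=m\co\pi_{k-1,\ns}\co\q_k$ is a cube on $\cT^*$, Remark \ref{rem:cubesuff} (applied to the $k$-step nilspace $\cT$, for which $\cF_{k-1}(\cT)=\cT^*$) yields $h\co\q_k\in\cu^k(\cT)$. Thus the restriction of $h\co\q$ to $\{0,1\}^{k+1}\setminus\{1^{k+1}\}$ is a $(k+1)$-corner on $\cT$, which has a unique completion $\tilde\q\in\cu^{k+1}(\cT)$ because $\cT$ is $k$-step. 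It remains to match values at $1^{k+1}$: writing $x=\q(1^{k+1})$, I compare $\tilde\q(1^{k+1})=(x_0',x_1')$ with $h\co\q(1^{k+1})=(x,\alpha'(x))$. Projecting to the first coordinate (a morphism $\cT\to\ns$) and using uniqueness of completion in the $k$-step nilspace $\ns$ forces $x_0'=x$; projecting by $\pi_{k-1,\cT}$ and using uniqueness of completion in the $(k-1)$-step nilspace $\cT^*$ (the two projections agreeing off $1^{k+1}$) places $(x,x_1')$ and $(x,\alpha'(x))$ in the same class $m(y)$, whence $x_1'=\phi_{x_0,x_1}(x)=\alpha'(x)$ by Proposition \ref{prop:Tstar-char}. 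Therefore $\tilde\q=h\co\q$, completing the verification.

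The main obstacle I anticipate is exactly this final matching of the two cubes at the top vertex $1^{k+1}$: everywhere else $h\co\q$ agrees with the genuine cube $\tilde\q$ by construction, but one must rule out a discrepancy within the $\sim_{k-1}$-fibre over $\pi_{k-1,\ns}(x)$, and this is precisely where the description of $\cT^*$ via local translations (Proposition \ref{prop:Tstar-char}), together with two separate uniqueness-of-completion arguments (one in $\ns$, one in $\cT^*$), are indispensable. The remainder is routine bookkeeping with the definitions of $\cT$, $\cT^*$, and the arrow-space criteria.
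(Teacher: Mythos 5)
Your proposal is correct, and the heart of it coincides with the paper's proof: you define the lift exactly as the paper does, by sending $x$ to $\phi_{x_0,x_1}(x)$ for a representative $(x_0,x_1)$ of $m(\pi_{k-1,\ns}(x))$, and you reduce the translation property to showing that $h=\mathrm{id}\times\alpha'$ is a morphism into $\cT$, using the identity $\pi_{k-1,\cT}\co h=m\co\pi_{k-1,\ns}$. Where you diverge is in the final cube-preservation check. The paper verifies directly that for $\q\in\cu^k(\ns)$ the map $\q\times(\beta\co\q)$ is a cube on $\cT$, on the grounds that it is a lift of the cube $m\co\pi_{k-1}\co\q\in\cu^k(\cT^*)$ (this is an application of Remark \ref{rem:cubesuff}/Corollary \ref{sim2cor} to the $k$-step nilspace $\cT$ with factor $\cT^*=\cF_{k-1}(\cT)$: in dimension $k$, \emph{any} pointwise lift of a cube is a cube, so no completion-and-matching is needed). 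You instead invoke Lemma \ref{lem:trans-k-suffices} to work with $(k+1)$-cubes, where the ``any lift of a cube is a cube'' principle no longer applies directly, and you compensate with the corner-completion argument and the two uniqueness-of-completion comparisons (in $\ns$ and in $\cT^*$) at the top vertex. Both routes are valid; the paper's is shorter because it stays in dimension $k$, where Corollary \ref{sim2cor} does all the work, while yours is self-contained at dimension $k+1$ and makes explicit the fibre-matching step that the paper's one-line appeal to lifting leaves implicit. Your preliminary reduction to $i<k$ is a sensible addition, since Lemma \ref{lem:transbundlespace} is stated under that hypothesis.
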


\begin{proof}
Suppose that there exists a morphism $m:\cF_{k-1}(\ns)\to \cT^*$ such that $\gamma \co m$ is the identity map. We define a map $\beta:\ns\to\ns$ as follows. For each $x\in \ns$, the element $m(\pi_{k-1}(x))\in \cT^*$ represents a local translation $\phi$, from the class $F_0$ of $\sim_{k-1,\ns}$ containing $x$, to the class $\alpha(F_0)$. Let $\beta(x)=\phi(x)$. We claim that $\beta\in \tran_i(\ns)$. To prove this, by Lemma \ref{lem:transcriterion} it suffices to show that for every $\q\in \cu^k(\ns)$ we have that $\q\times (\beta\co\q)\in \cu^k(\ns \Join_i \ns)$. Now $\q\times (\beta\co\q)$ is a lift to $\cT$ of the $\cT^*$-valued map $m\co \pi_{k-1}\co \q$. The latter map is in $\cu^k (\cT^*)$ because $m$ is a morphism and $\pi_{k-1}\co \q \in \cu^k (\cF_{k-1}(\ns))$. But then, by  Lemma \ref{lem:lifting}, the lift $\q\times (\beta\co\q)$ is in $\cu^k(\cT)\subset  \cu^k(\ns \Join_i \ns)$, so we are done.
\end{proof}



\begin{dajauthors}
\begin{authorinfo}[pgom]
  Pablo Candela\\
  Universidad Aut\'onoma de Madrid\\
  Madrid, Spain\\
  pablo\imagedot{}candela\imageat{}uam\imagedot{}es \\
  \url{http://verso.mat.uam.es/~pablo.candela/index.html}
\end{authorinfo}

\end{dajauthors}

\end{document}